\def\@oddhead{\hfill \shorttitle \hfill \thepage}
\def\@evenhead{\thepage \hfill \shortauthor \hfill}
\def\@oddfoot{}
\def\@evenfoot{}
\newtheorem{df}{Definition}[section]
\newtheorem{fact}[df]{Fact}
\newtheorem{remark}[df]{Remark}
\newtheorem{theorem}[df]{Theorem}
\newtheorem{lemma}[df]{Lemma}
\newtheorem{cor}[df]{Corollary}
\newtheorem{proposition}[df]{Proposition}
\newcommand{\R}{\mathbb{R}}
\renewcommand{\S}{\mathcal{S}}
\renewcommand{\H}{\mathbb{H}}
\newcommand{\E}{\mathrm{E}}
\newcommand{\C}{\mathcal{C}}
\newcommand{\M}{\mathbb{M} \kern-0.1em \mathbbm{i} \kern-0.1em \mathbbm{n}}
\newcommand\itemrow[3]{%
  \item\makebox[8em][l]{#1}%
    \makebox[8em][r]{#2}%
    \makebox[8em][l]{#3}%
}
\DeclareFontFamily{OT1}{pzc}{}
\DeclareFontShape{OT1}{pzc}{m}{it}{<-> s * [0.99] pzcmi7t}{}
\DeclareMathAlphabet{\mathscr}{OT1}{pzc}{m}{it}
\newcommand{\cM}{\mathscr{co}\mathcal{M}\!\mathscr{in}}
\newcommand{\cE}{\mathscr{co}\mathcal{E}\!\mathscr{uc}}
\newcommand{\Eu}{\mathcal{E}\!\mathscr{uc}}
\renewcommand{\P}{\mathrm{P}}
\renewcommand{\E}{\mathbb{E}}
\newcommand{\Ell}{\mathbb{E} \kern-0.1em \mathbbm{l} \kern-0.1em \mathbbm{l}}
\newcommand{\Gal}{\mathbb{G} \kern-0.1em \mathbbm{a} \kern-0.1em \mathbbm{l}}
\newcommand{\Hess}{\operatorname{Hess}}
\newcommand{\AdS}{\mathbb{A}\mathrm{d}\mathbb{S}}
\newcommand{\dS}{\mathrm{d}\mathbb{S}}
\newcommand{\I}{\mathrm{I}}
\newcommand{\II}{\mathrm{I\hspace{-0.04cm}I}}
\newcommand{\III}{\mathrm{I\hspace{-0.04cm}I\hspace{-0.04cm}I}}
\renewcommand{\d}{\operatorname{d}\!}
\author{Fran\c{c}ois Fillastre and Andrea Seppi}
\title{Spherical, hyperbolic and other projective geometries: convexity, duality, transitions.}
\begin{document}

\date{\today }

\maketitle

\begin{abstract}

We give an elementary projective geometry presentation of the classical Riemannian model spaces (elliptic and hyperbolic spaces) and of the classical Lorentzian model spaces (de Sitter and Anti-de Sitter spaces). We also present some relevant degenerate model spaces (Euclidean and co-Euclidean spaces, Lorentzian Minkowski and co-Minkowski spaces), and geometric transitions. 

An emphasis is given to dimensions $2$ and $3$, convex subsets, duality, and geometric transitions between the spaces.
\end{abstract}

\textbf{Keywords} Projective geometry, model spaces, convexity, duality, transition, degeneration

\textbf{AMS code} 	51A05 52A15	52A55 53C45

\tableofcontents
\normalem

\newpage

\section{Introduction}

Since the end of the 19th century, and after the works of F. Klein and H. Poincar\'e, it is well known that models of 
elliptic geometry\footnote{The elliptic space is the standard round sphere quotiented by the antipodal map.} and hyperbolic geometry can be given using projective geometry, and that Euclidean geometry can be seen as a ``limit'' of both geometries. (We refer to \cite{AP1,AP2} for historical aspects.)
Then, all the geometries that can be obtained in this way (roughly speaking by defining an ``absolute'', which is the projective quotient of the isotropic cone of a quadratic form) were classified, see  \cite{rosenfeld}.
Some of these geometries  had a rich development, most remarkably after the work of W.P.~Thurston at the end of the 1970's, see \cite{Thurcour1,thb}, which gave rise to a highly prosperous understanding of three-dimensional hyperbolic geometry (see among others \cite{benpet,bonotal,thurston2dim,agol}).
On the other hand, the seminal work of G. Mess (\cite{Mess}, see also \cite{mes+}) of 1990 motivated the study in dimension $(2+1)$ of the Lorentzian geometries of  Minkowski, de Sitter and Anti-de Sitter spaces (these spaces are known under different names 
 in the realm of projective geometries, see  \cite[p.375]{rosenfeld} or \cite{struve}), which in higher dimensions have attracted interest for a long time in mathematical physics and more precisely in General Relativity, see \cite{oneil, beem, hawking}. These geometries are close relatives of hyperbolic geometry and in fact Mess outlined their strong relation with Teichm\"uller theory, thus giving rise to new directions of development which are still very active \cite{notes,bms,bonschlfixed,bon_schl,bbzads,barbotzeghib,bscod,derthol,sephyp,seppimaximal,bstvolume,bsequivariant}.  

Moreover, some degenerate spaces appear naturally in the picture, namely the co-Euclidean space (the space of hyperplanes of the Euclidean space), and the co-Minkowski space (that we will restrict to the space of space-like hyperplanes of Minkowski space), first because of duality reasons, and second because they appear as limits of degeneration of classical spaces. In fact, co-Minkowski space recently reacquired interest under the name \emph{half-pipe geometry}\index{half-pipe geometry}, since the work of J. Danciger \cite{dan-thesis,dan}, which is very related to the idea of geometric transition \cite{hodker,cdkbook,kozai,dgk1,dgk2,bsmink,bsads}.

The purpose of this paper is to provide a survey of the properties of these spaces, especially in dimensions 2 and 3, from the point of view of projective geometry. Even with this perspective,  the paper does not aim to be an exhaustive treatment. Instead it is focused on the aspects which concern convex subsets and their duality, degeneration of geometries and some properties of surfaces in three-dimensional spaces. The presentation is intended to be elementary, hence containing no proofs of deep theorems, but trying to proceed by accessible observations and elementary proofs. Apart from some constructions in Sections \ref{sec connection volume} and \ref{sec geometry surfaces}, which have been obtained in \cite{andreathesis}, there is no claim of originality in the presented results. On the other hand, in this paper we attempt to use a modern mathematical language, thus possibly contrasting with the point of view of several presentations of classical topics in the literature.

Hopefully, this survey will provide a unified introduction to the aforementioned geometries, and at the same time it might fill to some extent the lack of references that, in the opinion of the authors, surrounds the  differential-geometric understanding of Minkowski, de Sitter and Anti-de Sitter geometries, quite differently, for instance, from the case of hyperbolic geometry where a large number of textbooks appeared since its modern development.

\section{Model spaces}\label{sec:mode spaces}

\subsection{Pseudo-spheres and model spaces}

The set of unit vectors of three-dimensional Euclidean space, endowed with the metric induced by the ambient space, is a useful model for the round sphere $\S^2$. Indeed, 
in this model,
 \begin{itemize}
\item  at a point $x\in \S^2$, the outward unit normal vector is the vector represented by  $x$ itself;
\item  the isometries of $\S^2$ are the restrictions of transformations in $\mathrm O(3)$, namely of linear isometries of Euclidean space;
 \item the geodesics are the intersections of $\S^2$ with linear planes, hence are great circles.\footnote{This follows by contradiction using the symmetry by reflection in the plane and the fact that an unparametrized geodesic from a given point is locally uniquely determined by its tangent vector.}
 \end{itemize} 
Therefore, 
 using this model, it is easy to show that:
 \begin{itemize}
 \item $\S^2$ is a smooth surface;
\item the isometry group  of $\S^2$  acts transitively 
on   points and on  orthonormal frames in the tangent spaces;
 \item the sectional curvature is $1$ at every point.
\end{itemize}  

 The above properties  still hold for the 
$n$-dimensional sphere $\S^n$, the set of unit vectors of
the Euclidean space of dimension $(n+1)$, which is a smooth hypersurface of constant sectional curvature if endowed with the induced metric.
Moreover, it is a simple but fundamental remark that the above properties  do not use the fact that the usual scalar product on the ambient space is positive definite, but only that it is a symmetric non-degenerate bilinear form.

 More precisely, if a smooth manifold $M$ is endowed with a smooth $(0,2)$-tensor which is a non-degenerate symmetric bilinear form, then there is an associated connection, the Levi-Civita connection,\footnote{The notion of Levi-Civita connection is recalled in Section \ref{sec connection volume}.}
 and the notions of curvature tensor and geodesic only depend on the connection.\footnote{Of course, if the tensor is not positive definite, it does not induce a distance on $M$, and in particular there is no notion of ``minimization of distance" for the geodesics.} For example, in $\R^{n+1}$ endowed with any non-degenerate symmetric bilinear form $b$, the Levi-Civita connection is given by the usual differentiation in each component, the curvature is zero, and the geodesics are affine lines, regardless of the signature of $b$. 
 
The \emph{isotropic cone}\index{isotropic! cone} (or null cone) of $b$ is the set of isotropic vectors for $b$:

$$\mathcal I(b)=\{ x \in \R^{n+1} \,|\, b(x,x)=0 \}~.$$

\begin{df}
A non-empty subset $\mathcal M$ of $\R^{n+1}$ is a \emph{pseudo-sphere}\index{pseudo-sphere}  if 
$$\mathcal M=\{ x\in \R^{n+1}  \,|\, b(x,x)=1 \} \qquad\text{or}\qquad\mathcal M=\{x\in \R^{n+1}  \,|\, b(x,x)=-1 \}~.$$
for a non-degenerate symmetric bilinear form $b$.
\end{df}

\begin{remark}{\rm
Of course, by replacing $b$ by $-b$, taking $1$ or $-1$ in the definitions leaves the topology unchanged, but
produces changes at a metric level.
More concretely, an \emph{anti-isometry}\index{anti-isometry} between two pseudo-Riemannian spaces\footnote{A smooth manifold is \emph{pseudo-Riemannian}\index{pseudo-Riemannian} if it is endowed with a non-degenerate smooth $(0,2)$-tensor. If the tensor is positive definite at each point, then the manifold is \emph{Riemannian}. If the tensor has a negative direction, but no more than one linearly independent negative direction at each point, then the manifold is \emph{Lorentzian}\index{Lorentzian}.} $(N_1,g_1)$ and $(N_2,g_2)$ is a diffeomorphism $f:N_1 \to N_2$ such that $f^*g_2=-g_1$. For example, if $b'=-b$, the pseudo-sphere 
$$\mathcal M=\{ x \,|\, b(x,x)=1 \}\,, $$
endowed with the metric induced from $b$,
is anti-isometric to
$$\mathcal N=\{ x \,|\, b' (x,x)=-1 \}\,, $$
when $\mathcal N$ is endowed with the metric induced from $b'$, 
the anti-isometry being the identity. As sets, $\mathcal M$ and $\mathcal N$ coincide, but if the signature of $b$ is $(p,q)$, the induced metric on $\mathcal M$ has signature
$(p-1,q)$, while the induced metric on $\mathcal N$ has signature $(q,p-1)$. In the following, given $\mathcal M$, the pseudo-sphere $\mathcal N$ obtained as above will be denoted by $\overline{\mathcal M}$. 
}\end{remark}

 Let $\mathcal M$ be a pseudo-sphere.
 The following facts can be checked similarly to the corresponding properties of the unit sphere in the Euclidean space:
\begin{itemize}
 \item  at a point $x\in \mathcal M$, a unit normal vector is the vector represented by  $x$ itself,
\item  the restrictions of the transformations of $\mathrm O(p,q)$, namely the linear isometries of $b$, are isometries of $\mathcal M$,\footnote{The isometry group of $\mathcal M$ is bigger than $\mathrm O(p,q)$ if $\mathcal M$ is not connected.} 
 \item totally geodesic submanifolds  are the intersections of $\mathcal M$ with vector subspaces.
\end{itemize}
It follows straightforwardly from the above properties that:
\begin{itemize}
 \item $\mathcal M$ is a smooth hypersurface,
\item the isometry group of $\mathcal M$ acts transitively 
on  points and on orthonormal frames in the tangent spaces,
 \item the sectional curvature is constantly equal to 
 $1$  if $\mathcal M=b^{-1}(1)  $ and equal to $-1$ if $\mathcal M=b^{-1}(-1) $.\footnote{This follows from the Gauss formula 
 and the fact that the shape operator on $\mathcal M$ is  $\pm$ the identity. The fact that the sectional curvature is constant also follows from the preceding item.}
 \end{itemize}

For $n\geq 2$, the manifold $b^{-1}(1)$ is diffeomorphic to $\mathcal{S}^{p-1} \times \R^q$, and $b^{-1}(-1)$ is diffeomorphic to $\mathcal{S}^{q-1} \times \R^p$, so it may  be disconnected. As for the remainder of this section, the reference is \cite{oneil} for more details. But from the definition, if $x\in \mathcal M$, then also $-x \in \mathcal M$, hence we will be mainly interested in the projective quotient of $\mathcal M$. 
As the action of the antipodal map is clearly isometric, the quotient will inherit in an obvious way many of the properties of $\mathcal M$.

\begin{df}
A \emph{model space}\index{model space} $\mathbb M$ is a
space $\mathbb M := \mathcal  M / \{\pm \mathrm{Id} \}$,
where $\mathcal M$ is a pseudo-sphere, together with the induced metric from $b|_{\mathcal M}$.
\end{df}

It is obvious that the projective quotient of $\overline{\mathcal M}$ is anti-isometric to $\mathbb M$, and will thus be denoted by $\overline{\mathbb M}$. Moreover, in the case where $\mathcal{M}$ is not connected, it has two connected components, which correspond under the antipodal map $x \mapsto -x$. In particular, $\mathbb M$ is connected.

By construction, $\mathbb M $ is a subset of the projective space $\R\P^n$.
 Indeed, topologically $\mathbb M $ can be defined as
 $$\mathbb M=\mathrm P\{x\in\R^ {n+1}\,|\,b(x,x)>0\}~,$$
 or the same definition with $>$ replaced by $<$, depending on the case.
Hence any choice of an affine hyperplane in $\R^{n+1}$ which does not contain the origin will give an affine chart of the projective space, and the image of $\mathbb M$ in the affine chart will be an open subset of an affine space of dimension $n$.

Recall that $\operatorname{PGL}(n+1,\R)$, the group of \emph{projective transformations}\index{projective transformation} (or homographies)\index{homography},  is the quotient of 
$\operatorname{GL}(n+1,\mathbb{R})$ by the non-zero scalar transformations. 
Even if $\mathcal{M}$ is not connected, an isometry of $\mathcal{M}$ passes to the quotient only if it is an element of $\mathrm{O}(p,q)$.
 Hence $\operatorname{Isom}(\mathbb{M})$, the isometry group of $\mathbb{M}$, is $\mathrm{PO}(p,q)$, the quotient of 
$\operatorname{O}(p,q)$ by $\{\pm\mathrm{Id}\}$ (indeed elements of $\operatorname{O}(p,q)$ have determinant equal to $1$ or $-1$).

\subsection{Lines and pseudo distance} \label{subsec lines pseudodistance}

\paragraph{Pseudo distance on $\mathcal M$.}

A geodesic $c$ of $\mathcal M$ is the non-empty intersection 
of $\mathcal M$ with a linear plane. This intersection might not be  connected.
The geodesic $c$ of $\mathcal M$ is said to be
\begin{itemize}
\item \emph{space-like} if $b(\dot c, \dot c)>0$,
\item \emph{light-like} (or \emph{null}) if $b(\dot c,\dot c)=0$,
\item \emph{time-like} if $b(\dot c,\dot c)<0$.
\end{itemize} 

An anti-isometry sends space-like lines onto time-like lines.
For example, assume that the restriction of $b$ to a plane $P$ of coordinates $(x_1,x_2)$ has the form $x_1^2+x_2^2$. Then the curve $$\{ x\in P\,|\,b(x,x)=1 \}=\{ x_1^2+x_2^2=1 \}$$ is a space-like geodesic. On the other hand, $b'=-b$ has the form 
$-x_1^2-x_2^2$ and thus $$\{ x\in P\,|\,b'(x,x)=-1 \}=\{ x_1^2+x_2^2=1 \}$$ is time-like.
Actually, up to a global isometry of the ambient space, a non-light-like geodesic of $\mathcal M$ can be written 
in one the the following forms:
\begin{enumerate}[label=a.\arabic*]
\itemrow {$\{x\in P\,|\,b(x,x)=1\}$}  {where } {$b|_P(x,x)=x_1^2+x_2^2$ ;} \label{form1}
\itemrow {$\{x\in P\,|\,b(x,x)=-1\}$}  {where } {$b|_P(x,x)=-x_1^2-x_2^2 $ ;} \label{form2}
\itemrow {$\{x\in P\,|\,b(x,x)=1\}$}  {where } {$b|_P(x,x)=x_1^2-x_2^2 $ ;}\label{form3}
\itemrow {$\{x\in P\,|\,b(x,x)=-1\}$}  {where } {$b|_P(x,x)=x_1^2-x_2^2$ .}\label{form4}
\end{enumerate}


Up to a change of coordinates, a non-light-like geodesic is a circle or a branch of hyperbola.
If the geodesic is light-like, it is an affine line in the ambient space.
It follows from these descriptions that:

\begin{fact} Every pseudo-sphere $\mathcal M$  is \emph{geodesically complete}\index{geodesically complete}: each maximal (for the inclusion) geodesic is defined on the entire real line.
\end{fact}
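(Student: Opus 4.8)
The plan is to exploit the explicit classification of geodesics established just above: a geodesic of $\mathcal M$ is the intersection of $\mathcal M$ with a linear $2$-plane, and every global isometry of the ambient space (an element of $\mathrm{O}(p,q)$) carries geodesics to geodesics while preserving affine parametrizations, hence preserves geodesic completeness. It therefore suffices to prove that a maximal (inextensible, hence connected) geodesic is defined on all of $\R$ for each of the normal forms \ref{form1}--\ref{form4}, together with the light-like case. In each case I would write down an explicit parametrization, check that it is an \emph{affine} parametrization, and verify that its maximal domain is all of $\R$.

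For the non-light-like forms I would parametrize by arc length with respect to the induced metric. In the Riemannian-type cases \ref{form1} and \ref{form2} the plane $P$ carries (up to sign) the standard positive definite form, so the geodesic is a unit circle, parametrized by $c(t)=(\cos t,\sin t)$; here $b(\dot c,\dot c)\equiv\pm 1$, so $t$ is an affine parameter, and although the curve is closed it is traversed for all $t\in\R$. In the cases \ref{form3} and \ref{form4} the geodesic is a branch of hyperbola, parametrized by $c(t)=(\cosh t,\sinh t)$ respectively $c(t)=(\sinh t,\cosh t)$; a one-line computation gives that $b(\dot c,\dot c)$ is again the nonzero constant $\mp 1$, so $t$ is an affine parameter and a single branch is covered bijectively as $t$ ranges over $\R$. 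The only way completeness could fail would be for the curve to run off to infinity at a finite value of the affine parameter; this is excluded precisely because the pseudo-length $\int |b(\dot c,\dot c)|^{1/2}\,dt=|t|$ diverges as $t\to\pm\infty$. Here one uses the elementary fact that a constant-nonzero-speed parametrization of the image of a non-null geodesic is itself affinely parametrized, since any reparametrization preserving $|b(\dot c,\dot c)|$ must be affine.

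For a light-like geodesic arc length degenerates, the speed being null, so instead I would produce an affine parameter directly. Such a geodesic is an affine line $c(t)=p+tv$ with $b(v,v)=0$ and $b(p,v)=0$, so that $b(c(t),c(t))=b(p,p)$ is constant and the line stays on $\mathcal M$. Since the Levi-Civita connection of $(\R^{n+1},b)$ is ordinary componentwise differentiation, one has $\ddot c=0$ in the ambient space; projecting onto $T_{c(t)}\mathcal M$ gives $\nabla^{\mathcal M}_{\dot c}\dot c=0$, so $c$ is a geodesic of $\mathcal M$ with affine parameter $t$, visibly defined on all of $\R$.

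The step I expect to carry the real content is the divergence-of-the-affine-parameter verification in the second paragraph, together with the check that the natural trigonometric and hyperbolic parametrizations are genuinely affine. It is worth stressing why this cannot be bypassed: although $\operatorname{Isom}(\mathcal M)$ acts transitively on frames, homogeneity alone does not force geodesic completeness in the pseudo-Riemannian setting, in contrast with the Riemannian case. Everything else in the argument is formal, reducing to the classification of geodesics and to the invariance of completeness under the ambient isometry group.
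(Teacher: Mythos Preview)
Your proposal is correct and follows essentially the same approach as the paper: both deduce completeness from the explicit classification of geodesics as circles, branches of hyperbola, or affine lines in the ambient space. The paper in fact gives no proof beyond the single sentence ``It follows from those descriptions,'' so your version simply fills in the details (explicit parametrizations, the check that they are affine, and the treatment of the null case) that the paper leaves to the reader.
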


Let $x,y$ be on a connected component of a geodesic $c$ of $\mathcal{M}$, and let us parametrize by $\gamma:[0,1]\to c$ a subsegment of $c$ such that  $\gamma(0)=x$ and $\gamma(1)=y$. The
\emph{pseudo distance}\index{pseudo distance} is  
\begin{equation}\label{eq:d lor}
 \tilde{d}(x,y)=\int_0^1 \sqrt{|b(\dot \gamma(t),\dot \gamma{(t)})|}dt~.\end{equation}

If $x$ and $y$ lie on a circle, there are actually two  choices for the subsegment of $c$ joining them.
We choose the one which gives the smaller distance.
In particular, in this case $\tilde{d}(x,y)\in [0, \pi]$. Moreover if $c$ is light-like, then $\tilde{d}(x,y)=0$.  
We have  $\tilde{d}(x,x)=0$ and $\tilde{d}(x,y)=\tilde{d}(y,x)$, but in general $\tilde{d}$ is not a distance.\footnote{It is a distance when $\mathcal M$ is Riemannian.}

Considering the cases in \ref{form1}-\ref{form4}, 
explicit parameterizations of the circle or the hyperbola give:
\begin{enumerate}[label=b.\arabic*]
\item $b(x,y)=\cos (\tilde{d}(x,y))$ ; \label{formb1}
\item $- b(x,y)= \cos (\tilde{d}(x,y))$ ;
\item $b(x,y)=\cosh (\tilde{d}(x,y))$ ;
\item $-b(x,y)=\cosh (\tilde{d}(x,y))$ .
\end{enumerate}


\paragraph{Projective distance on $\mathbb M$.}

 We will be mainly interested in non-parametrized geodesics of $\mathbb M$.

\begin{df}
A \emph{line} (resp. \emph{plane}, \emph{hyperplane}) of $\mathbb M$ is the projective quotient  of a non-empty  intersection of $\mathcal M$ with a 2-dimensional  (resp. 3-dimensional, $n$-dimensional) linear vector space. 
\end{df}

A straightforward property  is that the image of a line (resp. plane, hyperplane)  of $\mathbb M$ in an affine chart is the intersection of the image of $\mathbb M$ with an affine line (resp. plane, hyperplane) of $\R^n$.

We say that a line of $\mathbb{M}$ is \emph{space-like} (respectively \emph{time-like}, \emph{light-like}) if its lift is a space-like (resp. time-like, light-like)\index{space-like line}\index{time-like line}\index{light-like line} geodesic of $\mathcal M$.
But also the lines can be classified with respect to the quadric given by the projective quotient of the isotropic cone.

\begin{df}
The projective quotient $\P \mathcal I(b)$ of the isotropic cone 
is called the \emph{absolute}\index{absolute}.\footnote{The absolute corresponds to what is  sometimes called the \emph{boundary at infinity} of $\mathbb{M}$. We will avoid this terminology  in the present paper, because some confusion may occur from the fact that in an affine chart, the boundary at infinity may be at infinity or not. Note that an absolute is often the boundary at infinity of two model spaces.} 
\end{df}

\begin{df}\label{df:projective type}
A line  of $\mathbb M$ is called
\begin{itemize}
\item \emph{hyperbolic} if it intersects 
the absolute in two distinct points, 
\item  \emph{parabolic} if it intersects 
the absolute in one point,
\item \emph{elliptic}  it it does not intersect the absolute. 
\end{itemize}
\end{df}

%

\begin{lemma}
Parabolic and light-like lines coincide.
\end{lemma}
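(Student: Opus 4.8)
The plan is to reduce the statement to a two-dimensional linear-algebra computation on the lift of the line. A line of $\mathbb{M}$ lifts to a geodesic $c = \mathcal{M} \cap P$, where $P \subset \R^{n+1}$ is a $2$-dimensional linear subspace with $c \neq \emptyset$, and we write $\mathcal{M} = b^{-1}(\epsilon)$ with $\epsilon \in \{+1,-1\}$. First I would fix a point $x \in c$ together with the velocity $\dot c$ of the geodesic at $x$. By the first bulleted property of pseudo-spheres, the vector $x$ itself is normal to $\mathcal M$ at $x$, so the tangent vector $\dot c$ satisfies $b(x,\dot c)=0$; since $b(x,x)=\epsilon \neq 0$ while $b(x,\dot c)=0$, the vectors $x$ and $\dot c$ are linearly independent and hence form a basis of the plane $P$.

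In this basis the Gram matrix of the restriction $b|_P$ is diagonal, equal to $\operatorname{diag}(\epsilon, b(\dot c,\dot c))$. Next I would observe that the intersection of the projective line $\P(P)$ with the absolute $\P\mathcal I(b)$ is exactly $\P(P \cap \mathcal I(b))$, that is, the projectivization of the isotropic vectors of $b|_P$; thus counting the intersection points with the absolute amounts to counting the isotropic directions of the binary form $b|_P$. The determinant of the Gram matrix is $\epsilon\, b(\dot c,\dot c)$, so $b|_P$ is degenerate if and only if $b(\dot c,\dot c)=0$, that is, precisely when the geodesic $c$ --- and hence the line --- is light-like.

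It then remains to match degeneracy with the trichotomy of Definition \ref{df:projective type}. A non-degenerate binary form is either definite, with no nonzero isotropic vector (the elliptic case), or indefinite of signature $(1,1)$, with exactly two isotropic directions (the hyperbolic case); in neither situation is there a single intersection point. A degenerate $b|_P$ of rank one has a one-dimensional kernel, spanned here by $\dot c$, which is its unique isotropic direction and projectivizes to a single point of the absolute --- the parabolic case. The only remaining possibility, a totally isotropic plane (rank zero), cannot occur: there $b(x,x)=0$ for all $x \in P$, so $P$ would miss $\mathcal M = b^{-1}(\epsilon)$ and $c$ would be empty, contradicting that $P$ defines a line. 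The main point to handle carefully is exactly this bookkeeping of ranks --- confirming that rank one yields one projective isotropic point rather than two, and that the rank-zero case is excluded by nonemptiness of $c$ --- after which the equivalence between parabolic and light-like lines follows immediately.
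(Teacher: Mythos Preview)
Your proof is correct and follows essentially the same approach as the paper: both reduce the question to analyzing the restriction $b|_P$ on the $2$-plane defining the line. The paper's version is terse --- it simply observes that a parabolic line corresponds to a plane containing a unique isotropic line, while a light-like geodesic corresponds to a plane on which $b$ is degenerate, and leaves the equivalence of these two conditions to the reader. Your explicit Gram-matrix computation in the basis $\{x,\dot c\}$, together with the rank-by-rank bookkeeping (and the exclusion of the totally isotropic case), is exactly the missing detail that makes that equivalence transparent.
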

\begin{proof}
A parabolic line is the projective quotient of a plane $P$ containing a unique line belonging to the isotropic cone of $b$.
But a light-like geodesic of $\mathcal M$ is the intersection of $\mathcal{M}$ with a
plane on which the restriction of $b$ is degenerate. 
\end{proof}

 Recall that an anti-isometry exchanges space-like and time-like lines, but it
preserves the  type of a line given by Definition~\ref{df:projective type}.
Actually the lines in $\mathbb M$ whose lifts are described in \ref{form1}-\ref{form4} have the following corresponding type:

\begin{enumerate}[label=c.\arabic*]
\item space-like  elliptic,
\item time-like elliptic,
\item time-like  hyperbolic,
\item space-like  hyperbolic.
\end{enumerate}


%

If $x,y$ are on a line of $\mathbb M$, we will define a ``distance'' between them, which roughly speaking corresponds to the pseudo distance between the corresponding lifts. But one has to choose the lifts carefully.

\begin{df}
Let $x,y$ be on a line $l$ of $\mathbb{M}$. The \emph{projective distance}\index{projective distance} $d(x,y)$ is 
\begin{itemize}
\item $0$ if $l$ is parabolic,
\item $\tilde{d}(\tilde{x},\tilde{y})$ if $l$ is hyperbolic, and $\tilde{x},\tilde{y}$ are lifts of $x$ and $y$ on the same branch of hyperbola,
\item  $\tilde{d}(\tilde{x},\tilde{y})$ if $l$ is elliptic,  and $\tilde{x},\tilde{y}$ are lifts of $x$ and $y$ such that  $\tilde{d}(\tilde{x},\tilde{y})$ is the smallest.\footnote{In particular, in this case $d(x,y)\in [0,\pi/2]$.}
\end{itemize}
\end{df}

With the help of the cross-ratio, one can give a more direct definition of the projective distance.
Recall that if four points $x,y,q,p$ are given on the projective line $\mathrm \R\mathrm{P}^1\cong\R\cup\{\infty\}$, then there exists a unique projective transformation $h$ with $h(x)=\infty$, $h(y)=0$ and $h(q)=1$. Then the \emph{cross-ratio}\index{cross-ratio} can be defined as
$$[x,y,q,p]=h(p)\in \R\cup \{\infty\}\,.$$ Of course,  
$[x,y,q,p]=\infty$ if $p=x$, $[x,y,q,p]=0$ if $p=y$ and $[x,y,q,p]=1$ if $p=q$.
More explicitly, a formula is:
$$[x,y,q,p]=\frac{x-q}{y-q}\frac{y-p}{x-p}~. $$
Note that this last definition extends to complex numbers, i.e. the definition of the cross-ratio extends to $ \mathbb{C}\mathrm{P}^1$.

\begin{lemma}\label{lem:distances}
Given two distinct points $x,y\in \mathbb M$ on a line $l$, then $l$ intersects the absolute  at two points $\operatorname{I},\operatorname{J}$ (may be not distinct, may be in  $\mathbb{C}\P^1$), and 
\begin{equation}\label{eq:d hilb}d(x,y)=\left|\frac{1}{2}\ln [x,y,\operatorname{I},\operatorname{J}]\right|~.\end{equation}
Here $\ln$ denotes the branch of the complex logarithm $\ln:\mathbb C\setminus\{0\}\to\{z\in{\mathbb{C}}:\operatorname{Im}(z)\in(-\pi,\pi]\}$.
\end{lemma}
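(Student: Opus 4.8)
The plan is to exploit the fact that both sides of \eqref{eq:d hilb} depend only on the restriction $b|_P$ of $b$ to the plane $P$ whose projective quotient is the line $l$, and that both are invariant under linear changes of coordinates on $P$: the projective distance $d$ is defined directly through $b|_P$, while the cross-ratio of four points on a projective line is preserved by any projective transformation, in particular by the one induced by a linear isomorphism of $P$. Hence I would first reduce, using the classification recalled before the statement, to the four normal forms a.1--a.4 for $b|_P$ together with the light-like case, and verify the identity in each. The parabolic case is immediate: there the isotropic cone of $b|_P$ is a single line, so $\operatorname{I}=\operatorname{J}$, and the explicit cross-ratio formula gives $[x,y,\operatorname{I},\operatorname{J}]=1$; thus $\ln[x,y,\operatorname{I},\operatorname{J}]=0$, matching $d(x,y)=0$.

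For the elliptic case I would take $b|_P(x,x)=x_1^2+x_2^2$ and parametrize the circle by $x=(\cos\alpha,\sin\alpha)$, $y=(\cos\beta,\sin\beta)$. In the affine coordinate $u=x_2/x_1$ on the projective line the two isotropic directions are $u=\pm i$, so $\operatorname{I},\operatorname{J}$ are the points $\pm i$, while $x,y$ become $\tan\alpha,\tan\beta$. Using the identities $\tan\alpha\mp i=\mp i\,e^{\pm i\alpha}/\cos\alpha$, a direct computation gives
\[
[x,y,\operatorname{I},\operatorname{J}]=\frac{\tan\alpha-i}{\tan\alpha+i}\cdot\frac{\tan\beta+i}{\tan\beta-i}=e^{2i(\alpha-\beta)}\,,
\]
so that $\tfrac12\ln[x,y,\operatorname{I},\operatorname{J}]=i(\alpha-\beta)$ and the right-hand side of \eqref{eq:d hilb} equals $|\alpha-\beta|$. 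Since relation b.1 gives $b(x,y)=\cos(\alpha-\beta)$ and the lifts are chosen so that $\tilde d$ is smallest, this is precisely $d(x,y)$.

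For the hyperbolic case I would take $b|_P(x,x)=x_1^2-x_2^2$ and parametrize the relevant branch by $x=(\cosh s,\sinh s)$, $y=(\cosh t,\sinh t)$. Now the isotropic directions are $u=\pm 1$, and the analogous computation yields $[x,y,\operatorname{I},\operatorname{J}]=e^{2(t-s)}$, a positive real, whence the right-hand side is $|t-s|=\tilde d(\tilde x,\tilde y)=d(x,y)$ by relation b.3. I would then record the two consistency checks that the absolute value absorbs: interchanging the unordered pair $\operatorname{I},\operatorname{J}$, or the pair $x,y$, replaces the cross-ratio by its reciprocal and hence $\ln$ by its negative, leaving $\bigl|\tfrac12\ln[x,y,\operatorname{I},\operatorname{J}]\bigr|$ unchanged.

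The hard part will be the bookkeeping of the complex logarithm in the elliptic case. One must check that, with the minimal choice of lifts forcing $|\alpha-\beta|\le\pi/2$, the argument $2(\alpha-\beta)$ lies in $(-\pi,\pi]$, so that the prescribed branch of $\ln$ returns exactly $2i(\alpha-\beta)$ with no spurious multiple of $2\pi i$; this is also what pins down the range $d(x,y)\in[0,\pi/2]$ recorded in the definition of the projective distance. By contrast, in the hyperbolic case the cross-ratio is a positive real number and the logarithm is unambiguous, while in the parabolic case there is nothing to check.
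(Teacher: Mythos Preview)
Your proposal is correct and follows essentially the same route as the paper: reduce to the normal forms of $b|_P$, dispose of the parabolic case via $[x,y,\operatorname{I},\operatorname{I}]=1$, and compute the cross-ratio in an affine coordinate for the elliptic and hyperbolic cases. The only cosmetic differences are that the paper works in the chart $x_2=1$ (so the points become $\operatorname{cotan}$ and $\tanh$) and pushes the computation through $\operatorname{arcoth}$ identities, whereas you use $u=x_2/x_1$ and the exponential identities directly; your discussion of the branch of $\ln$ in the elliptic case is also more explicit than the paper's.
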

\begin{proof}
If $l$ is parabolic, then $\operatorname{I}=\operatorname{J}$,  
$[x,y,\operatorname{I},\operatorname{J}]=1$ and the result follows.

Let us consider the case when $l$ is elliptic. A lift of the line containing $x$ and $y$ is a connected component of the intersection of a $2$-plane $P$ with $\mathcal M$. The restriction of the bilinear form $b$ to the plane $P$ has necessarily signature $(+,+)$ or $(-,-)$, otherwise $P$ would meet the isotropic cone.  Without loss of generality, let us consider the case $(+,+)$, and let us  introduce coordinates such that
$$b(x,x)=x_1^2+x_2^2 $$
 and  $c=P \cap \mathcal M=\{ x\in P\mid b(x,x)=1\}$. In the complex plane, the equation
$x_1^2+x_2^2=0$  is solved by the two complex lines\footnote{In the text, we will use brackets to designate a projective   equivalence class.  } 
\begin{equation*}\label{eq:conj pt}\operatorname{I}=\left[\begin{matrix} i \\ 1 \end{matrix}\right] \mbox{ and } \operatorname{J}=\left[\begin{matrix} -i \\ 1 \end{matrix}\right]~.\end{equation*}
For more about the points $\operatorname{I},\operatorname{J}$
  see e.g. \cite{richter}.

%
%
%
%
%
%
%
%
%

  Let us parametrize $c$ as 
$( \cos t,  \sin t)$, and let us choose 
$t_x$ and $t_y$ such that 
 $d(x,y)=|t_x-t_y|$.
In the affine chart $x_2=1$, the line $\operatorname{I}$ (resp. $\operatorname{J}$, $x$, $y$) is represented by the point  
$i$ (resp. $-i$, $ \operatorname{cotan} t_x$,  $ \operatorname{cotan} t_y$). Then we compute:
\begin{eqnarray*}
 \frac{1}{2} \ln [x,y,\operatorname{I},\operatorname{J}]  &=&  \frac{1}{2} \ln [\operatorname{cotan} t_x,\operatorname{cotan} t_y,i,-i]  \\
&=& \frac{1}{2}\ln \left( \frac{i\operatorname{cotan} t_x+1}{ i\operatorname{cotan} t_y+1}\frac{i\operatorname{cotan} t_y-1}{i\operatorname{cotan} t_x-1}   \right)  \\
&=& \frac{1}{2} \ln \left( \frac{ i\operatorname{cotan} t_x+1}{i\operatorname{cotan} t_x-1}   \right) - \frac{1}{2} \ln \left( \frac{ i\operatorname{cotan} t_y+1}{i\operatorname{cotan} t_y-1}  \right)\\
&=&    \operatorname{arcoth} (i \operatorname{cotan} t_x) -  \operatorname{arcoth} (i \operatorname{cotan} t_y) =  it_x-it_y =i(t_x-t_y)\,,
\end{eqnarray*}
where the last equation follows from
$\cos(x)=\cosh(ix),\, \sin(x)=({1}/{i})\sinh(ix)$, therefore $i \operatorname{cotan} x=\operatorname{arcoth}(ix)$. This shows that, if $l$ is elliptic, then the expression inside the modulus of \eqref{eq:d hilb} is purely imaginary. Taking the modulus we obtain
$$\left|\frac{1}{2} \ln [x,y,\operatorname{I},\operatorname{J}]\right|=d(x,y)\,.$$

If $l$ is hyperbolic,
it intersects the absolute in two distinct points $\operatorname{I},\operatorname{J}$, such that $\operatorname{I}$ and $\operatorname{J}$ do not separate $x$ and $y$. Similarly to the previous case, one can assume $b(x,x)=x_1^2-x_2^2$ on $P$ and $c=P \cap \mathcal M=\{ x\in P\mid b(x,x)=-1\}$. Picking the lifts of $x$ and $y$ on the same branch of the hyperbola of $c$, which is parameterized by $(\cosh t,\sinh t)$, one sees that in the affine line $x_2=1$ the line $\operatorname{I}$ (resp. $\operatorname{J}$, $x$, $y$) is represented by the point  
$-1$ (resp. $1$, $ \tanh t_x$,  $ \tanh t_y$). Hence by an analogous computation,
$$
 \frac{1}{2} \ln [x,y,\operatorname{I},\operatorname{J}]  =  \frac{1}{2} \ln [\operatorname{tanh} t_x,\operatorname{tanh} t_y,-1,1] 
= \frac{1}{2} \ln \left( \frac{ \operatorname{tanh} t_x+1}{\operatorname{tanh} t_x-1}   \right) - \frac{1}{2} \ln \left( \frac{ \operatorname{tanh} t_y+1}{\operatorname{tanh} t_y-1}  \right)\\
=t_x-t_y\,.
$$
In this case $[x,y,\operatorname{I},\operatorname{J}]$
is a real number, and the formula holds in a way analogous to the elliptic case.
\end{proof}

\subsection{Usual model spaces}

We will often consider the symmetric bilinear form $b$ on $\R^{n+1}$ in a standard form, i.e. such that the standard basis of $\R^{n+1}$ is an orthonormal basis for $b$, and if the signature of $b$ is $(p,q)$, then the quadratic form is positive on the first $p$ vectors of the standard basis, and negative on the other $q$ vectors. In this case, we will denote the bilinear form by $b_{p,q}$.

\paragraph{Euclidean and Elliptic spaces.}

In our notation  $\R^{n+1}$ is the real coordinate space of dimension $(n+1)$, and we will use the notation $\E^{n+1}$ for the  Euclidean space of dimension $(n+1)$, that is, $\R^{n+1}$ endowed with 
$b_{n+1,0}$. 
The \emph{elliptic space}\index{elliptic space}  $\Ell^n$ is the projective space of dimension $n$ endowed with the spherical metric inherited from $\S^n$, namely:
$$\Ell^n=\{x\in \R^{n+1} \,|\, b_{n+1,0}(x)=1\}/\{\pm \mathrm{Id}\}\,.$$
It is a  compact Riemannian manifold of dimension $n$. Any affine chart corresponds to a central projection of the sphere onto a hyperplane. All its lines are space-like  elliptic, in particular the distance induced by the Riemannian structure is given by the projective distance \eqref{eq:d hilb}.

\paragraph{Minkowski, Hyperbolic and de Sitter spaces.}

The space $\R^{n+1}$  endowed with $b_{n,1}$ is called the \emph{Minkowski space}\index{Minkowski space} and denoted by $\M^{n+1}$.
 An affine hyperplane $Q$ of direction $P$ in $\M^{n+1}$
 is called 
 \begin{itemize}
 \item space-like, if $(b_{n,1})|_P$ is positive definite;
 \item light-like or null if $(b_{n,1})|_P$  is degenerate;
 \item time-like if $(b_{n,1})|_P$  has signature $(n-1,1)$. 
 \end{itemize}
  (See Figure~\ref{fig:plans mink} for the 
 $n=2$ case.) 
 
 \begin{SCfigure}
 \includegraphics[scale=0.5]{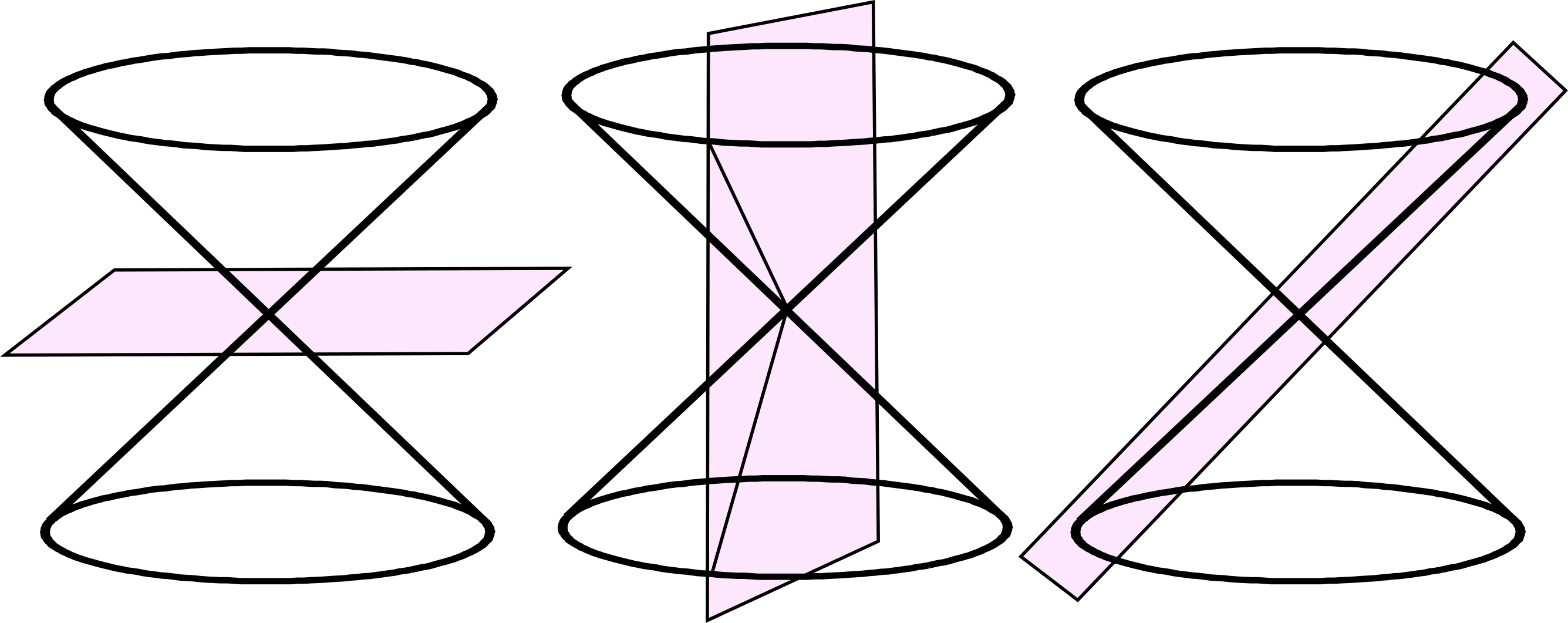}
 \caption{\label{fig:plans mink}  A plane in $\M^3$ is space-like (resp. time-like, light like) if its intersection with the isotropic cone of $b_{2,1}$ is reduced to a point (resp. two lines, a single line). 
}
 \end{SCfigure}

The \emph{hyperbolic space}\index{hyperbolic space}  is 
$$\H^n=\mathcal{H}^n / \{\pm \mathrm{Id}\}\,,$$
with
$$\mathcal{H}^n= \{x\in \R^{n+1} \,|\, b_{n,1}(x)=-1\}\,,$$
endowed with the induced metric.
The hyperbolic space is  a  simply connected complete Riemannian surface of sectional curvature $-1$. All its line are space-like hyperbolic, in particular the distance induced by the Riemannian structure is given by the projective distance \eqref{eq:d hilb}.

The other pseudo-sphere of Minkowski space is the \emph{de Sitter space} \index{de Sitter space}
$$\dS^n=\{x\in \R^{n+1} \,|\, b_{n,1}(x)=1 \}/\{\pm \mathrm{Id} \} $$
endowed with the induced metric. On any tangent plane to $\dS^n$, the restriction of $b_{n,1}$ has Lorentzian signature, i.e. it is non-degenerate with  signature $(n-1,1)$. De Sitter space has constant curvature $1$, and  is homeomorphic to $\S^{n-1}\times\R$.

Let us focus our attention to the case $n=2$. The de Sitter plane $\dS^2$ has space-like elliptic, null and time-like hyperbolic  lines.  This is more easily seen by taking affine models. The most common model is 
$\{x_3=1\}$. Then $\H^2$  is  the unit open disc, and $\dS^2$ is the complement of the closed disc in the  plane, see Figure~\ref{fig:H2dS2}.\footnote{This is the famous \emph{Klein model} for the hyperbolic plane.} 

\begin{SCfigure}
\includegraphics[scale=0.5]{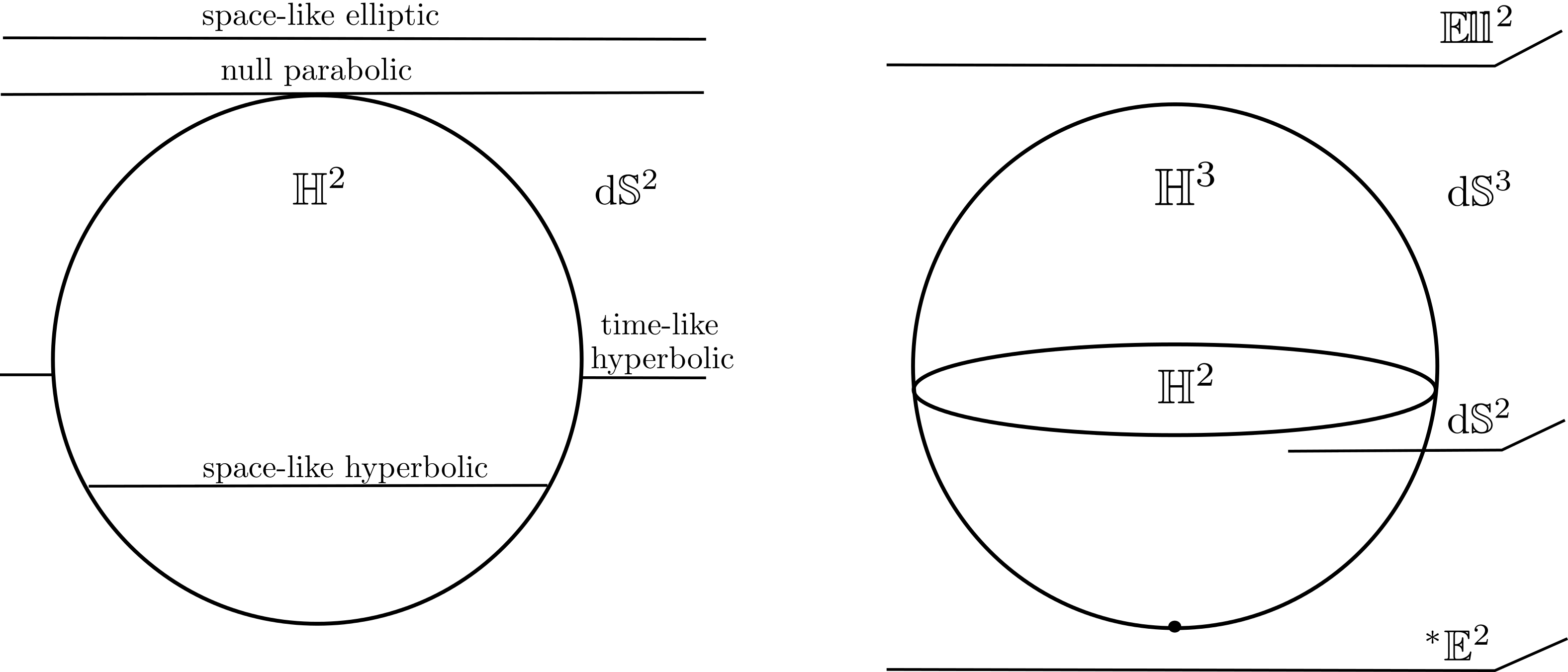}
\caption{\label{fig:H2dS2}On the left: types of lines in an affine model of the hyperbolic and de Sitter planes. On the right: types of planes in an affine model of the hyperbolic and de Sitter spaces.}
\end{SCfigure}

By any affine transformation,  any ellipse instead of the circle gives another affine model --- these affine transformations are given by the choice of any space-like affine plane in $\M^3$ which does not contain the origin. 
Taking as affine chart a time-like or a light-like affine plane of $\M^{3}$, we obtain other models of the hyperbolic plane and de Sitter plane:

\begin{fact}The convex side of any  conic of the plane (i.e. a parabola, ellipse or a hyperbola) endowed with the projective distance is an affine model of $\H^2$ or $\overline{\H^2}$. The other side is an affine model of $\dS^2$ or $\overline{\dS^2}$.\end{fact}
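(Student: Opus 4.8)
The plan is to reduce an arbitrary conic to the standard one, using that the projective distance is intrinsic to the absolute and that projective transformations preserve it. First I would observe that any non-degenerate conic $\mathcal C$ in the affine plane, viewed inside the projective completion $\R\P^2$, is the zero locus of a homogeneous quadratic equation $b(x,x)=0$ for a symmetric bilinear form $b$ on $\R^3$, determined up to a nonzero scalar. For $\mathcal C$ to be a genuine conic (parabola, ellipse, or hyperbola, all smooth) the form $b$ must be non-degenerate; and since $\mathcal C$ is a non-empty real curve the signature of $b$ cannot be $(3,0)$ or $(0,3)$, so it is $(2,1)$ or $(1,2)$. Replacing $b$ by $-b$ if needed (which leaves $\mathcal I(b)$ unchanged), I take the signature to be $(2,1)$. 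Thus $\mathcal C=\P\mathcal I(b)$ is exactly the absolute of the model spaces determined by $b$, and the two regions into which $\mathcal C$ divides $\R\P^2$ are the projective quotients of $\{b<0\}$ and $\{b>0\}$, which by definition are copies of $\H^2$ (or $\overline{\H^2}$) and of $\dS^2$ (or $\overline{\dS^2}$).

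Next I would reduce to the standard model by Sylvester's law of inertia: the form $b$ is congruent to $b_{2,1}$, so there is $L\in\operatorname{GL}(3,\R)$ with $b(Lv,Lw)=b_{2,1}(v,w)$. Then $L$ maps $\mathcal I(b_{2,1})$ onto $\mathcal I(b)$ and sends $\{b_{2,1}<0\}$ onto $\{b<0\}$, so the induced $\bar L\in\operatorname{PGL}(3,\R)$ carries the standard absolute $\P\mathcal I(b_{2,1})$ onto $\mathcal C$ and carries the standard sides, the projective quotients of $\{b_{2,1}<0\}$ and $\{b_{2,1}>0\}$, onto the two sides of $\mathcal C$. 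It therefore suffices to understand the standard conic, for which the Klein-model discussion already identifies $\{b_{2,1}<0\}/\{\pm\mathrm{Id}\}=\H^2$ with the open disk and $\{b_{2,1}>0\}/\{\pm\mathrm{Id}\}=\dS^2$ with its complement.

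The isometry statement is then essentially free. By Lemma~\ref{lem:distances} the projective distance $d(x,y)$ depends only on the cross-ratio of $x,y$ with the two intersection points of the line through them with the absolute. Since $\bar L$ sends the absolute of $b_{2,1}$ onto $\mathcal C$ and preserves cross-ratios, it is an isometry from the projective distance built on the standard conic to the projective distance built on $\mathcal C$: on the hyperbolic side this is the hyperbolic metric, and on the other side the de Sitter pseudo-distance. The alternatives ``$\H^2$ or $\overline{\H^2}$'' and ``$\dS^2$ or $\overline{\dS^2}$'' reflect exactly the sign ambiguity in the choice of $b$ versus $-b$ (equivalently the sign of the induced metric), which exchanges a model with its anti-isometric copy as in the Remark following the definition of pseudo-sphere.

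The one point deserving care, and the main obstacle, is the meaning of ``convex side,'' which I would read projectively and uniformly across the three affine shapes. Of the two regions bounded by $\mathcal C$, exactly one contains no complete projective line, and this is the convex one: on the standard conic it is the Klein disk $\H^2$, whereas $\dS^2$ always contains space-like elliptic lines, which are entire projective lines, and so is never convex. Because $\bar L$ sends lines to lines it preserves this property, so the convex side of $\mathcal C$ is always the hyperbolic region. In an affine chart displaying $\mathcal C$ as an ellipse (a space-like cutting plane of the cone $\mathcal I(b_{2,1})$) this region is the bounded interior, and as a parabola (a light-like plane) it is the concave side, both visibly convex; when $\mathcal C$ appears as a hyperbola (a time-like plane) the hyperbolic region meets the chart in two pieces joined through the line at infinity, and one must pass to a space-like chart to exhibit it as the convex disk. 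With this reading the convex side is always $\H^2$ (or $\overline{\H^2}$) and its complement $\dS^2$ (or $\overline{\dS^2}$), as claimed.
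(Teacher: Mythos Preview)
Your argument is correct. The paper does not give a formal proof of this Fact; it merely precedes it with the observation that different affine charts of $\R\P^2$ (space-like, light-like, or time-like hyperplanes of $\M^3$, not through the origin) slice the isotropic cone of $b_{2,1}$ in an ellipse, a parabola, or a hyperbola respectively, so that the same $\H^2$ and $\dS^2$ appear under each affine guise.

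Your approach is the algebraic counterpart: instead of varying the chart while keeping the form $b_{2,1}$ fixed, you keep the chart fixed and diagonalise the form $b$ defining the given conic via Sylvester's law of inertia. The two viewpoints are equivalent---a change of affine chart is a projective transformation of $\R\P^2$---but yours has the merit of treating the three conic types uniformly and of making the isometry statement explicit through the invariance of the cross-ratio (Lemma~\ref{lem:distances}). Your discussion of what ``convex side'' should mean is also more careful than anything in the paper: the projective characterisation (the side containing no full projective line) is the right way to make the phrase meaningful for all three conics at once, and your remark that in the hyperbola chart the hyperbolic region appears as two affine pieces, each convex, glued across the line at infinity, is exactly the subtlety one has to face.
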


In dimension $3$, we obtain:
%
%
\begin{fact}Any convex side of a non-degenerate quadric of $\R^3$ (i.e. an ellipsoid, an elliptic paraboloid or a two-sheeted hyperboloid) endowed with the projective distance is a model for $\H^3$ or $\overline{\H^3}$. The other side is a model for $\dS^3$ or $\overline{\dS^3}$.\end{fact}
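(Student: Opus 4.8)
The plan is to reduce the $3$-dimensional statement to the standard model by a projective change of coordinates, exactly as the preceding $2$-dimensional Fact reduces to the Klein disc. I start from the standard picture in $\M^4=\R^4$ equipped with $b_{3,1}$: by definition $\H^3=\P\{x\mid b_{3,1}(x)<0\}$ and $\dS^3=\P\{x\mid b_{3,1}(x)>0\}$, and both share the absolute $\P\mathcal I(b_{3,1})$. In the affine chart $\{x_4=1\}$ the absolute is the unit sphere $\{x_1^2+x_2^2+x_3^2=1\}$, the bounded convex side is the Klein ball model of $\H^3$, its complement is $\dS^3$, and by Lemma~\ref{lem:distances} the projective distance \eqref{eq:d hilb} restricts on each side to the respective constant-curvature metric.

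The key point is that \eqref{eq:d hilb} defines the projective distance purely through the cross-ratio of $x,y$ with the two points in which the line through them meets the absolute; since the cross-ratio is a projective invariant, this recipe makes sense for \emph{any} quadric $\mathcal{C}$ playing the role of absolute, and any $g\in\operatorname{PGL}(4,\R)$ is then tautologically an isometry from the model built on $\mathcal{C}$ to the model built on $g(\mathcal{C})$. Thus it suffices to exhibit, for each admissible affine quadric $\mathcal Q\subset\R^3$, some $g\in\operatorname{PGL}(4,\R)$ carrying the standard absolute onto $\mathcal Q$ and the convex side onto the convex side.

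Next I would classify. Homogenising an ellipsoid, an elliptic paraboloid, or a two-sheeted hyperboloid yields a projective quadric in $\R\P^3$ defined by a symmetric bilinear form of signature $(3,1)$ up to a global sign: for instance $x_1^2+x_2^2-x_3^2=-1$ homogenises to $x_1^2+x_2^2-x_3^2+x_4^2=0$, whereas the excluded ruled case, the one-sheeted hyperboloid $x_1^2+x_2^2-x_3^2=1$, homogenises to the signature $(2,2)$ form $x_1^2+x_2^2-x_3^2-x_4^2=0$; it is precisely convexity that selects signature $(3,1)$. By Sylvester's law of inertia any two such forms are congruent up to scalar, so there is $A\in\operatorname{GL}(4,\R)$ with $A^{\mathsf T}\,(\text{matrix of }\mathcal Q)\,A=\pm\,(\text{matrix of }b_{3,1})$, whose class $g=[A]\in\operatorname{PGL}(4,\R)$ sends the standard absolute onto $\mathcal Q$. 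More in the spirit of the discussion above, the three affine types correspond to the three positions of the plane at infinity relative to the standard absolute, that is, to choosing the direction of the affine chart to be space-like (the sphere stays bounded: ellipsoid), light-like (tangency at infinity: elliptic paraboloid), or time-like (a real conic at infinity: two-sheeted hyperboloid), which is the higher-dimensional analogue of the space/light/time-like affine planes used just above for $\dS^2$.

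Finally I would match the two sides and track the bar. The time-like region $\{b_{3,1}<0\}$, whose projectivisation is $\H^3$, maps under $g$ to the convex side of $\mathcal Q$, while $\{b_{3,1}>0\}=\dS^3$ maps to the other side; a short check confirms that this negative region is indeed the convex one in each chart. The sign in $A^{\mathsf T}(\cdots)A=\pm(\cdots)$ then decides whether the induced metric agrees with that of $\H^3$ and $\dS^3$ or with their anti-isometric partners $\overline{\H^3}$ and $\overline{\dS^3}$. I expect the only genuinely delicate points to be the bookkeeping that separates the two-sheeted from the one-sheeted hyperboloid through the signature/convexity dichotomy, the verification that the negative region projects exactly onto the convex side in every chart, and the correct attribution of the $\overline{\phantom{x}}$ cases from the global sign.
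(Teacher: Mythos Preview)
Your proposal is correct and follows essentially the same approach as the paper. The paper does not give a formal proof of this Fact; it is stated as an immediate consequence of the preceding discussion, which observes that choosing the affine chart to be a space-like, light-like, or time-like hyperplane of $\M^4$ yields respectively an ellipsoid, an elliptic paraboloid, or a two-sheeted hyperboloid as the image of the absolute, exactly the trichotomy you identify. Your write-up is somewhat more complete than the paper's, since you invoke Sylvester's law to show that \emph{every} such affine quadric arises this way (not merely the ones obtained from the standard model by a change of chart), and you make explicit the role of the global sign in distinguishing $\H^3$ from $\overline{\H^3}$.
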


It is then immediate that  a  plane of $\H^3$ is a hyperbolic plane, and that a  plane of $\dS^3$ may be isometric to $\dS^2$, to the elliptic plane $\Ell^2$, or to the co-Euclidean plane $^*\E^2$ (see \ref{cominkowski plane}), see Figure~\ref{fig:H2dS2}.


\paragraph{Anti-de Sitter space.}

By considering the non-degenerate symmetric bilinear form $b_{n-1,2}$ on
$\R^{n+1}$, for $n\geq 2$, we define the \emph{Anti-de Sitter space}\index{anti-de Sitter space}
 as $\AdS^n=\mathcal{A}d\mathcal{S}^n / \{\pm \mathrm{Id}\}$, with
$$\mathcal{A}d\mathcal{S}^n=\{x\in \R^{n+1} \,|\, b_{n-1,2}(x)=-1 \} $$
endowed with the metric induced from $b_{n-1,2}$. 
Anti-de Sitter space is a Lorentzian manifold of curvature $-1$. It is not simply connected as it contains a closed time-like geodesic.

\begin{remark}{\rm 
It is readily seen that $\AdS^2$ is anti-isometric to 
 $\dS^2$. This is not the case for  $n>2$.
}\end{remark}

Let us now focus on the $n=3$ case.
First note that $\mathcal{A}d\mathcal{S}^3$ is anti-isometric to
$b_{2,2}^{-1}(1)$. 
Up to affine transformations, $\AdS^3$ has two models described as follows:
\begin{fact}Any part of $\R^3$ delimited by a double-ruled quadric (i.e. a hyperbolic paraboloid or a one-sheeted hyperboloid) endowed with the projective distance is a model of $\AdS^3$ or $\overline{\AdS^3}$.\end{fact}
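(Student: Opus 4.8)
The plan is to follow exactly the pattern of the two preceding Facts for $\H^3$ and $\dS^3$, by identifying the double-ruled quadric in the statement with the \emph{absolute} of $\AdS^3$. Recall that topologically $\AdS^3=\P\{x\in\R^4 : b_{2,2}(x,x)<0\}$, so in any affine chart of $\R\P^3$ the set $\AdS^3$ is one of the two regions cut out by the affine image of the absolute $\P\mathcal I(b_{2,2})$; the complementary region is $\P\{b_{2,2}>0\}$, which is the projective quotient of $b_{2,2}^{-1}(1)$. Since $\mathcal{A}d\mathcal{S}^3=b_{2,2}^{-1}(-1)$ is anti-isometric to $b_{2,2}^{-1}(1)$, as already noted, this second region is $\overline{\AdS^3}$. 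Thus the entire content of the Fact reduces to the claim that the affine images of $\P\mathcal I(b_{2,2})$ are exactly the one-sheeted hyperboloids and the hyperbolic paraboloids, together with the identification of the projective distance on the bounded region with the $\AdS^3$ metric, which is immediate from Lemma~\ref{lem:distances}.

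First I would exhibit both shapes by computing the affine image of $\P\mathcal I(b_{2,2})$ in two representative charts. Writing $b_{2,2}(x,x)=x_1^2+x_2^2-x_3^2-x_4^2$, the chart $\{x_4=1\}$ turns the isotropic cone equation $b_{2,2}(x,x)=0$ into $x_1^2+x_2^2-x_3^2=1$, a one-sheeted hyperboloid, and $\AdS^3=\{b_{2,2}<0\}$ becomes the region $\{x_1^2+x_2^2-x_3^2<1\}$ it delimits. Passing to null coordinates $u=x_1-x_3$, $v=x_1+x_3$, $s=x_2-x_4$, $t=x_2+x_4$, the form becomes $uv+st$, and the chart $\{u=1\}$ turns the isotropic cone into the graph $v=-st$, a hyperbolic paraboloid. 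This shows both affine types occur.

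For the converse I would invoke the projective classification of non-degenerate quadrics of $\R\P^3$: up to $\operatorname{PGL}(4,\R)$ such a quadric is determined by the signature of its defining form, taken up to an overall sign, so the possibilities are the empty quadric (signature $(4,0)$, the $\Ell^3$ case), the quadric of signature $(3,1)$ (the absolute of $\H^3$, producing ellipsoids, elliptic paraboloids and two-sheeted hyperboloids as in the previous Fact), and the quadric of signature $(2,2)$. The doubly-ruled quadrics are precisely those of the last type: the two rulings are the traces of the two families of isotropic $2$-planes of a form of signature $(2,2)$, and conversely the presence of two distinct rulings forces the signature to be $(2,2)$. Hence every one-sheeted hyperboloid and every hyperbolic paraboloid of $\R^3$ is, in a suitable affine chart, the image of $\P\mathcal I(b)$ for a form $b$ projectively equivalent to $b_{2,2}$, and the bounded side carries the $\AdS^3$ metric (the other side, $\overline{\AdS^3}$).

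The hard part will be this classification/converse step: one must verify carefully that a real affine quadric admitting two rulings has projective completion of signature exactly $(2,2)$, hence is projectively equivalent to the standard absolute, and that the non-degenerate affine shapes realizing this signature are exactly the one-sheeted hyperboloid and the hyperbolic paraboloid, according to whether the plane at infinity meets the quadric transversally or tangentially. One must also keep track of which of the two regions delimited by the quadric is $\AdS^3$ and which is $\overline{\AdS^3}$, using the anti-isometry between $b_{2,2}^{-1}(-1)$ and $b_{2,2}^{-1}(1)$ — which is exactly what produces the ``or $\overline{\AdS^3}$'' in the statement.
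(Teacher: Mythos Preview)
Your argument is correct and in fact considerably more detailed than what the paper provides: in the paper this statement is simply asserted as a Fact, with no proof, relying on the reader to combine the definition $\AdS^3=\P\{b_{2,2}<0\}$ with the standard classification of real projective quadrics and the general remark that affine charts are obtained by intersecting with a hyperplane. Your two explicit charts (the $\{x_4=1\}$ chart giving the one-sheeted hyperboloid, and the null-coordinate chart $\{u=1\}$ giving the hyperbolic paraboloid) together with the signature argument for the converse supply exactly what the paper leaves implicit.

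One small slip: you write ``the bounded side carries the $\AdS^3$ metric''. Neither side delimited by a one-sheeted hyperboloid or a hyperbolic paraboloid is bounded, so drop the word ``bounded'' --- the distinction between the two sides is the sign of $b_{2,2}$, not boundedness, and you already handle that correctly elsewhere via the $\AdS^3$/$\overline{\AdS^3}$ dichotomy.
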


A plane in $\AdS^3$ can be isometric to $\H^2$, to the co-Minkowski plane $^*\M^2$ (cf. \ref{cominkowski plane}), or  to $\AdS^2$, see Figure~\ref{adslineetplan}.

\begin{SCfigure}
\includegraphics[scale=0.5]{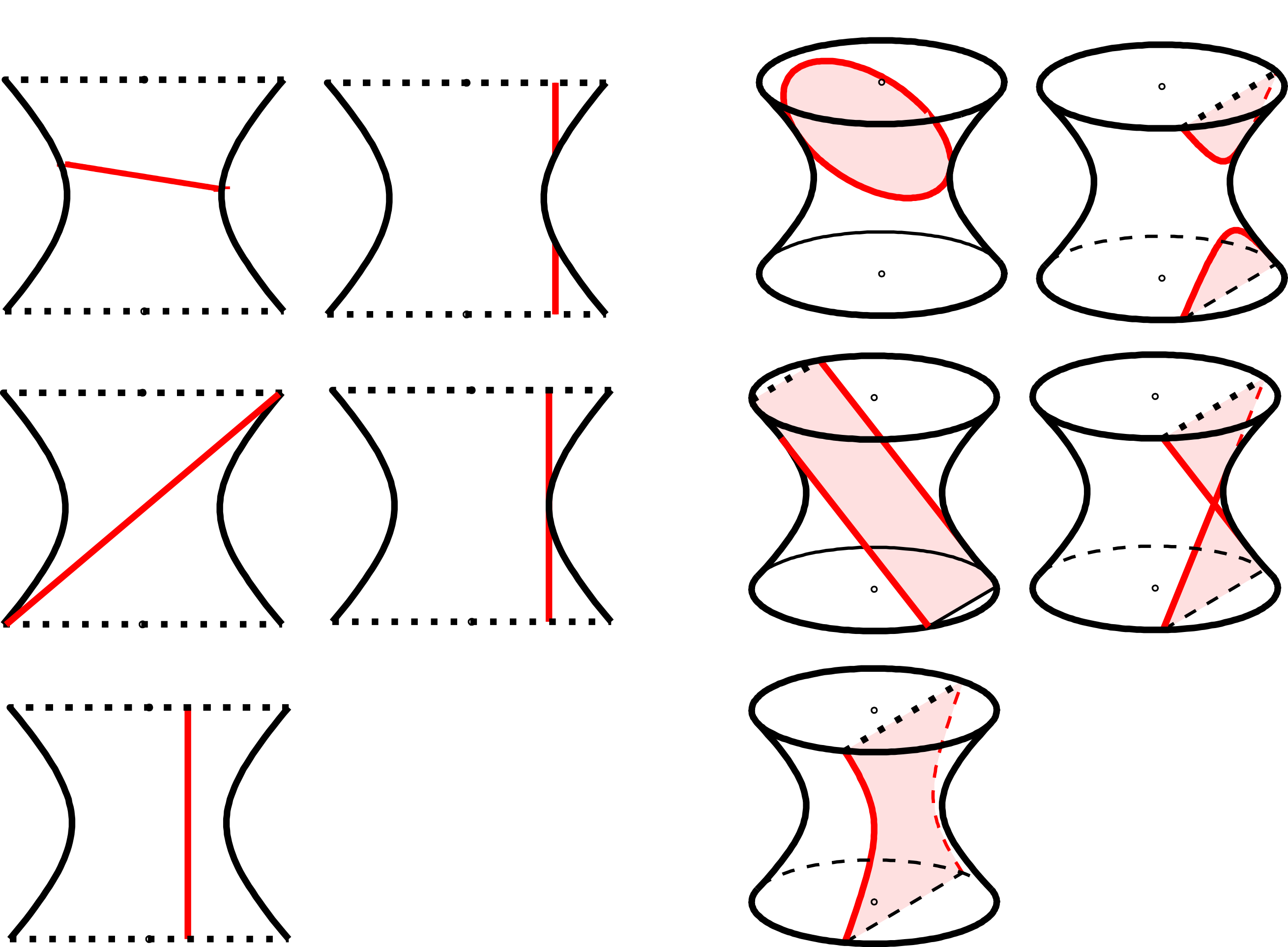}
\caption{\label{adslineetplan} On the left-hand side, some {non-affinely equivalent} lines in an affine model of $\AdS^2$. First row on the left: space-like hyperbolic, second row: light-like parabolic, third row: time-like elliptic. On the right-hand side, some {non-affinely equivalent} planes in an affine model of $\AdS^3$. First row: hyperbolic, second row: co-Minkowski, third row: $\AdS^2$.
}
\end{SCfigure}

%
%
%

\subsection{Convex sets and duality}\label{duality}

\paragraph{Dual cones.} A non-empty subset $K$ of $\R^{n+1}$ is \emph{convex} if it contains the segment between any two of its points (note that this notion only uses the affine structure of $\R^{n+1}$).
A  convex cone\index{convex! cone} $\C$ of $\R^n$ is a convex set such that  $\lambda \C \subset \C$ for any $\lambda>0$. We will also suppose that $\C$ is pointed:   the only linear subspace it contains is $\{0\}$. In general, we will also assume implicitly that the cone is closed.

The dual $(\R^{n+1})^*$ of  $\R^{n+1}$ is  the set of linear forms on $\R^{n+1}$, and  is naturally endowed with a vector space structure of dimension $(n+1)$. Note that the notion of convexity also holds in  $(\R^{n+1})^*$. Given a convex cone $\C$ in $\R^n$, its \emph{dual}\index{dual} is 

\begin{equation}\label{eq:dual}\C^*=\{x^* \in (\R^n)^* \,|\, x^*(y) \leq 0, \forall y\in \C\}~. \end{equation}

It is readily seen that the dual of a convex cone is a convex cone, and that
\begin{equation}\label{eq:inclusion cone}
A\subset B \Rightarrow B^*\subset A^ *~.
\end{equation}

Recall that a \emph{support space} of a closed convex set $K$ is a half space containing $K$ and bounded by an affine hyperplane $H$. If moreover $K\cap H \not= \emptyset$, $H$ is a \emph{support plane}\index{support plane} of $K$.
A convex set is also the intersection of its support spaces. If $K=\C$ is a convex cone, then its support planes are linear hyperplanes. 
Now if $x^*$ is a non trivial linear form on $\R^{n+1}$, then its kernel is a linear hyperplane, and 
$\{y \,|\, x^*(y)\leq 0 \}$ is a half-space bounded by this hyperplane. So 
$\C^*$ can be interpreted as the set of
 support planes of $\C$. As a convex set is the intersection of its support spaces, 

\begin{equation}\label{eq:dualdual}\C=\{ x \in \R^n \,|\, y^*(x)\leq 0, \,\forall y^* \in \C^* \}~. \end{equation}

From the formal duality between the definitions \eqref{eq:dual} and \eqref{eq:dualdual}, it is readily seen that 
\begin{equation}\label{autoduality}(\C^*)^*=\C~. \end{equation}

   If $\R^{n+1}$ is endowed with a non-degenerate symmetric bilinear form $b$, then $b$ induces an isomorphism between 
 $\R^{n+1}$ and $(\R^{n+1})^*$. Once $b$, and thus the isomorphism, are fixed, we will still denote by $\C^*$ the image of $\C^*$ in $\R^{n+1}$:

$$\C^*=\{x \in \R^n \,|\, b(x,y) \leq 0, \forall y\in \C\}~.$$
See Figure~\ref{fig:dualmink} for an example with $b=b_{1,1}$.
Let us denote by $ \mathrm{C}$ the projective quotient of 
$\lambda \C$, $\forall \lambda \in \R$, i.e.
$$\mathrm{C}= (\C\cup-\C)  / \{\pm \mathrm{Id}\}~.$$

\begin{remark}{\rm
We could have defined as a dual for $\C$ the convex set
$$\C^{*'}=\{x \in \R^n \,|\, b(x,y) \geq 0, \forall y\in \C\}~,$$
as both definitions agree in the projective quotient: $\mathrm{C}^*=\mathrm{C}^{*'}$.}
\end{remark}

\begin{SCfigure}
   \includegraphics[width=10cm]{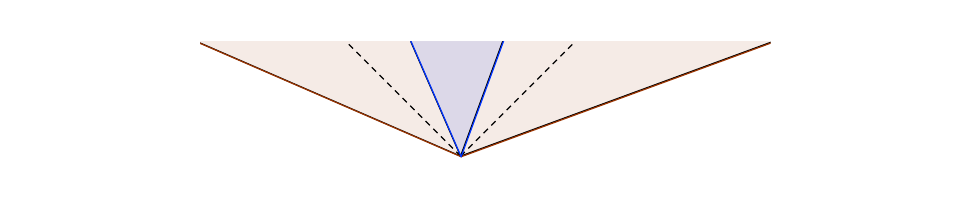}
\caption{\label{fig:dualmink} Dual cones in the Minkowski plane.}
\end{SCfigure}

\paragraph{Convex sets in model spaces.}

Let $\mathcal M$ be a pseudo-sphere,  $\C$ be a convex cone, and suppose that  $\mathcal M \cap \C$ is non-empty.
Recall that $\mathbb M$ is the projective quotient of $\mathcal M$, that is, $\mathbb M := \mathcal  M / \{\pm \mathrm{Id} \}$.

\begin{df}
A \emph{convex set}\index{convex!set} of $\mathbb M$ is
 the intersection $\mathbb M \cap \mathrm{C}$. 
\end{df}

The image of a convex cone  in an affine chart may not be an affine convex set. For example
the intersection of $\{z^2\leq x^2+y^2\}$ 
with a vertical plane.
 But  the image of a convex cone in an affine chart is an affine convex set if the vector hyperplane parallel to the hyperplane which defines the affine chart meets the cone only at $\{0\}$.
Moreover in an affine chart, even if $ \mathrm{C}$ is convex, 
$ \mathbb M\cap \mathrm{C}$ is not necessarily an affine convex set (the  image of $\mathbb M$ in the affine chart may be non convex).

Also, a convex set in $\mathbb{M}$ is not necessarily 
 a geodesically convex subset of $\mathbb{M}$.
For example, take any convex cone in $\R^3$ which contains the isotropic cone of $b_{2,1}$ in its interior, and is on one side of the horizontal plane. In the affine chart given by $\{x_3=1\}$, this gives a convex set in $\dS^2$, which is an affine convex set. But it contains the absolute  in its interior, so it is not a geodesically convex set in $\dS^2$ (see the big triangle in the left-hand side of Figure~\ref{fig:dualH2dS2}).

 

\paragraph{Duality.}

The notion of duality for convex sets  follows easily from the one for convex cones.
Let us focus on the more relevant cases. 
Let $K$ be a convex set in $\H^n$, defined by a convex cone $\C(K)$.
Using the bilinear form $b_{n,1}$ to identify the ambient space $\R^{n+1}$ with its dual,
 the support planes of $\C(K)$ are time-like or light-like,  and the boundary of $\C(K)^*$ is made of space-like or light-like lines, and hence its intersection $K^*$ with $\dS^n$ is not empty. The set  $K^*$ is a convex set in de Sitter space, the \emph{dual} of $K$.  As the lines of $\C(K)$ are orthogonal to the support planes of $\C(K)^*$ by construction, $K^*$ is \emph{space-like}: it has only space-like or light-like support planes.
Conversely, if  $K$ is a space-like convex set  in $\dS^n$, then we can define in the same way its dual convex set $K^*$, which is a convex set in $\H^n$.

If $K$ is a convex set in $\Ell^n$, we obtain similarly a dual convex set $K^*$ which is in $\Ell^n$. Analogously, the dual of a space-like convex set $K$ in $\AdS^3$ is a space-like convex set $K^*$ in $\AdS^3$. 
In any case, we have  $(K^*)^*=K$.

\begin{SCfigure}
\includegraphics[scale=0.15]{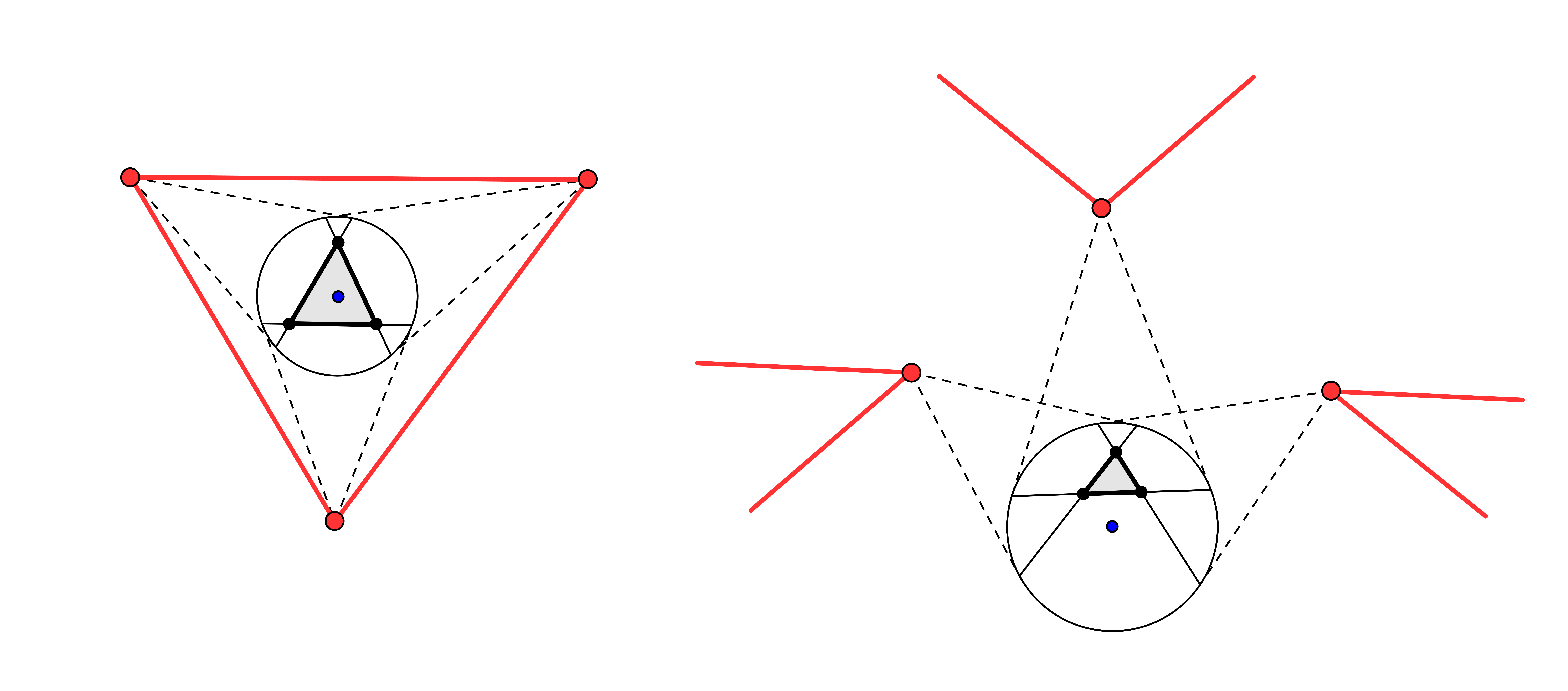}
\caption{\label{fig:dualH2dS2}Triangles in $\dS^2$  dual to triangles in $\H^2$ in an affine chart. Note that on the right-hand side, the dual triangle is not the convex hull in the affine chart of the vertices.}
\end{SCfigure}

 The duality is also a duality between points and hyperplanes. 
For  $x\in\mathcal{M}=b^{-1}(1)$, $x^*$ is the intersection (if non-empty) of
$\mathcal{N}=b^{-1}(-1)$ with the hyperplane of $\R^{n+1}$ orthogonal to $x$ for $b$.

This duality points/hyperplane can also be seen in an affine manner.
 Let us begin with the hyperbolic case. A hyperplane in $\H^n$ meets the absolute along a topological sphere $S$ of dimension $n-2$. Then the point dual to the hyperplane is the apex of the cone formed by the lines tangent to the absolute along $S$. This is the usual notion of 
polarity transformation with respect to a (proper) quadric, which is an affine notion, see Figure~\ref{fig:coniques} and Figure~\ref{fig:dualh2}. 

 We provide an argument, in the $n=2$ case, to fix ideas. In the double cover, a  plane of $\H^2$ corresponds to the intersection of the pseudo-sphere $$\mathcal H^2=\{x\in \R^3\,|\, b_{2,1}(x,x)=-1\}$$ with a time-like plane $ P$ of $\M^{3}$, which meets the isotropic cone $\mathcal I(b_{2,1})$ of $\M^3$ along two light-like vectors $v_1,v_2$. The light-like planes tangent to the isotropic cone containing $v_1,v_2$ meet along a line directed by a space-like vector $v$. It is easy to see that $v$ is orthogonal to $v_1$ and $v_2$, and hence to $ P$.

 The same holds in the Anti-de Sitter space: the dual of a space-like plane is a point of $\AdS^3$, and vice versa, see Figure~\ref{fig:dualityads}.

 The duality points/hyperplanes suffices to recover the dual of a convex set, see Figure~\ref{fig:dual-convexe}.   Also,
 it gives the following description of de Sitter space.
 
 \begin{fact}
 The de Sitter space $\dS^n$ is the space of (unoriented) hyperplanes of $\H^n$.
 \end{fact}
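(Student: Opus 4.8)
The plan is to establish a bijection between unoriented hyperplanes of $\H^n$ and points of $\dS^n$, using the point/hyperplane duality developed just above. First I would recall that in the double cover, a hyperplane of $\H^n$ lifts to the intersection of $\mathcal H^n = b_{n,1}^{-1}(-1)$ with a time-like linear hyperplane $P$ of $\M^{n+1}$ (time-like because only such a hyperplane meets $\mathcal H^n$ nontrivially: if $P$ had positive definite restriction of $b_{n,1}$ it would miss the cone entirely and hence miss $\mathcal H^n$). The orthogonal complement $P^\perp$ for $b_{n,1}$ is then a line, and I would check that $b_{n,1}$ is \emph{positive} on $P^\perp$, so that $P^\perp$ meets $b_{n,1}^{-1}(1)$; passing to the projective quotient, this gives a well-defined point of $\dS^n$.

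The key computation is the signature bookkeeping. Since $b_{n,1}$ has signature $(n,1)$ on $\R^{n+1}$, and $P$ is an $n$-dimensional time-like subspace, the restriction $b_{n,1}|_P$ has signature $(n-1,1)$; as $\R^{n+1}=P\oplus P^\perp$ is an orthogonal decomposition, $b_{n,1}|_{P^\perp}$ must carry the remaining positive direction, hence is positive definite on the line $P^\perp$. This is exactly the statement that the dual point lies on the pseudo-sphere $b_{n,1}^{-1}(1)$ rather than $b_{n,1}^{-1}(-1)$, and it is the content of the affine argument sketched above (the space-like vector $v$ orthogonal to the time-like plane $P$). I would then note that the assignment $P\mapsto P^\perp$ is an involution on linear subspaces, injective, and its image consists precisely of the lines on which $b_{n,1}$ is positive, i.e.\ the points of $\dS^n$; this simultaneously gives surjectivity and that the map is a bijection.

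The main obstacle, though a mild one, is the orientation issue hidden in the word \emph{unoriented}: a hyperplane of $\H^n$ corresponds to the linear hyperplane $P$ without a choice of co-orientation, while a point of $\dS^n$ is an unoriented line $P^\perp$ in the projective quotient $b_{n,1}^{-1}(1)/\{\pm\mathrm{Id}\}$. I would argue that both sides have already quotiented by $\{\pm\mathrm{Id}\}$: a hyperplane of $\H^n=\mathcal H^n/\{\pm\mathrm{Id}\}$ is the projectivization of $P$, which remembers $P$ but not a normal direction, and correspondingly $P^\perp$ as a point of $\dS^n$ is the image of $\pm v$ under the antipodal quotient. Thus the correspondence is between projective objects on both sides, and the $\pm$ ambiguity matches up exactly, confirming that it is \emph{unoriented} hyperplanes that are parametrized. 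Finally I would remark that this is simply the restatement, in the language of hyperplanes, of the point/hyperplane duality that sends $x\in b^{-1}(1)$ to the intersection of $b^{-1}(-1)$ with $x^\perp$, read in the opposite direction.
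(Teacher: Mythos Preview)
Your proposal is correct and follows essentially the same approach as the paper: the Fact is stated as an immediate consequence of the point/hyperplane duality for $b_{n,1}$, namely that a hyperplane of $\H^n$ lifts to a time-like linear hyperplane $P\subset\M^{n+1}$ whose $b_{n,1}$-orthogonal complement is a space-like line, hence a point of $\dS^n$. Your explicit signature bookkeeping (the $(n,1)=(n-1,1)\oplus(1,0)$ decomposition) makes precise what the paper leaves as ``easy to see'' in its $n=2$ illustration via the tangent light-like planes, and your treatment of the unoriented/antipodal matching is a welcome clarification that the paper does not spell out.
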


\begin{SCfigure}
 \includegraphics[scale=0.3]{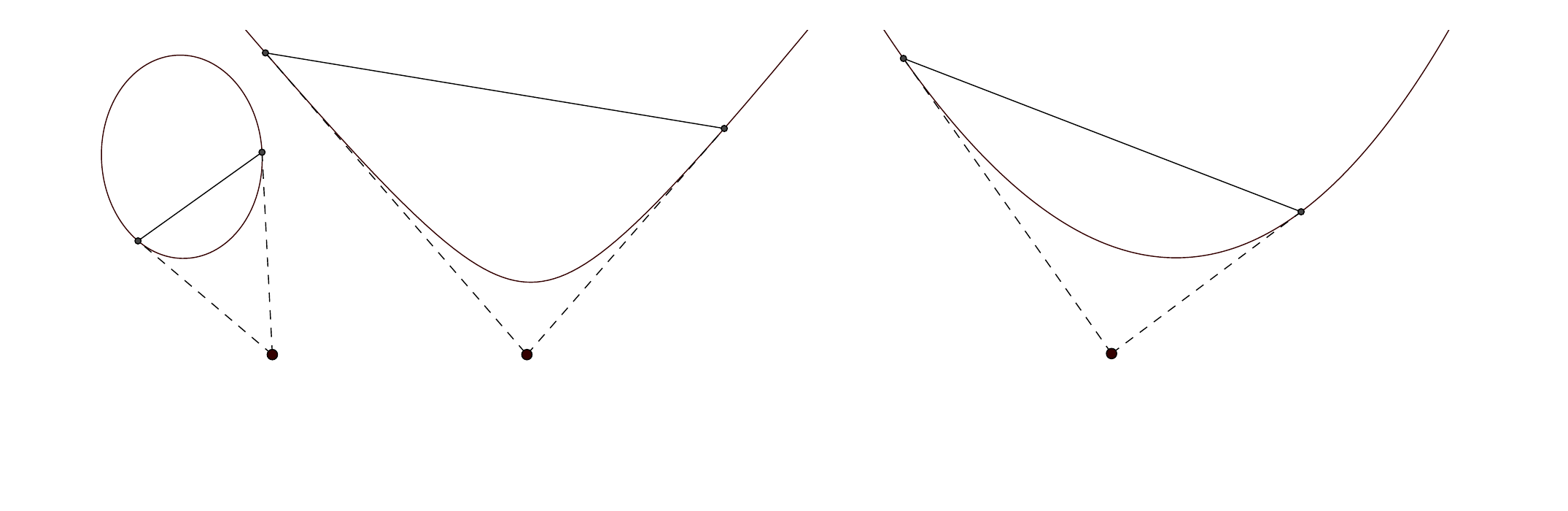}
     \caption{\label{fig:coniques} The dual point of a line in different affine models of the hyperbolic plane.}
\end{SCfigure}

\begin{SCfigure}
       \includegraphics[scale=0.6]{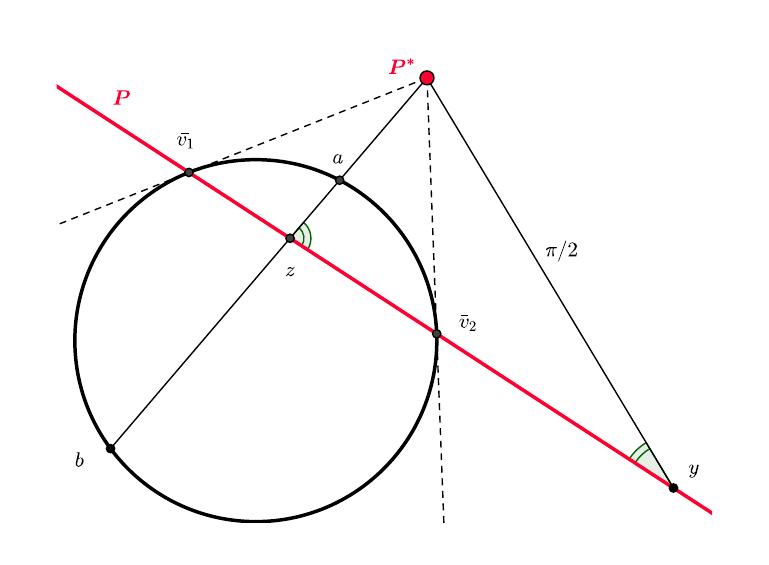}
     \caption{\label{fig:dualh2}The hyperplane containing $P$ meets the absolute along a hypersphere $S$. The point $P^*$ is the common point of all the lines tangent to the absolute at points of $S$.}
\end{SCfigure}

\begin{SCfigure}
   \includegraphics[width=5cm]{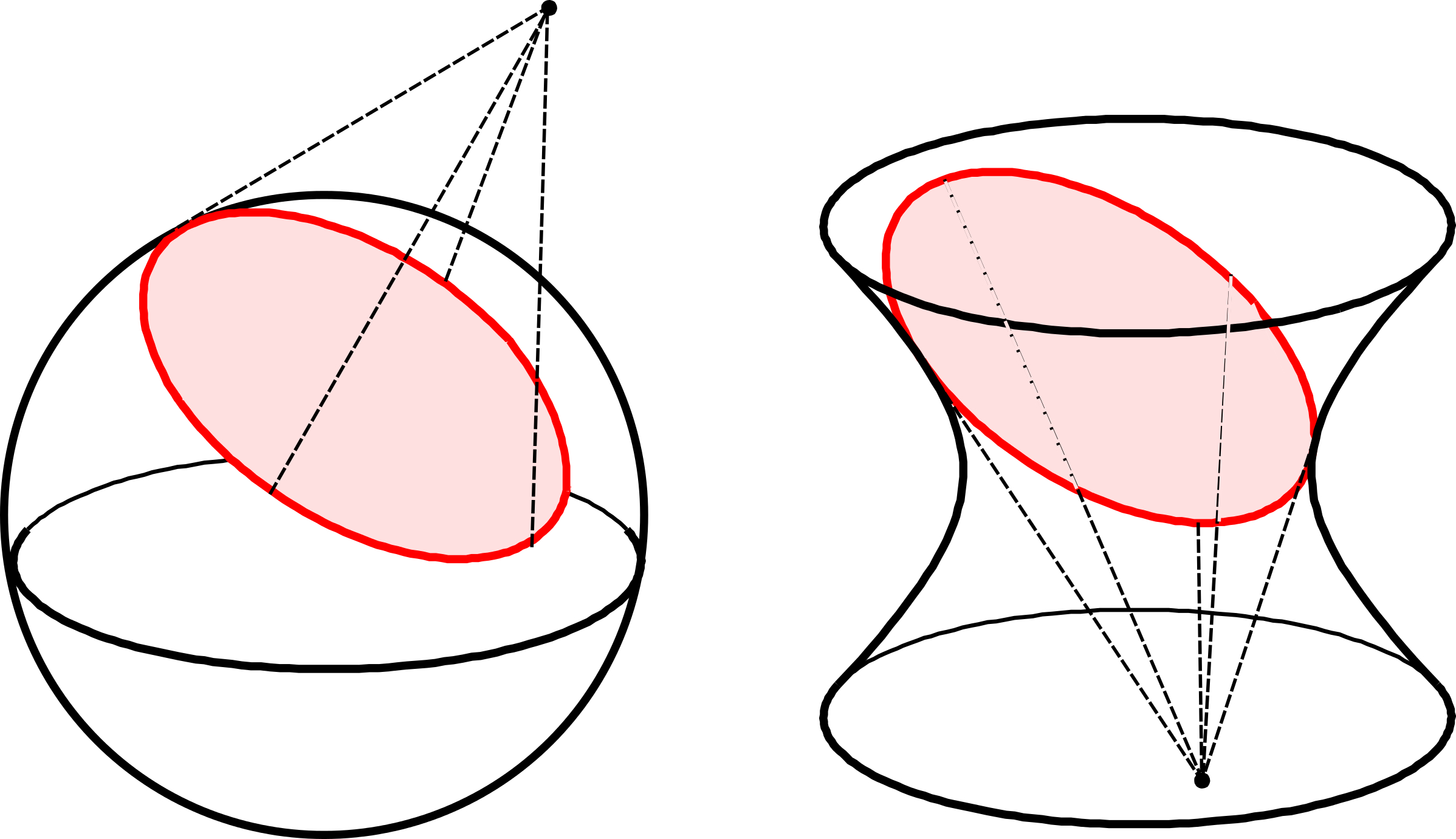}
\caption{\label{fig:dualityads} The dual of a plane of the hyperbolic space and of Anti-de Sitter space in an affine model. }
\end{SCfigure}

 \begin{SCfigure}
\includegraphics[scale=0.15]{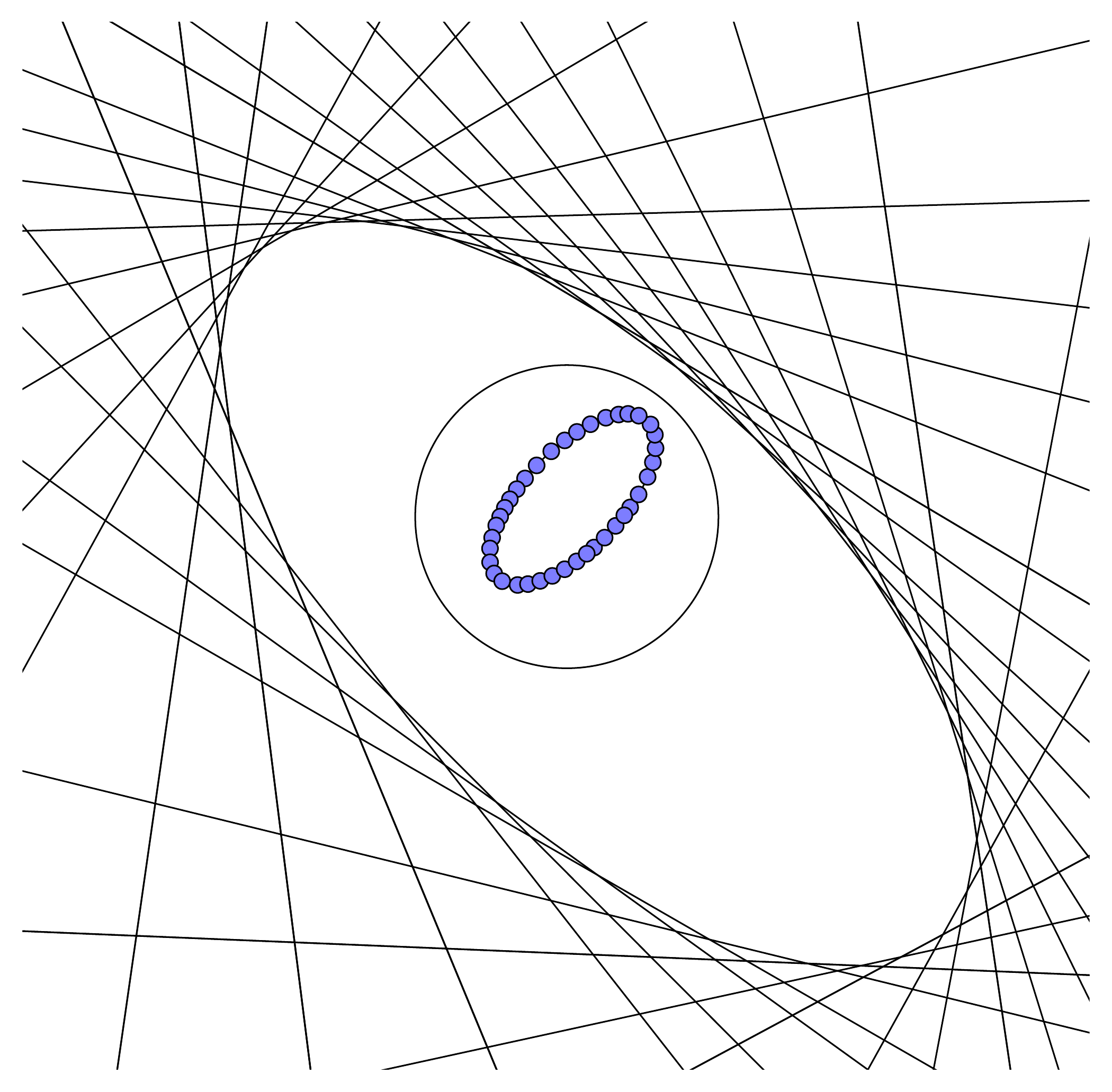}
\caption{\label{fig:dual-convexe} The dual of a convex set with respect to the sphere is the envelope of the hyperplanes dual to its boundary points.}
\end{SCfigure}

%
%
%

%

\begin{remark} \label{remark duality}{\rm
In the elliptic plane $\Ell^2$, it is well known that 
the duality can also be expressed in a metric way: the plane $x^*$ is the set of points at distance $\pi/2$ from $x$. 
This is readily seen because $x^*$ is the projective quotient of a linear plane $P$ in $\E^3$, and $x$ is the projective quotient of a vector orthogonal to $P$. The orthogonality for $b_{3,0}$ immediately gives that the projective distance is equal to $\pi/2$ due to \ref{formb1} in Subsection \ref{subsec lines pseudodistance}.

A similar argument leads to the following in $\AdS^ 3$:  the plane $x^*$ dual to the point $x$  is the set of points at distance $\pi/2$ from $x$ (see Figure~\ref{fig:dualityads}).

The same computation also occurs in de Sitter space,
if one considers the duality in the following way.
As before, let $P$ be a hyperplane in $\H^n$ and let $P^*$ be its dual point in $\dS^n$. Actually $P^*$ is also dual to a time-like hyperplane of $\dS^n$, the one defined by the same affine hyperplane as $P$, and that we still denote by $P$. Then in $\dS^n$, the distance between $P^*$ and $P$ is $\pi/2$, see Figure~\ref{fig:dualh2}. 

}\end{remark}

\paragraph{Comments and references} \small
\begin{itemize}
\item When the pseudo-metric is written under the form given by Lemma~\ref{lem:distances}, it is usually called a \emph{Hilbert} or \emph{Cayley--Klein} metric\index{Hilbert metric}\index{Cayley--Klein metric}. It can be defined on any convex sets, and not only on the ones bounded by quadrics as here,  see e.g. \cite{papadopoulos} for more details.
\item As Euclidean space is made only of parabolic lines, it is sometimes called \emph{parabolic space}\index{parabolic space}. In dimension $2$, the projective distance can be defined directly on a quadric: this is the Poincar\'e quadric geometry \cite{poincare,AP2}. In particular, this justifies the term  ``parabolic''.
\item Remarkably, in dimension $3$, the sphere $\mathcal{S}^3$ has a group structure, namely $\operatorname{SU}(2)$, with the determinant as the quadratic form on the ambient space. Similarly, $\mathcal{A}d\mathcal{S}^3$ is $\operatorname{SL}(2,\mathbb{R})$, with minus  the determinant as the
quadratic form on the ambient space. Note that the Lie algebra $\mathfrak{su}(2)$ (resp. $\mathfrak{sl}(2,\R)$) together with its Killing form is naturally identified with $\E^3$ (resp. $\M^3$). 
\item The following fact  comes readily from the definition of 
duality:
\emph{any line from $x$ meets $x^ *$ orthogonally}.
This is useful in practice, as the  affine models  are certainly not conformal (to the Euclidean metric), and computations of angles may be cumbersome, but orthogonality is easily seen, see Figure~\ref{fig:dualh2}. See Figure~\ref{fig:hexagon} for an application.  In particular, the Klein model of hyperbolic space is not a conformal model, as a striking difference with the other famous \emph{Poincar\'e model}, which will not be used in this survey.
\begin{SCfigure}
\includegraphics[scale=0.6]{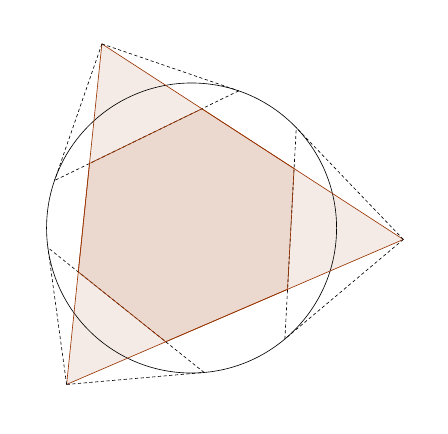}
\caption{\label{fig:hexagon}  A hyperbolic right-angled hexagon is an affine triangle, hence the corresponding moduli space has dimension $3$. Right-angled hexagons  are fundamental pieces to construct compact hyperbolic surfaces \cite{thb,Thurcour1}.}
\end{SCfigure}
\item 
The duality in $\S^2$ induces also a correspondence between angles and length, that has a great importance as many statements have a straightforward dual analogue. The most basic ones are  spherical trigonometric laws for triangles. 

 This is also true  for a general duality with respect to a symmetric non-degenerate bilinear form $b$. Actually this property is contained in the definition of angle. Let $P_1,P_2$  be two hyperplanes in $\mathcal{M}$, and let $x\in P_1\cap P_2$. The outward unit normals $n_1$ and $n_2$ to $P_1$ and $P_2$ in $T_x\mathcal{M}$ define two points on a pseudo-sphere $S$ of $T_x\mathcal{M}$, which is naturally isometric to a pseudo-Euclidean space. The (exterior dihedral) \emph{angle} between $P_1$ and $P_2$   is the pseudo distance on $S$ between those two points. This does not depend on the choice of the point $x\in P_1\cap P_2$. {But $T_x\mathcal{M}$ is also identified with a hyperplane in the ambient $\R^{n+1}$, the one orthogonal to $x$ for $b$.
 In particular, $n_1$ and $n_2$ are identified with points in a pseudo-sphere $\mathcal{N}$,  dual to $P_1$ and $P_2$. The distance between those points of $\mathcal{N}$ is exactly the distance between the points in $S$, hence equal to the angle between $P_1$ and $P_2$.}

The point is  that in $T_x\mathcal{M}$, $n_1$ and $n_2$ may belong to two different pseudo-spheres, but it is possible 
to define a pseudo distance between points belonging to two different pseudo-spheres defined by a same symmetric non-degenerate bilinear form. We will not use this in the present paper. 
For trigonometric laws for hyperbolic/Anti-de Sitter triangles, see \cite{Thurcour1,Cho09}.
\item Let $\operatorname{Isom}_0$ be the connected component of the identity of the isometry group. 
There are famous identifications between  $\operatorname{Isom}_0(\H^2)$ and  $\operatorname{PSL}(2,\mathbb{R})$, and between
the absolute and $\R \mathrm{P}^1$. In dimension $3$, there is an identification between
$\operatorname{Isom}_0(\H^3)$  and $\operatorname{PSL}(2,\mathbb{C})$. The last group also acts on the absolute, which is naturally identified with $\mathbb{C}\mathrm{P}^1$. 
In contrast, 
$\operatorname{Isom}_0(\AdS^3)=\operatorname{PSL}(2,\mathbb{R})\times \operatorname{PSL}(2,\mathbb{R})$, and the absolute is naturally identified with $\mathbb{R}\mathrm{P}^1\times \mathbb{R}\mathrm{P}^1$.

\end{itemize}

\normalsize

\section{Degenerate cases}

\subsection{Geometries}

A \emph{geometry}\index{geometry (as $(X,G)$ structure)}  is a pair $(X,G)$ where $X$ is a manifold and $G$ is a Lie group acting transitively by diffeomorphisms on $X$.
The model spaces we introduced in Section~\ref{sec:mode spaces} are geometries, with $X=\mathbb{M}$ and $G=\operatorname{PO}(p,q)$. Note that here the group $G$ is a subgroup of the group of projective transformations that can be characterized in two ways: $G$ leaves $\mathbb{M}$ invariant in the projective space and $G$ is the isometry group of $\mathbb{M}$. 
We will now be interested by \emph{degenerate} model spaces. This means that they are defined by a degenerate bilinear form (or equivalently, their absolute is not a proper quadric in an affine chart).\footnote{In particular, the absolute will contain lines, called \emph{isotropic}\index{isotropic! line}.}

A degenerate model space gives a geometry  $(\mathbb{M},G)$, with $G$ the subgroup of the group of projective transformations that leaves $\mathbb{M}$ invariant. Equivalently, $G$ is a subgroup of the group of projective transformations that preserves
 a degenerate $(0,2)$ tensor on $\mathbb{M}$. But we will consider the geometries 
 $(\mathbb{M},H)$, with $H$ a proper subgroup of $G$. The choice of $H$ will be justified by a duality argument in this section, and next justified on the one hand by the process of degeneration introduced in Section \ref{sec geometric transition} and on the other hand by the definition of connections and volume forms in Section \ref{sec connection volume}.

Unlike the preceding section, we do not attempt to give a unified treatment of the degenerate geometries, but we focus on the Euclidean and Minkowski spaces, together with their dual spaces: the co-Euclidean space and the co-Minkowski space.

%
%

\subsection{Euclidean space}\label{sec dual eucl}

\paragraph{Euclidean space as a degenerate model geometry.}\index{Euclidean space}
 The Euclidean space $\E^n$ may be considered as a projective quotient of a pseudo-sphere $\Eu^n$ of $\R^{n+1}$ defined by the following degenerate bilinear form:  
\begin{equation}\label{eq:b01}b(x,y)
=x_{n+1}y_{n+1}~, \end{equation}
i.e.
$$\Eu^n=\{x \in \R^{n+1} \,|\, x_{n+1}^2=1 \}~.$$

Indeed, $\Eu^n/\{\pm\operatorname{Id} \}$
is the complement of a hyperplane in $\R\mathrm{P}^n$, i.e.  it is identified with the affine space $\mathbb{A}^n$ of dimension $n$. {Note that in this model, the absolute (the projective quotient of the isotropic cone of $b$) is the hyperplane at infinity. In particular, all the lines are
parabolic, and hence the projective distance is zero, so we do not recover the Euclidean metric. We need more information to recover Euclidean geometry from this model.}

 The group $G$ of projective transformations leaving $\Eu^n$ invariant is the group of transformation of the form

\begin{equation} \label{eq: rep eucl}
\left(
\begin{array}{ccc|c}
  
  & & & t_1 \\
   & \raisebox{-4.5pt}{{\huge\mbox{{$A$}}}}  & & \vdots  \\
  & & & t_n \\ \hline
  0 & \cdots & 0 & \lambda
\end{array}
\right)
\end{equation}
where $A$ belongs to $\operatorname{GL}(n,\R)$ and $\lambda\neq 0$, quotiented by the subgroup of multiples of the identity, i.e. it is the group of transformations of the form
\begin{equation*} 
\left[
\begin{array}{ccc|c}
  
  & & & t_1 \\
   & \raisebox{-4.5pt}{{\huge\mbox{{$A$}}}}  & & \vdots  \\
  & & & t_n \\ \hline
  0 & \cdots & 0 & \lambda
\end{array}
\right]~.
\end{equation*}
 Of course one can always choose a representative with $\lambda=1$. Under the identification of $\Eu^n/\{\pm \operatorname{Id}\}$ with the affine space $\mathbb{A}^n$ given by taking the chart $\{x_{n+1}=1\}$, the action of $G$ is identified with the action on $\mathbb{A}^n\times\{1\}$ of matrices of the form

\begin{equation} \label{eq: rep eucl2}
\left(
\begin{array}{ccc|c}
  
  & & & t_1 \\
   & \raisebox{-4.5pt}{{\huge\mbox{{$A$}}}}  & & \vdots  \\
  & & & t_n \\ \hline
  0 & \cdots & 0 & 1
\end{array}
\right)~.
\end{equation}

Hence $(\Eu^n/\{\pm \operatorname{Id}\},G)$ is the affine geometry $(\mathbb{A}^n,\mathrm{GL}(n,\R)\rtimes\R^n)$. The Euclidean geometry $(\mathbb{\E}^n,\mathrm{Isom}(\E^n))$
is $(\Eu^n/\{\pm \operatorname{Id}\},H)$, where $H$ is the subgroup of $G$ such that $A$ in \eqref{eq: rep eucl} belongs to $\operatorname{O}(n)$.

%

An element of the form \eqref{eq: rep eucl} also acts on $\R^n\times \{0\}$, and the action reduces to the action of $A$.
As  $A\in \operatorname{O}(n)$, passing to the projective quotient, the hyperplane at infinity of $\mathbb{A}^n$ is endowed with the  elliptic geometry $\Ell^{n-1}$. 
 Conversely, suppose that $(\mathbb{A}^n,H)$ is a geometry,  where $H$ is a group of affine transformations, whose action on the hyperplane at infinity  is the one of $\operatorname{PO}(n)$. Then necessarily the part $A$, in a representative   of the form \eqref{eq: rep eucl} for an element of $H$, must belong to $\operatorname{O}(n)$.
 In other words, the Euclidean geometry can be characterized as follows.
%

\begin{fact}\label{fact:euclidean}
The Euclidean space is the projective space minus  an elliptic hyperplane.
\end{fact}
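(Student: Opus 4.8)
The plan is to read the statement as an identification of two geometries. By an \emph{elliptic hyperplane} I mean a projective hyperplane $H_\infty\subset\R\P^n$ equipped with the structure of $\Ell^{n-1}$, and ``the projective space minus an elliptic hyperplane'' denotes the pair $(\R\P^n\setminus H_\infty,\,H)$, where $H$ consists of the projective transformations of $\R\P^n$ that preserve $H_\infty$ setwise and restrict on it to isometries of its elliptic structure. The goal is then to prove that this pair coincides with $(\E^n,\operatorname{Isom}(\E^n))$. Since the ingredients have all been assembled in the paragraphs preceding the statement, the proof is essentially a matter of matching the two descriptions, and I would organize it in three steps.

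First I would fix the affine chart $\{x_{n+1}=1\}$, which identifies $\R\P^n\setminus H_\infty$ with the affine space $\mathbb{A}^n$, exactly as in the identification of $\Eu^n/\{\pm\operatorname{Id}\}$ with $\mathbb{A}^n$; in this chart $H_\infty$ is the hyperplane at infinity, carried by the linear hyperplane $\R^n\times\{0\}$, and its elliptic structure is the one induced by $b_{n,0}$, so that $\operatorname{Isom}(\Ell^{n-1})=\operatorname{PO}(n)$. Second I would recall that a projective transformation preserves $H_\infty$ exactly when it is represented by a block matrix of the form \eqref{eq: rep eucl}, and that its restriction to $H_\infty$ is the projective map $[A]\in\operatorname{PGL}(n,\R)$ induced by $A$. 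Requiring this restriction to be an elliptic isometry is the condition $[A]\in\operatorname{PO}(n)$; together with the normalization $\lambda=1$ this is the condition $A\in\operatorname{O}(n)$ already singled out before the statement. Third I would conclude: in the chart the transformation acts as $x\mapsto Ax+t$ with $A\in\operatorname{O}(n)$, that is, as a Euclidean isometry, and conversely every Euclidean isometry extends to such a projective transformation preserving the elliptic structure. Hence $H=\operatorname{Isom}(\E^n)$, and the two implications are precisely the forward and converse computations given just above the Fact.

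The only point that requires genuine care — and the one I would treat as the crux — is the scaling ambiguity inherent in projective transformations. A matrix of the form \eqref{eq: rep eucl} is defined only up to a global scalar, and the condition that the induced map on $H_\infty$ be an elliptic isometry, namely $[A]\in\operatorname{PO}(n)$, determines the linear part $A$ only up to a positive scalar. Read literally this enlarges $H$ to the group of similarities: a similarity $x\mapsto\mu Ax+t$ and the isometry $x\mapsto Ax+t$ induce the very same map $[A]$ on $H_\infty$, so the elliptic structure alone cannot separate them. Recovering $\operatorname{Isom}(\E^n)$ on the nose therefore requires either reading ``Euclidean space'' up to similarity, or supplementing the elliptic hyperplane with the additional (degenerate) metric datum that fixes the scale, which is exactly the normalization $A\in\operatorname{O}(n)$ selected before the statement. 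I would make this choice explicit, since keeping straight the interplay between the rescaling of the representative matrix and the induced structure at infinity is the delicate bookkeeping that turns the conformal content of the elliptic absolute into the rigid Euclidean isometry group.
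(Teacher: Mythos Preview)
Your approach is essentially the paper's own: the Fact is stated without a separate proof environment and is meant as a summary of the discussion immediately preceding it, which runs exactly through your three steps (identify $\R\P^n\setminus H_\infty$ with $\mathbb{A}^n$, write the stabilizer of $H_\infty$ in block form \eqref{eq: rep eucl}, and impose that the induced action on $H_\infty$ be that of $\operatorname{PO}(n)$).

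Your final paragraph, however, raises a point that the paper does not address and that is genuinely pertinent. The paper's converse reads: ``suppose that $(\mathbb{A}^n,H)$ is a geometry, where $H$ is a group of affine transformations, whose action on the hyperplane at infinity is the one of $\operatorname{PO}(n)$. Then necessarily the part $A$ \ldots\ must belong to $\operatorname{O}(n)$.'' Taken literally this is the same overreach you flag: the condition $[A]\in\operatorname{PO}(n)$ only forces $A$ to be a scalar multiple of an orthogonal matrix, so one recovers the similarity group rather than $\operatorname{Isom}(\E^n)$. The paper silently relies on the normalization $\lambda=1$ together with the degenerate form $b(x,y)=x_{n+1}y_{n+1}$ (which an isometry must preserve, not just projectively) to kill the residual scalar. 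You are right that this deserves to be made explicit; your diagnosis is sharper than the paper's own treatment here.
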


%

\paragraph{Duality of convex sets.}
Recall that a \emph{convex body}\index{convex! body} of $\E^n$ is a compact convex set in $\E^n$ with non-empty interior.\footnote{For some authors, a convex body is only a compact convex set.}
We will add the following assumption.

\begin{df}
A convex body is \emph{admissible} if it contains the origin in its interior.
\end{df}

  Actually every convex body is admissible  up to a translation. It is suitable to consider $K$
in $\E^n\times \{1\} \subset \E^{n+1}$, in order to introduce 
   the cone $\mathcal{C}(K)$ in $\E^{n+1}$ over $K$:
$$\mathcal{C}(K)=\left\{\left. \lambda \binom x 1 \, \right| \,  x \in K, \lambda \geq 0 \right\}~.$$

Let $\mathcal{C}(K)^*$ be its dual in $\E^{n+1}$ for the scalar product $b_{n+1,0}$:
 \begin{equation}\label{eq:CK* eucl}\mathcal{C}(K)^*=\{(y,y_{n+1}) \in \R^{n+1}\, |\,
 b_{n+1,0}\left( (y,y_{n+1}),(x,x_{n+1})\right) \leq 0, \forall (x,x_{n+1})\in \mathcal{C}(K) \}~.\end{equation}

We will  denote by $K^*$ the intersection of 
$\mathcal{C}(K)^*$ with $\{y_{n+1}=-1\}$. We identify  $\{y_{n+1}=-1\}$ with $\E^n$, so  that $K^*$ is a closed convex set in $\E^n$. 
It is readily seen that 
\begin{equation}\label{eq:K* eucl} K^*=\{y\in \R^n \,|\, b_{n,0}( x,y) \leq 1, \forall x \in K \}~. \end{equation}

The expression 
\eqref{eq:K* eucl} corresponds to the affine duality with respect to the unit sphere: a point $rv$, with $v\in \mathcal S^n$, on the boundary of $K$ will correspond to a support plane of $K^*$ of direction orthogonal to $v$ and at distance $1/r$ from the origin. Compare  Figure~\ref{fig:dual-cercle} and Figure~\ref{fig:coEplan}. In particular, the dual $B_r$ of a ball centred at the origin with radius $r$ is the ball $B_{1/r}$ centred at the origin. 

\begin{SCfigure}
\includegraphics[scale=0.5]{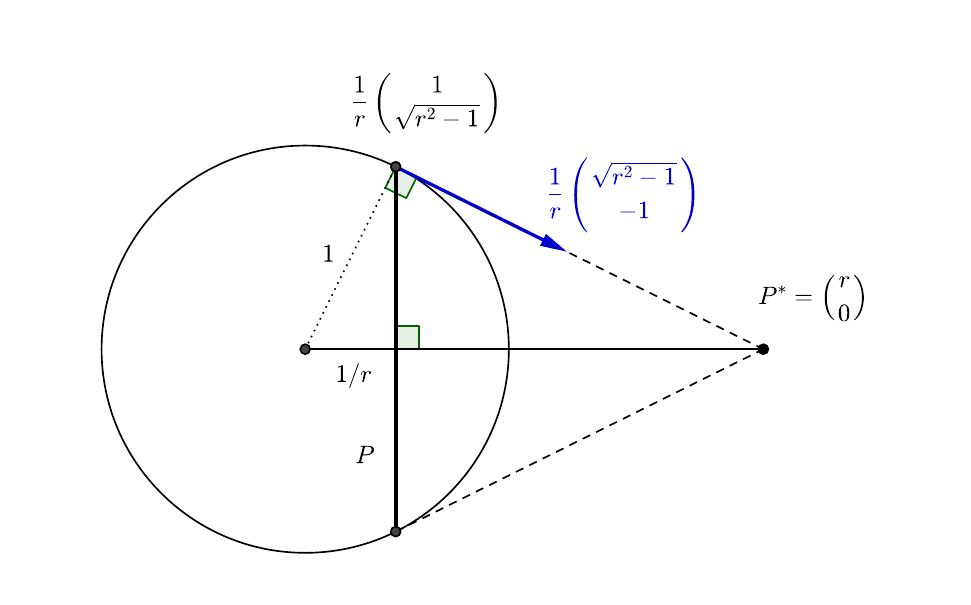}
\caption{\label{fig:dual-cercle} The Euclidean duality is the duality with respect to the unit sphere. }
\end{SCfigure}

\begin{SCfigure}
\includegraphics[scale=1.4]{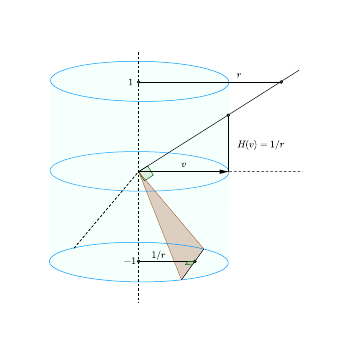}
 \caption{\label{fig:coEplan}The duality between points and lines in the Euclidean plane expressed in terms of orthogonality in a higher-dimensional Euclidean space.}
\end{SCfigure}

\begin{lemma}
The dual of an admissible convex body in Euclidean space is an admissible convex body in Euclidean space.
\end{lemma}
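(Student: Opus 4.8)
The plan is to verify for $K^*$ the four properties that make up the definition of an admissible convex body: convexity, closedness, boundedness, and containment of the origin in the interior. The first two are formal, while the last two will follow from a clean interchange between the two hypotheses on $K$.

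First I would observe that convexity and closedness of $K^*$ are immediate from the description \eqref{eq:K* eucl}: for each fixed $x\in K$, the condition $b_{n,0}(x,y)\leq 1$ cuts out a closed half-space of $\R^n$, and $K^*$ is the intersection of these half-spaces as $x$ ranges over $K$. An arbitrary intersection of closed convex sets is closed and convex, so $K^*$ is a closed convex set with no further argument; note that the convexity of $K$ itself is not even needed for this step.

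The heart of the matter is to sandwich $K$ between two balls centred at the origin and to transport this sandwiching through the duality. Since $K$ is compact it is bounded, so $K\subset B_R$ for some $R>0$; since $K$ is admissible the origin lies in its interior, so $B_r\subset K$ for some $r>0$. Thus $B_r\subset K\subset B_R$. I would then invoke the monotonicity of the duality, the body-level analogue of \eqref{eq:inclusion cone} asserting that $K_1\subset K_2$ implies $K_2^*\subset K_1^*$, which is transparent from \eqref{eq:K* eucl}, together with the computation $B_\rho^*=B_{1/\rho}$ recorded just before the statement. This yields
\begin{equation*}
B_{1/R}\subset K^*\subset B_{1/r}~.
\end{equation*}
The left inclusion shows that $K^*$ contains a ball around the origin, hence the origin lies in the interior of $K^*$ and in particular $K^*$ has non-empty interior; the right inclusion shows that $K^*$ is bounded.

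Combining these observations, $K^*$ is closed, bounded (hence compact), convex, with non-empty interior and with the origin in its interior, which is precisely the assertion that $K^*$ is an admissible convex body. There is no serious obstacle; the only point worth emphasising is the way the two conditions defining admissibility of $K$ swap roles under duality, the boundedness of $K$ producing the interior point of $K^*$ while the interior point of $K$ produces the boundedness of $K^*$. One could alternatively phrase the inclusions $B_r\subset K\subset B_R$ in terms of the cones $\mathcal{C}(K)$ and appeal directly to \eqref{eq:inclusion cone}, but the body-level argument via \eqref{eq:K* eucl} is the most economical.
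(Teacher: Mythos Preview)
Your argument is correct and is essentially the same as the paper's: both sandwich $K$ between two origin-centred balls and use the monotonicity of the duality together with $B_\rho^*=B_{1/\rho}$ to obtain the reversed sandwich for $K^*$. You supply more detail on the convexity and closedness of $K^*$, which the paper leaves implicit (these having already been established at the level of dual cones), but the core idea is identical.
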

\begin{proof}
If $K$ is a convex body, by definition 
there exists $s,t>0$ such that $B_s\subset K \subset B_t$. Hence by \eqref{eq:inclusion cone}, $B_{1/t}\subset K^* \subset B_{1/s}$:
$K^*$ is bounded and contains the origin in its interior. 
\end{proof}

%
%

\paragraph{Support functions.}\index{support function}

Let $H: \R^n\to \R$ be such that $\mathcal{C}(K)$ is the epigraph of $H$, i.e.
\begin{equation}\label{eq:ck H eucl}\mathcal{C}(K)=\{(x,x_{n+1})\in\R^{n+1} \,|\, x_{n+1} \geq H(x) \}~.\end{equation}

The function $H$ is convex, positive outside the origin, and homogeneous of degree $1$: 
$H(\lambda x)=\lambda H(x)$ for $\lambda>0$.
Hence using \eqref{eq:ck H eucl} and \eqref{eq:CK* eucl},
$$\mathcal{C}(K)^*=\{(y,y_{n+1}) \in \R^{n+1} \,|\,
b_{n,0}(y,x) \leq -y_{n+1}x_{n+1}, \forall (x,x_{n+1}), x_{n+1}\geq H(x) \}$$
i.e.
$$\mathcal{C}(K)^*=\{(y,y_{n+1}) \in \R^{n+1} \,|\,
 b_{n,0} (y,x) \leq -y_{n+1}H(x),\, \forall x\in \R^n \}$$ 
so $K^*$ is determined by $H$:
\begin{equation}\label{eq:supp fct euc}K^*=\{y \in \R^n \,|\, b_{n,0}( y,x) \leq H(x),\, \forall x\in \R^n  \}~. \end{equation}

\begin{df}
The function $H$ such that $\mathcal{C}(K)$ is the epigraph of $H$ is the \emph{support function} of $K^*$.
\end{df} 
 
 The support function  has also the following interpretation. 
Let $v$ be a unit vector of $\E^n\times \{0\}$. Then $H(v)$ is the distance in $\E^n \times \{-1\}$ between $\{0\}\times\{-1\}$ and the support plane of $K^*$ directed by $v$ (see Figure~\ref{fig:coEplan}). Hence
\eqref{eq:supp fct euc} expresses the fact that $K^*$ 
is  the envelope of its support planes. 

The following fact follows easily from the construction.

\begin{fact}
The support function provides a  bijection between the space of admissible convex bodies of $\R^n$  and the space of positive convex $1$-homogeneous functions on $\R^n$.
\end{fact}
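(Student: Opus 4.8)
The plan is to establish the bijection in the final Fact by exhibiting explicit inverse maps between the two spaces, using the cone construction that has already been set up. In one direction, I start with an admissible convex body $K$, form the cone $\mathcal{C}(K^*)$ over its dual, and read off the support function $H$ as the boundary function of that cone via \eqref{eq:ck H eucl}. In the other direction, I start with a positive convex $1$-homogeneous function $H$ on $\R^n$, declare $\mathcal{C}$ to be its epigraph as in \eqref{eq:ck H eucl}, and recover a convex body as the dual $K^*$ determined by \eqref{eq:supp fct euc}. The claim is that these two assignments are mutually inverse, which will follow from the duality relation $(\mathcal{C}^*)^* = \mathcal{C}$ (equation \eqref{autoduality}) applied to the relevant cones.

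The key steps, in order, are as follows. \textbf{First}, I verify that the epigraph of a positive convex $1$-homogeneous $H$ is a pointed closed convex cone in $\R^{n+1}$ lying in the upper half-space $\{x_{n+1} > 0\}$ away from the origin: convexity of the epigraph is equivalent to convexity of $H$, the $1$-homogeneity makes it a cone, and positivity off the origin guarantees pointedness and that the slice $\{x_{n+1}=1\}$ is a bounded convex body. \textbf{Second}, I check that this slice contains the origin of $\E^n$ in its interior, i.e.\ that it is admissible; this uses that $H$ is finite (hence the cone has nonempty interior) and positive (so the apex is at $0$). \textbf{Third}, running the duality computation already carried out in the paper, \eqref{eq:CK* eucl} through \eqref{eq:supp fct euc}, shows that the support function assigned to the convex body $K^*$ obtained from $\mathcal{C}$ is exactly the $H$ we started with, and conversely that the cone built from the support function of a given $K$ is exactly $\mathcal{C}(K)$ up to the identification of slices. \textbf{Fourth}, I invoke \eqref{autoduality} to close the loop: since $(\mathcal{C}^*)^* = \mathcal{C}$, applying the construction twice returns the original object in each direction, which is precisely the statement that the two maps are inverse bijections.

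The main obstacle I anticipate is bookkeeping rather than conceptual: one must track carefully the two distinct affine slices involved, namely $\{x_{n+1}=1\}$ where $K$ lives and $\{y_{n+1}=-1\}$ where $K^*$ lives, since the dual cone $\mathcal{C}(K)^*$ naturally sits in the lower half-space and the sign conventions in \eqref{eq:dual} must be threaded through consistently. A second subtlety is matching regularity classes correctly: I must confirm that the support function is genuinely finite and single-valued everywhere (which requires the body to be bounded, giving finiteness, and to contain the origin in its interior, giving positivity), and conversely that a positive convex $1$-homogeneous function always produces a body with these two properties and no degeneracies such as lower-dimensional or unbounded slices. Once these correspondences are pinned down, the bijectivity is immediate from the involutivity \eqref{autoduality} of cone duality, so no further hard analysis is needed.
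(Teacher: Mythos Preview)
Your proposal is correct and in the spirit of the paper, which in fact gives no proof at all: the paper simply states that the fact ``follows easily from the construction'' and moves on. Your plan to exhibit the two maps via the epigraph cone and to close the loop using \eqref{autoduality} is exactly the kind of elaboration the paper is leaving to the reader, and the checks you list (boundedness from positivity on the sphere, admissibility from finiteness of $H$, etc.) are the right ones.

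One small remark: your write-up routes everything through the \emph{dual} body (forming $\mathcal{C}(K^*)$ to read off the support function of $K$, then dualizing back), which is faithful to the paper's labeling conventions but slightly heavier than necessary. The same bijection can be phrased directly: the map $L\mapsto H_L$, $H_L(x)=\sup_{y\in L}b_{n,0}(y,x)$, has inverse $H\mapsto L_H$ given exactly by \eqref{eq:supp fct euc}, and the identity $L_{H_L}=L$ is the statement that a closed convex set is the intersection of its supporting half-spaces, while $H_{L_H}=H$ follows from $1$-homogeneity and convexity. Your cone/duality formulation encodes the same two identities, so nothing is lost, but you may find the bookkeeping with the two slices $\{x_{n+1}=1\}$ and $\{y_{n+1}=-1\}$ disappears if you work directly with \eqref{eq:supp fct euc}.
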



%

\subsection{Co-Euclidean space}\label{coeucl}

\paragraph{Definition.}\index{co-Euclidean space}

On $\R^{n+1}$, let $b^*$ be the following degenerate bilinear form

$$b^*(x,y)=x_1y_1+\cdots+x_{n}y_{n}~. $$
Let $\cE^n$ be the unit sphere for $b^*$: $$\cE^n=\{x\in\R^{n+1}\,|\,b^*(x,x)=1 \}\,,$$ endowed with the restriction of $b^*$ to its tangent space. Topologically, $\cE^n$ is  $\S^n\times \mathbb{R}$. 

\begin{df}
The space  $^*\E^n=\cE^n/\{\pm \mathrm{Id}\}$ is the \emph{co-Euclidean space}.
\end{df}

The isotropic cone $\mathcal{I}(b^*)$  is the line $\tilde\ell=\{(0,\ldots,0,\lambda) \}$. Let 
$\ell=\mathrm{P}\tilde\ell$. Then $^*\E^n=\R \mathrm{P}^n\setminus\{\ell\}$.
Lines of $^*\E^n$ are parabolic if they contain $\ell$, and
elliptic otherwise.
%

\paragraph{Duality.}

The co-Euclidean space is dual to the Euclidean space in the sense that it can be described as the set of hyperplanes of $\E^n$.
Let $P$ be an affine hyperplane in $\E^n$, and $v$  its unit normal vector, pointing towards the side of $P$ which does not contain the origin $0$ of $\E^n$.  Let  $h(v)$ be the distance from $0$ to $P$, i.e. $P$ has equation $b_{n,0}( \cdot,v) -h(v)=0$. 
The vector
$\binom{v}{h(v)}\in \R^{n+1}$ is   orthogonal in $\E^{n+1}$ to the linear hyperplane  containing $P \times \{-1\}$. Its projective quotient 
defines a point $P^*$ in $^*\E^n$, see Figure~\ref{fig:coEplan}.
One could also consider the other unit normal vector
$-v$ of $P$. The (signed) distance from the origin is then 
$-h(v)$, and the point $-\binom{v}{h(v)}$ has the same projective quotient as $\binom{v}{h(v)}$.

\begin{fact}
The co-Euclidean space $^*\E^n$ is the space of (unoriented) hyperplanes of $\E^n$.
\end{fact}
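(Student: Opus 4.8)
The plan is to establish a bijection between the points of $^*\E^n$ and the set of (unoriented) affine hyperplanes of $\E^n$, thereby proving the stated fact. The construction has already been sketched in the paragraph preceding the statement, so the proof consists in verifying that the assignment $P \mapsto P^*$ is well-defined and bijective. First I would make the target space precise: an affine hyperplane $P$ in $\E^n$ is determined by a unit normal vector $v \in \S^{n-1}$ together with its signed distance $h \in \R$ from the origin, via the equation $b_{n,0}(\cdot, v) = h$. The subtlety is that the pairs $(v,h)$ and $(-v,-h)$ describe the same \emph{unoriented} hyperplane, so the space of unoriented hyperplanes is the quotient of $\S^{n-1}\times\R$ by the involution $(v,h)\mapsto(-v,-h)$.

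Next I would compare this directly with the definition of $^*\E^n = \cE^n/\{\pm\mathrm{Id}\}$. The pseudo-sphere $\cE^n = \{x \in \R^{n+1} \mid b^*(x,x) = 1\}$ consists of vectors $\binom{w}{s}$ with $w \in \R^n$, $s \in \R$, and $|w|^2 = 1$; that is, $\cE^n = \S^{n-1}\times\R$, parametrized by $\binom{v}{h}$ with $v\in\S^{n-1}$ and $h\in\R$ arbitrary. This matches the topological identification $\cE^n \cong \S^n \times \R$ claimed earlier only up to the observation that $\S^{n-1}\times\R$ is what the equation $|w|^2=1$ actually cuts out; I would note this directly rather than invoke the earlier (apparently mistyped) claim. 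The map sending the hyperplane $P$ with data $(v,h)$ to the point $\bigl[\binom{v}{h}\bigr] \in \cE^n/\{\pm\mathrm{Id}\}$ is then visibly a bijection, because the antipodal identification $\binom{v}{h}\sim-\binom{v}{h}=\binom{-v}{-h}$ on $\cE^n$ is \emph{exactly} the orientation-reversing identification $(v,h)\sim(-v,-h)$ on the space of hyperplanes.

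To complete the argument I would check the two remaining points. One is well-definedness: the earlier construction realizes $\binom{v}{h(v)}$ as a vector orthogonal in $\E^{n+1}$ to the linear hyperplane spanned by $P\times\{-1\}$, so the correspondence is the geometric one already described, and both choices of normal $\pm v$ yield the same projective class, confirming the map descends to unoriented hyperplanes. The other is surjectivity and injectivity: every point of $^*\E^n$ lifts to some $\binom{v}{h}\in\cE^n$ with $|v|=1$, which determines a unique hyperplane $\{b_{n,0}(\cdot,v)=h\}$, and distinct unoriented hyperplanes give distinct projective classes since two hyperplanes coincide iff their $(v,h)$-data agree up to the sign involution.

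The main obstacle, such as it is, is purely bookkeeping: keeping the signs and the two antipodal/orientation-reversing identifications consistent, and not conflating the \emph{oriented} hyperplanes (parametrized by $\cE^n$ itself, i.e. $\S^{n-1}\times\R$) with the \emph{unoriented} ones (parametrized by the quotient $^*\E^n$). There is no analytic difficulty; the content is the clean observation that the projective quotient of $\cE^n$ performs precisely the identification of a hyperplane with its oppositely-oriented copy. I would therefore present the proof as a short verification that the explicit map $P\mapsto P^*$ of the preceding paragraph is a bijection onto $^*\E^n$.
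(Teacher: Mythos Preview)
Your proposal is correct and follows essentially the same approach as the paper: the paper does not give a formal proof of this Fact but merely presents the construction $P\mapsto P^*$ in the preceding paragraph and observes that the sign ambiguity $(v,h)\leftrightarrow(-v,-h)$ is absorbed by the projective quotient, which is exactly what you verify more carefully. Your observation that the earlier sentence ``Topologically, $\cE^n$ is $\S^n\times\R$'' should read $\S^{n-1}\times\R$ is also correct.
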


Conversely, an elliptic hyperplane $P$ of $^*\E^n$ (i.e. a hyperplane which does not contain $\ell$) is dual to a point $P^*$ of $\E^n$. 
The point $P^*$ is the intersection of all the hyperplanes $x^*$, for $x \in P$.
A co-Euclidean hyperplane of $^*\E^n$ (i.e. a hyperplane containing $\ell$) is dual to a point at infinity.

Let $K$ be an admissible convex body of $\E^n$, and 
let $H$ be its support function, i.e. $\mathcal{C}(K)^*$ is the epigraph of $H$.
By abuse of notation, let us also denote 
by $K^*$ the intersection of $\mathcal{C}(K)^*$ with
$\cE^n$. The set $K^*$ is the epigraph of the restriction $h$ of 
$H$ to $\cE^n\cap \{x_{n+1}=0\}$, that we identify with  $\mathcal{S}^{n-1}$. Note that  by homogeneity,  $H$ is determined by its restriction  to $\S^{n-1}$, which is actually $h$:
\begin{equation*}
H(x)=\|x\|h(x/\|x\|)~,
\end{equation*} 
with $\|x\|=\sqrt{b_{n,0}(x,x)}$. 
\begin{fact}
The convex set $K^*$ of $\cE^n$ is the epigraph of $h$, the restriction to $\S^{n-1}$ of the support function of $K$.
\end{fact}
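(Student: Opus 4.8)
The plan is to unwind the two facts the text has just established and to read the statement off the geometry of $\cE^n$. Recall that $\mathcal{C}(K)^*$ is the epigraph of the support function $H$ of $K$, i.e. (with the sign convention making it an upward epigraph) $\mathcal{C}(K)^*=\{(y,y_{n+1})\in\R^n\times\R\mid y_{n+1}\geq H(y)\}$, that $H$ is positively $1$-homogeneous, and that the convex set in question is by definition $K^*=\mathcal{C}(K)^*\cap\cE^n$. So the only thing to do is to intersect an epigraph with the pseudo-sphere $\cE^n$ and identify the result.

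First I would describe $\cE^n$ concretely in the coordinates $(y,y_{n+1})$ in which $b^*(x,x)=x_1^2+\cdots+x_n^2$. Since $b^*$ ignores the last coordinate, $\cE^n=\{(y,y_{n+1})\mid \|y\|=1\}$ is the cylinder $\S^{n-1}\times\R$ over the unit sphere $\S^{n-1}\subset\R^n\times\{0\}$, the $\R$-factor being the $x_{n+1}$-direction; and $\cE^n\cap\{x_{n+1}=0\}$ is exactly this copy of $\S^{n-1}$. This is the identification used in the statement.

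The key step is the intersection itself. I would fix $\omega\in\S^{n-1}$ and look at the fiber $\{\omega\}\times\R\subset\cE^n$: a point $(\omega,t)$ lies in $\mathcal{C}(K)^*$ iff $t\geq H(\omega)$, and since $\|\omega\|=1$ the homogeneity relation $H(x)=\|x\|\,h(x/\|x\|)$ gives $H(\omega)=h(\omega)$, where $h:=H|_{\S^{n-1}}$. Hence each fiber meets $K^*$ in the half-line $[h(\omega),+\infty)$, so $K^*=\mathcal{C}(K)^*\cap\cE^n=\{(\omega,t)\in\S^{n-1}\times\R\mid t\geq h(\omega)\}$, which is precisely the epigraph of $h$ in $\cE^n\cong\S^{n-1}\times\R$. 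Finiteness of $h$, so that the fiber is a genuine half-line and $K^*$ is an honest graph region, follows from compactness of the convex body $K$, which bounds $H(\omega)=\sup_{x\in K}b_{n,0}(x,\omega)$.

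I expect the only real subtlety, rather than a genuine obstacle, to be the bookkeeping of signs. The dual cone was defined with the convention $b(x,y)\leq0$, under which a direct computation yields $\mathcal{C}(K)^*=\{y_{n+1}\leq -H(y)\}$ rather than an upward epigraph; the normalization used here corresponds to the alternative convention $b(x,y)\geq0$ (equivalently, to replacing $\mathcal{C}(K)^*$ by $-\mathcal{C}(K)^*$), which is harmless because the two choices agree after passing to the projective quotient defining $^*\E^n$, as already noted in the Remark on $\C^{*'}$. Once a consistent orientation of the fiber direction is fixed so that it matches the graph direction of $h$, the fiber-by-fiber computation above delivers the claim verbatim.
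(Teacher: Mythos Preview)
Your argument is correct and is essentially the same as the paper's: the paper does not give a separate proof but simply observes in the sentence preceding the Fact that $K^*=\mathcal{C}(K)^*\cap\cE^n$ is the epigraph of $h=H|_{\S^{n-1}}$, which is exactly your fiber-by-fiber computation on the cylinder $\cE^n\cong\S^{n-1}\times\R$. Your extra paragraph on the sign convention is a legitimate point of care that the paper leaves implicit.
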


\begin{remark}\label{remark:angles coeucl}{\rm
 The duality between hyperplanes of $\E^n$ and points in $^*\E^n$  leads to the following relation between angles and length. If $P'$ is another affine hyperplane of $\E^n$, orthogonal to the unit vector $v'$, then the $^*\E^n$ segment between $P^*$ and $P'^*$ is elliptic (or equivalently, the restriction of $b^*$  is positive definite on any non-vertical hyperplane of $\R^{n+1}$). By the expression \ref{formb1} in Subsection \ref{subsec lines pseudodistance}, $\cos d(P^*,P'^*)= b^*(P^*,P'^*)$ and it is readily seen that the last quantity is equal to $b_{n,0}( v,v' )$. Thus,   the 
(exterior dihedral) angles between intersecting  affine hyperplanes of $\E^n$ are equal to the distance between their duals in $^*\E^n$.
}\end{remark}

\paragraph{A model geometry.}

Let $\mathrm{Isom}(b^*)$ be the subgroup of projective transformations that preserve $b^*$. There is a natural injective morphism
$^*:\mathrm{Isom}(\E^n) \to  \mathrm{Isom}(b^*)$ which is defined as follows.
Let $$P(v,h)=\{z \in \R^n\,|\, b_{n,0}(z,v)=h \}$$ be an affine hyperplane of $\E^n$, $v\in \S^{n-1}, h\in \R^*$. For $A\in O(n)$ and $\vec{t}\in \R^n$,
we have $$AP(v,h)+\vec{t}=P(Av,h+b_{n,0}(v,A^{-1}\vec{t})),$$ 
from  which we define
\begin{equation} \label{eq: etoile eucl}
\begin{array}{c}
^*  \\
\\
  \\
  \\ 
  \end{array}
  \hspace{-3mm}
\left[
\begin{array}{ccc|c}
  
  & & &  \\
   & \raisebox{-4.5pt}{{\huge\mbox{{$A$}}}}  & &\vec{t} \\
  & & &  \\ \hline
  0 & \cdots & 0 & 1
\end{array}
\right]~ := \left[
\begin{array}{ccc|c}
  
  & & & 0 \\
   & \raisebox{-4.5pt}{{\huge\mbox{{$A$}}}}  & & \vdots  \\
  & & & 0 \\ \hline
   & A^{-1}\vec{t} & & 1
\end{array}
\right]~.
\end{equation}

\begin{df} \label{defi isom coeucl}
The isometry group of the co-Euclidean space, $\mathrm{Isom}(^*\E^n)$, is the group of projective transformations of the form
\begin{equation}\label{eq:isom co}\left[
\begin{array}{ccc|c}
  
  & & & 0 \\
   & \raisebox{-4.5pt}{{\huge\mbox{{$A$}}}}  & & \vdots  \\
  & & & 0 \\ \hline
   & \vec{t} & & 1
\end{array}
\right]~ \end{equation}
for $A\in \operatorname{O}(n)$, $\vec{t}\in \R^n$.
\end{df}

Note that $\mathrm{Isom}(^*\E^n)$ is a proper subgroup of the group of isometries of $b^*$. For instance the latter  also contains \emph{homotheties} of the form
$[\operatorname{diag}(1,\ldots,1,\lambda)]$. They correspond to displacement along 
parabolic lines, see Figure~\ref{fig:isom coeucl}.

The co-Euclidean space is naturally endowed with a degenerate metric $g^*$, which is the restriction on $\cE^n$ of $b^*$, pushed down to the projective quotient, and coincides with the elliptic metric on the elliptic hyperplanes, and is zero on the parabolic lines. 

\begin{SCfigure}
     \includegraphics[scale=0.4]{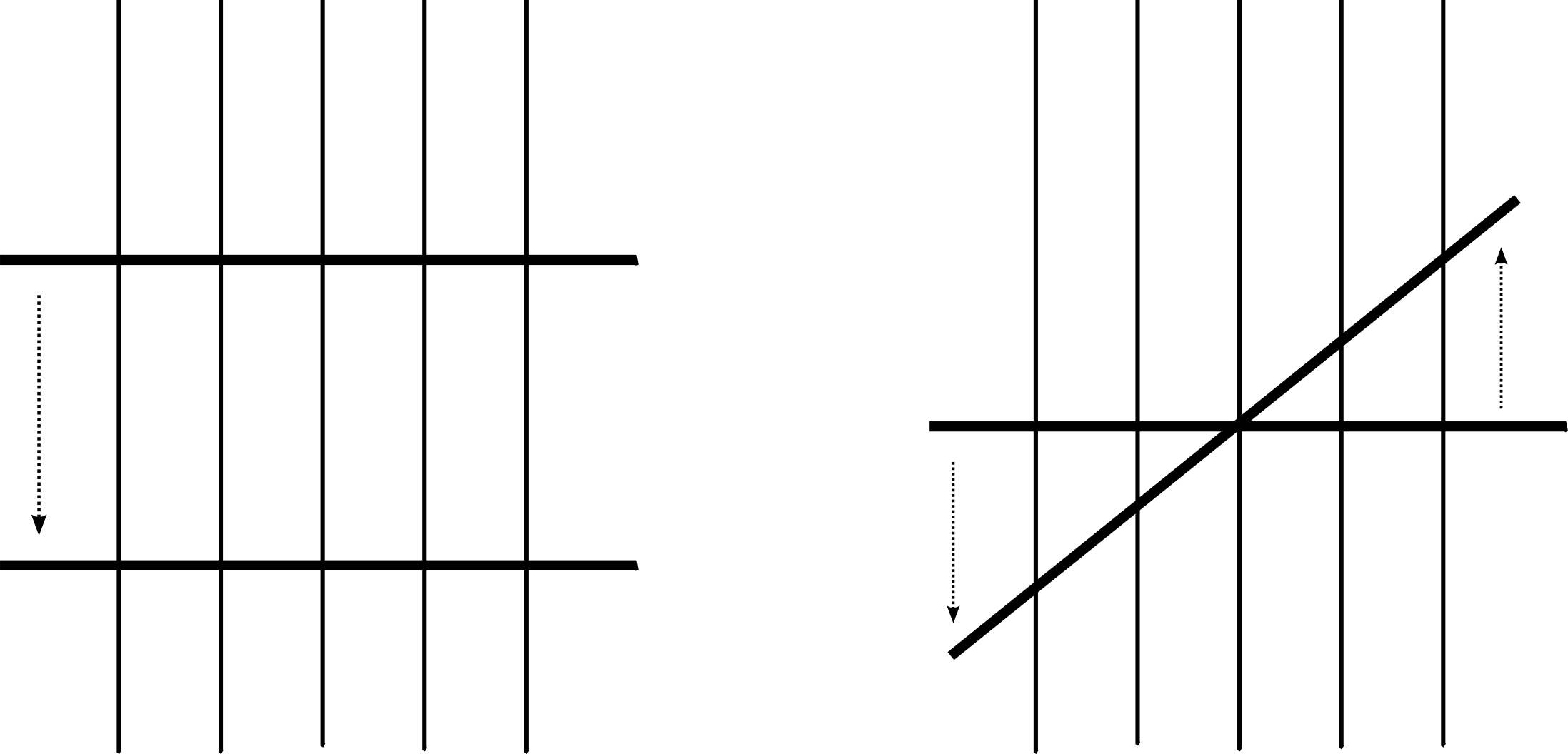}
\caption{\label{fig:isom coeucl} In this affine model of the co-Euclidean plane, the absolute $\{\ell\}$ is at infinity, in such a way that the vertical lines are parabolic. Also the quotient of $\{1,-1\}\times\R\subset \mathcal S^1\times\R$ is at infinity. On the left is the effect of the dual of a Euclidean homothety on an elliptic line of the co-Euclidean plane, and on the right is the effect of the dual of a Euclidean translation on an elliptic line of the co-Euclidean plane. }
\end{SCfigure}

The isotropy group  $I(p)$ of a point $p\in {^*\E^n}$ corresponds to the isometries of $\E^n$ that fix the dual hyperplane $p^*$, i.e. all the translations by vectors in the direction of $p^*$. 
The group $I(p)$ actually pointwise fixes the parabolic line passing through $p$, because each point on the line is dual to a hyperplane parallel to $p^*$ in $\E^n$. By a similar argument, $I(p)$ preserves every other parabolic line (not through $p$) and acts on it by translation. Finally, the group $I(p)$ acts simply transitively on all the elliptic hyperplanes containing $p$, because two such hyperplanes $a,b$
are dual to two points $a^*, b^*$ contained in the hyperplane $p^*$ of $\E^n$, and there is a unique translation in the direction of $p^*$ sending $a^*$ to $b^*$. The stabilizer in $I(p)$ of any such elliptic hyperplane clearly acts on it as $\operatorname{O}(n-1)$.

As a result of this discussion, let us show that any $(0,2)$-tensor on $^*\E^n$ invariant for $\mathrm{Isom}(^*\E^n)$ must necessarily be degenerate. Let $g$ be a bilinear form on $T_p \, ^*\E^n$ invariant under the action of $I(p)$, and fix a vector $X\in T_p\, ^*\E^n$ which is not tangent to the parabolic line through $p$. 
By the above discussion, there is an element of $I(p)$ which maps $X$ to $X+V$, where $V$ is a vector tangent to the parabolic line. Hence if $I(p)$ preserves the bilinear form $g$, then $V$ is null for $g$. Hence $g$ cannot be a scalar product. Since $I(p)$ acts transitively on lines spanned by vectors $X$ as above, all such vectors $X$ must be of the same type, and therefore $g$ is degenerate. 

\begin{fact}\label{fact no met coeucl}
There is no pseudo-Riemannian metric on $(^*\E^n,\mathrm{Isom}(^*\E^n))$.
\end{fact}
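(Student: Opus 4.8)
The plan is to prove \textbf{Fact~\ref{fact no met coeucl}} directly from the transitivity properties of $\mathrm{Isom}(^*\E^n)$ on $^*\E^n$, which were established in the preceding discussion. Since the isometry group acts transitively on points of $^*\E^n$, it suffices to show that there is no $\mathrm{Isom}(^*\E^n)$-invariant non-degenerate symmetric bilinear form on a single tangent space $T_p\,^*\E^n$: invariance forces such a form to be determined by its value at one point, and transitivity then propagates the conclusion everywhere. So I would fix $p\in {^*\E^n}$ and study the action of the isotropy group $I(p)$ on $T_p\,^*\E^n$, aiming to show that any $I(p)$-invariant symmetric bilinear form $g$ on $T_p\,^*\E^n$ must be degenerate.

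The key mechanism is exactly the one highlighted just before the statement. First I would record that $I(p)$ pointwise fixes the parabolic line through $p$, so it fixes the direction $V\in T_p\,^*\E^n$ tangent to that line; and crucially, $I(p)$ contains an element (coming from a Euclidean translation in the direction of the dual hyperplane $p^*$) whose differential sends a chosen vector $X$ transverse to $V$ to $X+V$. The heart of the argument is then a short computation: if $g$ is $I(p)$-invariant, then $g(X,X)=g(X+V,X+V)=g(X,X)+2g(X,V)+g(V,V)$, and iterating the shear (or using that arbitrarily large multiples of $V$ can be added) forces $g(V,V)=0$ and $g(X,V)=0$. Since $V$ is $g$-orthogonal to $X$ and to itself, while $X$ can be taken to span any transverse direction, $V$ lies in the radical of $g$; hence $g$ is degenerate.

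The remaining step is to upgrade ``no invariant non-degenerate form on $T_p$'' to ``no pseudo-Riemannian metric''. Here I would invoke that a pseudo-Riemannian metric invariant under $\mathrm{Isom}(^*\E^n)$ is, in particular, invariant under $I(p)$ at the point $p$, so its value $g_p$ is an $I(p)$-invariant symmetric bilinear form, which we have just shown must be degenerate; this contradicts non-degeneracy of a pseudo-Riemannian metric. Transitivity of the full group on points guarantees that the choice of $p$ is immaterial.

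The main obstacle is making precise that $I(p)$ really does contain the shearing element $X\mapsto X+V$ with $V$ tangent to the parabolic line. This is supplied by the earlier analysis: points on the parabolic line through $p$ are dual to mutually parallel hyperplanes of $\E^n$, and translations in the direction of $p^*$ fix $p^*$ while moving nearby hyperplanes, producing a nontrivial infinitesimal displacement along the parabolic direction in $T_p\,^*\E^n$. I would also need to confirm that $X$ can be chosen transverse to $V$ and that $I(p)$ acts transitively enough on such transverse directions (via its $\operatorname{O}(n-1)$-action on elliptic hyperplanes through $p$) so that the degeneracy conclusion is uniform; both facts are exactly what the discussion preceding the statement provides, so the proof should be short once these ingredients are cited.
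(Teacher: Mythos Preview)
Your proposal is correct and follows essentially the same route as the paper: both arguments use the translations in $I(p)$ (along the direction of the dual hyperplane $p^*$) to produce a shear $X\mapsto X+V$ on $T_p\,^*\E^n$, with $V$ the parabolic direction, and deduce that any $I(p)$-invariant bilinear form must be degenerate. Your conclusion via ``$V$ lies in the radical'' (from $g(V,V)=0$ and $g(X,V)=0$ for every transverse $X$) is in fact a bit more direct than the paper's, which instead argues that $V$ is null and then that transitivity of $I(p)$ on transverse lines forces all such $X$ to have the same type, whence degeneracy; note that your radical argument does not actually need the $\operatorname{O}(n-1)$-transitivity you mention at the end, since the shear already applies to \emph{every} transverse $X$ individually.
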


In fact, the argument above essentially shows that the metric must be of the form described above (up to a factor), namely when lifted to  the double cover $\cE^n$, it restrict to the spherical metric on every hyperplane transverse to the parabolic line.

\subsection{Minkowski space}\label{sec mink}

\paragraph{Minkowski space as a degenerate model geometry.}\index{Minkowski space}

Like Euclidean space, the Minkowski space is a subgeometry of the affine geometry,
and isometries of Minkowski space are of the form

\begin{equation*} 
\left[
\begin{array}{ccc|c}
  
  & & & t_1 \\
   & \raisebox{-4.5pt}{{\huge\mbox{{$A$}}}}  & & \vdots  \\
  & & & t_n \\ \hline
  0 & \cdots & 0 & 1
\end{array}
\right]~,
\end{equation*}
with $A\in \operatorname{O}(n-1,1)$.
Such a transformation also acts on $\R^n\times \{0\}$, and the action reduces to the  action of $A$.
As  $A\in \operatorname{O}(n-1,1)$, passing to the projective quotient, the hyperplane at infinity of $\mathbb{A}^n$ is endowed with the  hyperbolic and the de Sitter geometries. 
 Conversely, suppose that $(\mathbb{A}^n,H)$ is a geometry,  where $H$ is a group of affine transformations, whose action on the hyperplane at infinity  is the one of $\operatorname{PO}(n-1,1)$. Then necessarily the part $A$  for the representative of an element of $H$ must belong to $\operatorname{O}(n-1,1)$.
 This can be summarized as follows.
%

\begin{fact}\label{fact:mink}
The Minkowski space is the projective space minus  a hyperbolic-de Sitter hyperplane.
\end{fact}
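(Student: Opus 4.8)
The plan is to transcribe the Euclidean argument that culminates in Fact~\ref{fact:euclidean}, substituting the indefinite orthogonal group $\operatorname{O}(n-1,1)$ for the compact group $\operatorname{O}(n)$ at every step, and then to identify the geometry induced on the hyperplane at infinity. As in the Euclidean case I would realise the projectivised Minkowski space as the affine chart $\mathbb{A}^n=\{x_{n+1}=1\}$ of $\R\P^n$; an isometry of $\M^n$ is an affine map $x\mapsto A x+\vec t$ with $A\in\operatorname{O}(n-1,1)$, whose homogenisation is the block matrix displayed just above the statement (with bottom-right entry $1$, exactly as in \eqref{eq: rep eucl}). Every such transformation fixes the hyperplane at infinity $\R\P^{n-1}=\mathrm{P}(\R^n\times\{0\})$ and acts on it through its linear part, namely by the class $[A]\in\operatorname{PO}(n-1,1)$.

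The second step is to recognise which geometry this hyperplane at infinity carries. The ambient form restricts on $\R^n\times\{0\}$ to $b_{n-1,1}$, whose isotropic cone projectivises to a non-degenerate quadric of $\R\P^{n-1}$; by the discussion of Section~\ref{sec:mode spaces} this quadric is precisely the absolute of the $(n-1)$-dimensional hyperbolic--de Sitter geometry, its two sides being $\H^{n-1}$ and $\dS^{n-1}$, with common isometry group $\operatorname{PO}(n-1,1)$. Thus the forward direction shows that $\operatorname{Isom}(\M^n)$ preserves the hyperplane at infinity and acts on it exactly as the isometry group of the hyperbolic--de Sitter geometry; this is what it means for that hyperplane to be ``hyperbolic--de Sitter''.

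For the converse I would argue as in the Euclidean case. Suppose $(\mathbb{A}^n,H)$ is a geometry with $H$ a group of affine transformations whose induced action on the hyperplane at infinity is that of $\operatorname{PO}(n-1,1)$. Writing an element of $H$ in the block form with linear part $A$, its action at infinity is the class $[A]$, so the hypothesis forces $[A]\in\operatorname{PO}(n-1,1)$, whence $A\in\operatorname{O}(n-1,1)$ after the appropriate normalisation. Therefore $H\subseteq\operatorname{Isom}(\M^n)$, and together with the forward inclusion this gives $H=\operatorname{Isom}(\M^n)$: the geometry $(\mathbb{A}^n,H)$ is Minkowski geometry, that is, $\M^n$ is $\R\P^n$ minus the hyperbolic--de Sitter hyperplane, which is Fact~\ref{fact:mink}.

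The single step that is not a mechanical substitution is the converse forcing $A\in\operatorname{O}(n-1,1)$. The prescribed action at infinity determines only the class $[A]\in\operatorname{PO}(n-1,1)$, i.e.\ the linear part up to a nonzero scalar, so on the nose one recovers the \emph{similarity} group $\R^{*}\cdot\operatorname{O}(n-1,1)\rtimes\R^n$ rather than $\operatorname{Isom}(\M^n)=\operatorname{O}(n-1,1)\rtimes\R^n$. As already for Euclidean space, passing from the similarity group to the isometry group amounts to fixing a scale, information that is not visible on the hyperplane at infinity (the hyperbolic--de Sitter geometry there is insensitive to the scalar); this is the delicate point of the statement and the only place where the $\operatorname{O}(n)\to\operatorname{O}(n-1,1)$ translation of the proof of Fact~\ref{fact:euclidean} must be supplemented rather than copied verbatim.
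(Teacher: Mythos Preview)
Your proposal is correct and follows exactly the paper's approach: the paper does not give a separate proof of this fact but simply transcribes, in the paragraph immediately preceding it, the Euclidean discussion leading to Fact~\ref{fact:euclidean} with $\operatorname{O}(n-1,1)$ in place of $\operatorname{O}(n)$, precisely as you do. The subtlety you flag in your last paragraph---that the action at infinity only determines the linear part up to a scalar, so one a priori recovers the similarity group rather than the isometry group---is a genuine gap, but it is the \emph{same} gap present in the paper's own treatment of both the Euclidean and the Minkowski cases; the paper is informal here and leaves this normalisation implicit.
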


%

\paragraph{Duality of convex sets.}

  An affine space-like hyperplane $P$ splits $\M^n$ into two half-spaces. The time-orientation of $\M^n$ allows to speak about the future side of $P$. A convex set is \emph{future convex} if it is the intersection of the future of space-like hyperplanes.
Note that a future convex set may have also light-like support planes (e.g. the future cone of a point),\footnote{The \emph{future cone of a point} is the  union of all the future directed time-like or light-like half lines from the point.} but no time-like support plane.

\begin{fact}\label{fact:future cone}
A future convex set contains the future cone of any of its points.
\end{fact}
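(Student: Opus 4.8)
The plan is to prove Fact~\ref{fact:future cone}: if $K$ is a future convex set in $\M^n$ and $p\in K$, then the future cone of $p$ is contained in $K$. I will use the definition directly, namely that $K$ is an intersection of futures of space-like affine hyperplanes, and show that each such future half-space already contains the future cone of $p$.

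First I would fix $p\in K$ and an arbitrary space-like affine hyperplane $P$ such that $K$ is contained in the future side $P^+$ of $P$. Since $p\in K\subset P^+$, the point $p$ lies in the future of $P$. The key geometric observation is that the future side of a space-like hyperplane is itself closed under taking the future cone of any of its points: if $p\in P^+$ and $q$ lies in the future cone of $p$ (i.e.\ $q-p$ is a future-directed causal vector, meaning $b_{n-1,1}(q-p,q-p)\leq 0$ and $q-p$ points to the future), then $q\in P^+$ as well. Concretely, writing $P$ with future-pointing time-like unit normal $\nu$ and a defining linear functional, the future side is $\{x : b_{n-1,1}(x-p_0,\nu)\le 0\}$ for a base point $p_0\in P$ (with the sign convention making the future the relevant side). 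The causal vector $q-p$ satisfies $b_{n-1,1}(q-p,\nu)\le 0$ because the inner product of two future-directed causal vectors in a Lorentzian space is nonpositive; adding this to the inequality witnessing $p\in P^+$ gives $q\in P^+$.

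Therefore the main step is the Lorentzian reverse Cauchy--Schwarz / causal-cone lemma: in $\M^n$, if $u$ and $w$ are both future-directed causal vectors, then $b_{n-1,1}(u,w)\le 0$. I would apply this with $u=\nu$ (the future time-like normal of $P$) and $w=q-p$ (a future causal vector), obtaining $b_{n-1,1}(\nu,q-p)\le 0$, which combined with $b_{n-1,1}(\nu,p-p_0)\le 0$ yields $b_{n-1,1}(\nu,q-p_0)\le 0$, i.e.\ $q\in P^+$. Since $P$ was an arbitrary one of the space-like hyperplanes whose future half-spaces cut out $K$, and $q$ belongs to all of them, we conclude $q\in K$. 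As $q$ was an arbitrary point of the future cone of $p$, the future cone of $p$ lies in $K$, which is what we wanted.

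The step I expect to be the main obstacle is pinning down the causal-cone inequality together with the correct orientation bookkeeping: one must verify that the chosen normal $\nu$ and the chosen sign of the defining functional really do select the \emph{future} side (the one specified by the time-orientation), and that the future cone of $p$ is exactly the set of $q$ with $q-p$ future-directed causal. The inequality $b_{n-1,1}(u,w)\le 0$ for future causal $u,w$ is a standard fact, provable by normalizing $u$ to be time-like unit and decomposing $w$ into its $u$-component and an orthogonal space-like part, but the bookkeeping about which side is ``future'' is where sign errors naturally creep in. Everything else is a short chain of inequalities, so once the lemma and orientation conventions are fixed the result follows immediately.
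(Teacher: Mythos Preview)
Your proof is correct. The paper, however, does not prove this statement at all: it is stated as a bare Fact (with no proof environment) and immediately used. Your argument---reducing to a single future half-space $P^+$ bounded by a space-like hyperplane, and then invoking the reverse Cauchy--Schwarz inequality $b_{n-1,1}(u,w)\le 0$ for future-directed causal vectors $u,w$ to show $P^+$ is closed under taking future cones---is the natural and standard way to justify it, and your sign conventions are consistent with the paper's (signature $(n-1,1)$, future normal $\nu$ with $b_{n-1,1}(\nu,\nu)=-1$, future side $\{x:b_{n-1,1}(x-p_0,\nu)\le 0\}$).
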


Let us denote by $\mathcal{F}$ the closure of the future cone of the origin minus the origin, i.e.
\begin{equation}\label{eq F}\mathcal{F}:=\{x \in \R^{n} \,|\, b_{n-1,1}(x,x)\leq 0, x_{n}>0 \}~. \end{equation}

\begin{df}
An  \emph{admissible convex set} of $\M^n$ is 
a future convex set contained in  $\mathcal{F}$. 
\end{df}

Up to translation, any future convex set contained in the future cone of a point is an admissible convex set. But not all future convex sets are admissible, even up to translation, for example consider the future of a single space-like hyperplane.

Let $K$ be an admissible convex set in $\M^n$, and 
let us identify $\M^n$ with $\{-1\}\times\M^n$ in $\M^{n+1}$. 
 Then
   the cone $\mathcal{C}(K)$ in $\M^{n+1}$ over $K$ is
$$\mathcal{C}(K)=\left\{\left.\lambda \binom{-1}{x} \, \right| \, x \in K, \lambda \geq 0 \right\}~.$$
{Note that $C(K)$ is not closed as it contains points with zero first coordinate in its closure.} 

Let $\mathcal{C}(K)^*$ be its dual in $\M^{n+1}$ for  $b_{n,1}$:
 \begin{equation}\label{eq:CK* mink}\mathcal{C}(K)^*=\{(y_1,y) \in \R^{n+1} \,|\,
 b_{n,1}\left( (y_1,y),(x_1,x)\right) \leq 0, \forall (x_1,x)\in \mathcal{C}(K) \}~.\end{equation}

  We will  denote by $K^*$ the intersection of 
$\mathcal{C}(K)^*$ with $\{y_1=-1\}$. As we identify 
  $\{y_1=-1\}$ with $\M^n$,
 $K^*$ is a closed convex set in $\M^n$, and 
it is readily seen that 
\begin{equation}\label{eq:K* mink} K^*=\{y\in \R^n \,|\, b_{n-1,1}( x,y) \leq -1, \forall x \in K \}~. \end{equation}
This  corresponds to the affine duality with respect to the unit hyperboloid, compare Figure~\ref{fig:dual-hyp} and Figure~\ref{fig:comink}. The dual of $H_r$, the future convex side  of a branch of hyperboloid with radius $r$,  
is  $H_{1/r}$. The fact that $H_1^ *=H_1$ comes also from the following:
 the boundary of $\mathcal{C}(H_1)$ is (a part of) the isotropic cone $\mathcal{I}(b_{n,1})$, which is dual to itself for $b_{n,1}$.
%

Let us call \emph{admissible cone} the future cone of a point contained in the interior of $\mathcal{F}$, and  \emph{admissible truncation} the intersection of $\mathcal{F}$ with the future side of a space-like hyperplane $P$ such that the origin is in the past of $P$, see Figure~\ref{fig:fundpiece}.

\begin{SCfigure}
    \includegraphics[scale=0.2]{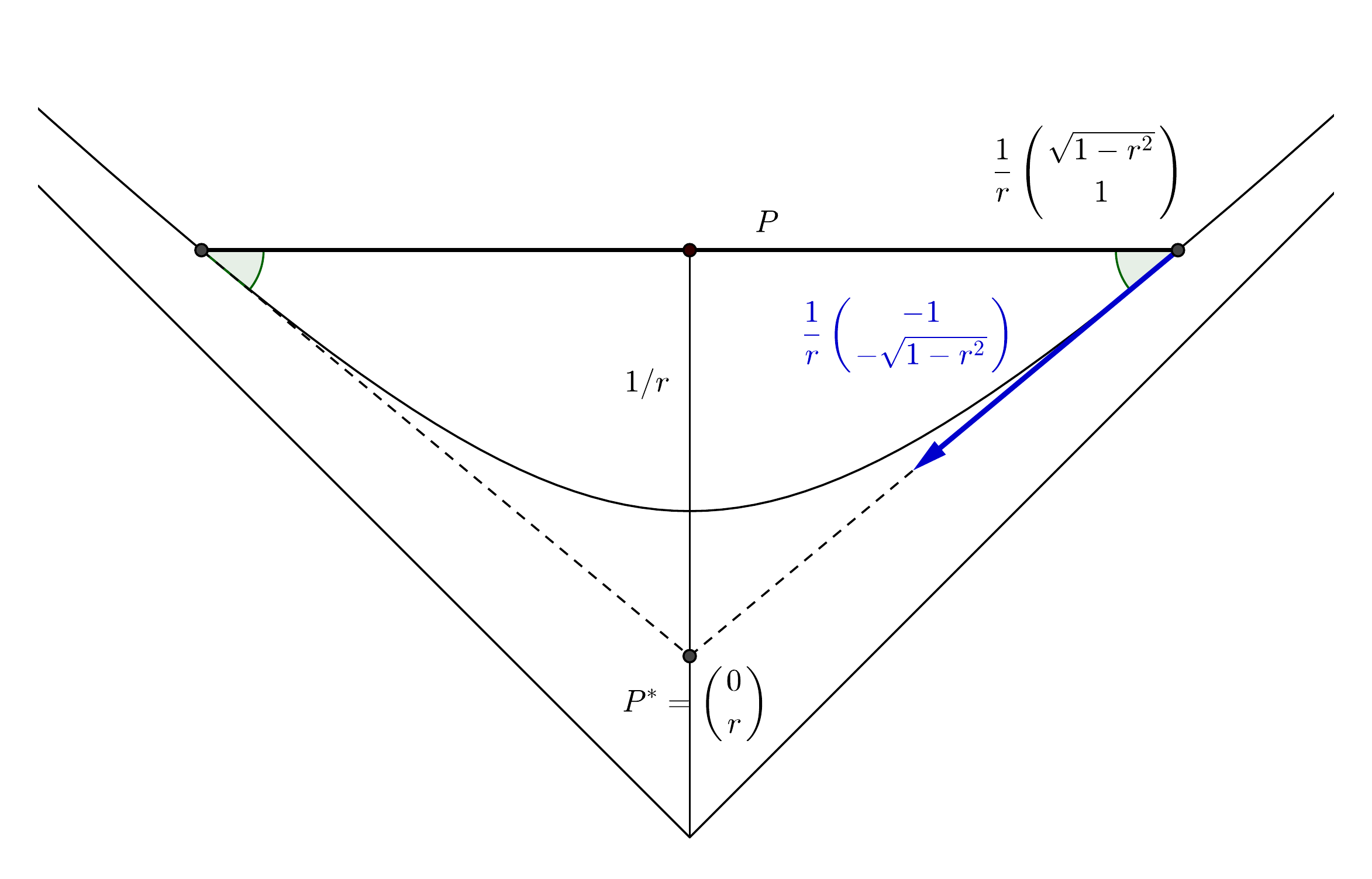}
\caption{\label{fig:dual-hyp} The duality of admissible convex sets in Minkowski space is the affine duality with respect to the upper hyperboloid.}
\end{SCfigure}

\begin{SCfigure}
    \includegraphics[scale=0.9]{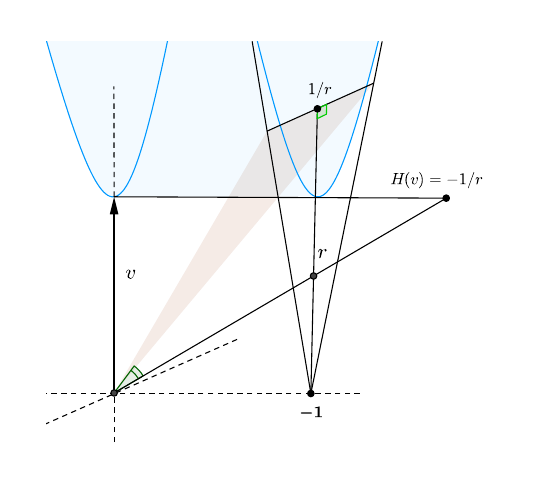}
 \caption{The duality between points and lines in the Minkowski plane expressed in term of orthogonality in a higher-dimensional Minkowski space.}\label{fig:comink}
\end{SCfigure}

\begin{SCfigure}
\includegraphics[scale=1]{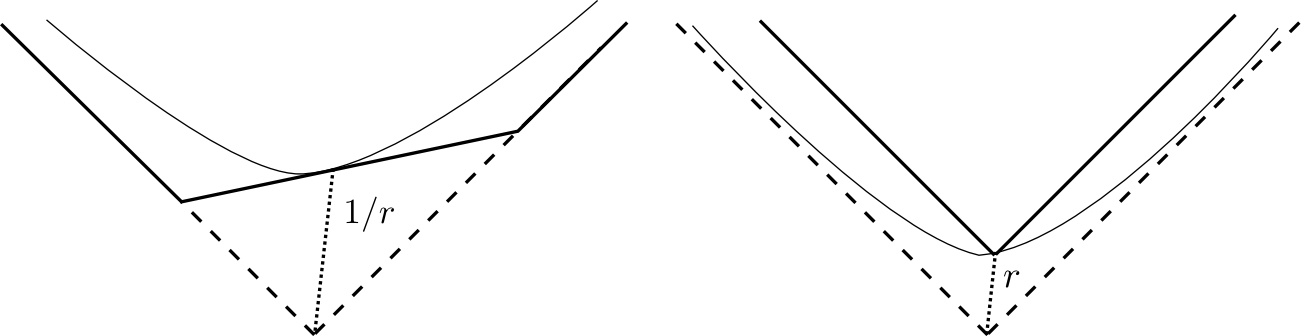}
\caption{\label{fig:fundpiece}The dual of an admissible truncation is an admissible cone.}
\end{SCfigure}

\begin{lemma}\label{lem: fundamental pieces}
The dual of an admissible truncation is an admissible cone, and vice versa.
\end{lemma}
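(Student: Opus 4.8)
The plan is to exploit the duality formula \eqref{eq:K* mink} together with the inclusion-reversing property \eqref{eq:inclusion cone} of the cone duality, reducing everything to an explicit computation of the dual of the two model objects. The key observation is that both an admissible cone and an admissible truncation are defined by very simple convex cones in $\M^{n+1}$, so their duals can be computed directly.

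First I would write down the cone $\mathcal{C}(K)$ in each case. For an admissible truncation $K$, which is the intersection of $\mathcal{F}$ with the future side of a space-like hyperplane $P = \{x \mid b_{n-1,1}(x,v)=h\}$ (with the origin in the past of $P$), the essential boundary constraint is the single linear inequality $b_{n-1,1}(x,v)\geq h$ together with the condition $x\in\mathcal{F}$. Passing to the cone $\mathcal{C}(K)\subset\M^{n+1}$ and using \eqref{eq:K* mink}, the dual $K^*$ should be governed by a single generating inequality, which geometrically is exactly the future cone of a single point $q$ (the dual point of the hyperplane $P$ under the point/hyperplane duality of Minkowski space). Conversely, for an admissible cone $K$, the future cone of a point $p$ in the interior of $\mathcal{F}$, the dual $K^*$ is computed from $b_{n-1,1}(x,y)\leq -1$ for all $x$ in the future cone of $p$; since a cone is an intersection of half-spaces through a family of light-like directions, the dual comes out as a truncation of $\mathcal{F}$ by a space-like hyperplane.

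The cleanest way to organize this is to verify one direction and then invoke the involutivity $(K^*)^*=K$, which holds by \eqref{autoduality} at the level of cones and descends to the affine slices. So I would prove, say, that the dual of an admissible truncation is an admissible cone, using \eqref{eq:K* mink} explicitly: the constraint ``$b_{n-1,1}(x,y)\leq -1$ for all $x$ in the truncation'' is a pointwise-over-$x$ condition, and because the truncation is itself an intersection of the future cone of the origin with one space-like half-space, one can identify the extreme rays of $\mathcal{C}(K)$ and dualize them. The dual of $\mathcal{F}$ is $\mathcal{F}$ itself (since the isotropic cone $\mathcal{I}(b_{n-1,1})$ is self-dual for $b_{n-1,1}$, as already noted for $H_1$ in the excerpt), and the extra space-like face contributes precisely the apex that converts a truncation into a cone. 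Then admissibility of the result — that the dual cone's apex lands in the interior of $\mathcal{F}$, and that the dual truncation's hyperplane has the origin in its past — follows by tracking the strict inequalities encoding that the origin lies strictly in the interior/past.

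The main obstacle I anticipate is not the algebra but the bookkeeping of strict versus non-strict inequalities and the future/past conditions that make these objects \emph{admissible} rather than merely convex. Cone duality naturally handles closed convex cones, but $\mathcal{C}(K)$ here is noted in the excerpt to be non-closed (it accrues boundary points with zero first coordinate), so I would need to be slightly careful that taking the affine slice $\{y_1=-1\}$ of $\mathcal{C}(K)^*$ genuinely recovers a truncation or a cone and not some degenerate limit. Concretely, the delicate point is verifying that the apex of the dual cone lies strictly inside $\mathcal{F}$ (rather than on its light-like boundary), which should correspond exactly to the hyperplane $P$ of the truncation being strictly space-like and strictly to the future of the origin; making this correspondence precise, and confirming it is symmetric under the involution, is where the real content lies. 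A picture (Figure~\ref{fig:fundpiece}) should guide the signs throughout.
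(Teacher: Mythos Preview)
Your plan is sound and would work, but it is organized differently from the paper's proof. You propose to compute $K^*$ directly by identifying the extreme rays of $\mathcal{C}(K)$ (the light-like rays of $\mathcal{F}$ together with the extra space-like face), dualizing them, and then checking admissibility by tracking strict inequalities. The paper avoids this direct computation entirely and instead runs a short double-inclusion argument. Writing the truncation as $K=\{x : b_{n-1,1}(x,x)<0,\ b_{n-1,1}(x-rv,v)\le 0\}$ for a unit future time-like $v$ at distance $r$, the paper first checks from \eqref{eq:K* mink} that the single point $\tfrac{1}{r}v$ lies in $K^*$; Fact~\ref{fact:future cone} then gives $C\subset K^*$ for free, where $C$ is the future cone of $\tfrac{1}{r}v$. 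For the reverse inclusion the paper does not dualize $K$ at all: it shows $C^*\subset K$ by the one-line observation that $\tfrac{1}{r}v\in C$ forces $b_{n-1,1}(y,v)\le -r$ for any $y\in C^*$, and then applies \eqref{eq:inclusion cone} and \eqref{autoduality} to get $K^*\subset C$. The ``vice versa'' then follows immediately from $(K^*)^*=K$.

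What each approach buys: your extreme-ray computation is more hands-on and makes the point/hyperplane correspondence $P\leftrightarrow\tfrac{1}{r}v$ emerge from the algebra, at the cost of the bookkeeping you yourself flag (closure of $\mathcal{C}(K)$, strict versus non-strict inequalities). The paper's argument sidesteps all of that bookkeeping by never computing $K^*$ directly; the only explicit computation is verifying one point belongs to $K^*$ and one inequality holds for $C^*$, and Fact~\ref{fact:future cone} plus the formal properties of cone duality do the rest. Your worries about non-closedness of $\mathcal{C}(K)$ simply never arise in the paper's route.
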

\begin{proof}
Let $K$ be an admissible truncation, i.e. if $v$ is the unit future vector orthogonal to $P$ and if $r$ is the distance from the origin to $P$, then
$$K=\{x\in \M^{n} \,|\, b_{n-1,1}(x,x)<0, b_{n-1,1}(x-rv,v)\leq 0 \}~. $$
From \eqref{eq:K* mink}, we see that $\frac{1}{r}v \in K^*$. From Fact~\ref{fact:future cone}, if $C$ is the future cone of $\frac{1}{r}v$, then $C \subset K^*$.

On the other hand, as $\frac{1}{r}v \in C$, if $y \in C^*$, then by \eqref{eq:K* mink}, $b_{n-1,1}(y,v)\leq -r$, which implies that $y\in K$, hence $C^*\subset K$. The result follows \eqref{autoduality} and {\eqref{eq:inclusion cone}}. 
\end{proof}
\begin{lemma}
The dual of an admissible  convex set in Minkowski space is an admissible  convex set in Minkowski space.
\end{lemma}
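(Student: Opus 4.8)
The plan is to mimic the structure used for the Euclidean case (Lemma on admissible convex bodies) together with the inclusion-reversing property \eqref{eq:inclusion cone} of duality, but now using the family of admissible pieces adapted to the Minkowski setting. The strategy is to ``sandwich'' an arbitrary admissible convex set $K$ between two elementary pieces whose duals are already understood, and then apply monotonicity of duality.

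First I would establish the two relevant inclusions. Since $K$ is admissible, it is a future convex set contained in $\mathcal{F}$. On the one hand, by Fact~\ref{fact:future cone}, $K$ contains the future cone of any of its points; choosing a point $p$ in the interior of $\mathcal{F}$ gives an admissible cone $C$ with $C \subset K$. On the other hand, because $K$ is future convex and contained in $\mathcal{F}$, there is a space-like support hyperplane $P$ with the origin in its past, so that $K$ is contained in the corresponding admissible truncation $T = \mathcal{F} \cap \{\text{future of } P\}$, i.e. $K \subset T$. Thus I obtain the chain $C \subset K \subset T$.

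Next I would dualize. Applying \eqref{eq:inclusion cone} (at the level of cones, equivalently its reflection for the affine duals via \eqref{eq:K* mink}) to the chain $C \subset K \subset T$ reverses the inclusions to give $T^* \subset K^* \subset C^*$. By Lemma~\ref{lem: fundamental pieces}, $T^*$ is an admissible cone and $C^*$ is an admissible truncation. Hence $K^*$ is squeezed between an admissible cone and an admissible truncation, both of which lie in $\mathcal{F}$: this shows that $K^* \subset C^* \subset \mathcal{F}$, so $K^*$ is contained in $\mathcal{F}$. It remains to check that $K^*$ is genuinely future convex. This follows from the general construction: $K^*$ is the intersection over $x \in K$ of the half-spaces $\{y : b_{n-1,1}(x,y) \leq -1\}$ by \eqref{eq:K* mink}, and since each such $x$ lies in $\mathcal{F}$ (so $x$ is future time-like or light-like), each of these half-spaces is bounded by a space-like hyperplane and contains the future side; therefore $K^*$ is an intersection of futures of space-like hyperplanes, i.e. future convex.

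The main obstacle I expect is the verification that the support hyperplanes of $K^*$ are indeed space-like and that the truncation/cone squeeze is set up consistently with the chosen identifications of $\M^n$ inside $\M^{n+1}$ (the first coordinate being $\{y_1 = -1\}$, and the sign conventions in \eqref{eq:CK* mink}--\eqref{eq:K* mink}). Concretely, one must confirm that the defining inequality $b_{n-1,1}(x,y) \leq -1$ for $x \in \mathcal{F} \setminus \{0\}$ really does cut out a future half-space bounded by a space-like hyperplane, which is exactly the translation of the statement that admissible cones dualize to admissible truncations and vice versa, already handled in Lemma~\ref{lem: fundamental pieces}. Once this is in place, the argument is a clean application of monotonicity of duality, entirely parallel to the Euclidean lemma, with the round balls $B_r$ replaced by the hyperboloidal pieces $H_r$ and the elementary admissible cones and truncations.
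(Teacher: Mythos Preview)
Your proof is correct and follows essentially the same sandwich argument as the paper: contain $K$ between an admissible cone and an admissible truncation, dualize using \eqref{eq:inclusion cone} and Lemma~\ref{lem: fundamental pieces}, and read off the two required properties of $K^*$. The only difference is in how you justify that $K^*$ is future convex: the paper deduces it from the inclusion $T^*\subset K^*$ (a closed convex set containing a future cone can have no time-like support plane), whereas you argue directly from the formula \eqref{eq:K* mink}. Your direct argument has a small slip: when $x\in K$ lies on $\partial\mathcal{F}$ (light-like), the hyperplane $\{y:b_{n-1,1}(x,y)=-1\}$ is light-like, not space-like. This is harmless---either restrict the intersection to time-like $x\in K$ (these suffice since $K$ has interior in $\mathcal{F}$), or observe that the future of a light-like hyperplane is itself an intersection of futures of space-like ones---but it is exactly the technical point you flagged as a potential obstacle, so it deserves one clean sentence rather than a parenthetical.
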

\begin{proof}
Let $K$ be an admissible convex set. By Fact~\ref{fact:future cone}, $K$ contains the future cone of a point, and by {\eqref{eq:inclusion cone}} and Lemma~\ref{lem: fundamental pieces}, $K^*$ is contained in an admissible truncation, in particular $K^*$ is in $\mathcal{F}$.

As $K$ is in $\mathcal{F}$, there exists a hyperplane separating $K$ from the origin, in particular $K$ is contained in an admissible truncation. By {\eqref{eq:inclusion cone}} and Lemma~\ref{lem: fundamental pieces}, $K^*$ contains the future cone of a point. From this is it easy to see that the closed convex set $K^*$ must be a future convex set.
\end{proof}

 \paragraph{Support function.}
\index{support function}
Let $\tilde H : \R^n \to \R \cup \{+\infty\}$ be the convex function whose graph is the cone over the boundary of $K$. In particular, $\mathcal{C}(K)$ is the intersection of the epigraph of $\tilde H$ with $\{x_1>0\}$:
\begin{equation}\label{eq:epgraphe mink}
\mathcal{C}(K)=\{(x_1,x)\in \R_{>0}\times \R^{n} \,|\, x_1\geq \tilde{H}(x) \}~.
\end{equation}
 The function $\tilde H$ is non-positive, convex and homogeneous of degree one.
Let $\operatorname{dom}\tilde H$ be the  domain of $\tilde{H}$,  the set of points where $\tilde H$ takes finite values.
By Fact~\ref{fact:future cone}, it is easy to see that any 
future time-like ray from the origin meets the boundary of $K$ exactly once, hence 
$$\operatorname{int}\mathcal{F}\subset \operatorname{dom}\tilde H \subset \mathcal{F}~,
 $$
 (here $\mathcal{F}$ is the set \eqref{eq F} 
 for the Minkowski structure induced on $\{0\}\times \R^n$ by that of the ambient $\M^{n+1}$). Let $v\in \partial \mathcal{F}$, i.e. $v$ is a future light-like vector. If there exists a $\lambda >0$ such that $\lambda v \in K$, then 
 $v \in \operatorname{dom}\tilde H$. Otherwise, 
 as from \eqref{eq:epgraphe mink},
\begin{equation}\label{eq K}K=\{x \in \mathcal{F} \,|\, \tilde H(x) \leq -1 \}~, \end{equation}
we would have $\tilde H(\lambda v)>-1$.  So, by homogeneity, $0\geq \tilde H(v) \geq -1/\lambda$
for all $\lambda >0$, hence $\tilde{H}(v)=0$, and the domain of $\tilde{H}$ is $\mathcal{F}$.

As the epigraph of $\tilde H$ is closed, $\tilde H$ is lower semi-continuous, hence it is determined by its restriction to $\operatorname{int}\mathcal{F}$. {See Section~7 in \cite{roc} for details.}
We will denote by $H$ the restriction of $\tilde H$ to  $\operatorname{int}\mathcal{F}$. The function $H$ is negative.

\begin{df}
The function $H$  is the \emph{support function} of $K^*$.
\end{df} 

The exact relation between $H$ and $K^*$ is given by the following lemma.

\begin{lemma}
Let $H$  be the support function of $K^*$.
Then 
\begin{equation}\label{eq:supp fct mink}K^*=\{y \in \R^n \,|\, b_{n-1,1}( y,x) \leq H(x),\, \forall x\in \operatorname{int}\mathcal{F}  \}~. \end{equation}
\end{lemma}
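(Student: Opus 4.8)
The plan is to deduce \eqref{eq:supp fct mink} from the already established description \eqref{eq:K* mink} of the dual, namely $K^*=\{y\mid b_{n-1,1}(y,x)\le -1,\ \forall x\in K\}$, by exploiting the $1$-homogeneity of $\tilde H$ together with the characterization \eqref{eq K} of $K$ as $\{x\in\mathcal F\mid \tilde H(x)\le -1\}$. The two conditions on $y$ to be matched are thus \textbf{(B)}: ``$b_{n-1,1}(y,x)\le -1$ for all $x\in K$'', and \textbf{(A)}: ``$b_{n-1,1}(y,x)\le H(x)$ for all $x\in\operatorname{int}\mathcal F$''. It then suffices to prove $\textbf{(A)}\Leftrightarrow\textbf{(B)}$, since $\textbf{(B)}$ is exactly the content of \eqref{eq:K* mink}.

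First I would prove $\textbf{(B)}\Rightarrow\textbf{(A)}$ by a scaling argument. Fix $w\in\operatorname{int}\mathcal F$; since $H$ is negative there, set $\mu=-1/H(w)>0$, so that by homogeneity $\tilde H(\mu w)=\mu H(w)=-1$, whence $\mu w\in K$. Applying \textbf{(B)} to $\mu w$ gives $\mu\, b_{n-1,1}(y,w)\le -1$, that is $b_{n-1,1}(y,w)\le H(w)$, which is \textbf{(A)}. The converse implication is immediate for points in the interior: if $x\in K\cap\operatorname{int}\mathcal F$, then \textbf{(A)} combined with $H(x)=\tilde H(x)\le -1$ yields $b_{n-1,1}(y,x)\le H(x)\le -1$.

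The one delicate point, and the main obstacle, is that $K$ may contain light-like points $x\in K\cap\partial\mathcal F$, where $H$ is not defined (recall $H=\tilde H|_{\operatorname{int}\mathcal F}$, and $\tilde H$ is only lower semicontinuous). For such an $x$ I would approximate from inside the cone: pick any $z\in\operatorname{int}\mathcal F$ and set $x_t=(1-t)x+tz\in\operatorname{int}\mathcal F$ for $t\in(0,1]$. Since $\tilde H$ is convex and finite on its domain $\mathcal F$, the one-variable convex function $t\mapsto\tilde H(x_t)$ satisfies $\limsup_{t\to 0^+}H(x_t)\le \tilde H(x)\le -1$. Using \textbf{(A)} on each interior point $x_t$ and the continuity of the linear functional $b_{n-1,1}(y,\cdot)$, letting $t\to 0^+$ gives $b_{n-1,1}(y,x)\le -1$. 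This completes $\textbf{(A)}\Rightarrow\textbf{(B)}$, and hence establishes the equality \eqref{eq:supp fct mink}.
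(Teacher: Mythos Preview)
Your proof is correct and follows essentially the same route as the paper's. Both arguments rest on the homogeneity rescaling $x\mapsto x/(-\tilde H(x))$ to pass between the condition $b_{n-1,1}(y,x)\le -1$ on $K$ and the condition $b_{n-1,1}(y,x)\le H(x)$ on $\operatorname{int}\mathcal F$. The only organizational difference is that the paper introduces the intermediate set $A=\{y\mid b_{n-1,1}(y,x)\le \tilde H(x)\ \forall x\in\mathcal F\}$, shows $A=K^*$, and then invokes the lower semicontinuity of $\tilde H$ (with a reference to Rockafellar) to identify $A$ with the right-hand side of \eqref{eq:supp fct mink}; you instead treat the light-like boundary points directly via the convexity inequality $\tilde H(x_t)\le(1-t)\tilde H(x)+t\tilde H(z)$, which is a more self-contained way of doing the same step.
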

\begin{proof}
By \eqref{eq:K* mink} and \eqref{eq K}
$$K^*=\{y \in \R^n \,|\, b_{n-1,1}(x,y) \leq -1 \; \forall x\in \mathcal{F}, \tilde H(x) \leq -1 \}~, $$
so it is straightforward that the set 
$$A=\{y\in \R^n \,|\, b_{n-1,1}(x,y) \leq \tilde H(x)\; \forall x \in \mathcal{F} \} $$
is contained in $K^*$. Conversely, suppose that $y\in K^*$ and let $x\in \mathcal{F}$. If $\tilde{H}(x)=0$, then $y\in A$. Otherwise, 
by homogeneity,
$$\tilde H \left(\frac{x}{-\tilde H(x)} \right)=-1 $$
so
$$b_{n-1,1}\left(\frac{x}{-\tilde H(x)},y \right) \leq -1 $$
i.e. $b_{n-1,1}(x,y)\leq \tilde H(x)$, so $y \in A$. By lower-semi continuity, the right-hand side of \eqref{eq:supp fct mink} is equal to $A$.

\end{proof}

 The support function also has the following interpretation. 
Let $v$ be a unit  vector of $\{0\}\times \operatorname{int}\mathcal{F} $. Then $H(v)$ is the distance in $\{-1\}\times \M^n $ between the origin and the space-like support plane of $K^*$ directed by $v$ (see Figure~\ref{fig:comink}).\footnote{In other terms, the Lorentzian orthogonal projection of the origin onto the support plane is the point $-H(v)v$.} Hence
\eqref{eq:supp fct mink} expresses the fact that $K^*$ 
is  the envelope of its space-like support planes.

The following fact follows easily from the construction, 

\begin{fact}
The support function provides a  bijection between the space of admissible convex subsets  of $\M^n$  and the space of negative convex $1$-homogenous functions on the cone $\operatorname{int}\mathcal{F}$.
\end{fact}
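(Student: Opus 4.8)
The plan is to realize the claimed bijection as the classical gauge/support correspondence of convex analysis (see \cite{roc}), restricted to the matching subclasses. The preceding discussion already establishes one direction: for $K$ an admissible convex set, the function $\tilde H$ whose graph is the cone over $\partial K$ is convex, $1$-homogeneous and non-positive, with $\operatorname{dom}\tilde H=\mathcal{F}$, and its restriction $H$ to $\operatorname{int}\mathcal{F}$ is negative. So the assignment $K\mapsto H$ is well defined into the asserted target space. For \emph{injectivity} I would note that $H$ recovers $\tilde H$, since $\tilde H$ is $1$-homogeneous and lower semi-continuous and hence determined by its restriction to $\operatorname{int}\mathcal{F}$, and then $\tilde H$ recovers $K$ through \eqref{eq K}. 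Thus $K\mapsto H$ is injective.

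For \emph{surjectivity}, given a negative convex $1$-homogeneous $H$ on $\operatorname{int}\mathcal{F}$, I would extend it to the function $\tilde H$ on $\R^n$ equal to $+\infty$ off $\mathcal{F}$ and to its lower semi-continuous $1$-homogeneous hull on $\mathcal{F}$, and then set $K:=\{x\in\mathcal{F}\,|\,\tilde H(x)\le -1\}$ as in \eqref{eq K}. Such a $K$ is closed, convex as a sublevel set of a convex function, non-empty because $H<0$, and contained in $\mathcal{F}$ since $\{\tilde H\le -1\}\subseteq\operatorname{dom}\tilde H\subseteq\mathcal{F}$. The crucial point is future convexity. Being convex and $1$-homogeneous, $\tilde H$ is subadditive, $\tilde H(x+w)\le\tilde H(x)+\tilde H(w)$; hence for $x\in K$ and $w\in\mathcal{F}$ one gets $\tilde H(x+w)\le -1+0=-1$ and $x+w\in\mathcal{F}$, so $x+w\in K$. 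Therefore $K$ contains the future cone of each of its points, and, exactly as in the proof of the preceding lemma, a closed convex set with this property admits no time-like support plane, hence equals the intersection of the futures of its space-like support hyperplanes, i.e. is future convex. Thus $K$ is admissible.

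It then remains to verify the round trip, namely that $H$ is the support function of this reconstructed $K$. By $1$-homogeneity, the ray $\R_{>0}x$ through any $x\in\operatorname{int}\mathcal{F}$ meets $\partial K$ exactly at $x/(-\tilde H(x))$, so the cone over $\partial K$ is again the graph of $\tilde H$ and its restriction to $\operatorname{int}\mathcal{F}$ is $H$; this is precisely the radial–gauge dictionary for $1$-homogeneous convex functions. This closes the bijection. The main obstacle is this surjectivity step, and within it the verification that the reconstructed $K$ is genuinely \emph{future} convex rather than merely convex, which is where the subadditivity of $\tilde H$ together with Fact~\ref{fact:future cone} is essential; the remaining identities are the formal homogeneity/lower-semicontinuity bookkeeping of convex analysis, for which I would refer to \cite{roc}.
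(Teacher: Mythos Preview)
Your proposal is correct and is essentially the natural fleshing-out of what the paper leaves implicit: the paper does not give a proof of this Fact at all, merely asserting that it ``follows easily from the construction.'' Your argument supplies precisely the missing inverse construction (defining $K$ from $H$ via \eqref{eq K}) and verifies admissibility through the subadditivity of $\tilde H$, which is the right mechanism; the only minor quibble is that your appeal to ``the proof of the preceding lemma'' for future convexity is not quite accurate, since that lemma argues via duality rather than by the support-plane analysis you outline, but your direct argument (no time-like support plane because $K$ contains the future cone of each of its points) is correct on its own.
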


\subsection{Co-Minkowski space}\label{cominkowski plane}

\paragraph{Definition.}
\index{co-Minkowski space}

On $\R^{n+1}$, let $b_-^*$ be the following degenerate bilinear form

$$b^*_-(x,y)=x_2y_2+\cdots+x_{n}y_{n}-x_{n+1}y_{n+1}~. $$

Let $\cM^n$ be the unit sphere for $b_-^*$: $$\cM^n=\{x\,|\,b_-^*(x,x)=-1 \}\,,$$ endowed with the restriction of $b_-^*$ on its tangent space. Topologically,  $\cM^n$ is the real line times  an open disc. Any section 
of $\cM^n$ by a hyperplane not containing a horizontal line is isometric to $\mathcal{H}^n$. 

\begin{df}
The space  $^*\M^n=\cM^n/\{\pm \mathrm{Id}\}$ is the \emph{co-Minkowski space}.
\end{df}

The isotropic cone $\mathcal{I}(b^*)$ contains the line 
$\tilde\ell=\{(\lambda,0,\ldots,0) \}$. Let us call $\ell=\mathrm{P}\tilde\ell$ the \emph{vertex} of the absolute.
Hyperplanes of co-Minkowski space which do not contain the vertex of the absolute are hyperbolic. Lines passing through the vertex of the absolute are parabolic.

\begin{fact}
A strip between two parallel lines or the interior of a cone in the plane 
with the projective distance is a model of $^*\M^2$ or
$\overline{^*\M^2}$. 
\end{fact}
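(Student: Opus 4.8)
The plan is to specialize the general construction to $n=2$ and then read off the two affine pictures according to where one sends the vertex $\ell$ of the absolute. Setting $n=2$, the form is $b^*_-(x,x)=x_2^2-x_3^2$ on $\R^3$, so (using the identification $\mathbb M=\mathrm P\{x:b(x,x)<0\}$ valid for the $(-1)$--pseudo-sphere) we have $^*\M^2=\{[x_1:x_2:x_3]\in\R\P^2 : x_3^2>x_2^2\}$. The isotropic cone $\mathcal I(b^*_-)=\{x_2^2=x_3^2\}$ is the union of the two planes $x_2=x_3$ and $x_2=-x_3$, whose projective quotient is a pair of lines meeting exactly at the vertex $\ell=[1:0:0]=\mathrm P\tilde\ell$. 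Thus the absolute is a \emph{degenerate} conic (two lines through $\ell$), and $^*\M^2$ is one of the two regions it bounds.

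First I would produce the two charts. Recall from the discussion preceding the statement that the image of $^*\M^2$ in any affine chart is the open region cut out by the image of the absolute, and that the projective distance of Lemma~\ref{lem:distances} is defined intrinsically from the absolute through a cross-ratio, hence is carried unchanged through any change of affine chart. In the chart $\{x_3=1\}$, with coordinates $(u,v)=(x_1/x_3,x_2/x_3)$, the vertex $\ell$ is sent to infinity, the two absolute lines become the parallel lines $v=\pm1$, and $x_3^2>x_2^2$ becomes $|v|<1$; so $^*\M^2$ is the strip between two parallel lines. In the chart $\{x_1=1\}$, with coordinates $(s,t)=(x_2/x_1,x_3/x_1)$, the vertex becomes the finite point $(0,0)$, the absolute lines become the crossing lines $t=\pm s$, and $x_3^2>x_2^2$ becomes $t^2>s^2$; so $^*\M^2$ is the interior of the cone they bound. (The two opposite sectors $\{t>|s|\}$ and $\{t<-|s|\}$ are glued along the part of the line at infinity lying in $^*\M^2$, so they form a single connected region of $\R\P^2$, i.e.\ one ``interior of a cone'' in the projective sense.) Replacing $b^*_-$ by $-b^*_-$, via the anti-isometry given by the identity as in the Remark on $\overline{\mathcal M}$, interchanges the metric with its opposite and accounts for the alternative ``$\overline{^*\M^2}$'' in the statement.

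What remains, and is the only genuinely delicate point, is to confirm that the projective distance really does recover the degenerate co-Minkowski structure on these pictures, so that they are \emph{models} and not merely homeomorphic copies. Here one exploits that the absolute is degenerate: a line through $\ell$ meets the absolute only at $\ell$, a single repeated point, so in the notation of Lemma~\ref{lem:distances} one has $\operatorname{I}=\operatorname{J}=\ell$, $[x,y,\operatorname{I},\operatorname{J}]=1$, and the projective distance is $0$, matching the vanishing of the degenerate co-Minkowski metric along the fibers over $\ell$; whereas a line not through $\ell$ meets the two absolute lines in two distinct real points, hence is hyperbolic, and \eqref{eq:d hilb} returns the hyperbolic distance, matching the hyperbolic sections transverse to those fibers. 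I expect the main obstacle to be precisely this bookkeeping around the degenerate vertex: one must verify that the cross-ratio in \eqref{eq:d hilb} degenerates correctly as the two intersection points with the absolute collapse onto $\ell$, and confirm that it is the region $\{x_3^2>x_2^2\}$, and not its complement, that carries the co-Minkowski structure with its parabolic lines through $\ell$ and hyperbolic lines elsewhere --- both of which follow from the classification of lines recorded just before the statement.
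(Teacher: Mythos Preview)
The paper does not actually supply a proof of this Fact; it is stated as an immediate observation following the definition of $^*\M^2$ and the description of its absolute as a pair of lines meeting at the vertex $\ell$, with only the single clarifying sentence ``The two lines in the fact above meet at the vertex of the absolute.'' Your argument is correct and fills in exactly the details the paper leaves to the reader: the identification $^*\M^2=\{[x_1{:}x_2{:}x_3]:x_3^2>x_2^2\}$, the two affine charts (sending $\ell$ to infinity to obtain the strip, keeping $\ell$ finite to obtain the cone), and the check via Lemma~\ref{lem:distances} that lines through $\ell$ are parabolic while all others are hyperbolic, so that the projective distance reproduces the degenerate co-Minkowski structure.
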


The two lines in the fact above meet at the vertex of the absolute.
In an affine chart, in dimension $3$, we have the following, see Figure~\ref{fig:com2}.

\begin{fact}The convex side of a ruled  quadric in $\R^3$ (i.e.  elliptic cone or elliptic cylinder or hyperbolic cylinder) with the projective distance is a model of $^*\M^3$ or
$\overline{^*\M^3}$. .\end{fact}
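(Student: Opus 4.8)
The plan is to follow the same route as the facts on $\H^3,\dS^3$ and $\AdS^3$ above: to show that the three surfaces are merely the distinct affine silhouettes of one and the same projective quadric, namely the absolute of $^*\M^3$, and then to transport the projective distance by invoking the projective invariance of the cross-ratio. For $n=3$ the defining form is $b^*_-(x,x)=x_2^2+x_3^2-x_4^2$ on $\R^4$, so $^*\M^3=\P\{x\mid b^*_-(x,x)<0\}$ and its absolute is $\P\mathcal I(b^*_-)=\P\{x_2^2+x_3^2=x_4^2\}$. Since $b^*_-$ is degenerate with the $x_1$-axis as kernel, this absolute is a quadric cone of rank $3$ in $\R\P^3$ --- the form $b^*_-$ has signature $(2,1)$ up to sign --- with vertex $\ell=[1:0:0:0]$, whose rulings are exactly the isotropic lines through $\ell$. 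This degeneracy is precisely the reason a \emph{ruled} quadric appears here, in contrast with the non-degenerate quadrics governing the hyperbolic and de Sitter facts.

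First I would exhibit the three affine pictures by choosing three hyperplanes at infinity. In the chart $\{x_1=1\}$ the absolute is the elliptic cone $x_2^2+x_3^2=x_4^2$ with finite vertex $\ell$; in the chart $\{x_4=1\}$ it is the elliptic cylinder $x_2^2+x_3^2=1$ with axis the $x_1$-direction, the vertex having escaped to infinity; and in the chart $\{x_3=1\}$ it is the hyperbolic cylinder $x_4^2-x_2^2=1$, again with axis the $x_1$-direction. In each of these charts the region $\{b^*_-<0\}=\{x_2^2+x_3^2<x_4^2\}$ is exactly the convex side of the surface --- a solid nappe of the cone, the interior of the elliptic cylinder, and the part $x_4^2-x_2^2>1$ for the hyperbolic cylinder --- so all three types do occur as affine pictures of $^*\M^3$.

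For the statement as phrased --- starting from an \emph{arbitrary} quadric of one of the three types --- I would apply Sylvester's law of inertia. Homogenizing such a surface yields a quadratic form $q$ on $\R^4$ of rank $3$ and of signature $(2,1)$ or $(1,2)$, hence $\operatorname{GL}(4,\R)$-congruent to $b^*_-$ or $-b^*_-$; let $g$ be a projective transformation with $q=c\,(b^*_-\circ g)$. Then $g$ maps the given quadric onto the absolute of $^*\M^3$, and a short case check shows it carries the convex side onto the component $\{b^*_-<0\}$: the convex side is $\{q<0\}$ with $c>0$ in the two elliptic cases, and $\{q>0\}$ with $c<0$ for the hyperbolic cylinder, so in both cases the image is $\{b^*_-<0\}$. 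Since the projective distance of Lemma~\ref{lem:distances} is read off from the cross-ratio of a pair of points with the two intersections of their line with the absolute, and the cross-ratio is a projective invariant, $g$ preserves the projective distance; hence the convex side, with the projective distance, is isometric to $^*\M^3$.

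The delicate part, and the only real obstacle, is the sign bookkeeping of this last step, which is exactly what the clause ``$^*\M^3$ or $\overline{^*\M^3}$'' records. A surface determines its defining form only up to a nonzero scalar, so one may equally induce the degenerate metric from $b^*_-$ or from $-b^*_-$ on the very same convex region, producing $^*\M^3$ or its anti-isometric partner $\overline{^*\M^3}$; the projective distance, being nonnegative, is blind to this sign and cannot distinguish the two. I would also record that $\{b^*_->0\}$ is never the convex side of such a quadric --- its cone in $\R^4$ fails to be convex --- so the convex region is always the interior $\{b^*_-<0\}$, and it is only the choice of sign of the ambient form, not the choice of side, that the clause ``or $\overline{^*\M^3}$'' absorbs.
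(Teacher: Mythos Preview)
The paper states this Fact without proof, exactly as it does for the parallel Facts on $\H^3$, $\dS^3$ and $\AdS^3$; there is nothing to compare against. Your argument is correct and is the natural route: identify the absolute of $^*\M^3$ as a rank-$3$ projective quadric with vertex $\ell$, exhibit the three listed surfaces as its affine traces in well-chosen charts, and for the converse invoke Sylvester together with the projective invariance of the cross-ratio underlying Lemma~\ref{lem:distances}.

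Two minor remarks. First, your closing observation that $\{b^*_->0\}$ fails to be a convex cone in $\R^4$, so the convex side necessarily lands in $\{b^*_-<0\}$, is a clean uniform argument and makes the earlier three-case sign check with the constant $c$ redundant; you could drop that case check entirely. Second, the parenthetical list in the Fact omits the parabolic cylinder, which is also a rank-$3$ signature-$(2,1)$ affine quadric (take the chart $\{x_3+x_4=1\}$, where the absolute becomes $x_2^2=x_4-x_3$); your Sylvester argument covers it automatically, so this is an omission in the statement rather than in your proof.
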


\begin{SCfigure}
\includegraphics[scale=0.7]{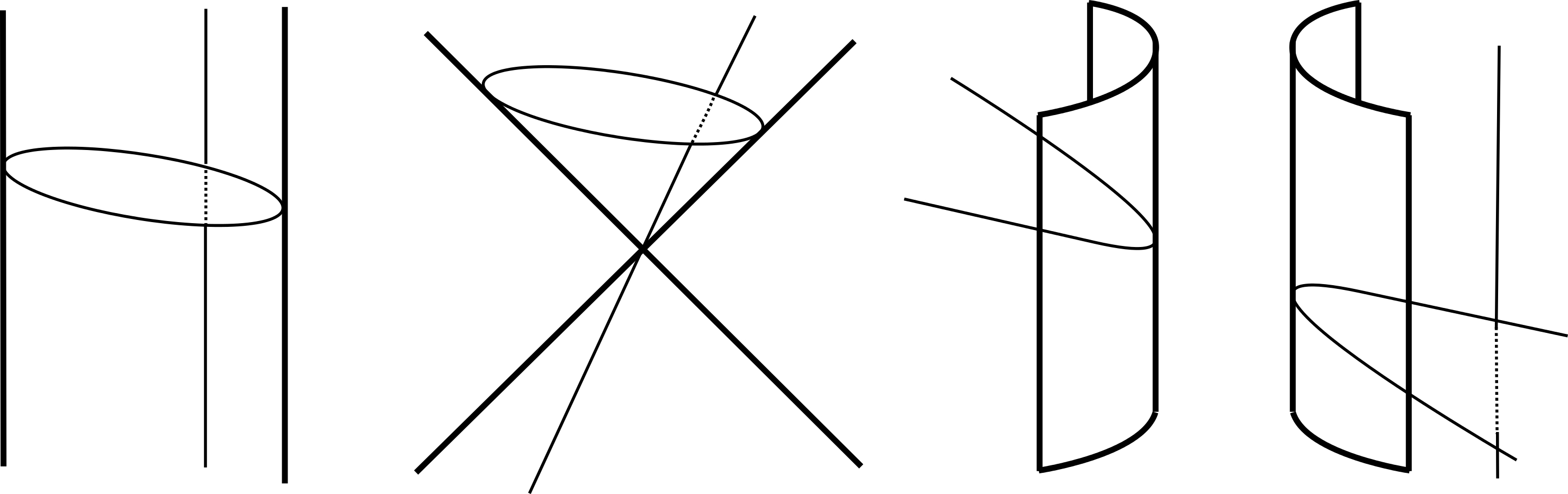}
\caption{\label{fig:com2} Projectively equivalent affine models of $3$d co-Minkowski space. In each case, a hyperbolic plane 
and a parabolic line are drawn.}
\end{SCfigure}

\paragraph{Duality.}

Let $P$ be an affine space-like hyperplane in $\M^n$, and $v$  its future unit normal vector.  Let  $h(v)$ be the Lorentzian distance from $0$ to $P$, i.e. the future of $P$ has equation $b_{n-1,1}( \cdot,v)< -h(v)$. 
The vector
$\binom{h(v)}{v}\in \R^{n+1}$ is   orthogonal in $\M^{n+1}$ to the linear hyperplane  containing $P \times \{-1\}$. Its projective quotient 
defines a point $P^*$ in $^*\M^n$, see Figure~\ref{fig:comink}.
One could also consider the past unit normal vector
$-v$ of $P$. The (signed) Lorentzian distance from the origin is then 
$-h(v)$, and the point $-\binom{h(v)}{v}$ has the same projective quotient as $\binom{h(v)}{v}$.

\begin{fact}\label{fact:comink space plane}
The co-Minkowski space $^*\M^n$ is the space of (unoriented) space-like hyperplanes of $\M^n$.
\end{fact}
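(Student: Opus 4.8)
The plan is to establish a bijection between $^*\M^n$ and the space of unoriented space-like hyperplanes of $\M^n$ by making the duality map $P \mapsto P^*$ constructed in the preceding paragraph into an explicit bijection, and then checking that its target is exactly $^*\M^n$. First I would recall that each affine space-like hyperplane $P$ has two unit normals $\pm v$ with $v$ future-directed, $v \in \mathcal{H}^{n-1}$ (i.e. $b_{n-1,1}(v,v)=-1$), and the signed Lorentzian distance $h(v) \in \R$ from the origin. The assignment $P \mapsto \binom{h(v)}{v}$ lands in $\R^{n+1}$, and the remark in the text that $-\binom{h(v)}{v}$ has the same projective class as $\binom{h(v)}{v}$ when one uses the opposite normal $-v$ shows that the map descends to a well-defined map from unoriented space-like hyperplanes to the projective quotient $\cM^n/\{\pm\mathrm{Id}\} = {^*\M^n}$.

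Next I would verify that the image of this map is genuinely $^*\M^n$ and not something larger or smaller. The key computation is that $b^*_-\!\left(\binom{h(v)}{v},\binom{h(v)}{v}\right) = b_{n-1,1}(v,v) = -1$ since the form $b^*_-$ ignores the first coordinate, so $\binom{h(v)}{v} \in \cM^n$ for every choice of $(v,h)$. This shows surjectivity onto the relevant pseudo-sphere once I observe that every point of $\cM^n$ arises this way: a point $\binom{x_1}{w} \in \cM^n$ has $b_{n-1,1}(w,w)=-1$, so $w$ is a future or past unit time-like vector, determining a space-like hyperplane direction, and $x_1$ encodes its signed distance $h$ from the origin. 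I would then note the geometric justification already given in the text, namely that $\binom{h(v)}{v}$ is $b_{n,1}$-orthogonal in $\M^{n+1}$ to the linear hyperplane containing $P \times \{-1\}$, which is what makes this the correct notion of projective duality.

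Finally I would argue injectivity, which is where the only real subtlety lies: I must check that distinct unoriented space-like hyperplanes give distinct points of $^*\M^n$. If $\binom{h(v)}{v}$ and $\binom{h(v')}{v'}$ are projectively equal, then $(h(v'),v') = \lambda(h(v),v)$ for some $\lambda \neq 0$; since $v,v'$ are both unit time-like, $\lambda = \pm 1$, and $\lambda=-1$ corresponds precisely to swapping $v$ with $-v$, i.e. to the same \emph{unoriented} hyperplane, while $\lambda=1$ forces $(v,h)=(v',h')$ and hence $P=P'$. The main obstacle, and the point requiring care, is exactly this bookkeeping of orientations: one must confirm that passing to the projective quotient $\{\pm\mathrm{Id}\}$ kills precisely the orientation of the normal and nothing more, so that oriented space-like hyperplanes correspond to $\cM^n$ and unoriented ones to $^*\M^n$. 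The parallel with Fact on the co-Euclidean space $^*\E^n$ (hyperplanes of $\E^n$) and the de Sitter description ($\dS^n$ as hyperplanes of $\H^n$) makes the structure of the argument transparent; the content is simply that the degenerate form $b^*_-$ is adapted so that the distance coordinate becomes the direction in which the quadric degenerates.
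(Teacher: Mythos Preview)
Your proposal is correct and follows exactly the approach the paper intends: the paper does not give a separate proof of this Fact but states it as an immediate consequence of the duality construction $P\mapsto\bigl[\binom{h(v)}{v}\bigr]$ described in the preceding paragraph. You have simply made explicit the verification that this map is a well-defined bijection onto $^*\M^n$, which the paper leaves to the reader.
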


Conversely, a hyperbolic hyperplane of $^*\M^n$ (i.e. a hyperplane of 
$^*\M^n$ which does not contain $\ell$) is dual to a point of $\M^n$. 
A co-Minkowski hyperplane of $^*\M^n$ (i.e. a hyperplane of 
$^*\M^n$ which contains $\ell$) is dual to a point at infinity.

{
Let $K$ be an admissible convex subset of $\M^n$, and 
let $H$ be its support function.
By abuse of notation, let us also denote 
by $K^*$ the intersection of $\mathcal{C}(K)^*$ with
$\cM^n$. The set $K^*$ is the closure of the epigraph of the restriction $h$ of 
$H$ to $\cM^n\cap \{x_{1}=0\}$, which we identify with  the upper part of the hyperboloid:$$\mathcal{H}_+^{n-1}=\mathcal{H}^{n-1} \cap \{x_{n}>0 \}~.$$ Note that  by homogeneity,  $H$ is determined by its restriction $h$ to $\mathcal{H}_+^{n-1}$:
\begin{equation*}H(x)=\|x\|_-h(x/\|x\|_-)~,
\end{equation*} 
with $\|x\|_-=\sqrt{-b_{n-1,1}(x,x)}$. 
\begin{fact}
The convex set $K^*$ of $\cM^n$ is the epigraph of $h$, the restriction  to $\mathcal{H}_+^{n-1}$ of the support function of $K$.
\end{fact}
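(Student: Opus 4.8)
The plan is to compute the dual cone $\mathcal{C}(K)^*$ explicitly, recognize it as an epigraph in the direction of the vertex $\ell$ of the absolute, and then observe that intersecting it with $\cM^n$ rather than with the affine slice $\{y_1=-1\}$ costs nothing, because the equation of $\cM^n$ does not involve the vertex coordinate. First I would unwind \eqref{eq:CK* mink} using $\mathcal{C}(K)=\{\lambda\binom{-1}{x}\mid x\in K,\ \lambda\ge 0\}$ and the splitting $b_{n,1}\big((y_1,y),(x_1,x)\big)=y_1x_1+b_{n-1,1}(y,x)$ of the ambient form. The condition $b_{n,1}\big((y_1,y),\lambda(-1,x)\big)\le 0$ for all $\lambda\ge 0$ and all $x\in K$ reduces to $b_{n-1,1}(y,x)\le y_1$ for all $x\in K$, so that
\begin{equation*}
\mathcal{C}(K)^*=\Big\{(y_1,y)\in\R^{n+1}\ \Big|\ y_1\ge \sup_{x\in K}b_{n-1,1}(y,x)\Big\}.
\end{equation*}
The function $y\mapsto\sup_{x\in K}b_{n-1,1}(y,x)$ is precisely the support function $H$ of $K$: it is negative, convex and $1$-homogeneous, and finite on the interior $\operatorname{int}\mathcal F$ of the future cone by Fact~\ref{fact:future cone} (both $y$ and the points of $K$ being future time-like, the products are non-positive and the supremum is attained on the past boundary of $K$). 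Thus $\mathcal{C}(K)^*$ is exactly the epigraph of $H$, the epigraph being taken in the $y_1$-direction, which is the direction of the vertex $\ell$.

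Next I would intersect with $\cM^n=\{y\mid b^*_-(y,y)=-1\}$. Since $b^*_-$ does not involve the first coordinate, the defining equation $b_{n-1,1}(y,y)=-1$ constrains only $y$ and leaves the fibre coordinate $y_1$ free; hence over each base point $y$ with $b_{n-1,1}(y,y)=-1$ the fibre of $\mathcal{C}(K)^*$ is the whole ray $\{y_1\ge H(y)\}$. Restricting the base to one sheet $\{y_1=0\}\cap\cM^n$, identified with $\mathcal{H}_+^{n-1}$, and writing $h=H|_{\mathcal{H}_+^{n-1}}$, we obtain
\begin{equation*}
K^*=\mathcal{C}(K)^*\cap\cM^n=\big\{(y_1,y)\ \big|\ y\in\mathcal{H}_+^{n-1},\ y_1\ge h(y)\big\},
\end{equation*}
which is exactly the epigraph of $h$. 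By $1$-homogeneity $H(x)=\|x\|_-\,h(x/\|x\|_-)$, so $h$ carries all the information of $H$. This is the same computation as in the co-Euclidean case, with the sphere $\cE^n\cap\{x_{n+1}=0\}\cong\mathcal{S}^{n-1}$ replaced by the hyperboloid $\mathcal{H}_+^{n-1}$ and the positive definite $b^*$ replaced by $b^*_-$.

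The content is thus elementary once the dual cone is written down; the delicate points are bookkeeping rather than substance. The first is the sign and identification convention: one must check that the vertex coordinate $y_1$, along which the epigraph is formed, is exactly the coordinate dropped by the degenerate form $b^*_-$, so that the two slicings of the single cone $\mathcal{C}(K)^*$---the affine slice $\{y_1=-1\}$ giving the Minkowski dual $K^*\subset\M^n$ of \eqref{eq:K* mink}, and the hyperboloidal ``screen'' $\cM^n$ giving the present $K^*\subset{}^*\M^n$---are genuinely the same object read in two charts. The second, which is the only real subtlety, is the behaviour near the light cone: as $y$ tends to a light-like direction the supremum defining $H$ need not be attained and $H$ extends only by lower semicontinuity (cf. the discussion preceding \eqref{eq:supp fct mink}, where $\operatorname{dom}\tilde H=\mathcal F$), which is exactly why the statement records the \emph{closure} of the epigraph of $h$ rather than the epigraph itself. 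Finally, passing to the projective quotient identifies the two sheets of $\cM^n$, so describing $K^*$ on the upper sheet $\mathcal{H}_+^{n-1}$ suffices.
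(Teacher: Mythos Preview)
Your proposal is correct and follows the same route as the paper, which treats this Fact as an immediate consequence of the construction (mirroring the co-Euclidean case) rather than giving a separate proof: one writes $\mathcal{C}(K)^*$ as the epigraph of the support function $H$ of $K$ in the vertex direction $y_1$, then observes that the defining equation of $\cM^n$ does not involve $y_1$, so intersecting with $\cM^n$ simply restricts the base to $\mathcal{H}_+^{n-1}$ and yields the epigraph of $h=H|_{\mathcal{H}_+^{n-1}}$. Your closing remarks on the closure/lower-semicontinuity issue near the light cone and on the two sheets are a useful gloss that the paper leaves implicit (note that the text just before the Fact indeed says ``closure of the epigraph'').
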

Note that the function $h:\mathcal{H}_+^{n-1}\to \R$ can be extended by symmetry to $\mathcal{H}^{n-1}$, in such a way that the} projective quotient is well defined on $\H^{n-1}$.

\paragraph{Cylinder model.}

There is another convenient way to describe the dual of admissible convex sets of Minkowski space in the co-Minkowski space, by looking at the affine chart $\{x_{n+1}=1\}$. In this model, $^*\M^n$ is an elliptic cylinder $B^{n-1}\times \R$, with $B^{n-1}$ the open unit disc.

Let $H$ be the support function of $K^*$.
The homogeneous function $H$ is also uniquely determined  by its restriction $\bar h$ to 
$B^{n-1}\times \{1\}$. The intersection of $\mathcal{C}(K)$ with 
$\{x_{n+1}=1\}$ is the graph of  $\bar h$. 
As a restriction of a convex function to a hyperplane, $\bar h$ is a convex function. Conversely, it can be easily seen that the 1-homogeneous extension of a convex function $B^{n-1}\times \{1\}$ is a convex function on the future cone of the origin. {See e.g. \cite[Lemma 2.6]{bf}.} 

\begin{fact}\label{fact:cvxe function disc}
There is a bijection between negative convex functions 
$B^{n-1} \to \R$ and admissible convex subsets of Minkowski space $\M^n$.
\end{fact}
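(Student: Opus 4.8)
The plan is to reduce the statement to the support-function correspondence already established above and then trade a $1$-homogeneous function on a cone for its restriction to a transverse section. Recall the Fact proved just before, namely that the support function provides a bijection between admissible convex subsets of $\M^n$ and negative convex $1$-homogeneous functions $H$ on the cone $\operatorname{int}\mathcal{F}$. Thus it suffices to exhibit a bijection between such functions $H$ and negative convex functions $\bar h\colon B^{n-1}\to\R$, after which I would simply compose the two correspondences.

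First I would fix the transverse section. Since every point of $\operatorname{int}\mathcal{F}$ has positive timelike coordinate $x_{n+1}$, each ray in $\operatorname{int}\mathcal{F}$ meets the affine hyperplane $\{x_{n+1}=1\}$ in exactly one point, and this intersection is precisely the open disc $B^{n-1}\times\{1\}\cong B^{n-1}$. Because $H$ is $1$-homogeneous, it is determined by its restriction $\bar h$ to this section through $H(x)=x_{n+1}\,\bar h(x/x_{n+1})$; this makes the restriction map $H\mapsto\bar h$ injective, with explicit inverse given by this $1$-homogeneous extension. Negativity is clearly preserved in both directions, as positive rescaling does not affect the sign.

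The two convexity statements are the heart of the argument. In the forward direction $\bar h$ is the restriction of the convex function $H$ to an affine hyperplane, hence convex. The converse — that the $1$-homogeneous extension of a convex $\bar h$ is convex on all of $\operatorname{int}\mathcal{F}$ — is the one genuine point to verify, and the cleanest route is through epigraphs: a positively $1$-homogeneous function is convex exactly when its epigraph is a convex cone, and the epigraph of $H$ is the cone with apex at the origin over the region $\{(y,s)\in B^{n-1}\times\R : s\ge \bar h(y)\}$ sitting in $\{x_{n+1}=1\}$. A cone over a convex set is convex, so convexity of $\bar h$ yields convexity of $H$; this is exactly the content recorded in \cite[Lemma 2.6]{bf}.

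Finally I would assemble the pieces: composing $K\mapsto H$ (support function) with $H\mapsto\bar h$ (restriction to the section $\{x_{n+1}=1\}$) sends admissible convex subsets of $\M^n$ bijectively to negative convex functions on $B^{n-1}$, the inverse being the $1$-homogeneous extension followed by the inverse support-function construction. The main obstacle is precisely the convexity of the $1$-homogeneous extension: convexity of $\bar h$ along the single section does not, on its face, propagate to directions transverse to $\{x_{n+1}=1\}$, and it is the epigraph-as-a-cone description that makes this propagation transparent and removes the only subtle step.
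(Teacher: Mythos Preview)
Your proposal is correct and follows essentially the same approach as the paper: compose the already-established support-function bijection with the restriction/1-homogeneous-extension correspondence between functions on $\operatorname{int}\mathcal{F}$ and functions on the section $B^{n-1}\times\{1\}$, noting that the only nontrivial direction (convexity of the 1-homogeneous extension) is handled via the epigraph-as-a-cone picture and the same reference \cite[Lemma 2.6]{bf}. Your write-up is slightly more explicit about injectivity and the epigraph argument, but the logical skeleton is identical.
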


Let us denote  by $K^\#$ the intersection of $C(K)^*$ with $\{x_{n+1}=1\}$.  By \eqref{eq:supp fct mink}, the convex function $\bar h$ determines $K^\#$:
\begin{equation*}\label{Kdiese}
K^\#=\left\{y \,|\, b_{n-1,1}\left(\binom x 1, \binom{ y}{ y_{n}}\right)\leq \bar h(x) \right\}~. \end{equation*}
The way to go from $K^*$ to $K^\#$ is by a projective transformation sending the origin to infinity, see Figure~\ref{fig:com}.

\begin{SCfigure}
\includegraphics[scale=1.2]{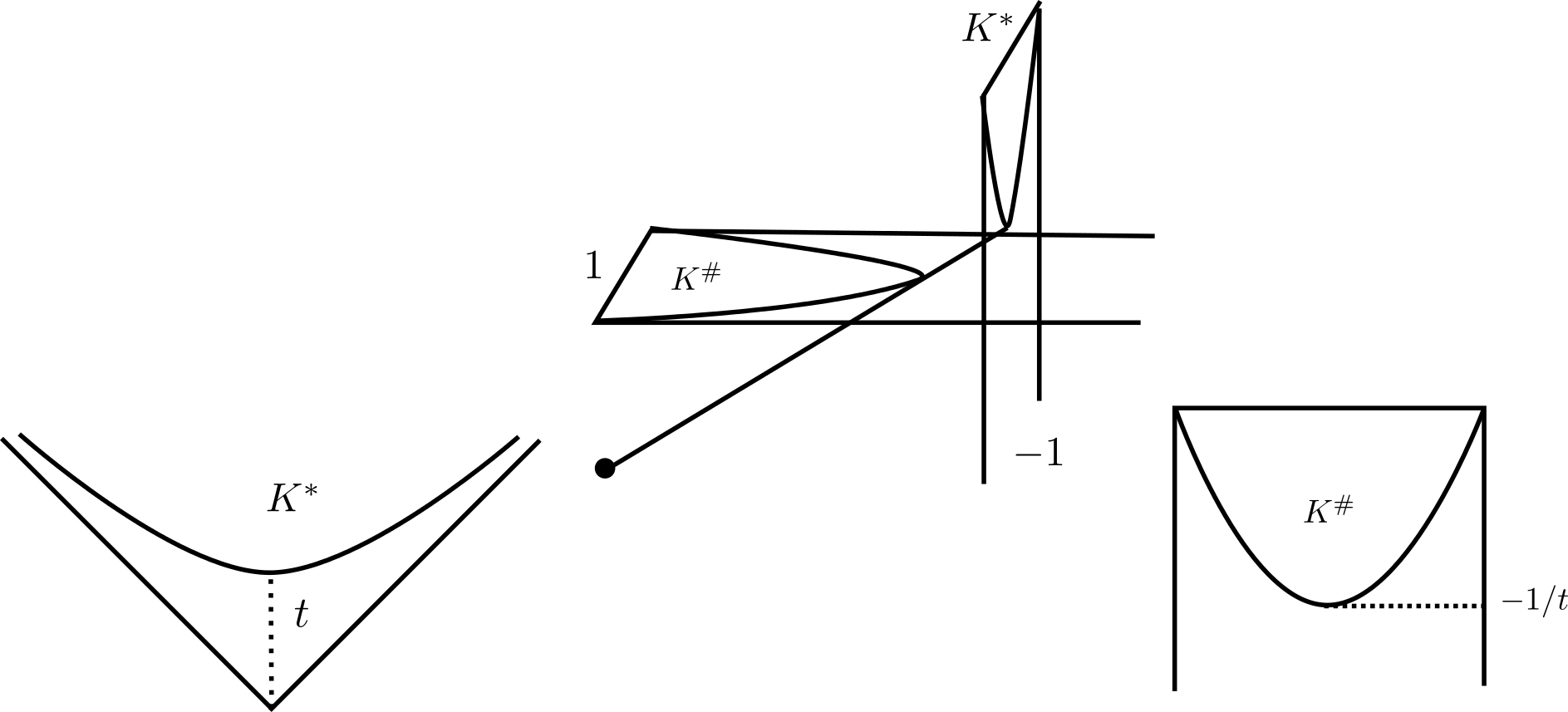}
\caption{\label{fig:com} The projective transformation $\binom x y \mapsto \binom{x/y}{-1/y}$ gives $K^\#$ from $K^*$. A hyperbola is sent to a parabola.}
\end{SCfigure}

\paragraph{A model geometry.}

Let $\mathrm{Isom}(b_-^*)$ be the subgroup of projective transformations that preserves $b^*$. There is a natural injective morphism
${^*}:\mathrm{Isom}(\M^n) \to  \mathrm{Isom}(b_-^*)$ which is defined as follows.
Let $$P(v,h)=\{z\in \R^n \,|\, b_{n-1,1}(z,v)=h \}$$ be a space-like affine hyperplane of $\M^n$, $v\in \mathcal{H}_+^{n-1}, h\in \R^*$. For $A\in \operatorname{O}(n-1,1)$ and $\vec{t}\in \R^n$,
we have $$AP(v,h)+\vec{t}=P(Av,h+b_{n-1,1}(v,A^{-1}\vec{t})),$$ 
from which we define $^*I$ for $I\in \mathrm{Isom}(\M^n)$ as in \eqref{eq: etoile eucl}, with $A\in \operatorname{O}(n-1,1)$.

\begin{df} \label{defi isom comink}
The isometry group of the co-Minkowski space, $\mathrm{Isom}(^*\M^n)$, is the group of projective transformations of the form \eqref{eq:isom co}
for $A\in \operatorname{O}(n-1,1)$, $\vec{t}\in \R^n$.
\end{df}

Note that $\mathrm{Isom}(^*\M^n)$ is a proper subgroup of the group of isometries of $b^*$. For instance the latter also contains \emph{homotheties} of the form
$[\operatorname{diag}(1,\ldots,1,\lambda)]$. As in the co-Euclidean case, these homotheties correspond to displacement along 
parabolic lines, see Figure~\ref{fig:isom comink}.

The  co-Minkowski space is naturally endowed with a degenerate metric $g^*$, which is the hyperbolic metric on the hyperbolic hyperplanes, and zero on the parabolic lines. 

\begin{SCfigure}
     \includegraphics[scale=0.4]{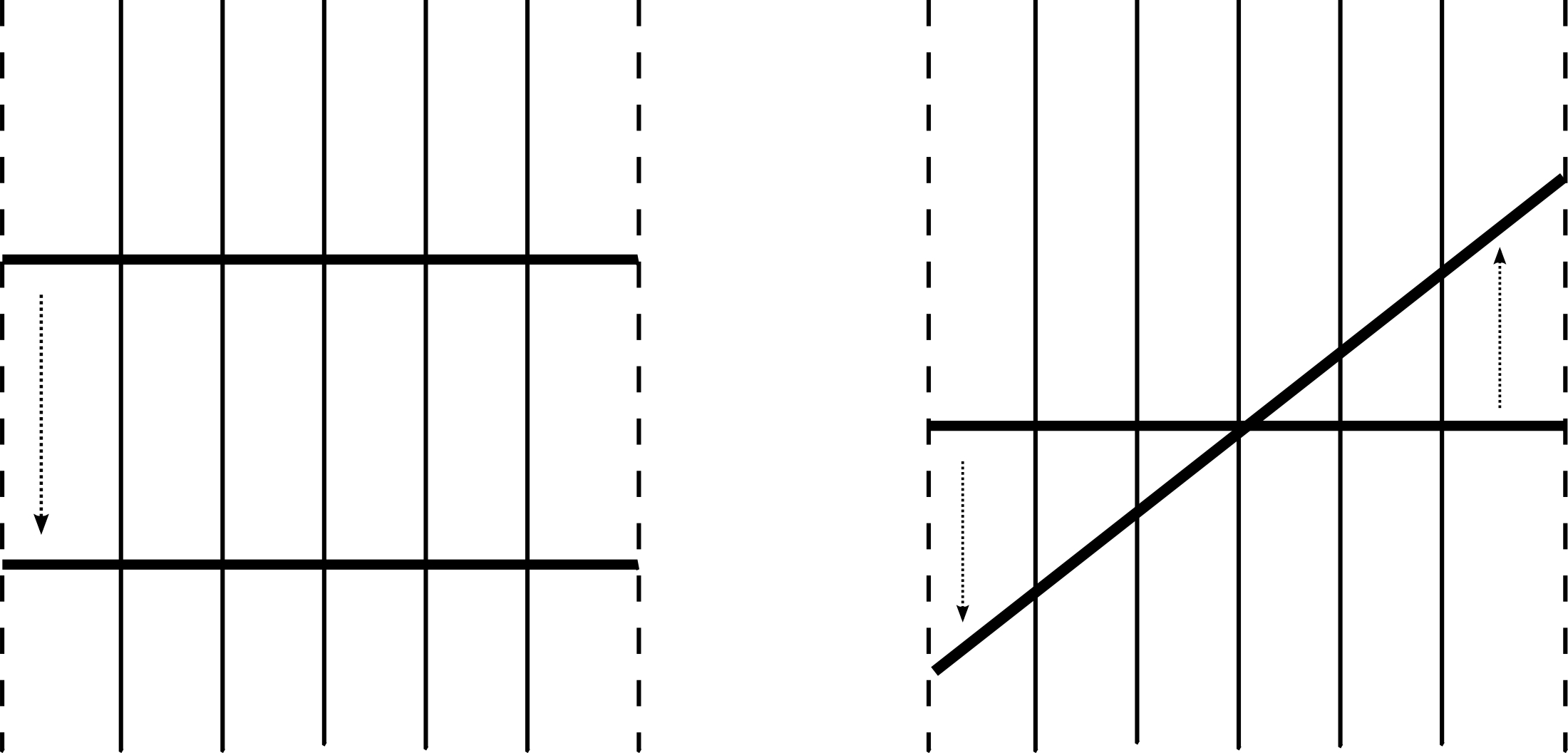}
\caption{\label{fig:isom comink} In this affine model of the co-Minkowski plane, the absolute is composed of two vertical lines. The other vertical lines are parabolic. On the left is the effect of a Minkowski homothety on a hyperbolic line of the co-Minkowski plane, and on the right is the effect of a Minkowski translation on a hyperbolic line of the co-Minkowski plane.}
\end{SCfigure}

Following the same reasoning as in the co-Euclidean case, we see that 
any $(0,2)$-tensor field on the co-Minkowski space which is invariant under the action of $\mathrm{Isom}(^*\M^n)$  must have parabolic directions in its kernel.

\begin{fact}
There is no pseudo-Riemannian metric on $(^*\M^n,\mathrm{Isom}(^*\M^n))$.
\end{fact}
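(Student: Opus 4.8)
The plan is to reproduce, essentially verbatim, the argument given for the co-Euclidean space just before Fact~\ref{fact no met coeucl}, substituting $\operatorname{O}(n-1,1)$ for $\operatorname{O}(n)$ and hyperbolic (space-like) hyperplanes for elliptic ones. As already announced in the sentence preceding the statement, the crux is to show that every $(0,2)$-tensor field on $^*\M^n$ invariant under $\mathrm{Isom}(^*\M^n)$ has the parabolic direction in its kernel; once this is known the Fact is immediate, since such a tensor is degenerate and hence cannot be a pseudo-Riemannian metric.

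First I would unwind the isotropy group $I(p)$ of a point $p\in{}^*\M^n$. By the duality of Fact~\ref{fact:comink space plane}, $p$ corresponds to a space-like hyperplane $p^*$ of $\M^n$, and an element of $\mathrm{Isom}(^*\M^n)$, written in the form \eqref{eq:isom co} with $A\in\operatorname{O}(n-1,1)$, fixes $p$ precisely when the dual Minkowski isometry preserves $p^*$. These are the Lorentz transformations stabilising $p^*$, together with the translations in the directions tangent to $p^*$. Exactly as in the co-Euclidean case, the points of the parabolic line through $p$ (the line joining $p$ to the vertex $\ell$ of the absolute) are dual to the space-like hyperplanes of $\M^n$ parallel to $p^*$; hence $I(p)$ fixes this parabolic line pointwise, translates along every other parabolic line, and the translations in the direction of $p^*$ act simply transitively on the hyperbolic hyperplanes through $p$, the stabiliser of one such hyperplane acting on it as its full isometry group $\mathrm{Isom}(\H^{n-1})$.

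The key step is then the shearing observation. Let $g$ be an $I(p)$-invariant bilinear form on $T_p\,{}^*\M^n$ and let $X$ be a vector transverse to the parabolic line. The translations in the directions tangent to $p^*$ produce, for each such $X$, a one-parameter family of elements sending $X$ to $X+tV$ for all $t$, with $V\neq0$ tangent to the parabolic line; invariance of $g$ then forces $g(V,V)=0$ and, ranging over all translation directions, $g(X,V)=0$ for every $X$, so $V$ lies in the kernel of $g$ and $g$ is degenerate. The only genuine difference from the co-Euclidean argument --- and the point I expect to require the most care --- is that here the relevant hyperplanes carry a hyperbolic rather than an elliptic structure and the acting group $\operatorname{O}(n-1,1)$ is non-compact; one must check that the stabiliser of a hyperbolic hyperplane still acts transitively on the directions transverse to the parabolic line, so that the ``same type'' conclusion of the co-Euclidean proof goes through. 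This holds because that stabiliser acts as $\mathrm{Isom}(\H^{n-1})$, whose action on the unit tangent bundle is transitive. Non-compactness otherwise plays no role: the shear $X\mapsto X+V$ is produced purely by translations along $p^*$, exactly as before, forcing the degeneracy of every invariant tensor and hence the non-existence of an invariant pseudo-Riemannian metric.
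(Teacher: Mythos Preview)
Your proposal is correct and follows exactly the approach the paper intends: the paper's own ``proof'' is the single sentence ``Following the same reasoning as in the co-Euclidean case,'' and you have carried out precisely that transcription, replacing $\operatorname{O}(n)$ by $\operatorname{O}(n-1,1)$ and elliptic hyperplanes by hyperbolic ones. Your shearing argument (the one-parameter family $X\mapsto X+tV$ forcing $g(X,V)=g(V,V)=0$) is in fact slightly cleaner than the paper's co-Euclidean discussion, and already suffices to put $V$ in the kernel without invoking transitivity on transverse directions; that extra check you flag is harmless but not needed.
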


\paragraph{Comments and references} \small
\begin{itemize}

\item One fundamental property of the support functions is that they behave well under addition. More precisely, the Minkowski sum $A+B=\{a+b \,|\,A\in A,b\in B\}$ of two admissible convex bodies is an admissible convex body. If $H_A$ and $H_B$ are the corresponding support functions, then $H_{A+B}=H_A+H_B$. Classical references for convex bodies are for example \cite{bf} and the up-to-date 
 \cite{schneider}.
\item  From Remark~\ref{remark:angles coeucl} and \eqref{eq:d hilb}, the angle between two lines in the Euclidean plane can be written as the logarithm of a cross-ratio. This is the \emph{Laguerre formula}\index{Laguerre formula}, see e.g. \cite{richter}.
\item Similarly to Fact \ref{fact:euclidean} and \ref{fact:mink}, on has that $(\R^{n},b_{n-2,2})$ can be identified with the projective space of dimension $n$ minus a hyperplane endowed with the Anti-de Sitter geometry.
\item The duality between Euclidean (resp. Minkowski) and co-Euclidean (resp. co-Minkowski) spaces is expressed using a different formalism in \cite{CDW,struve}. 
\item Similarly to Fact~\ref{fact:comink space plane}, it is easy to see that  the outside of the co-Minkowski space (i.e. the projective quotient of $(b^ *_-)^ {-1}(1)$) is the space of time-like hyperplanes of Minkowski space. 
\item The boundary of an admissible convex set $K$ in Minkowski space is the graph of a $1$-Lipschitz convex function $u:\R^{n-1}\to \R$. 
From \eqref{Kdiese}, $\bar h=\sup_{(p,p_n)\in K}b_{n-1,1}\left( \binom x 1,\binom{p}{p_n} \right) $ i.e.
$\bar h=\sup_{(p,p_{n})\in K}\left\{ b_{n-1,0}(x,p) - p_{n}\right\}$ i.e. $\bar h =\sup_{p\in \R^d}\left\{ b_{n-1,0}( x,p) - u(p)\right\}$, i.e. $u$ is  the dual of $\bar h$  for the Legendre--Fenchel duality, see e.g. \cite[2.3]{bf} and references therein for more details.
\item By Fact~\ref{fact:cvxe function disc}, a negative convex function on the open unit ball $B$ is the support function $\bar h$ of an admissible convex set $K$ in Minkowski space. 
Let us suppose that the lower semi-continuous extension $g$ of $\bar h$ to the boundary of $B$ is continuous (this is not always the case, for example $g$ could attain no maximum, see p.~870 of \cite{GKR}). Let $h_0$ be the convex function on $B$ such that the lower boundary of the convex hull of the graph of $g$ is the graph of $h_0$. Then $h_0\geq h$.
The function $h_0$ is the support function of a convex subset of Minkowski space which is the \emph{Cauchy development}\index{Cauchy development} of the convex  set whose support function is $h$.  The function $g$ encodes the light-like support planes of $K$
(and $\partial B \times \R$, the absolute of the co-Minkowski space in a suitable affine model, is sometimes called the \emph{Penrose boundary}\index{Penrose boundary}).
Note that a light-like support plane may not touch the boundary of $K$, as in the case of the upper branch of the hyperboloid.

Let $h_0^+$ be the concave function on $B$ such that the upper boundary of the convex hull of the graph of $g$ is the graph of $h_0^+$. It is possible to 
consider $h_0^+$ as the support function of a \emph{past} convex set in Minkowski space. At the end of the day, at a projective level, it would be better to consider as ``convex sets'' of Minkowski space the data of a future convex set and a past convex set, with disjoint interior and same light-like support planes.
\item The $3$ dimensional spaces $\H^3$, $\AdS^3$ and $^*\M^3$ can be defined in a unified way as spaces of matrices with coefficient in $\R + \kappa \R$, with
$\kappa \notin \R$, $\kappa^2 \in \R$, see \cite{DMS}.
\item The reference \cite{DMS} also contains a Gauss--Bonnet formula for $^*\M^2$ geometry.
\end{itemize}
\normalsize


%
%

\section{Geometric transition} \label{sec geometric transition}
\index{geometric transition}
In this section, we will study the so-called \emph{geometric transition} of model spaces  as subsets of projective space. Recall that a model space is an open subset $\mathbb{M}$ of $\R \P^n$, endowed with a closed subgroup $\mathrm{Isom}(\mathbb{M})$ of $\mathrm{PGL}(n+1,\R)$ which preserves the geometric structure of $\mathbb{M}$. 

Moreover, recall that given a model space $\mathbb{M}$, by applying a projective transformation $g\in \mathrm{PGL}(n+1,\R)$ one obtains another model space which is equivalent to $\mathbb{M}$. Indeed, the group of isometries of $g\mathbb{M}$ is precisely $g \mathrm{Isom}(\mathbb{M}) g^{-1}$.

We say that a model space $(\mathbb{N},\mathrm{Isom}(\mathbb{N}))$ is a \emph{conjugacy limit} or \emph{rescaled limit} of another model space $(\mathbb{M},\mathrm{Isom}(\mathbb{M}))$ if there exists a sequence of projective transformations $g_n\in \mathrm{PGL}(n+1,\R)$ such that:
\begin{itemize}
 \item[i)]\label{i} The sequence $g_k \mathbb M$ converges to $\mathbb N$  as $k\to\infty$;
 \item[ii)]\label{ii} The sequence of closed subgroups $g_k \mathrm{Isom}(\mathbb{M}) g_k^{-1}$ converges to $ \mathrm{Isom}(\mathbb{N})$.
\end{itemize}

The convergence here should be meant as the Hausdorff convergence, for instance. Thus the conditions i) and ii) essentially mean that:
\begin{itemize}
 \item[i)] Every $x\in \mathbb{N}$ is the limit of a sequence $\{g_k x_k\}$, for some $x_k\in\mathbb{M}$, as $k\to\infty$;
 \item[ii)] Every $h\in \mathrm{Isom}(\mathbb{N})$ is the limit of a sequence $\{g_k h_k g_k^{-1}\}$, for some $h_k\in\mathrm{Isom}(\mathbb{M})$, as $k\to\infty$.
\end{itemize}
Of course, in general, the sequences $g_k$ and $h_k$ are not  compact sequences in $\mathrm{PGL}(n+1,\R)$. 

A toy model of geometric transition is the 1-dimensional case, namely, the degeneration of projective lines. We already know that a line $c$ in a model space is a subset of a copy of $\R\mathrm{P}^1$, with $0$, $1$ or $2$ points in the absolute if $c$ is elliptic, parabolic or hyperbolic respectively. More precisely, composing with a projective transformation, we can assume that $c$ is $\R\mathrm{P}^1$ (if elliptic), $\R\mathrm{P}^1\setminus\{[1:0]\}$ (parabolic) or $\R\mathrm{P}^1\setminus\{[1:1],[1:-1]\}$ (hyperbolic).  The stabilizer of $c$ in the isometry group of the model space is identified to a subgroup of $\mathrm{PGL}(2,\R)$ of the form:
$$\left\{R_\theta=\begin{bmatrix} \cos\theta & -\sin\theta \\ \sin\theta & \cos\theta  \end{bmatrix}\right\}\,,\qquad\left\{T_a=\begin{bmatrix} 1 & a \\ 0 & 1  \end{bmatrix}\right\}\,,\qquad\left\{S_\varphi=\begin{bmatrix} \cosh\varphi & -\sinh\varphi \\ \sinh\varphi & \cosh\varphi  \end{bmatrix}\right\}\,,$$
for $c$ elliptic, parabolic of hyperbolic respectively. By applying the projective transformations 
$$g_k=\begin{bmatrix} k & 0 \\ 0 & 1  \end{bmatrix}\,,$$
one sees that given a sequence $\theta_k$, the sequence of conjugates 
$$g_k R_{\theta_k}g_k^{-1}=\begin{bmatrix} \cos\theta_k & -k\sin\theta_k \\ (1/k)\sin\theta_k & \cos\theta_k  \end{bmatrix}$$
converges to a projective transformation of the form
$T_a$ provided $\theta_k\sim a/k$. In an analogous way, $g_k S_{\varphi_k}g_k^{-1}$ converges to $T_a$ as $k\to\infty$ if $\varphi_k\sim a/k$, and $g_t$ maps the two points at infinity $[1:1]$ and $[1:-1]$ of the hyperbolic line to $[1:1/k]$ and $[1:-1/k]$, which converge to $[1:0]$ as $k\to\infty$. Hence: 
\begin{fact} Elliptic and hyperbolic lines converge --- in the sense of geometric transition --- to a parabolic line.
\end{fact}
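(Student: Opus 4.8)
The plan is to verify directly the two conditions i) and ii) in the definition of conjugacy limit, using the sequence $g_k=\begin{bmatrix} k & 0 \\ 0 & 1\end{bmatrix}$ introduced above, and to treat the elliptic and hyperbolic cases in parallel, since the conjugated stabilizers have exactly the same shape. I would first dispose of condition i), the convergence of the spaces. For the elliptic line one has $\mathbb{M}=\R\mathrm{P}^1$, all of the projective line, so $g_k\mathbb{M}=\R\mathrm{P}^1$ for every $k$ and the condition is immediate: any point $x$ of the parabolic line $\R\mathrm{P}^1\setminus\{[1:0]\}$ equals $g_k(g_k^{-1}x)$ with $g_k^{-1}x\in\mathbb{M}$. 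For the hyperbolic line $\mathbb{M}=\R\mathrm{P}^1\setminus\{[1:1],[1:-1]\}$ the relevant point is that the two absolute points are pushed to $g_k[1:\pm1]=[1:\pm 1/k]$, both of which converge to $[1:0]$; hence $g_k\mathbb{M}=\R\mathrm{P}^1\setminus\{[1:1/k],[1:-1/k]\}$ converges to $\R\mathrm{P}^1\setminus\{[1:0]\}$, the parabolic line, with the two absolute points degenerating onto the single absolute point of the limit. This is precisely the computation already recorded above.

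The substantial part is condition ii), the Hausdorff convergence of the conjugated stabilizers as closed subgroups of $\mathrm{PGL}(2,\R)$. One inclusion is the computation already carried out: given a target translation $T_a$, a suitable choice of $\theta_k\sim a/k$ (respectively $\varphi_k\sim a/k$) makes $g_kR_{\theta_k}g_k^{-1}$ (respectively $g_kS_{\varphi_k}g_k^{-1}$) converge to a translation, so every element of $\{T_a\}$ arises as a limit of elements of the conjugated groups. For the reverse inclusion I would take an arbitrary sequence of conjugates converging in $\mathrm{PGL}(2,\R)$ and identify its limit. The conjugate is represented by $\begin{bmatrix} \cos\theta_k & -k\sin\theta_k \\ (1/k)\sin\theta_k & \cos\theta_k\end{bmatrix}$ (and likewise with $\cosh,\sinh$ in the hyperbolic case), a matrix of determinant $1$; its lower-left entry tends to $0$ and its diagonal entries are bounded, so convergence in $\mathrm{PGL}(2,\R)$ forces $k\sin\theta_k$ (respectively $k\sinh\varphi_k$) to remain bounded. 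Along a convergent subsequence one then has $\sin\theta_k\to 0$, hence $\cos\theta_k\to\pm1$, and the limit is $\begin{bmatrix} \pm1 & c \\ 0 & \pm1\end{bmatrix}$, which projectively is some $T_a$. Thus the limit subgroup is contained in, and therefore equals, $\{T_a\}$.

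The main obstacle is exactly this reverse inclusion in condition ii): one is taking a limit of \emph{compact} groups — the circle of rotations $\{R_\theta\}$, or its conjugates — to the \emph{noncompact} group $\{T_a\}$, so one must rule out that the limit is strictly larger than $\{T_a\}$. This is where the case analysis on the growth of $k\sin\theta_k$ (respectively $k\sinh\varphi_k$) is genuinely needed: bounded sequences land on honest parabolic translations, whereas if $k\sin\theta_k\to\infty$ then rescaling the determinant-one matrix to keep its entries bounded sends the diagonal and lower-left entries to $0$, producing a rank-one matrix that does not represent an element of $\mathrm{PGL}(2,\R)$; such a sequence leaves every compact subset of $\mathrm{PGL}(2,\R)$ and hence contributes nothing to the Hausdorff limit of the subgroups. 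Once this dichotomy is in place, everything else reduces to the elementary asymptotics $\cos\theta_k\to1$ and $-k\sin\theta_k\to a$ already displayed, and the two conditions i) and ii) together yield that both the elliptic and the hyperbolic stabilizer configurations rescale to the parabolic one.
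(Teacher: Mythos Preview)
Your proposal is correct and follows the same approach as the paper, using the same projective transformations $g_k$ and the same computation of the conjugated matrices. In fact you go somewhat further than the paper: the text preceding the Fact only verifies the forward inclusion in condition ii) (that every $T_a$ arises as a limit $g_kR_{\theta_k}g_k^{-1}$ with $\theta_k\sim a/k$, and similarly for $S_{\varphi_k}$), whereas you also supply the reverse inclusion via the dichotomy on the boundedness of $k\sin\theta_k$, which is a welcome addition.
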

See also Figure \ref{fig:deg1d}.
We will meet this phenomenon again in higher dimensions. We start by discussing some examples of geometric transition of 2-dimensional model spaces in $\R\mathrm{P}^3$.

\begin{SCfigure}
     \includegraphics[scale=0.08]{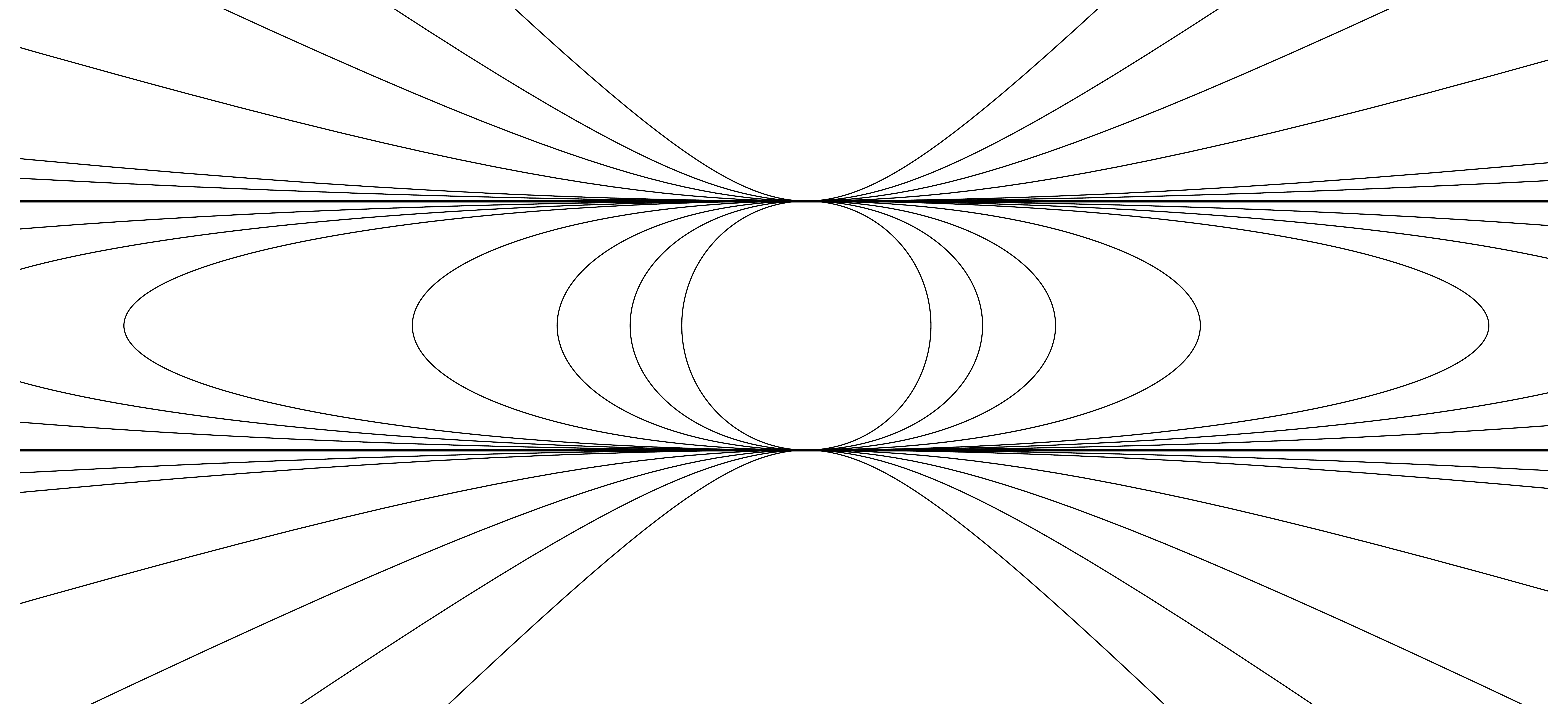}
\caption{\label{fig:deg1d} Geometric transition of elliptic line and hyperbolic line to Euclidean line, represented in the double cover.}
\end{SCfigure}

\subsection{Limits of 2-dimensional model spaces} \label{subsec Limits of 2-dimensional model spaces}

Recall that, in dimension 2, we have already introduced the 
 model spaces corresponding to the elliptic plane $\Ell^2$, the hyperbolic plane $\H^2$, the de Sitter plane $\dS^2$ (which is anti-isometric to $\AdS^2$) and the degenerate model spaces {of} Euclidean and Minkowski plane $\E^2$ and $\M^2$, co-Euclidean and co-Minkowski plane $^*\E^2$ and $^*\M^2$.

\paragraph{Elliptic and hyperbolic planes limit to Euclidean plane.}
Let us consider the unit sphere $\mathcal S^2$ in the Euclidean space $\E^3$.
We shall construct a \emph{geometric transition} which has the Euclidean plane as a conjugacy limit. For this purpose, consider the following projective transformations $g_t\in \operatorname{GL}(3,\R)$, for $t\in(0,1]$:
$$g_t=\begin{pmatrix} 1/t & 0 & 0 \\ 0 & 1/t & 0 \\ 0 & 0 & 1  \end{pmatrix}\,.$$
The transformations $g_t$ map the unit sphere $\mathcal S^2$ to an ellipsoid. 

Observe that  $g_t$ fixes the point $(0,0,1)\in\E^3$ and ``stretches'' the directions $x_1,x_2$. More precisely, given a differentiable path $x(t)\in \mathcal S^2$, for $t\in[0,1]$, such that $x(0)=(0,0,1)$, one has 
$$\lim_{t\to 0}g_t x_t=\lim_{t\to 0}\begin{pmatrix} x_1(t)/t \\ x_2(t)/t \\ x_3(t)  \end{pmatrix}=\begin{pmatrix} \dot x_1(0) \\ \dot x_2(0) \\ 1  \end{pmatrix}\,.$$
Roughly speaking, the rescaled limit is a point of the affine chart $\{x_3=1\}$, which encodes the first order derivative of the  $x_1,x_2$ coordinates. Heuristically, this rescaling procedure is a \emph{blow-up of a point}\index{blow-up! of a point}, in the sense that the point $(0,0,1)$ is preserved, and the transverse directions are ``blown-up''.

This procedure is actually well-defined in projective space. Indeed the transformations $g_t$ descend to projective transformations of $\R\mathrm{P}^2$, which we will still denote by $g_t$.
The points of the Euclidean plane $\E^2$, which is defined as
$$\E^2=\{x\in \E^{3} \,|\, x_3^2=1\}/\{\pm \mathrm{Id}\}$$
are conjugacy limits as $t\to 0$ of sequences of points of the elliptic plane
$$\Ell^2=\{x\in \E^{3} \,|\, b_{3,0}(x,x)=1\}/\{\pm \mathrm{Id}\}\,,$$
thus satisfying condition $i)$ in the definition of conjugacy limit, for instance with $t=1/k$,   {{see Figure~\ref{fig:blow2db} }}. To check that the Euclidean plane is a conjugacy limit of the elliptic space, one has to check also the condition $ii)$ on the isometry groups. That is, given an isometry of $\Ell^2$, namely an element in
$\mathrm{PO}(3)$, let us choose a representative with determinant $\pm 1$ of the form
$$h=\begin{bmatrix} a_{11} & a_{12} & a_{13} \\ a_{12} & a_{22} & a_{23} \\ a_{13} & a_{23} & a_{33}  \end{bmatrix}\,.$$
By a direct computation, one checks that
$$g_th g_t^{-1}=\begin{bmatrix} a_{11} & a_{12} & a_{13}/t \\ a_{12} & a_{22} & a_{23}/t \\ ta_{13} & ta_{23} & a_{33} \end{bmatrix}\,.$$
In fact, $g_th g_t^{-1}$ is a transformation which preserves the quadratic form $(1/t)^2  x_1^2+(1/t)^2  x_2^2+ x_3^2$. Thus every accumulation point of sequences of the form $g_t h(t) g_t^{-1}$ is necessarily of the form
$$h_\infty=\begin{bmatrix} a_{11}(0) & a_{12}(0) & \dot a_{13}(0) \\ a_{12}(0) & a_{22}(0) & \dot a_{23}(0) \\ 0 & 0 & \pm 1 \end{bmatrix}\in\mathrm{Isom}(\E^2)\,,$$
where
$$\begin{pmatrix} a_{11}(0) & a_{12}(0) \\ a_{12}(0) & a_{22}(0) \end{pmatrix}\in \mathrm{O}(2)\,.$$
and
$$h(0)=\begin{bmatrix} a_{11}(0) & a_{12}(0) & 0 \\ a_{12}(0) & a_{22}(0) & 0 \\ 0 & 0 & \pm 1 \end{bmatrix}\,.$$
This shows that the conjugacy limit of the group of isometries of $\Ell^2$ is precisely the group of isometries of $\E^2$, embedded in $\mathrm{PGL}(3,\R)$ in the usual way (see  {\eqref{eq: rep eucl}} and {\eqref{eq: rep eucl2}}). By a completely analogous proof, using the same transformations $g_t$, one sees that $\E^2$ is a conjugacy limit of the hyperbolic plane $\H^2$,  {see Figure~\ref{fig:blow2db}}. Thus one can imagine that the Euclidean plane (at $t=0$) is an interpolation of the elliptic plane (for $t>0$) and the hyperbolic plane (for $t<0$). 

\paragraph{Elliptic and de Sitter plane limit to co-Euclidean plane.}

We now describe a different procedure which permits to obtain a different limit from the elliptic plane, namely, we will obtain the co-Euclidean space as a conjugacy limit of the elliptic plane. Thus, consider
$$g_t^*=\begin{bmatrix} 1 & 0 & 0 \\ 0 & 1 & 0 \\ 0 & 0 & 1/t  \end{bmatrix}\in \mathrm{PGL}(3,\R)$$
for $t\in (0,1]$. As a remarkable difference with the previous case, the projective transformation $g_t$ leaves invariant a geodesic line of $\Ell^2$, namely the line which is defined by the plane $\{x_3=0\}$ of $\E^3$. So the ``stretching'' occurs only in the transverse directions  to $\{x_3=0\}$, and indeed the rescaled limit of a differentiable path of points of the form $x(t)=[x_1(t):x_2(t):x_3(t)]$ such that $x_3(0)=0$ is:
$$\lim_{t\to 0}g_t^* x_t=\lim_{t\to 0}\begin{bmatrix} x_1(t) \\ x_2(t) \\ x_3(t)/t  \end{bmatrix}=\begin{bmatrix} x_1(0) \\  x_2(0) \\ \dot x_3(0)  \end{bmatrix}\,.$$
We will indeed call this transition the \emph{blow-up of a line}\index{blow-up! of a line}. This shows that points of the co-Euclidean plane, defined by 
$$^*\E^2=\{x\in \E^{3} \,|\, b^*(x,x)=1\}/\{\pm \mathrm{Id}\}$$
(where $b^*(x,x)=x_1^2+x_2^2$) are rescaled limits of sequences in 
$$\Ell^2=\{x\in \E^{3} \,|\, b_{3,0}(x,x)=1\}/\{\pm \mathrm{Id}\}\,.$$
For what concerns the isometry groups, we will give again a computation which shows that $\mathrm{Isom}(^*\E^2)$ is the limit of $g_t \mathrm{Isom}(\Ell^2)(g_t^*)^{-1}$. As before, choose a representative of determinant $\pm 1$, say
$$h=\begin{bmatrix} a_{11} & a_{12} & a_{13} \\ a_{12} & a_{22} & a_{23} \\ a_{13} & a_{23} & a_{33}  \end{bmatrix}\in\mathrm{PGL}(3,\R)\,.$$
By a direct computation,
$$g_t^*h (g_t^*)^{-1}=\begin{bmatrix} a_{11} & a_{12} & ta_{13} \\ a_{12} & a_{22} & ta_{23} \\ a_{13}/t & a_{23}/t & a_{33} \end{bmatrix}\longrightarrow \begin{bmatrix} a_{11}(0) & a_{12}(0) & 0 \\ a_{12}(0) & a_{22}(0) & 0 \\ \dot a_{13}(0) & \dot a_{23}(0) & \pm 1 \end{bmatrix}\,,$$
provided $a_{13}(0)=a_{23}(0)=0$ and $$\begin{pmatrix} a_{11}(0) & a_{12}(0) \\ a_{12}(0) & a_{22}(0) \end{pmatrix}\in \mathrm{O}(2)\,.$$
In conclusion the limit of $g_t^*h (g_t^*)^{-1}$ is an element of the group $\mathrm{Isom}(^*\E^2)$ in Definition \ref{defi isom coeucl}.

Recall that we have introduced a duality of $\Ell^2$ to itself, which maps lines of $\Ell^2$ to points of $\Ell^2$ and vice versa. In principle, the dual space is a model space in the dual projective space. The dual space of $\Ell^2$ is thus identified to $\Ell^2$ itself if one chooses the scalar product of $\E^3$ to identify $\R\mathrm{P}^2$ to its dual projective space. Now, observe that the transformations $g_t$ and $g_t^*$, which have been used to rescale $\Ell^2$ to get $\E^2$ and $^*\E^2$ respectively, are well-behaved with respect to this duality. Namely, $b_{3,0}(x,y)=0$ if and only if $b_{3,0}(g_t x,g_t^* y)=0$. In other words, if $*$ denotes the duality point-line induced by the ambient scalar product, the following diagram is commutative:

\[
\xymatrix{
(\Ell^2,\mathrm{Isom}(\Ell^2)) \ar[r]^-{g_t} \ar@{<->}[d]^-{*}  & (g_t\Ell^2,g_t\mathrm{Isom}(\Ell^2)g_t^{-1}) \ar@{<->}[d]^-{*} \\
(\Ell^2,\mathrm{Isom}(\Ell^2)) \ar[r]^-{g_t^*}  & (g_t^*\Ell^2,g_t^*\mathrm{Isom}(\Ell^2)(g_t^*)^{-1}) 
}
\]

When $t\to 0$, we have already observed that $(g_t\Ell^2,g_t\mathrm{Isom}(\Ell^2)g_t^{-1})$ converges to  $(\E^2,\mathrm{Isom}(\E^2))$, while on the other hand $(g_t^*\Ell^2,g_t^*\mathrm{Isom}(\Ell^2)(g_t^*)^{-1}$ converges to 
$(^*\E^2,^*\mathrm{Isom}(^*\E^2))$. The natural duality of $\E^2$ and $^*\E^2$ is again induced by the scalar product of $\E^3$ and the commutativity of the diagram passes to the limit. Thus, we have shown:

\begin{fact} \label{rescaleEllE2}
The dual in $^*\E^2$ of a rescaled limit $x_\infty\in\E^2$ of points $x(t)\in\Ell^2$ is the rescaled limit of the dual lines $x(t)^*$ in $\Ell^2$ and vice versa (by exchanging the roles of points and lines).
\end{fact}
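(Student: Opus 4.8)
The plan is to deduce the statement by passing to the limit $t\to 0$ in the commutative square displayed above, the only genuine inputs being that the duality $*$ commutes with the rescalings $g_t,g_t^*$ for each fixed $t>0$ and that $*$ is a continuous operation.

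First I would make the duality explicit. Since $b_{3,0}$ is the standard scalar product, the self-duality of $\Ell^2$ sends a point $[x]$ to the line $x^*=\{[y]\mid b_{3,0}(x,y)=0\}$, which in coordinates is the hyperplane carried by the same coordinate vector; thus $*$ is a fixed projective-linear isomorphism $\R\P^2\to(\R\P^2)^*$, continuous everywhere. The key algebraic fact, already recorded above, is $b_{3,0}(g_tx,g_t^*y)=t^{-1}b_{3,0}(x,y)$, whence $b_{3,0}(g_tx,g_t^*y)=0\Leftrightarrow b_{3,0}(x,y)=0$. I would spell out that this is exactly the commutativity $*(g_t[x])=g_t^*(\,*[x])$: writing an arbitrary point of $g_t^*(x^*)$ as $[g_t^*y]$ with $b_{3,0}(x,y)=0$ and substituting $z=g_t^*y$, the defining relation becomes $b_{3,0}(g_tx,z)=0$, which is precisely the equation of $*(g_t[x])$.

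Next I would take the limit. Given a path $x(t)\in\Ell^2$ with rescaled limit $x_\infty=\lim_{t\to 0}g_tx(t)\in\E^2$, continuity of $*$ gives $*(g_tx(t))\to *(x_\infty)$; by the commutation above $*(g_tx(t))=g_t^*(x(t)^*)$, so the rescaled limit of the dual lines satisfies $\lim_{t\to 0}g_t^*(x(t)^*)=*(x_\infty)$. It then remains to identify $*(x_\infty)$ with the point/hyperplane duality between $\E^2$ and $^*\E^2$ of Subsection~\ref{coeucl}. This holds because the vertical arrows are induced by the same ambient form $b_{3,0}$ at every stage, and the limiting square has $g_t\Ell^2\to\E^2$ along the top and $g_t^*\Ell^2\to{}^*\E^2$ along the bottom (established in the preceding paragraphs); hence the limiting vertical arrow is exactly the $\E^2$–$^*\E^2$ duality. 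The ``vice versa'' follows by reading the same square in the other direction, exchanging the roles of $g_t$ and $g_t^*$ and of points and lines, using that $*$ is an involution.

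The step requiring the most care is this last identification: one must check that the duality arising as the limit of the self-dualities of $\Ell^2$ genuinely coincides with the independently defined point/hyperplane duality of $\E^2$ and $^*\E^2$. This is not a convergence subtlety, since $*$ is a single continuous projective isomorphism, but rather a matter of recognizing that both dualities are induced by $b_{3,0}$, so that no discrepancy in normalization or in the choice of projective representative can survive in the limit.
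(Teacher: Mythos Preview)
Your proposal is correct and follows essentially the same approach as the paper: the paper's argument is precisely the commutative square you describe, together with the observation that the duality $*$ on both sides is induced by the same ambient bilinear form $b_{3,0}$, so that commutativity passes to the limit. You have in fact been more explicit than the paper in two places---you write out the factor $t^{-1}$ in $b_{3,0}(g_tx,g_t^*y)=t^{-1}b_{3,0}(x,y)$ (the paper only records the equivalence of vanishing), and you spell out the continuity argument for the limit---but these are elaborations of the same proof rather than a different route.
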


In a completely analogous way, one can define a geometric transition which permits to obtain the co-Euclidean plane $^*\E^2$ as a conjugacy limit of the de Sitter plane $\dS^2$. Indeed, observe that $\Ell^2$ and $\dS^2$ are the constant curvature non-degenerate model spaces in dimension $2$ which contain an elliptic line $\Ell^1$, whereas $^*\E^2$ is the degenerate space having an embedded copy of $(\Ell^1,\mathrm{Isom}(\Ell^1))$. Therefore the following fact holds:

\begin{fact} \label{rescaleHE2}
The dual in $^*\E^2$ of a rescaled limit $x_\infty\in\E^2$ of points $x(t)\in\H^2$ is the rescaled limit of the dual lines $x(t)^*$ in $\dS^2$. The dual in $\E^2$ of a rescaled limit $x_\infty\in\,^*\E^2$ of points $x(t)\in\dS^2$ is the rescaled limit of the dual lines $x(t)^*$ in $\H^2$. The converse is also true, by exchanging the role of points and lines.
\end{fact}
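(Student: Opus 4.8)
The plan is to run the same argument used for Fact~\ref{rescaleEllE2}, replacing the self-duality of $\Ell^2$ (induced by $b_{3,0}$) by the point/line duality between $\H^2$ and $\dS^2$ induced by $b_{2,1}$, while keeping the same pair of rescalings $g_t=\operatorname{diag}(1/t,1/t,1)$ and $g_t^*=\operatorname{diag}(1,1,1/t)$. Recall from Subsection~\ref{duality} that, for $b_{2,1}$, a point $x\in\H^2$ is dual to the space-like line $x^*\subset\dS^2$ obtained as the projective quotient of the orthogonal plane $\{y\,|\,b_{2,1}(x,y)=0\}$, and conversely a space-like line of $\dS^2$ is dual to a point of $\H^2$. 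A path $x(t)\in\H^2$ with $x(0)=(0,0,1)$ has dual lines $x(t)^*$ converging to $\{x_3=0\}\cap\dS^2$, which is exactly the space-like elliptic line left invariant by $g_t^*$; so the duality is well adapted to the two blow-ups, $g_t$ blowing up the point $(0,0,1)$ of $\H^2$ to produce $\E^2$ and $g_t^*$ blowing up the dual elliptic line of $\dS^2$ to produce ${}^*\E^2$.

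First I would record the single computation that makes everything work:
\[
b_{2,1}(g_t x,g_t^* y)=\frac{1}{t}\,b_{2,1}(x,y)\,,
\]
so that $b_{2,1}(x,y)=0$ if and only if $b_{2,1}(g_t x,g_t^* y)=0$; equivalently $g_t^*$ is, projectively, the inverse of the $b_{2,1}$-adjoint of $g_t$. This is the exact analogue of the relation exploited for $\Ell^2$, and it is what makes the following square commute for every $t\in(0,1]$, the vertical arrows being the $b_{2,1}$-duality $*$:
\[
\xymatrix{
(\H^2,\mathrm{Isom}(\H^2)) \ar[r]^-{g_t} \ar@{<->}[d]^-{*}  & (g_t\H^2,g_t\mathrm{Isom}(\H^2)g_t^{-1}) \ar@{<->}[d]^-{*} \\
(\dS^2,\mathrm{Isom}(\dS^2)) \ar[r]^-{g_t^*}  & (g_t^*\dS^2,g_t^*\mathrm{Isom}(\dS^2)(g_t^*)^{-1})
}
\]

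Next I would let $t\to 0$. By the transitions already established in this subsection, the top row converges to $(\E^2,\mathrm{Isom}(\E^2))$ and the bottom row converges to $({}^*\E^2,\mathrm{Isom}({}^*\E^2))$. Writing $\tilde x=g_t x$ and $\tilde y=g_t^* y$, the displayed identity reads $b_{2,1}(x,y)=t\,b_{2,1}(\tilde x,\tilde y)$, so the incidence $b_{2,1}(x,y)=0$ is simply $b_{2,1}(\tilde x,\tilde y)=0$ in the rescaled coordinates; letting $\tilde x\to x_\infty\in\E^2$ and $\tilde y\to y_\infty\in{}^*\E^2$ it survives as $b_{2,1}(x_\infty,y_\infty)=0$, which is the incidence relation of the natural $\E^2$/${}^*\E^2$ duality of Subsection~\ref{coeucl}. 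Hence the limiting square still commutes. Chasing $x_\infty\in\E^2$ (top-left) down and across yields the first assertion; reading the same square starting from $x_\infty\in{}^*\E^2$ (bottom-left) up and across yields the second; and the final ``converse'' is nothing but the symmetry of projective duality under exchange of points and hyperplanes.

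I expect the one point requiring care to be the passage to the limit of the vertical duality arrows and their identification with the natural duality of Subsection~\ref{coeucl}. Concretely, one should check that the images $g_t^* x(t)^*$ of the dual lines converge --- after normalizing their defining linear forms by a factor $1/t$ --- to the line $\{z\,|\,b_{2,1}(x_\infty,z)=0\}$, and that in the affine chart $\{x_3=1\}$ this is precisely the line of ${}^*\E^2$ dual to $x_\infty$. This amounts to matching the lift conventions for $\cE^2$ with the chart used to realize $\E^2$; I note that, unlike in the elliptic case, it comes out without any sign correction in the chart $\{x_3=1\}$. Once this bookkeeping is settled the argument is entirely formal, structurally identical to the proof of Fact~\ref{rescaleEllE2}, and no new analytic input is required.
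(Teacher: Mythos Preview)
Your proposal is correct and follows essentially the same approach as the paper, which does not give a separate proof of Fact~\ref{rescaleHE2} but simply declares it ``completely analogous'' to Fact~\ref{rescaleEllE2}. You have supplied the missing details: replacing $b_{3,0}$ by $b_{2,1}$, verifying the key compatibility $b_{2,1}(g_t x,g_t^* y)=\tfrac{1}{t}b_{2,1}(x,y)$, and observing that the resulting limiting duality agrees with the $\E^2/{}^*\E^2$ duality of Subsection~\ref{coeucl} once the chart conventions are matched.
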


We thus have the following diagram, which encodes the possible transitions and dualities involving $\E^2$ and $^*\E^2$, {see Figures \ref{fig:blow2db} and  \ref{fig:blow2d}}:

\begin{equation}\label{eq:deg2d}
\xymatrix{
(\Ell^2,\mathrm{Isom}(\Ell^2)) \ar[rr]^-{\text{blow-up point}} \ar@{<->}[d]^-{*} & & (\E^2,\mathrm{Isom}(\E^2)) \ar@{<->}[d]^-{*} & & (\H^2,\mathrm{Isom}(\H^2)) \ar[ll]_-{\text{blow-up point}} \ar@{<->}[d]^-{*} \\
(\Ell^2,\mathrm{Isom}(\Ell^2)) \ar[rr]^-{\text{blow-up line}} & & (^*\E^2,\mathrm{Isom}(^*\E^2)) & & (\dS^2,\mathrm{Isom}(\dS^2)) \ar[ll]_-{\text{blow-up line}}
}
\end{equation}

\begin{SCfigure}
     \includegraphics[scale=0.08]{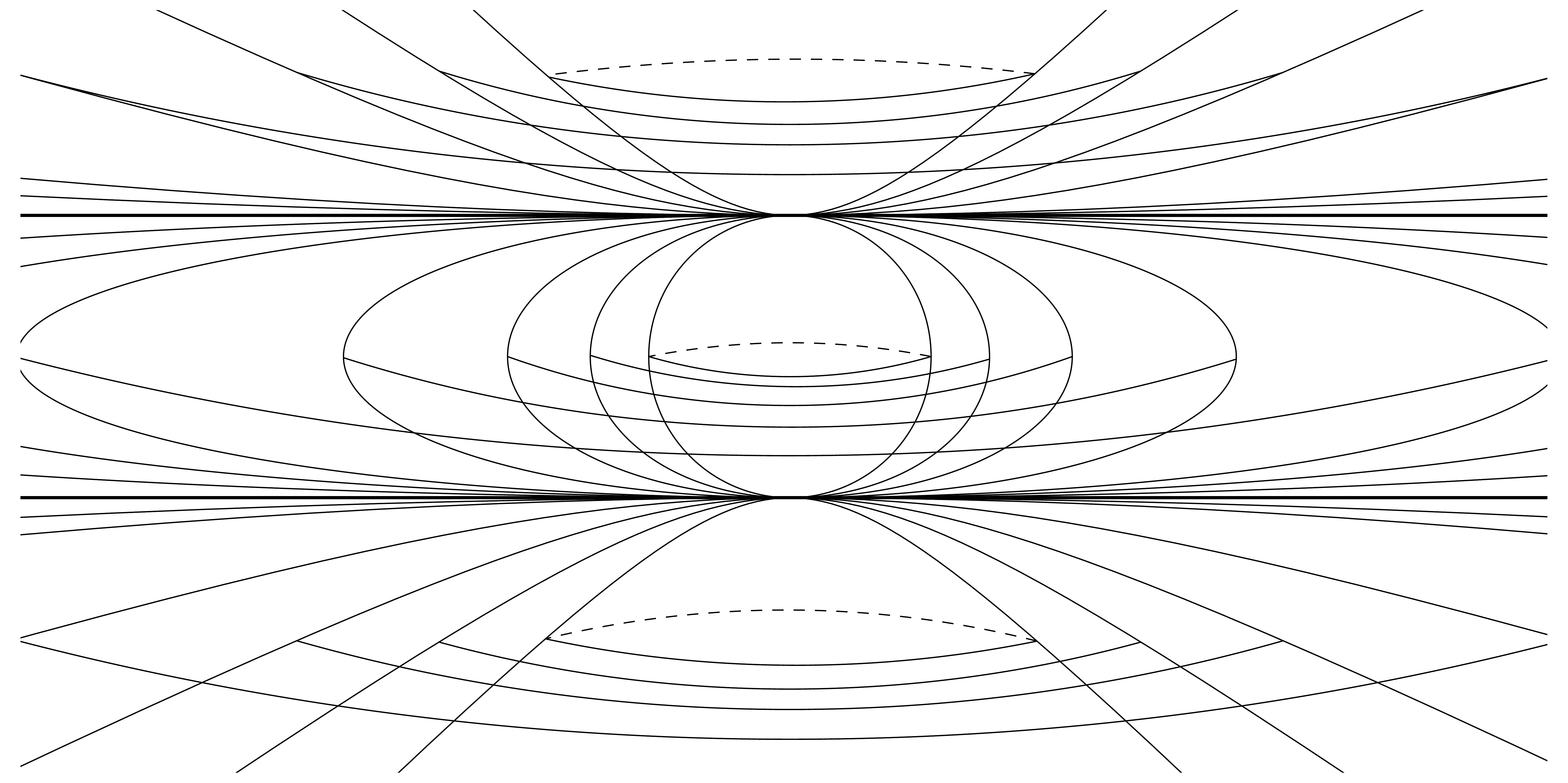}
\caption{\label{fig:blow2db} Degenerations of the top line in Scheme~\eqref{eq:deg2d}, inside the ambient space, in the double cover.}
\end{SCfigure}

\begin{SCfigure}
     \includegraphics[angle=270,origin=c,scale=0.07]{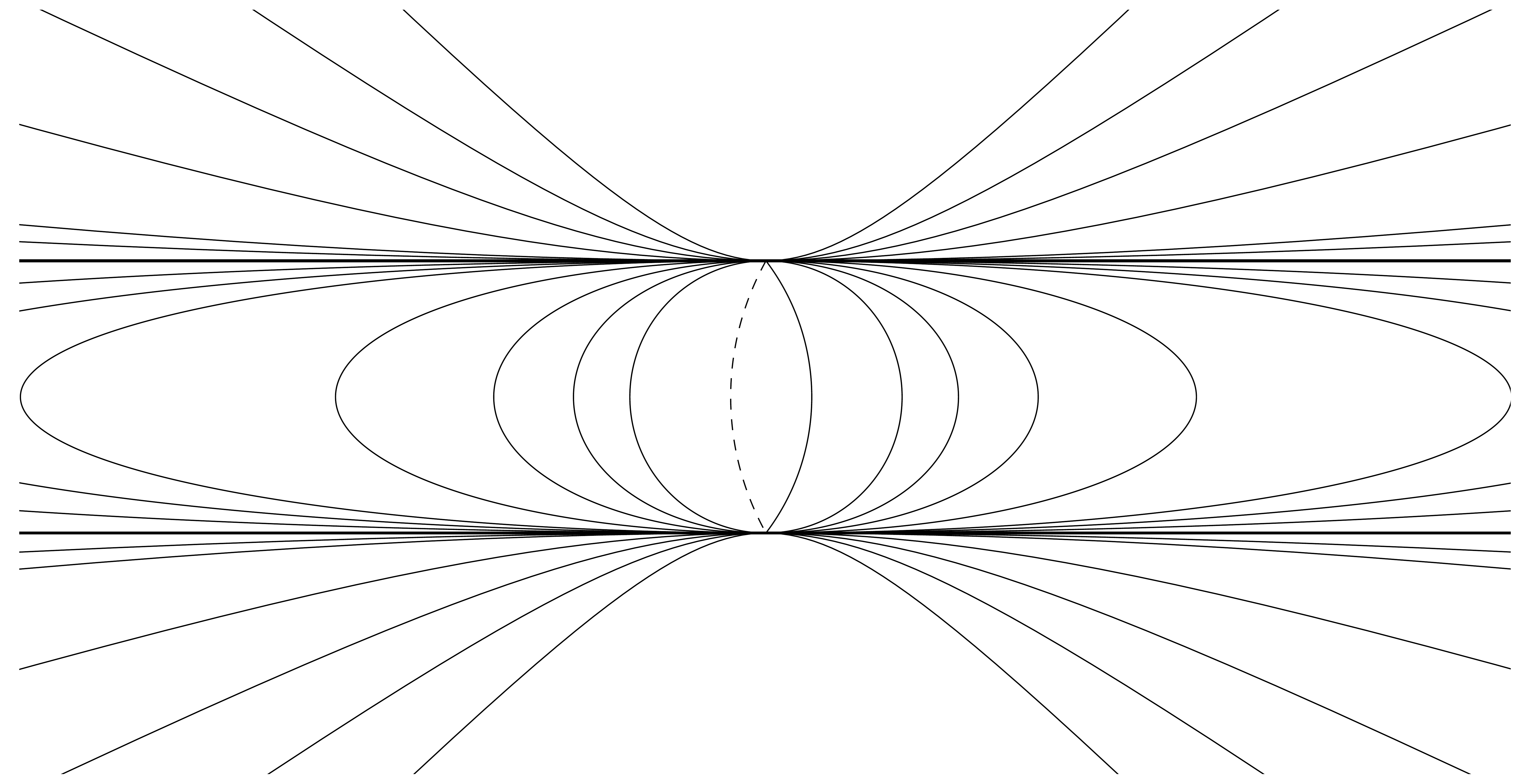}
\caption{\label{fig:blow2d} Degenerations of the bottom line in Scheme \eqref{eq:deg2d}, inside the ambient space, in the double cover.}
\end{SCfigure}

\paragraph{Geometric transitions with limits Minkowski and co-Minkowski plane.}

Given the above constructions, it is immediate to see that one can mimic the blow-up of a point for the de Sitter plane
$$\dS^2=\{x\in \M^{3} \,|\, b_{2,1}(x,x)=1\}/\{\pm \mathrm{Id}\}\,.$$
Using {again} the transformations $g_t$, the limit will be {again} represented by an affine chart defined by $\{x_3^2=1\}/\{\pm\mathrm{Id}\}$, and the conjugacy limit of $\mathrm{Isom}(\dS^2)\cong\mathrm{PO}(2,1)$ will be
$$\mathrm{Isom}(\M^2)=\left\{\begin{bmatrix} a_{11} & a_{12} & b_1 \\ a_{21} & a_{22} & b_2 \\ 0 & 0 & \pm 1 \end{bmatrix}:\begin{pmatrix} a_{11} & a_{12} \\ a_{12} & a_{22} \end{pmatrix}\in\mathrm{O}(1,1)\right\}\,.$$
Hence we can say that the blow-up of a point permits to obtain $(\M^2,\mathrm{Isom}(\M^2))$  as a limit of $(\dS^2,\mathrm{Isom}(\dS^2))$.

Also in this case we shall obtain the dual transition. As in the case of Euclidean/co-Euclidean plane, there are two possible transitions having limit the co-Minkowski plane. Indeed, both hyperbolic plane and de Sitter plane contain  hyperbolic lines (space-like in $\H^2$, time-like in $\dS^2$). For the hyperbolic plane, one checks directly that
$$g_t^*\mathrm{Isom}(\H^2)(g_t^*)^{-1}\to\mathrm{Isom}(^*\M^2)=\left\{\begin{bmatrix} a_{11} & a_{12} & 0 \\ a_{21} & a_{22} & 0 \\ v_1 & v_2 & \pm 1 \end{bmatrix}:\begin{pmatrix} a_{11} & a_{12} \\ a_{12} & a_{22} \end{pmatrix}\in\mathrm{O}(1,1)\right\}\,.$$

Observe that, in terms of geometric transition, if $(\mathbb{N},\mathrm{Isom}(\mathbb{N}))$ is a conjugacy limit of $(\mathbb{M},\mathrm{Isom}(\mathbb{M}))$, then also the anti-isometric space  $(\overline{\mathbb{N}},\mathrm{Isom}(\overline{\mathbb{N}}))$ is a limit of $(\mathbb{M},\mathrm{Isom}(\mathbb{M}))$ (and also of $(\overline{\mathbb{M}},\mathrm{Isom}(\overline{\mathbb{M}}))$, of course), by conjugating for the same projective transformations. For instance, both $\dS^2$ and $\overline{\dS^2}$ limit to $^*\M^2$. However, we prefer to say that $(\overline{\dS^2},\mathrm{Isom}(\overline{\dS^2}))$ limits to $(^*\M^2,\mathrm{Isom}(^*\M^2))$ by blowing-up a space-like line, since with this choice space-like lines (of hyperbolic type) converge to space-like lines of $^*\M^2$.

For the same reason, blowing-up a point one gets that $\overline{\dS^2}$ limits to $\overline{\M^2}$. However, in this special case, the space $\overline{\M^2}$, anti-isometric to $\M^2$, is also isometric to $\M^2$, thus we have a geometric transition $(\overline{\dS^2},\mathrm{Isom}(\overline{\dS^2}))\to({\M}^2,\mathrm{Isom}({\M^2}))$. This is formally not the same as the transition of $\dS^2$ to $\M^2$ (in $\dS^2$ we have space-like elliptic lines which converge to space-like lines of $\M^2$, while in $\overline{\dS^2}$ the space-like lines are hyperbolic and converge to space-like lines), although it is obtained for instance by applying the usual transformations $g_t$. Recall also that the dual of $\overline{\dS^2}$, considered as the space of space-like lines, is $\overline{\dS^2}$ itself 
{(see Figure~\ref{fig:dualh2}).}

By the same argument as in the previous paragraph (using the ambient metric of $\M^{3}$ on the left, and its anti-isometric $\overline{\M^{3}}$ on the right), one obtains the following diagram which shows the behavior of transitions and dualities, {see Figure~\ref{fig:deg2dm}}:

\begin{equation}\label{eq:deg 2d lor}
\xymatrix{
(\dS^2,\mathrm{Isom}(\dS^2)) \ar[rr]^-{\text{blow-up point}} \ar@{<->}[d]^-{*} & & (\M^2,\mathrm{Isom}(\M^2)) \ar@{<->}[d]^-{*} & & (\overline{\dS^2},\mathrm{Isom}(\overline{\dS^2})) \ar[ll]_-{\text{blow-up point}} \ar@{<->}[d]^-{*} \\
(\H^2,\mathrm{Isom}(\H^2)) \ar[rr]^-{\text{blow-up line}} & & (^*\M^2,\mathrm{Isom}(^*\M^2)) & & (\overline{\dS^2},\mathrm{Isom}(\overline{\dS^2})) \ar[ll]_-{\text{blow-up line}}
}
\end{equation}

\begin{SCfigure}
     \includegraphics[angle=90,origin=c,scale=0.07]{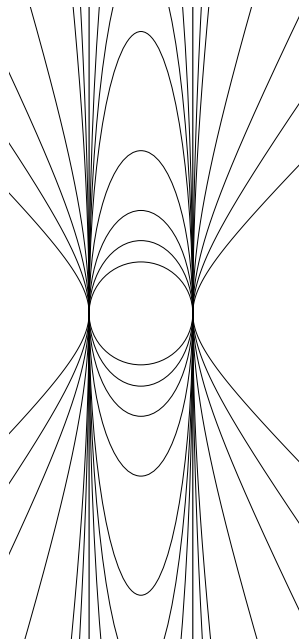}
\caption{\label{fig:deg2dm}
Degenerations of the bottom line in Scheme \eqref{eq:deg 2d lor}, in  an affine chart {(Recall that $\overline{\dS^2}$ is the same as $\AdS^2$}).}
\end{SCfigure}

In words,

\begin{fact}
The dual in $^*\M^2$ of a rescaled limit $x_\infty\in\M^2$ of points $x(t)\in\dS^2$ is the rescaled limit of the dual lines $x(t)^*$ in $\H^2$. The dual in $\M^2$ of a rescaled limit $x_\infty\in\,^*\M^2$ of points $x(t)\in\H^2$ is the rescaled limit of the dual lines $x(t)^*$ in $\dS^2$. The converse is also true, by exchanging the role of points and lines.
\end{fact}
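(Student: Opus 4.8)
The plan is to run the same argument that established Fact~\ref{rescaleEllE2} and Fact~\ref{rescaleHE2}, now with the Minkowski ambient form $b_{2,1}$ on $\M^3$ in place of the Euclidean form $b_{3,0}$ on $\E^3$. Recall the two rescalings appearing in Scheme~\eqref{eq:deg 2d lor}: the blow-up of a point $g_t=\operatorname{diag}(1/t,1/t,1)$, which realizes the transition $\dS^2\to\M^2$, and the blow-up of a space-like line $g_t^*=\operatorname{diag}(1,1,1/t)$, which realizes $\H^2\to{}^*\M^2$. Both descend to $\R\P^2$ and fit into the left-hand square of \eqref{eq:deg 2d lor}.

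The first step is the compatibility of these two rescalings with the $b_{2,1}$-polarity. Since $g_t,g_t^*$ are diagonal, a one-line computation gives $b_{2,1}(g_t x,g_t^* y)=(1/t)\,b_{2,1}(x,y)$ for all $x,y\in\R^3$; in particular $b_{2,1}(x,y)=0$ if and only if $b_{2,1}(g_t x,g_t^* y)=0$. Writing $\delta\colon[x]\mapsto\{[y]\mid b_{2,1}(x,y)=0\}$ for the fixed, $t$-independent point–line polarity of $\R\P^2$ with respect to $b_{2,1}$, this says exactly that $\delta$ intertwines the two blow-ups, $\delta(g_t x)=g_t^*(\delta(x))$, equivalently $(g_t x)^*=g_t^*(x^*)$; and the symmetric relation $(g_t^* y)^*=g_t(y^*)$ follows in the same way. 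Recalling from Section~\ref{duality} that the $b_{2,1}$-dual of a point of $\dS^2$ is a space-like hyperbolic line of $\H^2$, and dually that the $b_{2,1}$-dual of a point of $\H^2$ is a line of $\dS^2$, this is precisely the commutativity, for each fixed $t>0$, of the left-hand vertical duality in \eqref{eq:deg 2d lor} with the horizontal blow-ups.

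The second step is to let $t\to 0$. Given a differentiable path $x(t)\in\dS^2$ whose rescaled limit $x_\infty=\lim_{t\to 0}g_t x(t)$ lies in $\M^2$, continuity of the fixed polarity $\delta$ gives $\delta(x_\infty)=\lim_{t\to 0}\delta(g_t x(t))=\lim_{t\to 0}g_t^*\bigl(x(t)^*\bigr)$. The left-hand side is the $^*\M^2$-dual of $x_\infty$, while the right-hand side is by definition the rescaled limit, under the blow-up of a line, of the dual hyperbolic lines $x(t)^*\subset\H^2$; this is the first assertion, once one notes that the limiting correlation $\delta$ is the natural $\M^2$–$^*\M^2$ duality of Section~\ref{cominkowski plane}, which holds because $^*\M^2$ was constructed precisely as the space of space-like hyperplanes of $\M^2$. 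The second assertion is the same square read starting from the bottom-left corner: from a path $x(t)\in\H^2$ with $x_\infty=\lim g_t^* x(t)\in{}^*\M^2$, the identity $(g_t^* y)^*=g_t(y^*)$ yields that the $\M^2$-dual of $x_\infty$ is the rescaled limit, under $g_t$, of the dual de Sitter lines $x(t)^*\subset\dS^2$. The final ``converse'' is obtained by reading the same commuting square in the reverse vertical direction and using $(K^*)^*=K$ from \eqref{autoduality}.

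The algebra here is trivial; the point requiring care is the passage to the limit. One must check that the objects stay non-degenerate and of the correct causal type along the degeneration --- that each $x(t)^*$ is indeed a space-like hyperbolic line of $\H^2$ (resp. a line of $\dS^2$), that $g_t^*\bigl(x(t)^*\bigr)$ converges to a genuine line of $^*\M^2$ rather than collapsing into the absolute, and that the chosen paths admit rescaled limits --- and, most importantly, that the limit of the $b_{2,1}$-polarity is the natural duality between $\M^2$ and $^*\M^2$. These verifications are carried out exactly as in the elliptic/Euclidean case preceding Fact~\ref{rescaleEllE2}, with $\mathrm{O}(2,1)$ replacing $\mathrm{O}(3)$; the only genuinely new bookkeeping is the causal type, which is governed by the signature of $b_{2,1}$ on the relevant planes and is left unchanged by the diagonal rescalings up to the global factor $1/t$.
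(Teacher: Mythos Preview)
Your proof is correct and follows exactly the approach the paper itself takes: the paper does not give a separate proof of this Fact but simply says ``by the same argument as in the previous paragraph (using the ambient metric of $\M^{3}$ \ldots)'', referring to the argument leading to Facts~\ref{rescaleEllE2} and~\ref{rescaleHE2}. You have carried out precisely that argument, replacing $b_{3,0}$ by $b_{2,1}$, and your key observation $b_{2,1}(g_t x, g_t^* y)=(1/t)\,b_{2,1}(x,y)$ is the exact analogue of the paper's ``$b_{3,0}(x,y)=0$ iff $b_{3,0}(g_t x, g_t^* y)=0$'' that makes the commuting square work.
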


\begin{fact}
The dual in $^*\M^2$ of a rescaled limit $x_\infty\in\M^2$ of points $x(t)\in\overline{\dS^2}$ is the rescaled limit of the dual lines $x(t)^*$ in $\overline{\dS^2}$ and vice versa (by exchanging the roles of points and lines).
\end{fact}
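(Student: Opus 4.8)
The plan is to mirror the argument of Fact~\ref{rescaleEllE2} verbatim, replacing the ambient form $b_{3,0}$ by the form $b'=-b_{2,1}$ of $\overline{\M^3}$, since the right-hand column of Scheme~\eqref{eq:deg 2d lor} is built using the ambient metric of $\overline{\M^3}$. First I would recall that, with respect to $b'$, the point--line duality sends a point $x$ to its polar line $x^*=\{z\,|\,b'(x,z)=0\}$; as $b'$ and $b_{2,1}$ define the same orthogonality, this is exactly the self-duality for which $\overline{\dS^2}$, viewed as the space of its space-like lines, is dual to itself, so that both $x$ and $x^*$ are objects of $\overline{\dS^2}$. The two transitions on the right of the scheme are realised by the same transformations as before: the blow-up of a point $g_t=\mathrm{diag}(1/t,1/t,1)$, under which $\overline{\dS^2}$ converges to $\overline{\M^2}=\M^2$, and the blow-up of a line $g_t^*=\mathrm{diag}(1,1,1/t)$, under which $\overline{\dS^2}$ converges to $^*\M^2$.

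The central step is to record the compatibility of $g_t$ and $g_t^*$ with the duality. A one-line computation gives $b'(g_t x, g_t^* y)=\tfrac1t\, b'(x,y)$, so that $b'(g_t x, g_t^* y)=0$ if and only if $b'(x,y)=0$; equivalently, the polar line of $g_t x$ is $g_t^*(x^*)$. I would then assemble this into the commutative square
\[
\xymatrix{
(\overline{\dS^2},\mathrm{Isom}(\overline{\dS^2})) \ar[r]^-{g_t} \ar@{<->}[d]^-{*}  & (g_t\overline{\dS^2},g_t\mathrm{Isom}(\overline{\dS^2})g_t^{-1}) \ar@{<->}[d]^-{*} \\
(\overline{\dS^2},\mathrm{Isom}(\overline{\dS^2})) \ar[r]^-{g_t^*}  & (g_t^*\overline{\dS^2},g_t^*\mathrm{Isom}(\overline{\dS^2})(g_t^*)^{-1})
}
\]
valid for every $t\in(0,1]$, the vertical arrows being the duality induced by $b'$.

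It then remains to pass to the limit $t\to 0$. The convergence of the underlying sets is the one recalled just before Scheme~\eqref{eq:deg 2d lor}, and the convergence of the isometry groups is precisely the computation already carried out for $\dS^2\to\M^2$ (top row) and for the line blow-up producing $^*\M^2$ (bottom row); both depend only on the group $\mathrm{PO}(2,1)=\mathrm{Isom}(\overline{\dS^2})$ and so apply verbatim here. Since the vertical duality is induced by the single fixed form $b'$ at every level, and since on $\M^2$ and $^*\M^2$ it restricts to the Minkowski/co-Minkowski duality of Subsection~\ref{cominkowski plane}, the commutativity of the square survives in the limit: the dual in $^*\M^2$ of $x_\infty=\lim_{t\to 0}g_t x(t)$ is $\lim_{t\to 0}g_t^*(x(t)^*)$, i.e. the rescaled limit of the dual lines. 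The ``vice versa'' follows by reading the same square with the roles of $g_t$ and $g_t^*$ --- equivalently, of points and lines --- interchanged, which is permitted precisely because $(\overline{\dS^2})^*=\overline{\dS^2}$. The main obstacle, as in the elliptic case, is essentially bookkeeping: one must keep the duality induced by one and the same ambient form $b'$ at every stage, so that the limiting duality is genuinely the $\M^2\leftrightarrow{}^*\M^2$ one; once the identity $b'(g_t x,g_t^* y)=\tfrac1t b'(x,y)$ is in hand, the rest is the routine transfer of a commutative diagram to its limit.
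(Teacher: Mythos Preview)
Your proposal is correct and follows essentially the same approach as the paper: the paper does not give a separate proof of this Fact but simply states that it follows ``by the same argument as in the previous paragraph (using \ldots\ its anti-isometric $\overline{\M^3}$ on the right),'' and you have spelled out precisely that argument --- the compatibility identity $b'(g_t x,g_t^*y)=\tfrac1t\,b'(x,y)$, the resulting commutative square, and the passage to the limit. Your choice of $b'=-b_{2,1}$ as the ambient form and your identification of the self-duality of $\overline{\dS^2}$ match the paper's setup exactly.
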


\subsection{Limits of 3-dimensional model spaces} \label{subsec Limits of 3-dimensional model spaces}

At this stage, the reader will not be surprised to find that the transition procedures described in the previous paragraph extend also to the three-dimensional case (and to higher dimensions, although this will not be considered in this survey). For instance, the following diagram summarizes the transitions which have limits in the Euclidean space or the co-Euclidean space:

\begin{equation}\label{eq: deg 3d eucl coeucl}
\xymatrix{
(\Ell^3,\mathrm{Isom}(\Ell^3)) \ar[rr]^-{\text{blow-up point}} \ar@{<->}[d]^-{*} & & (\E^3,\mathrm{Isom}(\E^3)) \ar@{<->}[d]^-{*} & & (\H^3,\mathrm{Isom}(\H^3)) \ar[ll]_-{\text{blow-up point}} \ar@{<->}[d]^-{*} \\
(\Ell^3,\mathrm{Isom}(\Ell^3)) \ar[rr]^-{\text{blow-up plane}} & & (^*\E^3,\mathrm{Isom}(^*\E^3)) & & (\dS^3,\mathrm{Isom}(\dS^3)) \ar[ll]_-{\text{blow-up plane}}
}
\end{equation}

Indeed, the way to rescale elliptic space or de Sitter space to get a limit in co-Euclidean space is by \emph{blowing-up a plane}\index{blow-up! of a plane}. Space-like planes in $\Ell^3$ and $\dS^3$ are indeed copies of $\Ell^2$, and $^*\E^3$ is the degenerate geometry having an embedded $\Ell^2$ plane. It is thus an exercise to rewrite the statements of Facts~\ref{rescaleEllE2} and \ref{rescaleHE2} in the three-dimensional setting, by making use of the duality points/planes.

The other diagram we considered in dimension 2 also has a generalization here. In fact, it will now become clear that the right model spaces which have a limit in Minkowski space (by blowing-up a point) are de Sitter and Anti-de Sitter space; whereas their duals (hyperbolic space and Anti-de Sitter space itself) have a limit in co-Minkowski space, {see Figure~\ref{fig:blow3d}}:

\begin{equation}\label{eq: deg 3d}
\xymatrix{
(\dS^3,\mathrm{Isom}(\dS^3)) \ar[rr]^-{\text{blow-up point}} \ar@{<->}[d]^-{*} & & (\M^3,\mathrm{Isom}(\M^3)) \ar@{<->}[d]^-{*} & & ({\AdS}^3,\mathrm{Isom}({\AdS}^3)) \ar[ll]_-{\text{blow-up point}} \ar@{<->}[d]^-{*} \\
(\H^3,\mathrm{Isom}(\H^3)) \ar[rr]^-{\text{blow-up plane}} & & (^*\M^3,\mathrm{Isom}(^*\M^3)) & & ({\AdS}^3,\mathrm{Isom}({\AdS}^3)) \ar[ll]_-{\text{blow-up plane}}
}
\end{equation}
 
\begin{SCfigure}
     \includegraphics[angle=270,origin=c,scale=0.07]{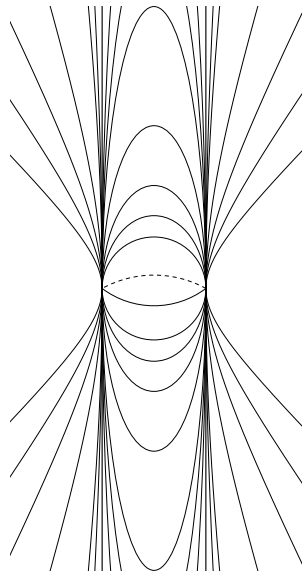}
\caption{\label{fig:blow3d}
Degenerations of the bottom line in Scheme \eqref{eq: deg 3d}, in  an affine chart.
}
\end{SCfigure}

In fact, in dimension 2 we did not define an Anti-de Sitter plane, as it would be anti-isometric to the de Sitter plane. Thus we write here the statement which encodes the relationship between transitions and dualities for Anti-de Sitter space, leaving the analogous statements to the reader:

\begin{fact}
The dual in $^*\M^3$ of a rescaled limit $x_\infty\in\M^3$ of points $x(t)\in{\AdS}^3$ is the rescaled limit of the dual  planes $x(t)^*$ in ${\AdS}^3$ and vice versa (by exchanging the roles of points and planes).
\end{fact}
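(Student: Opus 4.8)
The plan is to transport, to the three-dimensional Lorentzian setting, the commutative-square argument already used for Facts~\ref{rescaleEllE2} and~\ref{rescaleHE2}. Recall that $\AdS^3$ is the projective quotient of $b_{2,2}^{-1}(-1)\subset\R^4$, and that its point/plane duality $*$ is induced by $b_{2,2}$-orthogonality, sending a point $x$ to the space-like plane $x^\perp\cap\AdS^3$ and conversely; in particular $\AdS^3$ is self-dual. The blow-up of a point yielding $\M^3$ is realized by $g_t=\operatorname{diag}(1/t,1/t,1/t,1)$, which fixes the point $[0:0:0:1]\in\AdS^3$, while the blow-up of a plane yielding $^*\M^3$ is realized by $g_t^*=\operatorname{diag}(1,1,1,1/t)$, which fixes the space-like plane $\{x_4=0\}\cap\AdS^3$. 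These two fixed loci are already dual to one another, since $e_4^\perp=\{x_4=0\}$, so the blow-up of a point is dual to the blow-up of its dual plane.

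First I would record the single algebraic identity on which everything rests. Since $g_t$ and $g_t^*$ are diagonal, hence $b_{2,2}$-self-adjoint, and satisfy $g_t\,g_t^*=(1/t)\operatorname{Id}$, a direct computation gives
\begin{equation*}
b_{2,2}(g_t x,\,g_t^* y)=\frac1t\,b_{2,2}(x,y)\,.
\end{equation*}
Hence $x$ and $y$ are $b_{2,2}$-orthogonal if and only if $g_t x$ and $g_t^* y$ are, so the duality $*$ is intertwined \emph{exactly}, for every $t\in(0,1]$, by the pair $(g_t,g_t^*)$. This produces, for each such $t$, a commutative square
\[
\xymatrix{
(\AdS^3,\mathrm{Isom}(\AdS^3)) \ar[r]^-{g_t} \ar@{<->}[d]^-{*}  & (g_t\AdS^3,\,g_t\mathrm{Isom}(\AdS^3)g_t^{-1}) \ar@{<->}[d]^-{*} \\
(\AdS^3,\mathrm{Isom}(\AdS^3)) \ar[r]^-{g_t^*}  & (g_t^*\AdS^3,\,g_t^*\mathrm{Isom}(\AdS^3)(g_t^*)^{-1})
}
\]
whose left vertical arrow is the self-duality of $\AdS^3$ and whose right vertical arrow is the $b_{2,2}$-duality carried over to the rescaled copies, in perfect analogy with the square preceding Fact~\ref{rescaleEllE2}.

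Next I would pass to the limit $t\to0$. By the three-dimensional transitions recorded in Scheme~\eqref{eq: deg 3d}, the top row converges to $(\M^3,\mathrm{Isom}(\M^3))$ and the bottom row to $(^*\M^3,\mathrm{Isom}(^*\M^3))$; the natural duality between these limit spaces is exactly the point/space-like-plane duality of Fact~\ref{fact:comink space plane}, again induced by $b_{2,2}$. Because the vertical arrows commute with the horizontal ones for every $t$, and the polarity is a continuous projective correspondence, the commutativity passes to the limit. Reading off the limiting square then gives precisely the statement: if $x_\infty=\lim_{t\to0}g_t\,x(t)$ is the rescaled limit in $\M^3$ of points $x(t)\in\AdS^3$, then its dual plane in $^*\M^3$ equals $\lim_{t\to0}g_t^*\bigl(x(t)^*\bigr)$, the rescaled limit of the dual planes $x(t)^*\subset\AdS^3$; the symmetric assertion follows by exchanging the roles of points and planes.

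The main obstacle is the very last step, namely justifying that the limiting identification is the correct one and that convergence survives the non-compactness of $g_t$ and $g_t^*$. Concretely, one must check that under the limiting $b_{2,2}$-duality the points of $\M^3$ arising as rescaled limits correspond to the space-like planes of $^*\M^3$ of Fact~\ref{fact:comink space plane}, and that the dual planes $g_t^*(x(t)^*)$ do not escape to infinity. Both points are controlled by the exact intertwining identity above, which keeps the pair $(g_t x(t),\,g_t^*(x(t)^*))$ orthogonal for all $t$; thus the only genuine content is the continuity of the duality on the relevant domain, verified exactly as in the two-dimensional cases. A purely cosmetic preliminary is to match, by a permutation of coordinates, the degenerate direction $x_4$ emerging from the blow-up of $\{x_4=0\}$ with the normalization $b^*_-$ used in the definition of $^*\M^3$.
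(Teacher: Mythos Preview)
Your proposal is correct and follows exactly the approach the paper intends. The paper does not actually prove this Fact separately: it states it after Scheme~\eqref{eq: deg 3d} with the remark ``leaving the analogous statements to the reader,'' pointing back to the commutative-square argument given for Facts~\ref{rescaleEllE2} and~\ref{rescaleHE2}. Your key identity $b_{2,2}(g_t x,\,g_t^* y)=\tfrac{1}{t}\,b_{2,2}(x,y)$ is precisely the three-dimensional analogue of the paper's observation that ``$b_{3,0}(x,y)=0$ if and only if $b_{3,0}(g_t x,g_t^* y)=0$,'' and your passage to the limit mirrors the paper's line ``the commutativity of the diagram passes to the limit.'' The closing remark about the coordinate permutation needed to match the $b^*_-$ normalization is a fair point that the paper leaves implicit.
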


\paragraph{Blowing-up time-like planes.}
However, we observe that in dimension three there are more complicated pheonomena which might occur. In particular,  one might be interested in the blow-up of a time-like  plane in a Lorentzian model-space. Of course this is the same of blowing-up a space-like plane in the anti-isometric model space. Intrinsically, the time-like plane can be a copy of $\dS^2$ or $\overline{\dS^2}$. Let us analyze the following diagram:

\[
\xymatrix{
(\overline{\dS^3},\mathrm{Isom}(\overline{\dS^3})) \ar[rr]^-{\text{blow-up point}} \ar@{<->}[d]^-{*} & & (\overline{\M^3},\mathrm{Isom}(\overline{\M^3})) \ar@{<->}[d]^-{*} & & (\overline{\AdS^3},\mathrm{Isom}(\overline{\AdS^3})) \ar[ll]_-{\text{blow-up point}} \ar@{<->}[d]^-{*} \\
({\dS}^3,\mathrm{Isom}({\dS}^3)) \ar[rr]^-{\text{blow-up plane}} & & (^*\overline{\M^3},\mathrm{Isom}(^*\overline{\M^3})) & & ({\overline{\AdS^3}},\mathrm{Isom}({\overline{\AdS^3}})) \ar[ll]_-{\text{blow-up plane}}
}
\]

On the upper line, there is nothing surprising. We have just re-written the usual blow-up of a point, but by considering the anti-isometric copies of $\dS^3$, $\M^3$ and $\AdS^3$. In fact, the definition of geometric transition does not distinguish between a model space and its anti-isometric copy. Anyway, we decided to stick to the convention to choose the sign of the metric in such a way that space-like lines converge to space-like lines, and so on. 

On the lower line, we have already encountered the duality which appears on the left. In fact,  in $\M^4$ the linear hyperplanes which define time-like planes in $\dS^3$ also define planes in $\H^3$ (see Remark \ref{remark duality}). Hence the space of time-like planes in $\dS^3$, or equivalently the space of space-like planes in $\overline{\dS^3}$, is naturally $\dS^3$ itself.

In the right-hand side, recall  that $\overline{\AdS^3}$ can be defined as:
$$\overline{\AdS^3}=\{x\in \R^{2,2} \,|\, b_{2,2}(x)=1 \} / \{\pm \mathrm{Id}\}\,.$$
Thus a space-like plane in $\overline{\AdS^3}$ is a copy of $\dS^2$, and the dual space of $\overline{\AdS^3}$, considered as the space of space-like planes of $\overline{\AdS^3}$, is $\overline{\AdS^3}$ itself. 

It remains to understand what is the limit space in the center of the lower line. We denoted it by $^*\overline{\M^3}$ to indicate that it is the dual of $\overline{\M^3}$ (and not the space {$\overline{^*\M^3}$} anti-isometric to $^*{\M}^3$!)
We will omit the details of the definition and the proof of the commutativity of the last diagram presented. However, observe that in the lower line, both ${\dS}^3$ and $\overline{\AdS^3}$ contain a totally geodesic copy of ${\dS}^2$. Thus one can define a transition procedure which \emph{blows-up a time-like plane}, stretching the transverse directions. Topologically the limit space is expected to be ${\dS}^2\times\R$. By a construction similar to that of $^*\M^3$, one can identify this space to the space of time-like planes in $\M^3$ (or of space-like planes in $\overline{\M^3}$). In an affine chart, this would be the exterior of the cylinder which represents $^*\M^3$.

\paragraph{Comments and references} {\small{
\begin{itemize}
\item Let us denote by $\Gal^2$ the \emph{Galilean plane}\index{Galilean plane}, i.e. the projective plane minus a parabolic line. {See \cite{yaglom}.} Below are shown  the possible degenerations of the three Riemannian {and the three Lorentzian} plane geometries. This diagram is the one  
in Section~5.3 of \cite{CDW}, adapted to our terminology.
\begin{displaymath}
\xymatrix{ \Ell^2 \ar[d] \ar[drr] & \H^2 \ar[dl]\ar[d] & \overline{\dS^2}\ar[dl]\ar[d]\ar[dr] & \\ \E^2\ar[dr] & ^*\M^2\ar[d] & ^*\E^2\ar[dl] & \M^2 \ar[dll] \\  & \Gal^2 & & }
\end{displaymath}
This essentially shows that the geometric transitions considered in this paper are all the possible transitions in $\R\mathrm{P}^3$, except the further space $\Gal^2$ which is, in some vague sense, doubly-degenerate. Essentially, the isometry group for $\Gal^2$ is the subset of triangular matrices which preserve an affine chart. 
\item In \cite{CDW}, the possible degenerations are classified in every dimension. Already in dimension 3, apart from those considered above, there are other \emph{simple} degenerations, essentially obtained by blowing up a line. These spaces are described as model spaces $(X,\mathrm{Isom}(X))$ and in some cases they contain as subgeometries (i.e. as a geometry $(X,G)$ where $G$ is a subgroup of $\mathrm{Isom}(X)$) other 3-dimensional geometries in the sense of Thurston. For instance $\mathrm{Sol}$ geometry is a possible limit of hyperbolic structures \cite{kozai}. One could draw the corresponding diagram in dimension 3, which would already be pretty complicated, and find several other spaces as double degenerations. For example, an affine space endowed with the action of a group of lower triangular matrices, with unitary elements on the diagonal, is a generalization of $\Gal^2$.
\item 
For the effect of the two-dimensional transition hyperbolic-Euclidean-spherical, the effects on angles, area etc. of triangles are studied in \cite{AP3}.

\end{itemize}
}
}

\section{Connection and volume form} \label{sec connection volume}

In this section we discuss the definitions of the Levi-Civita connection and volume form for model spaces, starting by the general setting of Riemannian or pseudo-Riemannian manifolds and then specializing to the cases of constant curvature manifolds. We will then give a construction of a geometric connection and a volume form on the degenerate cases of co-Euclidean and co-Minkowski space.

\subsection{Non-degenerate model spaces}

Let $(M,g)$ be a three-dimensional Riemannian or pseudo-Riemannian manifold. Many key examples have already been introduced: $\E^3$, $\H^3$ and $\Ell^3$ are Riemannian, $\M^3$, $\dS^3$ and $\AdS^3$ are pseudo-Riemannian. Recall that the non-degenerate metric $g$ uniquely determines the \emph{Levi-Civita connection}\index{connection (Levi-Civita)}, which we will denote by  $\nabla^{g}$ or $\nabla^{M}$ if there is no ambiguity about the metric tensor $g$.
Indeed, the Levi-Civita connection is the unique connection which satisfies the following properties:
\begin{itemize}
 \item $\nabla^{g}$ is \emph{symmetric}, i.e. $\nabla^{g}_X Y-\nabla^{g}_Y X=[X,Y]$ for any pair of vector fields $X,Y$;
 \item $\nabla^{g}$ is \emph{compatible with the metric}, i.e. $Z.g(X,Y)=g(\nabla^{g}_Z X,Y)+g(X,\nabla^{g}_Z Y)$ for any vector fields $X,Y,Z$.
\end{itemize}
The latter condition can also be expressed by saying that $\nabla^{g} g=0$, namely the metric tensor is parallel with respect to $\nabla^{g}$. Recall also that a \emph{volume form}\index{volume form} is defined on $(M,g)$, provided $M$ is oriented. Namely, the volume form is a 3-form $\omega$ such that
$\omega(X,Y,Z)=1$ if $X,Y,Z$ is an oriented (with respect to the orientation of $M$) orthonormal frame for $g$. The volume form is also characterized by being the unique 3-form $w$ such that:
\begin{itemize}
\item $\omega$ is \emph{parallel} with respect to $\nabla^g$, i.e. $\nabla^g \omega=0$;
\item $\omega(v_1,v_2,v_3)=1$, where $v_1,v_2,v_3$ is an oriented orthonormal triple of vectors at a fixed point $x_0$.
\end{itemize}
Indeed, as the parallel transport  preserves an oriented orthonormal basis, the volume form is parallel for the connection, i.e. $\nabla^g \omega=0$, i.e.
\begin{equation}\label{eq:forme vol par}Z.\omega(X_1,\ldots,X_n)=\omega(\nabla_Z^g X_1,\ldots,X_n)+{\cdots +}\omega(X_1,\ldots,\nabla_Z^g X_n)~. \end{equation}

\paragraph{Explicit construction of the ambient Levi-Civita connection and volume form.}
Let us now go back one step, and see how the Levi-Civita connection of the three-dimensional manifolds considered here can be defined. For instance, the Levi-Civita connection of Euclidean space is simply given by differentiation of the standard coordinates of a vector field, that is,
$$\nabla^{\E^3}_v w=Dw(v)\,,$$
for any pair of smooth vector fields $v,w$. The connection of Minkowski space is defined analogously, and thus on the same affine space, the Levi-Civita connections of the Euclidean and the Minkowski metric coincide. Moreover, the {standard} volume form of {$\R^3$ coincides with the volume forms (induced by the metric) of}  $\E^3$ and $\M^3$, {i.e.} in the standard coordinates $(x,y,z)$: 
\begin{equation}\label{eq:meme forme volume}\omega_{\E^3}=\omega_{\M^3}=\d x\wedge \d y\wedge \d z\,{=\omega_{\R^3}}.\end{equation}

Of course, the same definitions can be given for any $n$-dimensional vector space endowed with a non-degenerate symmetric bilinear form $b$. Now we will use this fact to define the Levi-Civita connection and the volume form of the model spaces. We have already observed that for all {non degenerate} model spaces $\mathbb{M}=\mathcal{M}/\{\pm\mathrm{Id}\}$ which we have defined as the (projectivization of the) subset $\mathcal{M}$ of $(\R^4,b)$ where the quadratic form defined by $b$ takes the value $1$ (or $-1$), the normal vector to $\mathcal{M}$ at a point $x$ is precisely $x$ itself. Thus,  by the definition of the Levi-Civita connection of an embedded hypersurface in a higher-dimensional manifold, we obtain, for vector fields $v,w$ tangent to $\mathcal{M}$,
$$Dw(v)=\nabla^{\mathcal M}_v w-b(v,w)x\,.$$ We used that the  identity and the second fundamental form coincide (up to a sign) with the first fundamental form (see also Section \ref{sec geometry surfaces} below). Clearly this definition descends to the definition of the Levi-Civita connection of $\mathbb M$.

Also the volume form of $\mathcal{M}$ (and thus of $\mathbb{M}$) can be defined in terms of the ambient volume form of {$\R^4$}. Indeed, given any triple of vectors $v,w,u$ in $T_x \mathcal{M}$, $x$ is also the normal vector to $\mathcal{M}$, and thus one can define
$$\omega_{\mathcal{M}}(v,w,u)=\omega_{{\R^4}}(x,v,w,u)\,.$$

\subsection{Degenerate cases}\label{sec deg case}

We will now introduce a natural connection and volume form on $^*\E^3$ and $^*\M^3$. We will discuss the meaning of naturality in the following, but of course we can anticipate that a natural connection/volume form will be preserved by the isometry group of co-Euclidean (resp. co-Minkowski) space, as introduced in Definitions \ref{defi isom coeucl} and \ref{defi isom comink}. 

\paragraph{The connection of co-Euclidean space.}

To define a connection on $^*\E^3$, we start by defining a connection on its double cover, namely
$$\cE^3=\{(x_1,x_2,x_3,x_4)\,|\,x_1^2+x_2^2+x_3^2=1\}{=
\{x\in\R^ 4\,|\,b^*(x,x)=1\}}\,,$$
and thus we will consider $\mathcal S^2\times \R$ as a model for $\cE^3$. In the classical case of model spaces {defined by a non degenerate symmetric quadratic form $b$}, the key ingredient to define the Levi-Civita connection was the existence of the normal vector field $\textsc{N}$, so as to be able to write
$$Dw(v)=\nabla^{\mathcal M}_v w+b(v,w)\textsc{N}\,.$$
Clearly the normal vector field has the property that it is preserved by the group of isometries of the ambient quadratic form $b$. In this degenerate case, the bilinear form $b^*$ is degenerate, and thus it does not enable us to determine a unit normal vector field. However, 
there is a well-defined transverse vector field to $\cE^3$, namely the vector field which at the point $x\in \cE^3\subset \R^4$ is defined by
$$\textsc{N}_x=x\in \R^4\,,$$
Tautologically, this vector field $\textsc{N}$ is preserved by the group $\mathrm{Isom}(^*\E^3)$, which means that if $A\in\mathrm{Isom}(^*\E^3)$, then $A_*(\textsc{N}_x)=\textsc{N}_{A(x)}$. Thus one can use the vector field $\textsc{N}$ to decompose the ambient derivative of two vector fields in a tangential and a ``normal'' component.

\begin{df}[Co-Euclidean connection] \label{defi coeucl connection}
Given two vector fields $v,w$ in $\cE^3$, we define the connection $\nabla^{\cE^3}$ by means of:
$$Dw(v)=\nabla^{\cE^3}_v w+b^*(v,w)\textsc{N}\,,$$
The co-Euclidean connection is the connection $\nabla^{^*\E^3}$ induced on $^*\E^3$ by $\nabla^{\cE^3}$.
\end{df}

First, it should be clear that the co-Euclidean connection is preserved by the group of isometries of co-Euclidean space. We prove it now, and in fact this also follows from the characterization given in Proposition~\ref{prop properties coeucl connection} below.

\begin{lemma}
The co-Euclidean connection $\nabla^{^*\E^3}$ is invariant for the group $\mathrm{Isom}(^*\E^3)$.
\end{lemma}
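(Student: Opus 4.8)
The plan is to establish the equivariance on the double cover $\cE^3$ and then push it down to $^*\E^3$. Every $g\in\mathrm{Isom}(^*\E^3)$ lifts to a linear map $\tilde g\colon\R^4\to\R^4$ of the form \eqref{eq:isom co}, acting by $\tilde g(x',x_4)=(Ax',\,\vec t\cdot x'+x_4)$ with $A\in\mathrm O(3)$, where $x=(x',x_4)\in\R^3\times\R$. Since the covering map $\cE^3\to{}^*\E^3$ intertwines these lifts with the projective action and commutes with $\pm\mathrm{Id}$, it suffices to show that each such $\tilde g$ preserves the connection $\nabla^{\cE^3}$ of Definition~\ref{defi coeucl connection}. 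Writing $\tilde g_*$ for the pushforward (which, as $\tilde g$ is linear, equals $\tilde g$ on each tangent vector while moving the base point by $\tilde g$), the goal is the identity
$$\tilde g_*\bigl(\nabla^{\cE^3}_v w\bigr)=\nabla^{\cE^3}_{\tilde g_* v}\bigl(\tilde g_* w\bigr)$$
for all vector fields $v,w$ tangent to $\cE^3$.

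The argument rests on three short structural verifications. First, because $\tilde g$ is linear (hence affine), it preserves the flat ambient differentiation $D$ (i.e. $Dw(v)$ of Definition~\ref{defi coeucl connection}): one has $D_{\tilde g_* v}(\tilde g_* w)=\tilde g_*(D_v w)$. Second, $\tilde g$ preserves the degenerate form $b^*$; this is the one place the block structure is used, since $b^*(\tilde g x,\tilde g y)=(Ax')\cdot(Ay')=x'\cdot y'=b^*(x,y)$ thanks to $A\in\mathrm O(3)$, and in particular $b^*(\tilde g_* v,\tilde g_* w)=b^*(v,w)$. Third — the feature peculiar to the degenerate setting, where $b^*$ provides no genuine normal — the transverse field $\textsc{N}_x=x$ is tautologically preserved: $\tilde g_*(\textsc{N}_x)=\tilde g(x)=\textsc{N}_{\tilde g(x)}$.

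With these in hand the conclusion is purely algebraic. Applying $\tilde g_*$ to the defining relation $D_v w=\nabla^{\cE^3}_v w+b^*(v,w)\textsc{N}$ at a point $x$ and invoking the first and third facts gives
$$D_{\tilde g_* v}(\tilde g_* w)=\tilde g_*\bigl(\nabla^{\cE^3}_v w\bigr)+b^*(v,w)\,\textsc{N}_{\tilde g(x)}.$$
On the other hand, the same defining relation applied to the fields $\tilde g_* v,\tilde g_* w$ at the point $\tilde g(x)$, together with the second fact, reads
$$D_{\tilde g_* v}(\tilde g_* w)=\nabla^{\cE^3}_{\tilde g_* v}(\tilde g_* w)+b^*(v,w)\,\textsc{N}_{\tilde g(x)}.$$
Subtracting the two identities cancels the common $\textsc{N}$-terms and yields exactly the desired equivariance, which then descends to $\nabla^{^*\E^3}$.

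I do not expect a genuine obstacle: the only points meriting care are that $\tilde g$ preserves $b^*$ as a bilinear form (not merely modulo its kernel) and that the splitting $\R^4=T_x\cE^3\oplus\R\,\textsc{N}_x$ making the decomposition in Definition~\ref{defi coeucl connection} well defined is carried by $\tilde g$ to the splitting at $\tilde g(x)$; both follow at once from the block form together with $\tilde g(\cE^3)=\cE^3$. Alternatively one could defer the statement to the intrinsic characterization promised in Proposition~\ref{prop properties coeucl connection}, but the direct computation above is self-contained.
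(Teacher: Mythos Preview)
Your proof is correct and follows essentially the same approach as the paper's: both work on the double cover $\cE^3$ and use that an isometry $\tilde g$ is linear (hence preserves $D$), preserves $b^*$, and maps the transverse field $\textsc N_x=x$ to $\textsc N_{\tilde g(x)}$, then compare the defining decompositions. The paper compresses these three observations into a single displayed chain of equalities, whereas you spell each out, but the content is the same.
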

\begin{proof}
From the definition, we have
$$\nabla^{\cE^3}_{A_*v}A_* w=D(A_*w)(A_* v)-b^*(A_* v,A_* w)\textsc{N}_{A(x)}=A_*(Dw(v)-b^*(v,w)\textsc{N}_x)=\nabla^{\cE^3}_v w\,,$$
since the ambient connection $D$ and the vector field $\textsc{N}$ are invariant for the action of $\mathrm{Isom}(^*\E^3)$.
\end{proof}

We will denote by $\textsc{T}$ the vector field on $\cE^3$ defined by $(0,1)$ in $T_x \cE^3\cong T_x\mathcal{S}^2\times \R$. It is a degenerate vector field invariant for the group $\mathrm{Isom}(\cE^3)$, as seen from the form inside the brackets of {\eqref{eq:isom co}}. Observe that $\textsc{T}$ does not descend to a global vector field on ${^*\E^3}$, but we will still talk about the vector field $\textsc{T}$ (by an abuse of notation) as the vector field induced on any simply connected open subset of ${^*\E^3}$.

\begin{proposition} \label{prop properties coeucl connection}
The connection $\nabla^{^*\E^3}$  is the unique connection on $^*\E^3$ such that:
\begin{itemize}
\item $\nabla^{^*\E^3}$ is \emph{symmetric}, i.e. $\nabla^{^*\E^3}_X Y-\nabla^{^*\E^3}_Y X=[X,Y]$ for any pair of vector fields $X,Y$; \item $\nabla^{^*\E^3}$ is \emph{compatible} with the degenerate metric $g^*$ of $^*\E^3$, i.e. $Z.g^*(X,Y)=g^*(\nabla^{^*\E^3}_Z X,Y)+g^*(X,\nabla^{^*\E^3}_Z Y)$ for any vector fields $X,Y,Z$.

\item $\nabla^{^*\E^3}$ \emph{preserves every space-like plane of} $^*\E^3$, i.e. for $V,W$ vector fields on a space-like plane $P$, $\nabla^{^*\E^3}_V W$ is tangent to $P$;
\item The vector field $\textsc{T}$ is \emph{parallel} with respect to $\nabla^{^*\E^3} $, i.e. $\nabla^{^*\E^3} \textsc{T}=0$.
\end{itemize}
In particular, the restriction of $\nabla^{^*\E^3}$ to any  space-like plane coincides with the Levi-Civita connection for the induced metric.
\end{proposition}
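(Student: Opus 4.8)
The plan is to establish existence (the connection of Definition~\ref{defi coeucl connection} has the four properties) and then uniqueness, which is the real content. Throughout I work on the double cover $\cE^3=\{b^*(x,x)=1\}$, where $\nabla^{\cE^3}_vw$ is by construction the component of the flat ambient derivative $Dw(v)$ tangent to $\cE^3$ in the splitting $\R^4=T_x\cE^3\oplus\R\,\textsc{N}$; this splitting is legitimate because $b^*(\textsc{N},\textsc{N})=1$ while $\textsc{N}=x$ is $b^*$-orthogonal to $T_x\cE^3$. Symmetry~(1) is then immediate: the correction terms are multiples of $\textsc{N}$, so $\nabla_vw-\nabla_wv$ equals the tangential part of $Dw(v)-Dv(w)=[v,w]$, which is already tangent. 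For compatibility~(2), the defect $\nabla_ZX-DX(Z)$ is a multiple of $\textsc{N}$ and hence $b^*$-orthogonal to the tangent field $Y$, so $g^*(\nabla_ZX,Y)=b^*(DX(Z),Y)$; since $b^*$ is parallel for $D$, summing the two such terms reproduces $Z.g^*(X,Y)$. Property~(4) holds because $\textsc{T}=e_4$ is a constant field, so $D\textsc{T}=0$ and its tangential part vanishes. For~(3), a space-like plane is $\cE^3\cap W_0$ for a linear $3$-subspace $W_0$; if $V,W$ are tangent to it then $DW(V)$ again takes values in $W_0$, and the projection only alters it by a multiple of $\textsc{N}=x\in W_0$, so $\nabla_VW\in W_0\cap T_x\cE^3=T_xP$. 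Finally the ``in particular'' drops out: by~(3) the connection restricts to any space-like plane $P$, and by~(1)--(2) this restriction is symmetric and compatible with the nondegenerate induced metric $g^*|_P$, hence is its Levi-Civita connection by the usual uniqueness theorem.

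For uniqueness, suppose $\nabla$ and $\bar\nabla$ both satisfy (1)--(4) and set $A(X,Y)=\nabla_XY-\bar\nabla_XY$, a tensor that is symmetric, $A(X,Y)=A(Y,X)$, because both connections are torsion-free. Property~(4) gives $A(X,\textsc{T})=\nabla_X\textsc{T}-\bar\nabla_X\textsc{T}=0$, and by symmetry $A(\textsc{T},X)=0$; thus $A$ only depends on the components of its arguments transverse to the radical direction $\textsc{T}$. Fix a point and a $g^*$-orthonormal pair $e_1,e_2$ spanning a horizontal complement $H=\R e_1\oplus\R e_2$ of $\R\,\textsc{T}$, on which $g^*$ is positive definite. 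It remains to prove $A(e_i,e_j)=0$ for $i,j\in\{1,2\}$.

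The hard part is that conditions (1)--(2) alone only determine $A$ up to the degenerate direction, in keeping with Fact~\ref{fact no met coeucl}. Concretely, subtracting the compatibility identities for $\nabla$ and $\bar\nabla$ gives $g^*(A(Z,X),Y)+g^*(X,A(Z,Y))=0$. Writing $A(e_k,e_i)=\sum_{l\in\{1,2\}}a^l_{ki}e_l+\tau_{ki}\textsc{T}$ and pairing with $e_j$ (using $g^*(e_l,e_j)=\delta_{lj}$ and $g^*(\textsc{T},e_j)=0$) yields $a^j_{ki}+a^i_{kj}=0$, antisymmetry in $(i,j)$; symmetry of $A$ gives $a^j_{ki}=a^j_{ik}$, symmetry in $(k,i)$. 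The standard argument that an array symmetric in one index pair and antisymmetric in an overlapping pair must vanish (exactly as in the uniqueness of Christoffel symbols) forces $a^j_{ki}=0$. Hence $A(e_i,e_j)=\tau_{ij}\textsc{T}$ is purely vertical, and the metric has extracted all it can.

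To kill the residual $\tau_{ij}$ I invoke property~(3). Using the model $\cE^3=\S^2\times\R\subset\R^3\times\R$ with basepoint $x=(p_0,s_0)$, I would exhibit a space-like plane $P_0$ through $x$ with $T_xP_0=H$: take $P_0=\cE^3\cap W_0$ for the linear graph $W_0=\{(v,s_0\langle v,p_0\rangle):v\in\R^3\}$, and check that $x\in P_0$ and $T_xP_0=\{(\dot p,0):\dot p\perp p_0\}=H$. Since $A$ is a tensor, (3) forces $A(v,w)\in T_xP_0=H$ for all $v,w\in H$; in particular $A(e_i,e_j)=\tau_{ij}\textsc{T}\in H$. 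As $H\cap\R\,\textsc{T}=\{0\}$, we conclude $\tau_{ij}=0$, whence $A\equiv0$ and $\nabla=\bar\nabla$. The whole difficulty is thus concentrated in this final step: the degenerate metric cannot see the radical direction, so it is precisely the geometric conditions (3) and (4) --- together with the fact that the horizontal distribution is tangent to a genuine space-like plane --- that remove the leftover freedom.
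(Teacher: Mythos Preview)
Your proof is correct, and the existence part mirrors the paper's argument closely. For uniqueness the approaches diverge: the paper works in the product coordinates $\mathcal S^2\times\R$ and pins down the Christoffel symbols directly, arguing that on each slice $\mathcal S^2\times\{*\}$ the connection must restrict to the spherical Levi--Civita connection (by properties (2)--(3)) and that the remaining symbols vanish (by (1) and (4)). Your route is the difference-tensor argument: set $A=\nabla-\bar\nabla$, use symmetry and compatibility to kill the horizontal components $a^l_{ki}$ via the usual ``symmetric in one pair, antisymmetric in another'' trick, and then invoke property~(3) on a concrete space-like plane to annihilate the residual vertical component $\tau_{ij}\textsc T$.

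One thing your argument handles more carefully than the paper's sketch: you actually exhibit, through an \emph{arbitrary} point $(p_0,s_0)$, a genuine space-like plane $P_0=\cE^3\cap W_0$ (intersection with a \emph{linear} hyperplane) whose tangent space there is exactly the horizontal distribution $H$. The paper instead applies property~(3) to the slices $\mathcal S^2\times\{*\}$, but strictly speaking only the slice $*=0$ is a plane of $\cE^3$ in the sense of Definition~\ref{defi coeucl connection} and the existence proof (intersection with a linear $3$-subspace of $\R^4$). Your construction of $W_0=\{(v,s_0\langle v,p_0\rangle)\}$ removes this imprecision and makes the use of~(3) airtight at every point.
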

\begin{proof}
It suffices to prove the statement for the double cover $\cE^3$, since clearly $-\mathrm{Id}$ acts on $\cE^3$ as an isometry of $\cE^3$, and preserving $\nabla^{\cE^3}$. It is straightforward to check that $\nabla^{\cE^3}$ defines a connection on $\cE^3$.
Symmetry follows from the observation that
$\nabla^{\cE^3}_X Y-\nabla^{\cE^3}_Y X$ is the tangential component of $(D_X Y-D_Y X)=[X,Y]$, which equals $[X,Y]$ itself. 
Also compatibility is very simple: for every vector $Z$ tangent to $\cE^3$, 
\begin{equation*}
Z.g^*(X,Y)=g^*( D_Z X ,Y)+g^*( X,D_Z Y)
=g^*( \nabla^{\cE^3}_Z X,Y)+g^*( X,\nabla^{\cE^3}_Z Y)\,.
\end{equation*}
For the third point, let $P$ be a plane of $\cE^3$ obtained as the intersection of $\cE^3$ with a linear hyperplane $P'$ of $\R^4$. Given vector fields $V,W$ on $P$, $D_V W$ is tangent to $P'$ and thus the projection to $\cE^3$ is still in $P$.
Finally, it is clear from the construction that the derivative of $\textsc{T}$ in any direction vanishes. 

Let us now assume that the four conditions hold. In the coordinate system provided by $\mathcal S^2\times\R$, the restriction of $\nabla^{\cE^3}$ to every plane $\mathcal S^2\times\{*\}$ preserves the plane itself (by the third point) and coincides with the Levi-Civita connection of $\mathcal S^2$, by the second point. 
 
Hence it is easily seen that the Christoffel symbols $\Gamma_{ij}^k$ are those of the Levi-Civita connection when $i,j,k$ correspond to coordinates of $\mathcal{S}^2$. Otherwise, using the first and fourth hypothesis, the $\Gamma_{ij}^k$ vanish. Hence the connection is uniquely determined.
\end{proof}

\begin{cor}
The geodesics for the co-Euclidean connection $\nabla^{^*\E^3}$ coincide with the lines of $^*\E^3$.
\end{cor}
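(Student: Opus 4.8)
The plan is to reduce everything to the double cover $\cE^3$, where geodesics of $\nabla^{^*\E^3}$ lift to geodesics of $\nabla^{\cE^3}$ and where the lines of $^*\E^3$ are exactly the projections of the curves $\cE^3\cap W$, for $W\subset\R^4$ a linear $2$-plane. First I would write down the geodesic equation explicitly from Definition~\ref{defi coeucl connection}: a curve $\gamma$ in $\cE^3\subset\R^4$ is a geodesic precisely when $\nabla^{\cE^3}_{\dot\gamma}\dot\gamma=0$, i.e. when the tangential part of its ambient acceleration $\ddot\gamma$ vanishes, which by the defining formula means that the whole acceleration is normal: $\ddot\gamma=\pm\,b^*(\dot\gamma,\dot\gamma)\,\textsc{N}_\gamma=\pm\,b^*(\dot\gamma,\dot\gamma)\,\gamma$ (the sign being the one that makes the restriction to a space-like plane agree with the spherical Levi-Civita connection, as in Proposition~\ref{prop properties coeucl connection}). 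The next step is to observe that the scalar $b^*(\dot\gamma,\dot\gamma)$ is constant along $\gamma$: differentiating $b^*(\gamma,\gamma)\equiv1$ gives $b^*(\gamma,\dot\gamma)=0$, and then $\tfrac{d}{dt}b^*(\dot\gamma,\dot\gamma)=2b^*(\ddot\gamma,\dot\gamma)=\pm2\,b^*(\dot\gamma,\dot\gamma)\,b^*(\gamma,\dot\gamma)=0$.

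Once the coefficient $\kappa:=\pm b^*(\dot\gamma(0),\dot\gamma(0))$ is known to be constant, the geodesic equation becomes the constant-coefficient linear system $\ddot\gamma=\kappa\gamma$, so $\ddot\gamma$ is at every instant a scalar multiple of $\gamma$. I would then show that the $2$-plane $W:=\operatorname{span}(\gamma(0),\dot\gamma(0))$ is invariant: letting $\pi$ denote the orthogonal projection of $\R^4$ onto $W^{\perp}$ for the \emph{standard} Euclidean product, the curve $\pi\circ\gamma$ solves $\tfrac{d^2}{dt^2}(\pi\gamma)=\kappa\,(\pi\gamma)$ with $\pi\gamma(0)=0$ and $\tfrac{d}{dt}\pi\gamma(0)=0$, hence vanishes identically. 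Thus $\gamma$ stays in $W$, its image lies in $\cE^3\cap W$, and its projection to $^*\E^3$ is contained in a line. This proves that every geodesic is contained in a line, uniformly in the elliptic and parabolic cases.

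For the converse I would argue that every line is a geodesic and that geodesics exhaust lines. Here it is cleanest to invoke Proposition~\ref{prop properties coeucl connection}: an elliptic line lies in a space-like plane $P$, on which $\nabla^{^*\E^3}$ restricts to the Levi-Civita connection of the induced elliptic metric, whose geodesics are exactly the lines of $P$ (the great circles, by Section~\ref{sec:mode spaces}); while a parabolic line is an integral curve of the field $\textsc{T}$, which is parallel, so $\nabla^{\cE^3}_{\textsc{T}}\textsc{T}=0$ shows it is a geodesic. Since through each point $p$ and each tangent direction $v$ there passes exactly one line — namely the projection of $\cE^3\cap\operatorname{span}(p,v)$ — and since geodesics with given initial data are unique, the geodesic through $(p,v)$ must coincide with that line; finally, because $\ddot\gamma=\kappa\gamma$ is linear, maximal geodesics are complete and sweep out the whole connected curve, so the images agree exactly. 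The routine part is the ODE computation; the points demanding care are the degenerate (parabolic) directions, where there is no metric notion of geodesic and everything must be phrased through the connection, and the verification that the complete geodesic covers the entire line rather than a proper arc.
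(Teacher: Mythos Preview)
Your proof is correct. The paper takes a shorter path: it only proves that every line is a geodesic (exactly as in your ``converse'' paragraph --- space-like lines via Proposition~\ref{prop properties coeucl connection}, parabolic lines via $\nabla^{\cE^3}_{\textsc{T}}\textsc{T}=0$), and then concludes immediately from uniqueness of geodesics that every geodesic must be a line, since through each point in each direction there is a line. Your explicit ODE argument --- reducing the geodesic equation to $\ddot\gamma=\kappa\gamma$ with $\kappa$ constant and showing the solution stays in the $2$-plane $\operatorname{span}(\gamma(0),\dot\gamma(0))$ --- is a genuine alternative for the direction ``geodesics lie in lines'', but it is strictly speaking redundant once you have the converse plus uniqueness. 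What it buys is a self-contained computation that does not appeal to the classification of lines into elliptic and parabolic types, and it makes the completeness of geodesics transparent. Your caution about the sign in the geodesic equation is well placed: Definition~\ref{defi coeucl connection} as written has a sign that disagrees with the formula in Section~4.1, and the correct sign is the one making the restriction to $\mathcal S^2\times\{0\}$ spherical, namely $\ddot\gamma=-b^*(\dot\gamma,\dot\gamma)\gamma$.
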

\begin{proof}
Again, we prove the statement for the double cover $\cE^3$. Given a space-like line $l$ of $\cE^3$, using the action of $\mathrm{Isom}(\cE^3)$ we can assume that $l$ is contained in the slice $\mathcal S^2\times\{0\}$. Since the connection on such a slice coincides with the Levi-Civita connection, and lines of $\cE^3$ are geodesics for this copy of $\mathcal S^2$, $l$ is a geodesic for $\nabla^{\cE^3}$. If $l$ is not space-like, then it is of the form $\{*\}\times\R$. Since $\nabla^{\cE^3}_\textsc{T} \textsc{T}=0$, it is clear by construction that $l$ is geodesic, provided it is parametrized in such a way that its tangent vector is a fixed multiple of $\textsc{T}$ for all time. 

Since there is a line of $\cE^3$ through every point of $\cE^3$ with every initial velocity, this shows that all geodesics for the connection $\nabla^{\cE^3}$ are lines of $\cE^3$.
\end{proof}

\paragraph{The volume form of co-Euclidean space.} By means of the transverse vector field $N$ in $\R^4$, we can also perform the usual construction to define a volume form for co-Euclidean space. 
Indeed, we can give the following definition:

\begin{df}[Co-Euclidean volume form] \label{defi coeucl volume}
The volume form of $\cE^3$ is the 3-form $\omega_{\cE^3}$ such that, given vectors $v,w,u$ in $T_x\cE^3$, 
$$\omega_{\cE^3}(v,w,u)=\omega_{\R^4}(\textsc{N},v,w,u)\,.$$
The co-Euclidean volume form is the volume form $\omega_{^*\E^3}$ induced on $^*\E^3$ by $\omega_{\cE^3}$.
\end{df}

Both the volume form of the ambient $\R^4$ and the vector field $\textsc{N}$ are invariant for $\mathrm{Isom}(^*\E^3)$, hence clearly:

\begin{lemma}
The volume form $\omega_{\cE^3}$ is invariant for the group $\mathrm{Isom}(^*\E^3)$.
\end{lemma}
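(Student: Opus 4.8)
The plan is to imitate the proof of the invariance of the co-Euclidean connection given just above, reducing everything to the double cover $\cE^3$ and exploiting the two facts singled out immediately before the statement: the ambient volume form $\omega_{\R^4}$ and the transverse field $\textsc{N}$ both behave well under the $\mathrm{Isom}(^*\E^3)$-action. First I would pass to $\cE^3$, observing that $-\mathrm{Id}$ lifts to $-\mathrm{Id}_{\R^4}$, which has determinant $(-1)^4=1$ and hence preserves both $\omega_{\R^4}$ and $\textsc{N}$; therefore invariance of $\omega_{\cE^3}$ will descend to $\omega_{^*\E^3}$, and it suffices to treat the linear lift $A$ of an element of $\mathrm{Isom}(^*\E^3)$. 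By Definition~\ref{defi isom coeucl}, this lift is a matrix of the block form \eqref{eq:isom co} whose orthogonal part lies in $\mathrm{O}(3)$, so in particular it preserves the quadric $\cE^3=\{b^*=1\}$.

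Two structural observations then drive the argument, exactly as in the connection lemma. Because $\textsc{N}_x=x$ and $A$ is linear, one has $A_*(\textsc{N}_x)=Ax=\textsc{N}_{A(x)}$, so the transverse field is tautologically equivariant; and because $A$ is linear on $\R^4$, it transforms the standard volume form by its determinant, $A^*\omega_{\R^4}=\det(A)\,\omega_{\R^4}$. With these in hand, the core step is a single pullback computation: for $v,w,u\in T_x\cE^3$,
$$(A^*\omega_{\cE^3})_x(v,w,u)=\omega_{\R^4}\bigl(\textsc{N}_{A(x)},A_*v,A_*w,A_*u\bigr)=\omega_{\R^4}\bigl(A_*\textsc{N}_x,A_*v,A_*w,A_*u\bigr)=\det(A)\,\omega_{\cE^3,x}(v,w,u),$$
the second equality using equivariance of $\textsc{N}$ and the third using the determinant transformation rule applied to the four vectors $\textsc{N}_x,v,w,u$ in $\R^4$.

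Since the orthogonal part of $A$ lies in $\mathrm{O}(3)$, the determinant of the lift is $\pm 1$; thus $\omega_{\cE^3}$ is preserved by the orientation-preserving elements and merely changes sign under the others, which is precisely the statement that $\omega_{\cE^3}$ is invariant as a volume form in the usual oriented sense. I do not expect a genuine obstacle here: the whole content is forced by linearity and by the position-vector description of $\textsc{N}$. The only point demanding any care is the determinant bookkeeping in the third equality and the resulting sign, which is the familiar orientation caveat attached to every ``the volume form is invariant'' statement and is already implicit in the earlier assumption that $M$ be oriented.
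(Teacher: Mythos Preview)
Your argument is correct and is exactly the one the paper has in mind: the paper does not give a separate proof but simply prefaces the lemma with the sentence ``Both the volume form of the ambient $\R^4$ and the vector field $\textsc{N}$ are invariant for $\mathrm{Isom}(^*\E^3)$, hence clearly:'' and then states the result. Your computation is the explicit unpacking of that one-line justification, including the honest treatment of the $\det(A)=\pm 1$ sign that the paper leaves implicit in its orientation conventions.
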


Of course, as there is no Riemannian or pseudo-Riemannian metric invariant by the group of isometries of $^*\E^3$ {(Fact~\ref{fact no met coeucl})}, there is no volume arising as the volume associated to a metric. However, the volume form $\omega_{^*\E^3}$ has the property that $\omega_{^*\E^3}(v,w,\textsc{T})=1$ provided $v,w$ are orthonormal space-like vectors for the degenerate metric of ${^*\E^3}$, and $\textsc{T}$ is the ``unitary'' degenerate vector field, so that the triple $(v,w,\textsc{T})$ is positively oriented. As in the classical case, since parallel transport preserves both the degenerate metric (this follows from the compatibility with the metric) and the vector field $\textsc{T}$ (see Proposition \ref{prop properties coeucl connection}), the volume form $\omega_{^*\E^3}$ has the following characterization:

\begin{proposition} 
The volume form $\omega_{^*\E^3}$  is the unique volume form on $^*\E^3$ such that:
\begin{itemize}
\item $\omega_{^*\E^3}$ is \emph{parallel} with respect to $\nabla^{^*\E^3}$, i.e. $\nabla^{^*\E^3} \omega_{^*\E^3}=0$;
\item $\omega_{^*\E^3}(v,w,\textsc{T})=1$, where $v,w,\textsc{T}$ is an oriented triple at a fixed point $x_0$, such that $v,w$ are orthonormal space-like vectors.
\end{itemize}
\end{proposition}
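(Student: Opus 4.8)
The plan is to carry out the whole argument on the double cover $\cE^3$, exactly as in the proof of Proposition~\ref{prop properties coeucl connection}: since $-\mathrm{Id}$ acts on $\cE^3$ preserving both $\nabla^{\cE^3}$ and $\omega_{\cE^3}$ (a one-line check using $\textsc{N}_{-x}=-x$ and the evenness of $\R^4$), any statement verified upstairs descends to $^*\E^3$. I would then split the argument into \emph{existence} — the form $\omega_{\cE^3}$ of Definition~\ref{defi coeucl volume} satisfies the two listed properties — and \emph{uniqueness}.

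For existence, the normalization is a direct determinant computation. Writing $\cE^3=\mathcal S^2\times\R$ with $\textsc{N}_x=x$ and $\textsc{T}=(0,0,0,1)$, and taking $v,w$ orthonormal space-like (hence tangent to the $\mathcal S^2$-slice, with vanishing last coordinate), I would expand $\omega_{\cE^3}(v,w,\textsc{T})=\omega_{\R^4}(\textsc{N},v,w,\textsc{T})$ along the column $\textsc{T}$; this reduces to $\det[\,p\mid v'\mid w'\,]$ in $\R^3$, where $p$ is the unit normal to $\mathcal S^2$ and $v',w'$ are the spatial parts of $v,w$, which equals $+1$ for the correctly oriented triple. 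For the parallel property I would use two clean inputs: that $\textsc{N}$ is the position field, so $D_Z\textsc{N}=Z$, and the defining relation $D_Z X=\nabla^{\cE^3}_Z X+b^*(Z,X)\textsc{N}$. Differentiating $\omega_{\cE^3}(X_1,X_2,X_3)=\omega_{\R^4}(\textsc{N},X_1,X_2,X_3)$ and using that $\omega_{\R^4}$ is parallel for the flat ambient connection $D$ gives
\begin{equation*}
Z.\omega_{\cE^3}(X_1,X_2,X_3)=\omega_{\R^4}(Z,X_1,X_2,X_3)+\sum_{i=1}^{3}\omega_{\cE^3}(X_1,\ldots,\nabla^{\cE^3}_Z X_i,\ldots,X_3)\,,
\end{equation*}
where the $b^*(Z,X_i)$-terms drop out because they insert $\textsc{N}$ twice into $\omega_{\R^4}$. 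The first summand vanishes since $Z,X_1,X_2,X_3$ are four vectors lying in the $3$-dimensional space $T_x\cE^3$, hence linearly dependent; what remains is exactly the identity $\nabla^{\cE^3}\omega_{\cE^3}=0$.

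For uniqueness, I would use that $\Lambda^3 T^*(^*\E^3)$ is a line bundle, so any competing volume form is $\omega'=f\,\omega_{^*\E^3}$ for a nowhere-vanishing smooth $f$. Applying the parallel hypothesis to both forms yields $0=\nabla^{^*\E^3}\omega'=(\d f)\otimes\omega_{^*\E^3}+f\,\nabla^{^*\E^3}\omega_{^*\E^3}=(\d f)\otimes\omega_{^*\E^3}$; since $\omega_{^*\E^3}$ is nowhere zero this forces $\d f=0$, so $f$ is constant on the connected space $^*\E^3$. The normalization at $x_0$ gives $f(x_0)=1$, whence $f\equiv 1$ and $\omega'=\omega_{^*\E^3}$.

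The main subtlety here is conceptual rather than computational: because $g^*$ is degenerate there is no orthonormal-frame argument as in the Riemannian characterization of the volume form, so the role of the distinguished field $\textsc{T}$ (which lives only on simply connected pieces of $^*\E^3$) must be handled, and one relies on the parallelism of $\textsc{T}$ and of $g^*$ established in Proposition~\ref{prop properties coeucl connection}. Once one notices the two inputs $D_Z\textsc{N}=Z$ and the linear dependence of four vectors in $T_x\cE^3$, the computation is painless; the only book-keeping items are the orientation sign in the normalization and the descent through $-\mathrm{Id}$.
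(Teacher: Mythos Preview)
Your argument is correct. One small slip: the parenthetical ``hence tangent to the $\mathcal S^2$-slice, with vanishing last coordinate'' is not true in general --- a space-like orthonormal pair $v,w\in T_{(p,t)}\cE^3$ may well have nonzero fourth coordinate. Fortunately this does not affect your determinant: expanding $\omega_{\R^4}(\textsc{N},v,w,\textsc{T})$ along the column $\textsc{T}=(0,0,0,1)$ kills the fourth coordinates of $\textsc{N}$, $v$, $w$ anyway, so one still gets $\det[p\mid v'\mid w']=\pm 1$ for \emph{any} such $v,w$, which is in fact what is needed for the normalization to be well-posed.

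Your route differs from the paper's. The paper does not give an explicit proof but only the conceptual remark preceding the statement: since $\nabla^{^*\E^3}$ is compatible with $g^*$ and $\textsc{T}$ is parallel (Proposition~\ref{prop properties coeucl connection}), parallel transport carries oriented frames of the form $(v,w,\textsc{T})$ with $v,w$ orthonormal space-like to frames of the same type, and $\omega_{^*\E^3}$ takes the value $1$ on all such frames; hence $\omega_{^*\E^3}$ is parallel, and uniqueness follows because a parallel top-form is determined by its value at one point. Your approach instead works directly in the ambient $\R^4$, using $D_Z\textsc{N}=Z$ together with the linear dependence of four tangent vectors to a $3$-manifold; this is more hands-on and has the advantage of not relying on the properties of $\nabla^{^*\E^3}$ established in Proposition~\ref{prop properties coeucl connection}, while the paper's argument is shorter once those properties are in place.
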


\paragraph{The case of co-Minkowski space.}
Very similar constructions can be used to define a connection and a volume form for co-Minkowski space. Indeed,  we define the vector field $\textsc{N}$, which is transverse to $\cM^3\cong \mathcal H^2\times\R$ in the ambient space $\R^4$, as the vector field $\textsc{N}_x=x\in\R^4$.
Clearly this definition is invariant by the isometries of $\cM^3$, hence in particular by the involution which identifies the two connected components of $\mathcal H^2\times\R$. For simplicity, we identify $^*\M^3\cong\H^2\times\R$ as one of the two connected components of $\mathcal H^2\times\R$.  
Recalling that the bilinear form for co-Minkowski space $b^*$ has the form $x_1^2+x_2^2-x_3^2$ in the $(x_1,x_2,x_3,x_4)$-coordinates, using the ambient connection and volume form of $\R^4$, one obtains the definitions of co-Minkowski connection and volume form:

\begin{df}[Co-Minkowski connection] \label{defi comink connection}
Given two vector fields $v,w$ in $^*\M^3$, we define the connection $\nabla^{^*\M^3}$ by means of:
$$Dw(v)=\nabla^{^*\M^3}_v w+b^*(v,w)\textsc{N}\,.$$
\end{df}

\begin{df}[Co-Minkowski volume form]
The volume form of $^*\M^3$ is the 3-form $\omega_{^*\M^3}$ such that, given vectors $v,w,u$ in $T_x ^*\M^3$, 
$$\omega_{^*\M^3}(v,w,u)=\omega_{\R^4}(\textsc{N},v,w,u)\,.$$
\end{df}

We report on the key properties below, without giving the proofs, as they are completely analogous to the co-Euclidean case. 

\begin{lemma}
The co-Minkowski connection $\nabla^{^*\M^3}$ and volume form $\omega_{\cE^3}$ are invariant for the isometry group $\mathrm{Isom}(^*\M^3)$.
\end{lemma}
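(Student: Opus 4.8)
The plan is to transcribe the two co-Euclidean invariance arguments, since the only structural facts used there carry over unchanged. The three inputs are: every element of $\mathrm{Isom}(^*\M^3)$ lifts to a linear automorphism of $\R^4$ preserving the degenerate form $b^*$ (this is exactly Definition~\ref{defi isom comink}, where $A\in\mathrm{O}(2,1)$); the transverse field $\textsc{N}_x=x$ is tautologically equivariant under linear lifts, i.e. $A_*\textsc{N}_x=\textsc{N}_{A(x)}$; and the ambient objects $D$ and $\omega_{\R^4}$ are preserved by linear maps (the flat connection $D$ by all of them, the volume form $\omega_{\R^4}$ by those of determinant one).

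First I would dispose of the connection. Fix $A\in\mathrm{Isom}(^*\M^3)$, identified with its linear lift to $\R^4$. Since $D$ is the flat ambient connection and $A$ is linear, $D(A_*w)(A_*v)=A_*(Dw(v))$; since $A$ preserves $b^*$ and $A_*\textsc{N}_x=\textsc{N}_{A(x)}$, the defining relation of Definition~\ref{defi comink connection} gives
$$\nabla^{^*\M^3}_{A_*v}A_*w=D(A_*w)(A_*v)-b^*(A_*v,A_*w)\textsc{N}_{A(x)}=A_*\bigl(Dw(v)-b^*(v,w)\textsc{N}_x\bigr)=A_*\nabla^{^*\M^3}_v w\,,$$
which is precisely the $A$-invariance of $\nabla^{^*\M^3}$. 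The descent to the quotient is immediate, as $-\mathrm{Id}$ acts as an isometry preserving $\textsc{N}$, $b^*$ and $\omega_{\R^4}$ (it has determinant $+1$ on $\R^4$).

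For the volume form the same substitution yields
$$\omega_{^*\M^3}(A_*v,A_*w,A_*u)=\omega_{\R^4}(\textsc{N}_{A(x)},A_*v,A_*w,A_*u)=\det(A)\,\omega_{\R^4}(\textsc{N}_x,v,w,u)\,.$$
The one point deserving care---and the only (minor) obstacle I anticipate---is this determinant factor. The lift of an element of $\mathrm{Isom}(^*\M^3)$ has determinant $\det A$ with $A\in\mathrm{O}(2,1)$, hence equal to $\pm1$; for the orientation-preserving elements ($\det A=+1$) the factor is trivial and invariance is exact, whereas orientation-reversing isometries change $\omega_{^*\M^3}$ by a sign. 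I would therefore state invariance for the orientation-preserving subgroup, or equivalently as invariance of the associated density, exactly as is tacitly understood in the co-Euclidean case; no further computation is needed.
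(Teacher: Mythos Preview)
Your proposal is correct and follows exactly the approach the paper intends: the paper gives no separate proof for this lemma but declares it ``completely analogous to the co-Euclidean case,'' and your argument is precisely the co-Euclidean computation transported to the $b^*_-$-setting via the same three ingredients (linearity of the lift, invariance of $b^*$, and equivariance of $\textsc{N}_x=x$).

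Your observation about the determinant $\pm 1$ is a genuine refinement. The paper's co-Euclidean argument asserts that ``the volume form of the ambient $\R^4$ \ldots\ [is] invariant for $\mathrm{Isom}(^*\E^3)$,'' but the lifts in Definition~\ref{defi isom coeucl} and Definition~\ref{defi isom comink} have determinant $\det A\in\{\pm 1\}$, so strictly speaking $\omega_{\R^4}$ is only preserved up to sign. Your resolution---invariance on the orientation-preserving subgroup, or equivalently invariance of the associated density---is the right way to state it, and the paper is tacitly adopting the same convention without comment.
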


Here we denote by $\textsc{T}$ the normalized degenerate vector field (as we recall that there is a notion of length of the degenerate direction which is preserved by the group of isometries of $^*\M^3$). In the $(x_1,x_2,x_3,x_4)$-coordinates, $\textsc{T}$ can be written as $(1,0,0,0)$.

\begin{proposition} \label{prop properties half pipe connection}
The connection $\nabla^{^*\M^3}$  is the unique connection on $^*\M^3$ such that:
\begin{itemize}
\item $\nabla^{^*\M^3}$ is \emph{symmetric}, i.e. $\nabla^{^*\M^3}_X Y-\nabla^{^*\M^3}_Y X=[X,Y]$ for any pair of vector fields $X,Y$; \item $\nabla^{^*\M^3}$ is \emph{compatible} with the degenerate metric $g^*$ of $^*\M^3$, i.e. $\nabla^{^*\M^3} g^*=0$;

\item $\nabla^{^*\M^3}$ \emph{preserves every space-like plane of} $^*\M^3$, i.e. for $V,W$ vector fields on a space-like plane $P$, $\nabla^{^*\M^3}_V W$ is tangent to $P$;
\item The vector field $\textsc{T}$ is \emph{parallel} with respect to $\nabla^{^*\M^3} $, i.e. $\nabla^{^*\M^3} \textsc{T}=0$.
\end{itemize}
\end{proposition}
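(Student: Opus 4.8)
The plan is to follow, almost verbatim, the proof of Proposition~\ref{prop properties coeucl connection}, since every ingredient of the co-Euclidean argument has an exact counterpart here, with the hyperbolic slices $\H^2$ playing the role of the spheres $\mathcal S^2$. One pleasant simplification is that no reduction to a double cover is needed: contrary to the co-Euclidean case, where $^*\E^3=\cE^3/\{\pm\mathrm{Id}\}$ is a genuine quotient, here $^*\M^3$ is realized directly as one of the two connected sheets $\H^2\times\R$ of $\cM^3$, and the connection of Definition~\ref{defi comink connection} already lives on it through the transverse field $\textsc{N}_x=x\in\R^4$. I would first record that $\nabla^{^*\M^3}$ is indeed a connection (routine), and then verify the four listed properties in turn.

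For existence I would check each property directly. \emph{Symmetry} holds because, by Definition~\ref{defi comink connection}, $\nabla^{^*\M^3}_X Y-\nabla^{^*\M^3}_Y X$ is the tangential part of the ambient bracket $D_X Y-D_Y X=[X,Y]$, which is already tangent. \emph{Compatibility} with $g^*$ rests on the single observation that $\textsc{N}_x=x$ is $b^*$-orthogonal to the tangent space: differentiating the defining relation $b^*(x,x)=-1$ along a tangent direction $Y$ gives $b^*(x,Y)=0$, so the term $b^*(Z,X)\textsc{N}$ appearing in $D_Z X$ is annihilated when paired by $g^*$ with a tangent vector; since $D$ is flat and $b^*$ is $D$-parallel, this yields $Z.g^*(X,Y)=g^*(\nabla^{^*\M^3}_Z X,Y)+g^*(X,\nabla^{^*\M^3}_Z Y)$. \emph{Preservation of space-like planes} holds because such a plane is $P=\cM^3\cap P'$ for a linear hyperplane $P'$ of $\R^4$: for $V,W$ tangent to $P$ the ambient derivative $D_V W$ lies in $P'$, and since $\textsc{N}_x=x\in P'$ too, the tangential projection $\nabla^{^*\M^3}_V W=D_V W-b^*(V,W)\textsc{N}$ remains in $P'$, hence in $P$. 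Finally, \emph{$\textsc{T}$ is parallel}: writing $\textsc{T}$ as the constant ambient field spanning the radical of $b^*$, one has $D_V\textsc{T}=0$, while $b^*(V,\textsc{T})=0$ for every $V$ because $\textsc{T}$ lies in the kernel of $b^*$; hence $\nabla^{^*\M^3}_V\textsc{T}=0$.

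For uniqueness I would argue through the Christoffel symbols in the coordinates $\H^2\times\R$. The third and second properties force the restriction of any connection satisfying the hypotheses to each slice $\H^2\times\{\ast\}$ to coincide with the Levi-Civita connection of $\H^2$, which fixes all $\Gamma_{ij}^k$ whose indices are tangent to the slice. The remaining symbols all touch the degenerate direction: the fourth property gives $\nabla_X\textsc{T}=0$ for every $X$, and combined with symmetry this yields $\nabla_\textsc{T}X=\nabla_X\textsc{T}+[\textsc{T},X]=0$ for coordinate fields $X$, since $\textsc{T}=\partial_t$ commutes with the $\H^2$-coordinate fields. Thus every Christoffel symbol involving the degenerate direction vanishes, and the connection is completely determined.

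The main subtlety—and the only place where the argument genuinely departs from the classical Koszul derivation—is that $g^*$ is degenerate, so compatibility with $g^*$ alone cannot pin down a connection. The two supplementary conditions, preservation of space-like planes and parallelism of $\textsc{T}$, are precisely what supply the information missing in the degenerate direction, and the computations above show that Definition~\ref{defi comink connection} provides a connection meeting all four. I expect the only point requiring care to be the consistent bookkeeping of the radical vector $\textsc{T}$ throughout the compatibility and uniqueness steps, namely treating it simultaneously as $b^*$-null (so that it drops out of the metric computation) and as $D$-parallel (so that all cross Christoffel symbols vanish).
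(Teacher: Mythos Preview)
Your proposal is correct and follows essentially the same route as the paper, which explicitly states that the proof is ``completely analogous to the co-Euclidean case'' (Proposition~\ref{prop properties coeucl connection}) and omits the details. Your write-up is in fact slightly more careful than the paper's co-Euclidean argument in two places: you make explicit that $\textsc{N}_x=x\in P'$ in the plane-preservation step, and you spell out that $b^*(V,\textsc{T})=0$ in the parallelism step; both are implicit in the paper's version. Your observation that no passage to a double cover is needed here is also in line with the paper's convention of identifying $^*\M^3$ directly with one sheet $\H^2\times\R$.
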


\begin{cor}
Geodesics for the co-Minkowski connection $\nabla^{^*\M^3}$ coincide with lines of $^*\M^3$.
\end{cor}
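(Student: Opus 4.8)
The plan is to follow verbatim the strategy of the co-Euclidean corollary, exploiting the two facts already recorded in Proposition~\ref{prop properties half pipe connection}: that $\nabla^{^*\M^3}$ restricts on every space-like (hyperbolic) plane to the Levi-Civita connection of the induced $\H^2$-metric, and that the degenerate field $\textsc{T}$ is parallel. Working in the model $^*\M^3\cong\H^2\times\R$, I would first recall the dichotomy for the lines of $^*\M^3$: a line is either \emph{parabolic}, that is, it passes through the vertex of the absolute and is hence a fibre $\{*\}\times\R$ directed by $\textsc{T}$, or it is \emph{space-like hyperbolic}, that is, it avoids the vertex and projects to a geodesic of a hyperbolic slice. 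These two families exhaust all lines, since a projective line is the quotient of the intersection of the pseudo-sphere with a $2$-plane through the origin, and such a $2$-plane either contains the vertex direction $\tilde\ell$ or it does not.

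For a space-like line $l$ I would use the action of $\mathrm{Isom}(^*\M^3)$ to normalize it. The key point is that the ``translation'' part $\vec t$ of an isometry of the form \eqref{eq:isom co} acts as a shear in the $\textsc{T}$-direction; consequently, after writing the $\textsc{T}$-coordinate of $l$ as the restriction of a linear function on the hyperbolic factor, a suitable choice of $\vec t$ flattens this coordinate and carries $l$ into the standard slice $\H^2\times\{0\}$, exactly as in the reduction to a slice carried out for the co-Euclidean corollary. Once $l$ lies in this slice, the first three properties of Proposition~\ref{prop properties half pipe connection} (symmetry, metric-compatibility and preservation of space-like planes) force $\nabla^{^*\M^3}$ to restrict there to the Levi-Civita connection of the induced $\H^2$-metric; as the lines contained in the slice are precisely the geodesics of that copy of $\H^2$, the line $l$ is a geodesic of $\nabla^{^*\M^3}$. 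For a parabolic line, the tangent vector is a constant multiple of $\textsc{T}$, and the parallelism $\nabla^{^*\M^3}\textsc{T}=0$ immediately yields $\nabla^{^*\M^3}_\textsc{T}\textsc{T}=0$, so the line, parametrized so that its velocity is a fixed multiple of $\textsc{T}$, is a geodesic.

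I would then close the argument by the usual uniqueness-of-geodesics principle: through every point of $^*\M^3$ and in every tangent direction there passes a line, obtained as the quotient of the intersection of the pseudo-sphere with the $2$-plane spanned by a lift of the point and a lift of the prescribed direction. Since every tangent direction is either tangent to a hyperbolic slice, proportional to $\textsc{T}$, or a genuine combination of the two, each is realized by one of the lines treated above, which are all geodesics. Because geodesics of $\nabla^{^*\M^3}$ are uniquely determined by their initial position and velocity, every geodesic must coincide with such a line, and the two notions agree.

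The step I expect to be the main obstacle is the normalization of an arbitrary space-like line into the slice $\H^2\times\{0\}$ by a shear. One must verify that the variation of the $\textsc{T}$-coordinate along the line is indeed the restriction of a single linear function on the hyperbolic factor, so that one element of $\mathrm{Isom}(^*\M^3)$ suffices to flatten it, and one must check that the resulting curve is genuinely a geodesic of the Riemannian slice, and not merely contained in it. Everything else is formal and strictly parallel to the co-Euclidean case.
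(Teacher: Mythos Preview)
Your proposal is correct and follows essentially the same approach as the paper, which does not give a separate proof for the co-Minkowski case but simply declares it ``completely analogous'' to the co-Euclidean corollary. Your dichotomy, the shear-normalization of a non-parabolic line into the slice $\H^2\times\{0\}$, the use of $\nabla^{^*\M^3}\textsc{T}=0$ for the parabolic fibres, and the concluding uniqueness argument are exactly the co-Euclidean proof transported verbatim; the normalization you flag as a potential obstacle is no harder here than there, since a non-parabolic line lies in a $2$-plane whose degenerate coordinate is a linear function of the remaining ones, and a single shear in $\mathrm{Isom}(^*\M^3)$ kills it.
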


\begin{proposition} \label{prop properties comink connection}
The volume form $\omega_{^*\M^3}$  is the unique volume form on $^*\M^3$ such that:
\begin{itemize}
\item $\omega_{^*\M^3}$ is \emph{parallel} with respect to $\nabla^{^*\M^3}$, i.e. $\nabla^{^*\M^3} \omega_{^*\M^3}=0$;
\item $\omega_{^*\M^3}(v,w,\textsc{T})=1$, where $v,w,\textsc{T}$ is an oriented triple at a fixed point $x_0$, such that $v,w$ are orthonormal space-like vectors.
\end{itemize}
\end{proposition}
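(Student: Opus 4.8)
The plan is to follow the co-Euclidean argument essentially verbatim, since every structural ingredient has already been set up in the co-Minkowski setting: the transverse field $\textsc{N}_x=x$, the parallel degenerate field $\textsc{T}$, compatibility of $\nabla^{^*\M^3}$ with $g^*$, and the defining formula $\omega_{^*\M^3}(v,w,u)=\omega_{\R^4}(\textsc{N},v,w,u)$. The statement splits into existence (the form of the definition satisfies the two conditions) and uniqueness (no other volume form does).

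For existence, I would first verify the normalization. At a point $x\in\cM^3$ the transverse field is $\textsc{N}_x=x$, and $\textsc{T}$ is the normalized degenerate direction; on the space-like distribution (tangent to the $\H^2$-slices) $g^*$ is positive definite. Given $g^*$-orthonormal space-like vectors $v,w$ completing $(v,w,\textsc{T})$ to a positively oriented frame, the value $\omega_{^*\M^3}(v,w,\textsc{T})=\omega_{\R^4}(\textsc{N},v,w,\textsc{T})$ is a single $4\times 4$ determinant, equal to $1$ for the correct ambient orientation — precisely the computation of the co-Euclidean case, with $b^*$ now Lorentzian rather than positive definite on the ambient complement.

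The parallelism $\nabla^{^*\M^3}\omega_{^*\M^3}=0$ I would establish by evaluating on a parallel-transported frame. By Proposition~\ref{prop properties half pipe connection} the field $\textsc{T}$ is parallel and $\nabla^{^*\M^3}$ is compatible with $g^*$, so if $v,w$ are parallel-transported along a curve their $g^*$-products are preserved. The one wrinkle peculiar to the degenerate case is that parallel transport may add multiples of $\textsc{T}$ to $v$ and $w$, since the $g^*$-norm is blind to the $\textsc{T}$-component. This is harmless: as $\textsc{T}$ already occupies the last slot, the alternating property of $\omega_{\R^4}$ gives $\omega_{^*\M^3}(v+a\textsc{T},w+b\textsc{T},\textsc{T})=\omega_{^*\M^3}(v,w,\textsc{T})$, so the value on the transported frame stays equal to $1$. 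Hence $\omega_{^*\M^3}$ is constant on parallel frames, i.e. parallel.

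For uniqueness, I would use that the bundle of top-degree forms on the $3$-manifold $^*\M^3$ is a line bundle, so any competing volume form is $\omega'=f\,\omega_{^*\M^3}$ for a smooth $f$. Parallelism then yields $0=\nabla^{^*\M^3}\omega'=(\d f)\otimes\omega_{^*\M^3}$, whence $f$ is locally constant, hence constant on the connected space $^*\M^3\cong\H^2\times\R$; the normalization condition forces $f\equiv 1$, so $\omega'=\omega_{^*\M^3}$. I expect the only genuinely delicate point to be the parallel-transport wrinkle above, which is the single place where the degeneracy of $g^*$ actually intervenes; everything else is a transcription of the co-Euclidean proof.
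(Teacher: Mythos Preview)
Your proposal is correct and follows essentially the same route as the paper, which gives no explicit proof for this proposition but simply declares it ``completely analogous to the co-Euclidean case'' --- and even in the co-Euclidean case the paper only offers the one-sentence justification that parallel transport preserves both $g^*$ and $\textsc{T}$. Your argument is in fact more careful than the paper's sketch: the observation that parallel transport could in principle add $\textsc{T}$-components (invisible to $g^*$) but that this is absorbed by the alternating property, together with the explicit line-bundle uniqueness argument, fills in details the paper leaves implicit.
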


\paragraph{Relation with geometric transition.} 
We will now show that the connections of co-Euclidean and co-Minkowski space are natural also in the sense of geometric transition, that is, they are the limits of the Levi-Civita connections under the transitions described in Subsection \ref{subsec Limits of 2-dimensional model spaces} and \ref{subsec Limits of 3-dimensional model spaces}. We will focus on dimension 3 for definiteness, and consider the transition of $\Ell^3$ and $\dS^3$ to $^*\E^3$, as described in Scheme \eqref{eq: deg 3d eucl coeucl}, which makes use of the projective transformations 
\begin{equation} \label{proj resc dim 4}
g_t^*=\begin{bmatrix} 1 & 0 & 0 & 0 \\ 0 & 1 & 0 & 0 \\ 0 & 0 &  1 & 0 \\ 0 & 0 & 0 & 1/t  \end{bmatrix}\in \mathrm{PGL}(4,\R)\,.
\end{equation}

\begin{proposition} \label{prop limit connection volume coeucl}
Let $X_t,Y_t,Z_t$ be smooth families of smooth vector fields on $\Ell^3$ (resp. $\dS^3$) such that $X_0,Y_0,Z_0$ are tangent to a  plane $P$. The limit of $g_t^*X_t$ is a vector field on $^*\E^3$ which we denote by $\dot X$, and analogously for $\dot Y$ and $\dot Z$. Then:
\begin{itemize}
\item The limit of $ g_t^*\nabla^{\Ell^3}_X Y$ (resp. $ g_t^*\nabla^{\dS^3}_X Y$) is $\nabla^{^*\E^3}_{\dot X}\dot Y$.
\item The limit of $\omega^{\Ell^3}(X_t,Y_t,Z_t)$ (resp. $\omega^{\dS^3}(X_t,Y_t,Z_t)$)  is $\omega^{^*\E^3}(\dot X,\dot Y,\dot Z)$.
\end{itemize}
\end{proposition}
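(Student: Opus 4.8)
The plan is to run every computation in the ambient $\R^4$, exploiting that the rescaling $g_t^*$ of \eqref{proj resc dim 4} is \emph{linear}. First I would record its three relevant features. Being linear, $g_t^*$ commutes with the flat ambient connection $D$, and it tautologically fixes the position (transverse) field: $(g_t^*)_*\textsc{N}_x=g_t^* x=\textsc{N}_{g_t^* x}$. Moreover it carries the pseudo-sphere $\Ell^3=\{b_{4,0}=1\}$ isometrically onto $\{b_t=1\}$, where $b_t:=(g_t^*)_*b_{4,0}$ satisfies $b_t(y,y)=y_1^2+y_2^2+y_3^2+t^2y_4^2$, so that $b_t\to b^*$ as $t\to 0$. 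In the de Sitter case one only replaces $b_{4,0}$ by $b_{3,1}$ and gets $b_t(y,y)=y_1^2+y_2^2+y_3^2-t^2y_4^2$, which again tends to $b^*$; the two cases then proceed identically, and I treat only the elliptic one below.

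For the connection, I would begin from the tangential decomposition of a pseudo-sphere, $\nabla^{\Ell^3}_X Y=D_X Y+b_{4,0}(X,Y)\textsc{N}$. Applying $g_t^*$ and using that it commutes with $D$, fixes $\textsc{N}$, and sends $b_{4,0}$ to $b_t$, one obtains, with $V=g_t^* X$ and $W=g_t^* Y$, the identical formula with $b_t$ in place of $b_{4,0}$, namely $\nabla^{g_t^*\Ell^3}_V W=D_V W+b_t(V,W)\textsc{N}$. Now set $V_t=g_t^* X_t$ and $W_t=g_t^* Y_t$; the hypothesis that $X_0,Y_0$ are tangent to $P=\{x_4=0\}\cap\Ell^3$ forces the $x_4$-components of $X_t,Y_t$ to vanish at $t=0$, which is exactly what makes $V_t\to\dot X$ and $W_t\to\dot Y$ together with one derivative. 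Since $D$ is continuous (componentwise differentiation) and $b_t\to b^*$, I would then pass to the limit in $g_t^*\nabla^{\Ell^3}_{X_t}Y_t=\nabla^{g_t^*\Ell^3}_{V_t}W_t=D_{V_t}W_t+b_t(V_t,W_t)\textsc{N}\to D_{\dot X}\dot Y+b^*(\dot X,\dot Y)\textsc{N}=\nabla^{^*\E^3}_{\dot X}\dot Y$, the final identification being exactly the tangential decomposition of Definition~\ref{defi coeucl connection}. The only routine care is to extend $W_t$ to an ambient neighbourhood so that $D_{V_t}W_t$ is defined and has the expected limit $D_{\dot X}\dot Y$.

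For the volume form I would again reduce to the ambient determinant. By Definition~\ref{defi coeucl volume} and the analogous formula for $\Ell^3$, one has $\omega^{\Ell^3}(X_t,Y_t,Z_t)=\omega_{\R^4}(x(t),X_t,Y_t,Z_t)=\det\big[\,x(t)\mid X_t\mid Y_t\mid Z_t\,\big]$, where $x(t)$ is the base point. Multilinearity under $g_t^*$ then gives the key identity $\det\big[\,g_t^* x(t)\mid g_t^* X_t\mid g_t^* Y_t\mid g_t^* Z_t\,\big]=\det(g_t^*)\,\omega^{\Ell^3}(X_t,Y_t,Z_t)=\tfrac1t\,\omega^{\Ell^3}(X_t,Y_t,Z_t)$. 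As $t\to0$ the left-hand side tends to $\det\big[\,\textsc{N}\mid\dot X\mid\dot Y\mid\dot Z\,\big]=\omega_{\R^4}(\textsc{N},\dot X,\dot Y,\dot Z)=\omega^{^*\E^3}(\dot X,\dot Y,\dot Z)$, using $g_t^* x(t)\to\textsc{N}$ and Definition~\ref{defi coeucl volume}. Thus the correct statement is that the volume form converges \emph{after} being transported by $g_t^*$, with the ensuing determinant factor $\det(g_t^*)=1/t$; the de Sitter case is verbatim.

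I expect the volume form to be the genuine difficulty, precisely because of this scaling factor. Blowing up the plane $P$ collapses the transverse direction, so on a frame $\dot X,\dot Y,\dot Z$ that becomes linearly independent only in the limit, the metric volume $\omega^{\Ell^3}(X_t,Y_t,Z_t)$ itself vanishes to first order in $t$, and the finite co-Euclidean volume survives only after multiplication by $\det(g_t^*)=1/t$. This rescaling is the exact volume-form analogue of conjugating the connection by $g_t^*$, and it is the point that must be stated carefully. By contrast, for the connection the corresponding blow-up of the transverse part of $X_t$ is compensated by the tangency hypothesis $X_0\in T P$, so no extra factor intervenes there and the limit is clean.
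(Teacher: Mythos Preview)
Your argument is essentially the same as the paper's: push the tangential decomposition through the linear map $g_t^*$, using that $g_t^*$ commutes with $D$, fixes the position field $\textsc{N}$, and carries $b_{4,0}$ (resp.\ $b_{3,1}$) to a form $b_t$ converging to $b^*$. Two remarks are worth making.

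First, a bookkeeping point on signs. You write $\nabla^{\Ell^3}_X Y=D_X Y+b_{4,0}(X,Y)\textsc{N}$ and then identify the limit $D_{\dot X}\dot Y+b^*(\dot X,\dot Y)\textsc{N}$ with $\nabla^{^*\E^3}_{\dot X}\dot Y$ ``by Definition~\ref{defi coeucl connection}''. But Definition~\ref{defi coeucl connection} literally reads $Dw(v)=\nabla^{\cE^3}_v w+b^*(v,w)\textsc{N}$, i.e.\ $\nabla^{\cE^3}_v w=D_v w-b^*(v,w)\textsc{N}$. Your $+$ sign is the one consistent with the pseudo-sphere formula $Dw(v)=\nabla^{\mathcal M}_v w-b(v,w)x$ used earlier in the paper (and also with the sign that appears inside the paper's own proof of this proposition), so this is really a sign inconsistency in the paper's Definition~\ref{defi coeucl connection} rather than an error on your part; just be aware that what you write does not match the definition as printed.

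Second, and more substantively, your treatment of the volume form is \emph{sharper} than the paper's. The paper asserts the chain
\[
\omega_{\mathcal S^3}(X_t,Y_t,Z_t)=\omega_{\R^4}(\textsc{N},X_t,Y_t,Z_t)=\omega_{\R^4}(\textsc{N}_t,g_t^*X_t,g_t^*Y_t,g_t^*Z_t)
\]
and then lets $t\to 0$. As you correctly observe, the second equality is off by $\det(g_t^*)=1/t$, and indeed a direct check (e.g.\ $x(t)=(1,0,0,0)$, $X_t=e_2$, $Y_t=e_3$, $Z_t=t\,e_4$) shows $\omega^{\Ell^3}(X_t,Y_t,Z_t)\to 0$ while $\omega^{^*\E^3}(\dot X,\dot Y,\dot Z)=1$. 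So the honest statement is the one you give: it is $\det(g_t^*)\cdot\omega^{\Ell^3}(X_t,Y_t,Z_t)=\omega_{\R^4}(g_t^*x(t),g_t^*X_t,g_t^*Y_t,g_t^*Z_t)$ that converges to $\omega^{^*\E^3}(\dot X,\dot Y,\dot Z)$, which is precisely the statement that the \emph{transported} volume form converges. Your diagnosis --- that the transverse collapse forces $\omega^{\Ell^3}(X_t,Y_t,Z_t)$ to vanish to first order and that the $1/t$ is the volume-form analogue of conjugating the connection --- is exactly right.
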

\begin{proof}
First, let us assume that $P$ is defined by $x_4=0$ in $\R\mathrm{P}^3$, as in Subsection \ref{subsec Limits of 3-dimensional model spaces}, so that we can use the projective transformations
in \eqref{proj resc dim 4}. We can as usual perform the computation in the double cover $\cE^3$.  Hence in those coordinates, the vector field $X_0,Y_0,Z_0$ have $x_4$-coordinate equal to 0. 
Hence, the limit of $g_t^*X_t$ is:
$$\lim_{t\to 0}g_t^* X_t(x)=\lim_{t\to 0}\begin{bmatrix} (X_t(x))_1 \\ (X_t(x))_2 \\ (X_t(x))_3 \\ (X_t(x))_4/t  \end{bmatrix}=\begin{bmatrix} (X_0(x))_1 \\ (X_0(x))_2 \\ (X_0(x))_3 \\ (\dot X(x))_4 \end{bmatrix}\,,$$
namely, it is a vector field on $^*\E^3$ whose vertical projection is identified with the vector field $X_0(x)$.

Now, observe that $g_t^*$ maps {$\mathcal S^3$} to 
$$\mathcal S^3_t:=g_t^*(\mathcal S^3)=\{x\,|\,x_1^2+x_2^2+x_3^2+t^2x_4^2=-1\}\,.$$ 
Moreover, $g_t^*$ is an isometry if we endow $\mathcal S^3_t$ with the metric induced by the quadratic form $$b_{4,0}^t(x,x)=x_1^2+x_2^2+x_3^2+t^2x_4^2\,.$$
 Hence $g_t^*$ maps the normal vector $\textsc{N}(x)=x$ of {$\mathcal S^3$} to $\textsc{N}_t(x)=g_t^*(x)$, which is precisely the normal vector to $\mathcal S^3_t$ for $b_{4,0}^t$. 
 
Now, as $X_0,Y_0,Z_0$ are tangent to $P$, their base point $x_t$ are rescaled to $g_t^* x_t$ which converge to $x_\infty$, base point for $\dot X,\dot Y,\dot Z$. For the same reason, the normal vector $\textsc{N}_t(x_t)=g_t^*(x_t)$ to  $\mathcal S^3_t$ at $x_t$ converges to $x_\infty$, which is (in the notation of Definitions \ref{defi coeucl connection} and \ref{defi comink connection}) the vector $\textsc{N}(x_\infty)$ used as a transverse vector field to $\cE^3$.

Hence in conclusion,
$$g_t^*\nabla^{\mathcal S^3}_{X_t} Y_t=\nabla^{\mathcal S^3_t}_{g_t^*X_t} g_t^*Y_t=DY_t(X_t)-b_{4,0}^t (X_t,Y_t)\textsc{N}_t(x_t)$$
converges to
$$\nabla^{\cE^3}_{\dot X} \dot Y=D\dot Y(\dot X)-b^*(\dot X,\dot Y)\textsc{N}(x_\infty)\,.$$
Analogously,
$$\omega_{\mathcal S^2}(X_t,Y_t,Z_t)=\omega_{\R^4}(\textsc{N},X_t,Y_t,Z_t)=\omega_{\E^4}(\textsc{N}_t,g_t^*X_t,g_t^*Y_t,g_t^*Z_t)$$
converges to
$$\omega_{\cE^3}(\dot X,\dot Y,\dot Z)=\omega_{\R^4}(\textsc{N}(x_\infty),\dot X,\dot Y,\dot Z)\,.$$
Of course, the proof is completely analogous for the rescaling from $\dS^3$.
\end{proof}

Actually, the proof is analogous also for the case of geometric transition of $\H^3$ and $\AdS^3$ to $^*\M^3$, hence one can prove:

\begin{proposition}
Let $X_t,Y_t,Z_t$ be smooth families of smooth vector fields on $\H^3$ (resp. $\AdS^3$) such that $X_0,Y_0,Z_0$ are tangent to a  plane $P$. The limit of $g_t^*X_t$ is a vector field on $^*\M^3$ which we denote by $\dot X$, and analogously for $\dot Y$ and $\dot Z$. Then:
\begin{itemize}
\item The limit of $ g_t^*\nabla^{\H^3}_X Y$ (resp. $ g_t^*\nabla^{\AdS^3}_X Y$) is $\nabla^{^*\M^3}_{\dot X}\dot Y$.
\item The limit of $\omega^{\H^3}(X_t,Y_t,Z_t)$ (resp. $\omega^{\AdS^3}(X_t,Y_t,Z_t)$)  is $\omega^{^*\M^3}(\dot X,\dot Y,\dot Z)$.
\end{itemize}
\end{proposition}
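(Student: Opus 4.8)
The statement is the exact analogue of Proposition~\ref{prop limit connection volume coeucl}, with $\Ell^3,\dS^3,{}^*\E^3$ replaced by $\H^3,\AdS^3,{}^*\M^3$, and the plan is to reproduce that proof line by line, replacing the positive definite ambient form $b_{4,0}$ by $b_{3,1}$ (resp. $b_{2,2}$) and the round sphere $\mathcal S^3$ by the pseudo-sphere $\mathcal H^3=\{b_{3,1}=-1\}$ (resp. $\mathcal{A}d\mathcal{S}^3=\{b_{2,2}=-1\}$). As in the co-Euclidean case I would carry out the computation with the standard lifts to $\R^4$, using that $-\mathrm{Id}$ is an isometry preserving the connection, the volume form and the transverse field $\textsc{N}_x=x$. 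Concretely, I would choose linear coordinates so that the plane $P$ to be blown up is a coordinate hyperplane: this is the totally geodesic hyperbolic plane (a copy of $\H^2$) that is simultaneously embedded in $\H^3$, embedded as a space-like plane in $\AdS^3$, and realized as the distinguished space-like plane of ${}^*\M^3$, as recorded in Scheme~\eqref{eq: deg 3d}. The transformation $g_t^*$ is then taken to stretch precisely the direction transverse to $P$, exactly as the rescaling \eqref{proj resc dim 4}.

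The geometric heart is the same observation as in Proposition~\ref{prop limit connection volume coeucl}. The map $g_t^*$ is a linear isometry from $(\R^4,b_{3,1})$ (resp. $(\R^4,b_{2,2})$) onto $(\R^4,b^t)$, where $b^t$ is obtained from $b_{3,1}$ (resp. $b_{2,2}$) by scaling the coefficient of the transverse direction by $t^2$; hence $b^t\to b^*$ coefficientwise as $t\to 0$, the kernel direction of the limit being $\R\textsc{T}$. Consequently $g_t^*$ carries $\mathcal H^3$ (resp. $\mathcal{A}d\mathcal{S}^3$) onto the $b^t$-pseudo-sphere $\mathcal H^3_t:=\{\,b^t(x,x)=-1\,\}$, and carries the transverse field $\textsc{N}_x=x$ onto the transverse field $\textsc{N}_t(y)=y$ of $\mathcal H^3_t$. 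Since $X_0,Y_0,Z_0$ are tangent to $P$, the transverse components of $g_t^*X_t$ stay bounded, the rescaled base points $g_t^*x_t$ converge to $x_\infty$, and $\textsc{N}_t(g_t^*x_t)=g_t^*x_t\to x_\infty=\textsc{N}(x_\infty)$, so that the limiting transverse field is exactly the one used in Definition~\ref{defi comink connection} and in the definition of the co-Minkowski volume form.

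For the connection, I would use that $g_t^*$ intertwines the Levi-Civita connections (being an isometry onto $(\mathcal H^3_t,b^t)$), whence $g_t^*\nabla^{\mathcal H^3}_{X_t}Y_t=\nabla^{\mathcal H^3_t}_{g_t^*X_t}g_t^*Y_t$, and then expand the right-hand side through the Gauss-type formula for the $(-1)$-pseudo-sphere, $\nabla^{\mathcal H^3_t}_V W=DW(V)-b^t(V,W)\textsc{N}_t$. Passing to the limit and using $b^t(g_t^*X_t,g_t^*Y_t)\to b^*(\dot X,\dot Y)$ (joint continuity together with $b^t\to b^*$), $D(g_t^*Y_t)(g_t^*X_t)\to D\dot Y(\dot X)$, and $\textsc{N}_t\to\textsc{N}(x_\infty)$, one gets the limit $D\dot Y(\dot X)-b^*(\dot X,\dot Y)\textsc{N}(x_\infty)$, which is precisely $\nabla^{{}^*\M^3}_{\dot X}\dot Y$ by Definition~\ref{defi comink connection}. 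The sign is automatically correct here because $\mathcal H^3$, $\mathcal{A}d\mathcal{S}^3$ and $\cM^3$ are all pseudo-spheres of the form $b^{-1}(-1)$, so the Gauss formula and the defining formula for $\nabla^{{}^*\M^3}$ carry the same sign. For the volume form I would argue identically, writing $\omega_{\mathcal H^3}(X_t,Y_t,Z_t)=\omega_{\R^4}(\textsc{N},X_t,Y_t,Z_t)$ and tracking its behaviour under $g_t^*$ exactly as in Proposition~\ref{prop limit connection volume coeucl}, so that in the limit one obtains $\omega_{\R^4}(\textsc{N}(x_\infty),\dot X,\dot Y,\dot Z)=\omega_{{}^*\M^3}(\dot X,\dot Y,\dot Z)$.

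The only genuinely geometric input beyond bookkeeping — and the step I expect to require the most care — is verifying that the hyperplane $P$ being blown up is indeed a common totally geodesic hyperbolic plane of $\H^3$ and $\AdS^3$ and at the same time the distinguished space-like plane of ${}^*\M^3$. This is what makes the hypothesis that $X_0,Y_0,Z_0$ be tangent to $P$ ensure that the transverse components of $g_t^*X_t$ converge rather than blow up, and what pins down the degeneration $b^t\to b^*$ with degenerate direction $\R\textsc{T}$; it is also where the two cases $\H^3$ and $\AdS^3$ must be checked to behave uniformly, the signature of the transverse direction being space-like in $\H^3$ and time-like in $\AdS^3$ but in both cases merely scaled by $t^2$. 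Once this is in place, everything else is a formal transcription of the co-Euclidean computation, with $b_{4,0}$ replaced by $b_{3,1}$ or $b_{2,2}$.
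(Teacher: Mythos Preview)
Your proposal is correct and matches the paper's approach exactly: the paper gives no separate proof for this proposition, stating only that ``the proof is analogous also for the case of geometric transition of $\H^3$ and $\AdS^3$ to $^*\M^3$,'' so your line-by-line transcription of the proof of Proposition~\ref{prop limit connection volume coeucl} with $b_{4,0}$ replaced by $b_{3,1}$ (resp.\ $b_{2,2}$) and the $(+1)$-pseudo-sphere by the $(-1)$-pseudo-sphere is precisely what is intended. Your attention to the sign in the Gauss-type formula (all three relevant pseudo-spheres being of the form $b^{-1}(-1)$) and to the choice of $P$ as a common hyperbolic plane is in fact more careful than what the paper spells out.
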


\subsection{The infinitesimal Pogorelov map}

\paragraph{Weyl formula for connections.}

In an affine chart, a (non-degenerate) model space $\mathbb M$
has the strong property that its (unparametrized) geodesics are the same as in the ambient $\R^n$  endowed with a Euclidean metric. This will imply that, in an open set, the  Levi-Civita connection $\nabla$ of $\mathbb M$ can be written in terms of the usual connection on $\R^n$. The manifold $\mathbb M$ may not be orientable, but on an affine chart we can consider the same orientation as $\R^n$. Hence the pseudo-Riemannian metric of $\mathbb M$ gives a volume form $\omega$ in the affine chart.

Let $N$ be an orientable manifold with two torsion-free connections $\nabla$ and $\widehat \nabla$ and with two volume forms $\omega$ and $\widehat \omega$ such that $\nabla \omega=0$ and $\widehat\nabla \widehat \omega =0$.
Let $\lambda$ be the function on $N$ such that
$$\widehat \omega = \lambda \omega~.$$
Let us introduce $$D(X,Y)=\widehat \nabla_XY - \nabla_XY~.$$ {This function is linear over smooth functions on both argument, so it is a $(1,2)$-tensor. As the connections are symmetric, it is easy to check that $D$ is symmetric.}

\begin{lemma} The contraction of $D$ is a one-form that is given by $\lambda$:
\begin{equation}\label{contraction}
C^1_1 D = \d\ln \lambda~.
\end{equation}
\end{lemma}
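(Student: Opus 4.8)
The plan is to compute the covariant derivative $\widehat\nabla\omega$ of the first volume form with respect to the \emph{second} connection in two independent ways, and then to compare. Both computations are pointwise and tensorial, so I would fix a point and work in a local frame $e_1,\ldots,e_n$; at the end it suffices that the two resulting one-forms agree when evaluated on an arbitrary $X$.

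For the first computation I would start from the Leibniz rule for the action of a connection on an $n$-form: for any vector field $X$,
\begin{equation*}
(\widehat\nabla_X\omega)(e_1,\ldots,e_n)=X.\omega(e_1,\ldots,e_n)-\sum_{i=1}^n\omega(e_1,\ldots,\widehat\nabla_X e_i,\ldots,e_n)\,.
\end{equation*}
Writing the same identity for $\nabla$ and subtracting, the term $X.\omega(e_1,\ldots,e_n)$ cancels, and using $\nabla\omega=0$ together with $\widehat\nabla_X e_i-\nabla_X e_i=D(X,e_i)$ gives
\begin{equation*}
(\widehat\nabla_X\omega)(e_1,\ldots,e_n)=-\sum_{i=1}^n\omega(e_1,\ldots,D(X,e_i),\ldots,e_n)\,.
\end{equation*}
Expanding each $D(X,e_i)$ in the frame and using that $\omega$ is alternating, every term in which a vector $e_j$ with $j\neq i$ is inserted into the $i$-th slot vanishes by repetition, so only the $e_i$-coefficient of $D(X,e_i)$ survives. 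Summing these diagonal coefficients is exactly the contraction $C^1_1 D$, whence the right-hand side equals $-(C^1_1 D)(X)\,\omega(e_1,\ldots,e_n)$, i.e. $\widehat\nabla_X\omega=-(C^1_1 D)(X)\,\omega$.

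For the second computation I would differentiate $\widehat\omega=\lambda\omega$ with $\widehat\nabla$ and invoke $\widehat\nabla\widehat\omega=0$:
\begin{equation*}
0=\widehat\nabla_X(\lambda\omega)=(X.\lambda)\,\omega+\lambda\,\widehat\nabla_X\omega\,,
\end{equation*}
so that $\widehat\nabla_X\omega=-(X.\lambda/\lambda)\,\omega=-(\d\ln\lambda)(X)\,\omega$ (here $\lambda$ is nowhere zero, since $\omega$ and $\widehat\omega$ are both volume forms, and $\d\ln\lambda=\d\lambda/\lambda$). Comparing the two expressions for $\widehat\nabla_X\omega$ and using that $\omega$ is nowhere vanishing yields $(C^1_1 D)(X)=(\d\ln\lambda)(X)$ for every $X$, which is precisely \eqref{contraction}.

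The only genuinely delicate step is the alternating-form identity in the first computation, namely that $\sum_i\omega(e_1,\ldots,D(X,e_i),\ldots,e_n)=(C^1_1 D)(X)\,\omega(e_1,\ldots,e_n)$; everything else is a routine application of the Leibniz rule. I would also remark that both sides of the final identity are frame-independent, so verifying it in one convenient frame is enough, and that the symmetry of $D$ (already noted before the statement) makes the choice of index to contract immaterial.
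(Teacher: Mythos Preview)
Your proof is correct and follows essentially the same approach as the paper. The paper works directly in a coordinate frame with Christoffel symbols, deriving $\sum_\alpha\Gamma_{i\alpha}^\alpha=\partial_i\ln(\omega_{1,\ldots,n})$ for each connection from the parallelism of its volume form and then subtracting; you package the same computation more invariantly by evaluating $\widehat\nabla_X\omega$ in two ways, but the key step---that inserting $D(X,e_i)$ into the $i$-th slot of a top-degree alternating form picks out only the diagonal coefficient, yielding the trace---is identical in both arguments.
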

\begin{proof}

Recall that  the volume form $\omega$ is parallel for $\nabla$ \eqref{eq:forme vol par}, that gives,  in the given local coordinates on $N$,
{$$\partial_i\omega_{1,\ldots,n}=\omega\left(\sum_{k=1}^n\Gamma_{i1}^ke_k,\ldots,e_n\right)+ \cdots+\omega\left(e_1,\ldots,\sum_{k=1}^n\Gamma_{in}^ke_k\right)~.$$
But 
$\omega$ is alternating and $n$-linear:
$$\partial_i\omega_{1,\ldots,n}=\omega(\Gamma_{i1}^1e_1,\ldots,e_n)+ \cdots+\omega(e_1,\ldots,\Gamma_{in}^ne_n)= \left(\sum_{\alpha=1}^n\Gamma_{i\alpha}^{\alpha}\right) \omega_{1,\ldots,n}\,, $$}
namely
$$\sum_{\alpha=1}^n\Gamma_{i\alpha}^{\alpha} = \partial_i \ln(\omega_{1,\ldots,n}) $$
and in particular,
$$ \sum_{\alpha=1}^n \widehat\Gamma_{i\alpha}^{\alpha}-\sum_{\alpha=1}^n\Gamma_{i\alpha}^{\alpha}=\partial_i \ln \lambda~. $$
\end{proof}

\begin{lemma}[Weyl Formula]\label{weyl}\index{Weyl formula}
Let us suppose that $\nabla$ and $\widehat\nabla$ have the same (unparameterized) geodesics. Then 
$$\widehat \nabla_XY -  \nabla_XY= X.\left(\ln \lambda^{\frac{1}{n+1}}\right)Y + Y.\left(\ln \lambda^{\frac{1}{n+1}}\right) X~. $$
\end{lemma}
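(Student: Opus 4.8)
The plan is to reduce the statement to a pointwise algebraic fact about the symmetric $(1,2)$-tensor $D$, and then to pin down the proportionality constant using the contraction identity \eqref{contraction} already established above. I will work pointwise, so that $D$ becomes a symmetric bilinear map $T_pN\times T_pN\to T_pN$.

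First I would translate the hypothesis that $\nabla$ and $\widehat\nabla$ share the same unparametrized geodesics into a condition on $D$. If $\gamma$ is an affinely parametrized geodesic of $\nabla$, so that $\nabla_{\dot\gamma}\dot\gamma=0$, then by the definition of $D$ we have $\widehat\nabla_{\dot\gamma}\dot\gamma=D(\dot\gamma,\dot\gamma)$. Since $\gamma$ is also a geodesic of $\widehat\nabla$ up to reparametrization, $\widehat\nabla_{\dot\gamma}\dot\gamma$ must be proportional to $\dot\gamma$. As there is a $\nabla$-geodesic through every point with every prescribed initial velocity, this yields the pointwise condition that $D(v,v)$ is proportional to $v$ for every tangent vector $v$; equivalently $D(v,v)\wedge v=0$ in $\Lambda^2 T_pN$.

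The heart of the argument --- and the step I expect to be the main obstacle --- is the purely algebraic claim that a symmetric bilinear map $D$ on a vector space of dimension $n\geq 2$ satisfying $D(v,v)\parallel v$ for all $v$ must be of the form $D(X,Y)=\phi(X)Y+\phi(Y)X$ for a unique linear form $\phi$. To prove it I would write $D(v,v)=c(v)\,v$ (with $c$ homogeneous of degree $1$) and polarize the relation $D(v,v)\wedge v=0$ by substituting $v=u+tw$ and collecting powers of $t$, which gives $D(u,u)\wedge w+2D(u,w)\wedge u=0$ together with its $u\leftrightarrow w$ companion $2D(u,w)\wedge w+D(w,w)\wedge u=0$. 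Decomposing $D(u,w)$ against linearly independent $u,w$ then forces $D(u,w)\in\mathrm{span}(u,w)$ and, comparing coefficients, $D(u,w)=\tfrac12\bigl(c(u)\,w+c(w)\,u\bigr)$; bilinearity of $D$ finally forces $c$ to be additive, hence (being $1$-homogeneous) linear. Setting $\phi=\tfrac12 c$ produces the desired form, valid for all $X,Y$ by bilinearity.

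Finally I would contract to identify $\phi$. Applying $C^1_1$ to $D(X,Y)=\phi(X)Y+\phi(Y)X$ gives, in a local frame, $\sum_\alpha\bigl(\phi_\alpha\delta^\alpha_j+\phi_j\delta^\alpha_\alpha\bigr)=(n+1)\phi_j$, that is $C^1_1 D=(n+1)\phi$. Comparing with \eqref{contraction}, namely $C^1_1 D=\d\ln\lambda$, yields $\phi=\tfrac{1}{n+1}\d\ln\lambda=\d\ln\lambda^{\frac{1}{n+1}}$. Substituting back gives
$$\widehat\nabla_XY-\nabla_XY=X.\left(\ln\lambda^{\frac{1}{n+1}}\right)Y+Y.\left(\ln\lambda^{\frac{1}{n+1}}\right)X\,,$$
as claimed.
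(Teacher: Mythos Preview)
Your proof is correct and follows essentially the same strategy as the paper: first show that the symmetric difference tensor has the form $D(X,Y)=\phi(X)Y+\phi(Y)X$ for some $1$-form $\phi$, then identify $\phi$ via the contraction identity \eqref{contraction}. The only cosmetic difference is in the algebraic step: you polarize the relation $D(v,v)\wedge v=0$ and read off the coefficients, while the paper instead expands $D(X\pm Y,X\pm Y)$ and compares to deduce additivity of $\phi$; both arguments reach the same conclusion by equivalent elementary manipulations.
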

\begin{proof}

Observe that in this formula both sides are linear in $X$ and $Y$. To compute the left-hand side, fix $p$ and $X_p\in T_p N$ and let $c:I\to N$ be a parameterized  geodesic for $\nabla$ with tangent vector $X_p$ at $c(0)=p$. Extend $X_p$ to $X$, the tangent vector of $c$, which is parallel along $c$, so that $\nabla_X X=0$.
Let $\widehat X$ be the tangent vector of a reparametrization of $c$ that turns it into a geodesic $\widehat c$ for $\widehat \nabla$ (hence $\widehat \nabla_{\widehat X} \widehat X=0$).
Then there is a function {$f$} on $c$ (which depends on the choice of $X_p$ and $\widehat X_p$) such that $X=f \widehat X$.
We compute:
$$(\widehat \nabla_{X} X - \nabla_{X} X)(p) = (\widehat \nabla_{X} X)(p)
=  \widehat \nabla_{f(p) \widehat X_p} (f \widehat X )= f(p) (\widehat X_p.f)\widehat X_p   + f(p)^2 \widehat \nabla_{\widehat X_p}\widehat X_p =f(p) (\widehat X_p.f)\widehat X_p\,,$$
so $D(X,X) = X.\ln (f)X $.
 Let us define  $\phi(X)=X.\ln(f_X)/2$, so that 
 \begin{equation} \label{eq spivak phi}
 D(X,X)=2\phi(X)X\,.
 \end{equation} 
 Let us show that $\phi$ is a 1-form. In fact, given any symmetric tensor $D$ such that $D(X,X)$ is a multiple of $X$, the function $\phi$ satisfying \eqref{eq spivak phi} is uniquely determined and clearly satisfies $\phi(\lambda X)=\lambda\phi(X)$. For the additivity, from the symmetry of $D$, we have
  \begin{equation} \label{eq spivak D}
 D(X+Y,X+Y)=D(X,X)+D(Y,Y)+2D(X,Y)\,,
 \end{equation} 
 hence 
 $$\phi(X+Y)(X+Y)=\phi(X)X+\phi(Y)Y+D(X,Y)\,.$$
 On the other hand, developing the same expression for $D(X-Y,X-Y)$ gives
 $$\phi(X-Y)(X-Y)=\phi(X)X+\phi(Y)Y-D(X,Y)\,.$$
 Putting together the two expressions, one obtains
 $$(\phi(X+Y)+\phi(X-Y)-2\phi(X))X=(\phi(X-Y)-\phi(X+Y)+2\phi(Y))Y\,.$$
 Since it now suffices to consider $X$ and $Y$ linearly independent, one has $\phi(X+Y)+\phi(X-Y)-2\phi(X)=\phi(X-Y)-\phi(X+Y)+2\phi(Y)=0$ and therefore $\phi(X+Y)=\phi(X)+\phi(Y)$. Hence, using the linearity of $\phi$ which was just proved, from \eqref{eq spivak phi} and \eqref{eq spivak D} one obtains:
 $$D(X,Y) = \phi(X)Y + \phi(Y)X~.$$
Finally, contracting on both side using \eqref{contraction} leads to
$\d\ln \lambda = (n+1){\phi}$.
\end{proof}

\paragraph{Killing fields.}

If $\nabla$ is the Levi-Civita connection of a pseudo-Riemannian metric $g$ on a manifold $N$, a vector field is called a \emph{Killing field}\index{Killing field} if it generates a $1$-parameter group of isometries.  A vector field $K$ is a Killing field if and only if $\textsc{L}_Xg=0$ ($\textsc{L}$ is the Lie derivative), or if and only if $\nabla K$ is a  skew-symmetric $(1,1)$-tensor:
\begin{equation}\label{def:killing}g(\nabla_XK,Y)+g(X,\nabla_YK)=0~, \end{equation}
for every $X,Y$. It is easy to see that this condition is actually equivalent to the condition that $g(\nabla_XK,X)=0$ for every $X$.

\paragraph{The infinitesimal Pogorelov map.}

Let us suppose that $\widehat \nabla$ is also a Levi-Civita connection for a pseudo-Riemannian metric $\widehat g$ on $N$.
Let $L:=L_{g,\widehat g}$ be the map $TN\to TN$ defined by

\begin{equation} \label{defi operator L}
g(X,Y)=\widehat g (L(X),Y)
\end{equation}
for every $Y$.
The \emph{infinitesimal Pogorelov map}\index{Pogorelov map! infinitesimal} $\mathrm{P}:=\mathrm{P}_{g,\widehat g}:TN\mapsto TN$ is defined by
$$ \mathrm{P}(X)= \lambda^{\frac{2}{n+1}}L(X)~. $$

\begin{lemma}\label{lem:killing}
The infinitesimal Pogorelov map $\mathrm{P}_{g,\widehat g}$ sends Killing fields of $g$ to Killing fields of $\widehat g$: if $K$ is a Killing field of $(N,g)$, then $\mathrm{P}(K)$ is a Killing field of $(N,\widehat g)$.
\end{lemma}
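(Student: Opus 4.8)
The plan is to reformulate the whole statement in terms of the $1$-forms metrically dual to the vector fields involved, since for a connection compatible with a metric the Killing condition \eqref{def:killing} is just the antisymmetry of the covariant derivative of the dual $1$-form. Once this is done, the only input needed is the Weyl formula of Lemma~\ref{weyl}.

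First I would associate to the Killing field $K$ the $1$-form $\alpha := g(K,\cdot)$. By the definition \eqref{defi operator L} of $L$ one has $g(K,\cdot)=\widehat g(L(K),\cdot)$, so $\alpha$ is simultaneously the $g$-dual of $K$ and the $\widehat g$-dual of $L(K)$. Setting $\mu:=\lambda^{2/(n+1)}$, the $1$-form $\beta:=\mu\,\alpha$ is then the $\widehat g$-dual of $\mathrm{P}(K)=\mu\,L(K)$. Since $\nabla$ is compatible with $g$ and $\widehat\nabla$ with $\widehat g$, metric compatibility gives $(\nabla_X\alpha)(Z)=g(\nabla_X K,Z)$ and $(\widehat\nabla_X\beta)(Z)=\widehat g(\widehat\nabla_X\mathrm{P}(K),Z)$. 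Thus, by \eqref{def:killing}, the hypothesis that $K$ is Killing for $g$ says exactly that $(\nabla_X\alpha)(Z)$ is antisymmetric in $X,Z$, and the goal becomes to show that $(\widehat\nabla_X\beta)(Z)$ is antisymmetric in $X,Z$.

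Next I would transport $\alpha$ from $\nabla$ to $\widehat\nabla$. Writing $f:=\tfrac{1}{n+1}\ln\lambda$ and $\phi:=\d f$, Lemma~\ref{weyl} reads $\widehat\nabla_X Z-\nabla_X Z=\phi(X)Z+\phi(Z)X$. Dualizing to $1$-forms through $(\widehat\nabla_X\alpha)(Z)-(\nabla_X\alpha)(Z)=-\alpha(\widehat\nabla_X Z-\nabla_X Z)$ gives
$$(\widehat\nabla_X\alpha)(Z)=(\nabla_X\alpha)(Z)-\phi(X)\alpha(Z)-\phi(Z)\alpha(X)~.$$
Then I differentiate $\beta=\mu\alpha$. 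Using $\ln\mu=2f$, hence $X.\mu=2\mu\,\phi(X)$, together with the previous identity, I obtain
$$(\widehat\nabla_X\beta)(Z)=(X.\mu)\alpha(Z)+\mu(\widehat\nabla_X\alpha)(Z)=\mu(\nabla_X\alpha)(Z)+\mu\bigl(\phi(X)\alpha(Z)-\phi(Z)\alpha(X)\bigr)~.$$
Here $(\nabla_X\alpha)(Z)$ is antisymmetric in $X,Z$ by the Killing hypothesis, while $\phi(X)\alpha(Z)-\phi(Z)\alpha(X)$ is manifestly antisymmetric; since $\mu$ is a scalar, the whole right-hand side is antisymmetric in $X,Z$, which is precisely the Killing condition for $\mathrm{P}(K)$ with respect to $\widehat g$.

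The computation is essentially forced, so there is no analytic difficulty; the single point that must be checked with care is the exact cancellation in the last display, and this is where the normalizations are matched. It is precisely the exponent $2/(n+1)$ in the definition of $\mathrm{P}$ that makes the contribution $(X.\mu)\alpha(Z)=2\mu\,\phi(X)\alpha(Z)$ combine with the term $-\mu\,\phi(X)\alpha(Z)$ coming from $\mu(\widehat\nabla_X\alpha)(Z)$, so that only $+\mu\,\phi(X)\alpha(Z)$ survives and pairs antisymmetrically with $-\mu\,\phi(Z)\alpha(X)$. Any other power of $\lambda$ would leave a residual \emph{symmetric} remainder proportional to $\phi(X)\alpha(Z)+\phi(Z)\alpha(X)$ and destroy the argument, so the factor $\lambda^{2/(n+1)}$ is exactly what is dictated by the coefficient $1/(n+1)$ in Weyl's formula.
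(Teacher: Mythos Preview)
Your proof is correct and takes essentially the same approach as the paper: both rest on the Weyl formula (Lemma~\ref{weyl}) together with the metric compatibilities, and both hinge on the exponent $2/(n+1)$ producing the exact cancellation. The only cosmetic difference is that the paper works directly with vectors and checks the diagonal Killing criterion $\widehat g(\widehat\nabla_X\mathrm{P}(K),X)=0$, while you dualize to $1$-forms and verify antisymmetry of $(\widehat\nabla_X\beta)(Z)$; these are equivalent by polarization and the computations run parallel.
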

\begin{proof}
By definition,
\begin{equation} \label{eq defi killing}
\widehat g(\mathrm{P}(K), X) = g (\lambda^{\frac{2}{n+1}}K,X)~,
\end{equation} which implies
$$X.\widehat g(\mathrm{P}(K),X)=X.g(\lambda^{
\frac{2}{n+1}}K,X)~, $$
and using the fact that $K$ is a Killing field of $(N,g)$, we arrive at
\begin{equation} \label{eq proof lemma Killing}
\widehat g(\widehat\nabla_X\mathrm{P}(K),X)= (X.\lambda^{\frac{2}{n+1}})g(K,X)-{ g (\lambda^{\frac{2}{n+1}}K,\widehat \nabla_X X-\nabla_XX)~.}
\end{equation}
{Observe that by  Weyl formula (Lemma~\ref{weyl})}, 
$${g (\lambda^{\frac{2}{n+1}}K,\widehat \nabla_X X-\nabla_XX)=g(\lambda^{\frac{2}{n+1}}K,2X.(\ln \lambda^{\frac{1}{n+1}})X))
=(X.\lambda^\frac{2}{n+1})g(K,X)~.}$$ 
Hence from Equation \eqref{eq proof lemma Killing} one gets
$\widehat g(\widehat \nabla_X\mathrm{P}(K),X)= 0,$
i.e.~$\mathrm{P}(K)$ is a Killing field of $(M,\widehat g)$.
\end{proof}

\paragraph{The infinitesimal Pogorelov map from  hyperbolic  to Euclidean space.}
Let us consider the usual Klein model of the hyperbolic space $\mathbb{H}^n$, i.e. the affine chart $\{x_{n+1}=1\}$, in which
$\mathbb{H}^n$ is the open unit ball
$$B^n=\{x\in \R^n \,|\, b_{n,0}<1 \}~.$$

We will compare the hyperbolic metric $g$ on $B^n$ with the standard Euclidean metric. For $x\in B^n$, the point $r(x):=\rho(x)\binom{x}{1}$, with
$$\rho(x)=\frac{1}{\sqrt{1-b_{n,0}(x,x)}} $$
belongs to $\mathcal{H}^n$ in $\M^{n+1}$.\footnote{If $t$ is the hyperbolic distance from  $r(0)$ to $r(x)$, then $\rho(x)=\cosh t~. $} So if $X\in T_xB^n$, then 
$$\d r(X)=\rho(x)\binom{\rho(x)^2b_{n,0}(x,X)x+X}{\rho(x)^2b_{n,0}(x,X)} $$
belongs to  $T_{r(x)}\mathcal{H}^n$, and it is then straightforward that the expression of the hyperbolic metric in the Klein model is 
\begin{equation}\label{klein metric}g_x(X,Y)=\rho(x)^2b_{n,0}(X,Y)+\rho(x)^4b_{n,0}(x,X)b_{n,0}(x,Y)~. \end{equation}

The radial direction at $x$ is the direction defined by the origin of $\mathbb{R}^n$ and $x$. From \eqref{klein metric}, a vector is orthogonal to the radial direction for the Euclidean metric if and only if it is orthogonal for the hyperbolic metric. A vector orthogonal to the radial direction is called lateral. Then for a tangent vector $X$ of $B^n$ at a point $x$,
\begin{itemize}
\item if $X$ is radial, then  its hyperbolic norm is $\rho(x)^2$ times its Euclidean norm.
\item if $X$ is lateral, then its hyperbolic norm is $\rho(x)$ times its Euclidean norm.
\end{itemize}

 With respect to the definition of $L=L_{g,b_{n,0}}$ in {\eqref{defi operator L}},
$$L_x(X)=\rho(x)^2X+\rho(x)^4b_{n,0}(x,X)x $$
and  a lateral (resp. radial) vector is an eigenvector for $L$ with eigenvalue  $\rho(x)^2$
(resp. $\rho(x)^4$). In an orthonormal basis for
$b_{n,0}$,  $\operatorname{det}(g_x)=\det (L_x)=\rho(x)^{2(n+1)}$, so
$${\omega_{\R^n}=\rho^{-(n+1)} {\omega_{\H^n}} ~.} $$

%

Let $K$ be a Killing field of $\H^n$. Then by Lemma~\ref{lem:killing},
$\mathrm{P}(K) = \rho^{-2}L_\cdot(K)$ is a Euclidean Killing field.  More precisely,
$$\mathrm{P}(K)_x=  K_x + \rho(x)^2b_{n,0}(x,K_x)x~.$$

In particular,
\begin{itemize}
\item if $K$ is lateral, the Euclidean norm of $\mathrm{P}(K)$ is equal to $\rho^{-1}$ times the hyperbolic norm of $K$;
\item if $K$ is radial, the Euclidean norm of $\mathrm{P}(K)$ is equal to the hyperbolic norm of $K$.
\end{itemize}


\paragraph{The infinitesimal Pogorelov map from Anti-de Sitter to Minkowski space.}

Let us consider the model of the Anti-de Sitter space $\AdS^n$ given by the affine chart $\{x_{n+1}=1\}$. Recall that $\AdS^n$ is the projective quotient of
$$\mathcal{A}d\mathcal{S}^n=\{(x,x_{n+1})\in\R^{n+1} \,|\, b_{n-1,2}=-1 \} $$
and its image in the affine chart is
$$H^n=\{x\in \R^n \,|\, b_{n-1,1}<1 \}~.$$

We will compare the Anti-de Sitter Lorentzian  metric $g$ on $H^n$ with the Minkowski metric on $\R^n$. For $x\in H^n$, the point $\rho(x)\binom{x}{1}$, with
$$\rho(x)=\frac{1}{\sqrt{1-b_{n-1,1}(x,x)}} $$
belongs to $\mathcal{A}d\mathcal{S}^n$.
A computation similar to the hyperbolic/Euclidean case gives
 \begin{equation*}
g_x(X,Y)=\rho(x)^2b_{n-1,1}(X,Y)+\rho(x)^4b_{n-1,1}(x,X)b_{n-1,1}(x,Y)~. \end{equation*}

As for the Klein model of the hyperbolic space, a vector has a radial and a lateral component, and this decomposition does not depend on the metric.
We also have
$$L_x(X)=\rho(x)^2X+\rho(x)^4b_{n-1,1}(x,X)x~, $$
whose determinant in an orthonormal basis for
$b_{n-1,1}$, is $\rho(x)^{2(n+1)}$, so (recall \eqref{eq:meme forme volume})
$${ \omega_{\R^n}=\rho^{-(n+1)}\omega_{\AdS^n}}~.$$

Let $K$ be a Killing field of $\AdS^n$. Then by Lemma~\ref{lem:killing},
$\rho^{-2}L_\cdot(K)= K + \rho^2b_{n-1,1}(\cdot,K)\cdot$ is a Minkowski Killing field.

\paragraph{Comments and references} \small
\begin{itemize}
\item A symmetric connection such that there exists (locally) a parallel volume form is characterized by the fact that its Ricci tensor is symmetric, \cite[Proposition 3.1]{nomizu}.
\item Let  $\nabla$ be a torsion-free complete connection on a manifold $N$, such that a local parallel volume form exists.
An \emph{affine field}\index{affine field} of $\nabla$  is a vector field that generates a $1$-parameter group of transformations that preserves the connection.
Let $R$ be the curvature tensor of $\nabla$:
$$R(X,Y)Z=\nabla_X\nabla_YZ-\nabla_Y\nabla_XZ-
\nabla_{[X,Y]}Z~. $$
Then $K$ is an affine field if and only if (see e.g. \cite{kobayashi})
$$\nabla_Y \nabla_X K= - R(K,Y)X~.$$
Let $\widehat \nabla$ be another connection on $N$ with the same properties as $\nabla$, both having the same unparametrized geodesics. Let $\widehat R$ be its curvature tensor. Then a direct computation using the Weyl formula shows that:
$$\widehat R(X,Y)Z=R(X,Y)Z + (\nabla^2 f -\d f^2)(X,Z)Y - (\nabla^2 f -\d f^2)(Y,Z)X~, $$
where $\nabla^2$ is the Hessian for $\nabla$.
See  pp. 126,127 in \cite{kobayashi} for the relations between infinitesimal affine transformations and  Killing fields.
\item Lemma~\ref{lem:killing} was proved in \cite{kne30} and independently in \cite{volkov}. See also the end of Section \ref{sec:inf rig}.
\item The simplest infinitesimal Pogorelov map is the one from Minkowski space to Euclidean space: it suffices to multiply the last coordinate of the Killing field by $-1$. This was noted in \cite{GPS82}.
The term infinitesimal Pogorelov map comes from the fact that it was defined as a first-order version of  the so-called Pogorelov mapping, see the end of Section~\ref{sec:inf rig}.  See  \cite{LS00,sch06,DMS} for some applications.
\end{itemize}

\normalsize

\section{Geometry of surfaces in 3-dimensional spaces} \label{sec geometry surfaces}

The purpose of this section is the study of embedded surfaces, with particular attention to convex surfaces, in 3-dimensional model spaces. We will first review the classical theory of embeddings of surfaces in 3-manifolds, with special attention to the constant curvature cases we have introduced so far. After that, we will define analogous notions in the degenerate spaces (for instance, the second fundamental form and the shape operator), in particular co-Euclidean and co-Minkowski spaces, and show that these notions have a good behavior both with respect to the geometry of $^*\E^3$ and $^*\M^3$, although the ambient metric is a degenerate metric, and with respect to duality and geometric transition.

\subsection{Surfaces in non degenerate constant curvature 3-manifolds}
Let $(M,g)$ be a Riemannian or pseudo-Riemannian manifold. Given a smooth immersion $\sigma:S\to M$ with image a space-like surface $\sigma(S)$ in $(M,g)$, recall that the \emph{first fundamental form}\index{fundamental form! first} is the pull-back of the induced metric, namely
$$\I(v,w)=g(\sigma_*(v),\sigma_*(w))\,.$$
The Levi-Civita connection $\nabla^S$ of the first fundamental form $\I$ of $S$ is obtained from the Levi-Civita connection of the ambient manifold: given vector fields $v,w$ on $S$,
$\nabla^S_v w$ is the orthogonal projection to the tangent space of $S$ of $\nabla^{M}_{\sigma_* v} (\sigma_* w)$.

Let us denote by $\textsc{N}$  a unit normal vector field on $S$, namely $\textsc{N}$ is a smooth vector field such that for every point $x\in S$, $\textsc{N}_x$ is orthogonal to $T_{\sigma(x)}\sigma(S)$, and such that $|g(\textsc{N},\textsc{N})|=1$. The \emph{second fundamental form}\index{fundamental form! second} $\II$ is a bilinear form on $S$ defined by
\begin{equation}\label{eq:nabla sub}\nabla^{M}_{\sigma_* v}(\sigma_*\hat w)=\nabla^S_{v}\hat w+\II(v,w)\textsc{N}\end{equation}
where $\hat w$ denotes any smooth vector field on $S$ extending the vector $w$. This form turns out to be symmetric and it only depends on the vectors $v$ and $w$, not on the extension of any of them.
The \emph{shape operator} of $S$ is the $(1,1)$-tensor $B\in\mathrm{End}(TS)$, self-adjoint with respect to $\I$, such that
\begin{equation} \label{second fundamental form}\II(v,w)=\I(B(v),w)=\I(v,B(w))\,. \end{equation}
 By a standard computation, it turns out that
 \begin{equation}\label{eq:shape op}B(v)=\pm\nabla^M_v \textsc{N}\,,\end{equation}
the sign depending on whether $M$ is Riemannian or Lorentzian, where we have used the differential of the embedding $\sigma$ to identify a vector $v\in T_x S$ to a vector tangent to the embedded surface $\sigma(S)$.
Indeed, by applying the condition of compatibility of the metric of the Levi-Civita connection to the condition $|g(\textsc{N},\textsc{N})|=1$, it is easily checked that $\nabla^{M}_{v}\textsc{N}$ is orthogonal to $\textsc{N}$, hence is tangent to $\sigma(S)$.
Since $B$ is self-adjoint with respect to $\I$, it is diagonalizable at every point. The eigenvalues of $B$ are called \emph{principal curvatures}.

The \emph{extrinsic curvature} is the determinant of the shape operator, i.e. the product of the principal curvatures. The \emph{mean curvature} is the trace of the shape operator, that is the sum of the principal curvatures. 

\paragraph{Fundamental theorem of immersed surfaces.}

The \emph{embedding data} of a smooth immersed surface in a 3-manifold are usually considered the first fundamental form and the shape operator, or equivalently, the first and second fundamental forms. However, these two objects are not independent and satisfy some coupled differential equations usually called the \emph{Gauss--Codazzi} equations. We start by expressing such equations in the setting of the constant curvature 3-manifolds introduced above, namely Euclidean and Minkowski space, and the model spaces.

The \emph{Codazzi equation}\index{Codazzi equation} can be expressed in the same fashion for all ambient spaces, and it says that the exterior derivative of the shape operator $B$, with respect to the Levi-Civita connection of the first fundamental form (denoted by $\nabla^\I$), vanishes. In formulae,
\begin{equation}\label{eq:codazzi eq}
\d^{\nabla^\I}\!B:=(\nabla^\I_{v} B)(w)-(\nabla^\I_{w} B)(v)=\nabla^\I_{v} B(w)-\nabla^\I_{w} B(v)-B[v,w]=0\,,
\end{equation}

The \emph{Gauss equation}\index{Gauss equation} is a relation between the intrinsic curvature of the first fundamental form, and the extrinsic curvature of the immersion. The form of the Gauss equation, however, depends on the ambient metric (whether it is Riemannian or pseudo-Riemannian) and on its curvature. In particular, if $K_\I$ denotes the curvature of the first fundamental form, in Euclidean space the Gauss formula is:
$$K_\I=\det B\,,$$
while in Minkowski space, for immersed space-like surfaces, the correct form is:
$$K_\I=-\det B\,.$$
As we said, if the ambient manifold has nonzero curvature, there is an additional term in the equation. For instance, for $\mathcal S^3$ or $\Ell^3$ (curvature $1$ everywhere):
$$K_\I=1+\det B\,,$$
while for surfaces in hyperbolic space we have:
$$K_\I=-1+\det B\,.$$
Finally for space-like surfaces in Lorentzian manifolds of nonzero constant curvature: in de Sitter space the Gauss equation is
$$K_\I=1-\det B\,,$$
and in Anti-de Sitter space of course
$$K_\I=-1-\det B\,.$$
We resume all the statements in the following:
\begin{fact}
Given a smooth immersed surface in a three-dimensional model space (or in Euclidean/Minkowski space), the first fundamental form and the shape operator satisfy the corresponding Gauss--Codazzi equations.
\end{fact}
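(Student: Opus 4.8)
The plan is to establish the general Gauss and Codazzi equations for a nondegenerate hypersurface in a (pseudo-)Riemannian three-manifold, and then to specialize to the constant-curvature ambient spaces at hand. The crucial simplification is that each of the eight spaces listed --- Euclidean and Minkowski space, $\Ell^3$ (equivalently $\mathcal S^3$), $\H^3$, $\dS^3$, $\AdS^3$ --- has \emph{constant sectional curvature} $c$, with $c=0$ for $\E^3,\M^3$, with $c=1$ for $\mathcal S^3,\Ell^3,\dS^3$, and with $c=-1$ for $\H^3,\AdS^3$. Hence the ambient Riemann tensor is completely explicit: $R^M(X,Y)Z=c\big(g(Y,Z)X-g(X,Z)Y\big)$. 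I will also record the sign $\epsilon:=g(\textsc{N},\textsc{N})\in\{\pm 1\}$ of the unit normal, which is $+1$ in the Riemannian cases and $-1$ for a space-like surface in a Lorentzian ambient (where $\textsc{N}$ is time-like).

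First I would derive the Codazzi equation. Starting from the defining splitting \eqref{eq:nabla sub}, namely $\nabla^{M}_{\sigma_*v}(\sigma_*\hat w)=\nabla^S_{v}\hat w+\II(v,w)\textsc{N}$, together with the shape operator identity \eqref{eq:shape op} $B(v)=\pm\nabla^M_v\textsc{N}$, I differentiate once more and antisymmetrize in two tangent directions $X,Y$ so as to form $R^M(X,Y)Z$ for $X,Y,Z$ tangent to $S$. Collecting the component along $\textsc{N}$ produces, up to the global sign in \eqref{eq:shape op}, exactly $\I\big((\nabla^\I_X B)Y-(\nabla^\I_Y B)X,\,Z\big)$. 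But since the ambient has constant curvature, $R^M(X,Y)Z$ is tangent to $S$ whenever $X,Y,Z$ are, so its normal component vanishes. This forces $(\nabla^\I_X B)Y-(\nabla^\I_Y B)X=0$, which is precisely \eqref{eq:codazzi eq}; note that this argument is uniform across all eight cases and does not see the value of $c$.

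Next I would read off the Gauss equation from the \emph{tangential} component of the same computation. The splitting yields $R^M(X,Y)Z=R^S(X,Y)Z-\epsilon\big(\II(Y,Z)B(X)-\II(X,Z)B(Y)\big)$, where $R^S$ is the intrinsic curvature of $\I$. Pairing with a fourth tangent vector $W$ and inserting the constant-curvature expression for $R^M$, one obtains in the surface (two-dimensional) case the scalar relation $K_\I=c+\epsilon\det B$. It then remains only to substitute the pair $(c,\epsilon)$ in each case: $(0,+1)$ gives $K_\I=\det B$ for $\E^3$; $(0,-1)$ gives $K_\I=-\det B$ for $\M^3$; $(1,+1)$ gives $K_\I=1+\det B$ for $\mathcal S^3,\Ell^3$; $(-1,+1)$ gives $K_\I=-1+\det B$ for $\H^3$; $(1,-1)$ gives $K_\I=1-\det B$ for $\dS^3$; and $(-1,-1)$ gives $K_\I=-1-\det B$ for $\AdS^3$. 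This matches the list preceding the statement.

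The computations are entirely elementary, and the only real obstacle is the bookkeeping of signs: the choice of sign in \eqref{eq:shape op} (Riemannian versus Lorentzian), the factor $\epsilon=g(\textsc{N},\textsc{N})$ entering the Gauss equation, and the ambient curvature $c$. A way to reduce the risk of sign errors, and arguably the most natural route given the present setting, is to exploit that every curved model space $\mathbb{M}$ is itself a hypersurface of the \emph{flat} ambient $(\R^4,b)$, with normal vector field $x\mapsto x$ and shape operator $\pm\mathrm{Id}$ (as recorded in Section~\ref{sec connection volume}). One may then regard $S$ as a codimension-two submanifold of $(\R^4,b)$, apply the projection formula \eqref{eq:nabla sub} twice, and use that $R^{\R^4}\equiv 0$. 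The contribution of the intermediate hypersurface $\mathbb{M}$ is governed by its shape operator $\pm\mathrm{Id}$, which reproduces the constant-curvature term $c=\pm 1$, while the flatness of $\R^4$ makes the Codazzi term vanish automatically. This collapses the six curved cases to a single flat-ambient computation, the flat cases $\E^3,\M^3$ (where $c=0$) being immediate.
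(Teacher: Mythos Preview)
Your argument is correct and follows the standard derivation of the Gauss--Codazzi equations for hypersurfaces in constant-curvature ambients. However, the paper does not actually prove this Fact: it is stated as a summary of well-known results, with the various Gauss equations simply listed case by case and the Codazzi equation recorded once, and with references to standard sources (Spivak, etc.) given for the Fundamental Theorem that follows. So you have supplied more than the paper does here.

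Your approach---computing the normal and tangential parts of $R^M(X,Y)Z$ and then plugging in $R^M(X,Y)Z=c\big(g(Y,Z)X-g(X,Z)Y\big)$---is the canonical one and is exactly what underlies the list the paper presents. Your alternative suggestion, viewing $S$ as codimension two in the flat $(\R^4,b)$ and letting the intermediate shape operator $\pm\mathrm{Id}$ produce the curvature term, is in the spirit of how the paper sets up the model spaces (see the discussion at the start of Section~\ref{sec connection volume}), and would indeed give a uniform treatment of the six curved cases. Either route is fine; the paper simply takes the content for granted.
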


Clearly, if one post-composes a smooth immersion with an isometry of the ambient space, the embedding data remain unchanged. A classical theorem in Euclidean space, which can be extended to all the other cases of constant curvature, states that the Gauss--Codazzi equations are necessary and sufficient conditions in order to have the embedding data of a smooth surface, and the embedding data determine the surface up to global isometry. See \cite{BGM,spi3} for a reference.

\begin{theorem}[Fundamental theorem of immersed surfaces]\index{fundamental theorem of immersed surfaces}
Given a simply connected surface $S$ and a pair $(\I,B)$ of a Riemannian metric and a symmetric $(1,1)$-tensor on $S$, if $(\I,B)$ satisfy the Gauss--Codazzi equations in Euclidean (resp. Minkowski, elliptic, hyperbolic, de Sitter or Anti-de Sitter) space, then there exists a smooth immersion of $S$ having $(\I,B)$ as embedding data. Any two such immersions differ by post-composition by a global isometry.
\end{theorem}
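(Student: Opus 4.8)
The plan is to reduce the existence statement to the classical integrability of a linear first-order system via the Frobenius theorem, exploiting the realization of the model spaces as pseudo-spheres in $(\R^4,b)$ from Section~\ref{sec:mode spaces}. I will treat the four constant-curvature model spaces $\Ell^3,\H^3,\dS^3,\AdS^3$ uniformly; the Euclidean and Minkowski cases are the classical Bonnet theorem and follow from the same argument with the linear group $\mathrm{O}(p,q)$ replaced by the affine isometry group. Since $S$ is simply connected, any immersion $\sigma:S\to\mathbb{M}=\mathcal{M}/\{\pm\mathrm{Id}\}$ lifts to an immersion $\tilde\sigma:S\to\mathcal{M}\subset\R^4$, and it suffices to produce $\tilde\sigma$.

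First I set up a moving frame. Write $\kappa=b(\tilde\sigma,\tilde\sigma)\in\{\pm 1\}$, which is the ambient sectional curvature, and let $\varepsilon=b(\textsc{N},\textsc{N})\in\{\pm 1\}$, where $\textsc{N}$ is the unit normal of the surface inside $\mathcal{M}$. Since $\I$ is assumed Riemannian, a $b$-orthonormal tangent frame $e_1,e_2$, together with the ambient normal $\tilde\sigma$ and the surface normal $\textsc{N}$, forms a $b$-orthonormal basis of $\R^4$ at each point; a signature count shows that assembling these four vectors as the columns of a map $F:S\to\mathrm{O}(p,q)$ is consistent in every case. The Gauss--Weingarten equations of the surface then read $D_v\tilde\sigma=\sigma_* v$, $\;D_v e_j=\nabla^S_v e_j+\II(v,e_j)\textsc{N}-\kappa\,\I(v,e_j)\tilde\sigma$, and $D_v\textsc{N}=-\sigma_*(Bv)$, where $\nabla^S$ is the Levi-Civita connection of $\I$ and $\II(v,w)=\I(Bv,w)$; this combines into the compact form $\mathrm{d}F=F\,\omega$ for a $1$-form $\omega$ on $S$ valued in $\mathfrak{o}(p,q)$, the skew-symmetry with respect to $b$ being exactly the condition that the frame stay $b$-orthonormal.

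The system $\mathrm{d}F=F\,\omega$ is overdetermined, and differentiating gives $0=\mathrm{d}(\mathrm{d}F)=F(\mathrm{d}\omega+\omega\wedge\omega)$; by the Frobenius theorem it admits a solution $F:S\to\mathrm{O}(p,q)$, unique once $F$ is prescribed at one base point, precisely when the flatness condition $\mathrm{d}\omega+\omega\wedge\omega=0$ holds. The crucial point is that this Maurer--Cartan equation, decomposed into blocks along $T_xS\oplus\langle\textsc{N}\rangle\oplus\langle\tilde\sigma\rangle$, has nontrivial content in exactly two places: the tangential--tangential block is the Gauss equation, which in this uniform notation reads $K_\I=\kappa+\varepsilon\det B$, and the mixed tangential--normal block is the Codazzi equation \eqref{eq:codazzi eq}, while the remaining blocks are automatically satisfied because $\nabla^S$ is the Levi-Civita connection of $\I$. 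Substituting the values of $\kappa$ and $\varepsilon$ appropriate to each of the six ambient geometries reproduces exactly the signs listed above (for instance $\kappa=-1,\varepsilon=+1$ gives $K_\I=-1+\det B$ in $\H^3$, while $\kappa=+1,\varepsilon=-1$ gives $K_\I=1-\det B$ in $\dS^3$). Since by hypothesis $(\I,B)$ satisfies the relevant Gauss--Codazzi equations and $S$ is simply connected, the flatness condition holds and $F$ exists globally; the column of $F$ corresponding to the ambient normal then lands in $\mathcal{M}$ (as $F\in\mathrm{O}(p,q)$ keeps its $b$-norm equal to $\kappa$) and, by the first structure equation, is an immersion $\tilde\sigma:S\to\mathcal{M}$ having $(\I,B)$ as embedding data.

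For uniqueness, suppose $\tilde\sigma_1,\tilde\sigma_2$ are two immersions with the same embedding data, and let $F_1,F_2$ be the associated frames, both solving $\mathrm{d}F_i=F_i\,\omega$ for the \emph{same} $\omega$ (since $\omega$ is built only from $\I$ and $B$). Fix a base point $x_0$ and let $A\in\mathrm{O}(p,q)$ be the unique element with $F_2(x_0)=A\,F_1(x_0)$; then $A F_1$ and $F_2$ solve the same linear system with the same initial value, so $F_2=A F_1$ on all of $S$ and hence $\tilde\sigma_2=A\tilde\sigma_1$. Passing to the quotient, $\sigma_1$ and $\sigma_2$ differ by the global isometry induced by $A\in\mathrm{PO}(p,q)=\mathrm{Isom}(\mathbb{M})$. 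The main obstacle, and the only genuinely computational step, is the verification that the $\mathfrak{o}(p,q)$-valued flatness equation decomposes precisely into the stated Gauss equation $K_\I=\kappa+\varepsilon\det B$ and the Codazzi equation in every signature; once the moving-frame bookkeeping is fixed this is a direct block computation, and the Euclidean and Minkowski cases follow identically upon replacing the position vector $\tilde\sigma$ by a free translational degree of freedom and $\mathrm{O}(p,q)$ by the affine isometry group.
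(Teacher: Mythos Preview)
The paper does not actually prove this theorem: immediately before stating it, the paper says ``A classical theorem in Euclidean space, which can be extended to all the other cases of constant curvature, states that the Gauss--Codazzi equations are necessary and sufficient conditions\ldots See \cite{BGM,spi3} for a reference,'' and the theorem is then simply quoted. So there is no ``paper's own proof'' to compare against; the statement is imported from the literature.

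Your sketch is the standard moving-frames/Cartan approach and is correct in outline. A couple of small points worth tightening: when you write ``the column of $F$ corresponding to the ambient normal then lands in $\mathcal{M}$,'' make sure it is clear you mean the position-vector column $\tilde\sigma$ (normal to $\mathcal{M}$ in $\R^4$), not the surface normal $\textsc{N}$; the phrasing is a little ambiguous. Also, in the Weingarten equation you wrote $D_v\textsc{N}=-\sigma_*(Bv)$ with a fixed sign, whereas the paper (equation \eqref{eq:shape op}) notes the sign of $B=\pm\nabla^M_v\textsc{N}$ depends on whether the ambient is Riemannian or Lorentzian; this is absorbed into your $\varepsilon$ in the Gauss block, but you should make the bookkeeping consistent across all four Gauss--Weingarten equations so that the Codazzi block also comes out with the right sign. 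Finally, the uniqueness argument tacitly assumes you have chosen the \emph{same} $\I$-orthonormal frame $(e_1,e_2)$ on $S$ for both immersions so that the two $\omega$'s literally coincide; this is fine since $\I$ is given intrinsically, but it is worth saying. With these cosmetic fixes your argument is the textbook proof the paper is citing.
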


\paragraph{Duality for smooth surfaces.}
We can use the description of duality introduced in Subsection \ref{duality} to talk about duality for convex surfaces. In fact, consider a surface $S$, which is the boundary of a convex set. Then its dual is again a convex set with boundary a surface $S^*$. For instance, the dual of the boundary of a convex set in $\H^3$  is a space-like surface in $\dS^3$ (and vice versa). Similarly one can consider a convex set in $\AdS^3$, whose boundary is composed of two space-like surfaces, and dually one obtains a convex set  in $\AdS^3$ bounded by two 
space-like surfaces.

 Let us consider a convex set with boundary a smooth (or at least $C^2$) embedded surface $S$, such that $B$ is positive definite at every point (which in particular implies strict convexity). 
The \emph{third fundamental form}\index{fundamental form! third} of $S$ is
$$\III(v,w)=\I(B(v),B(w))\,.$$
First, observe that, since  $B$ is positive definite, $\III$ is a Riemannian metric. The reader can check, as an exercise, that in his/her favorite duality of ambient spaces, $(\III,B^{-1})$ are the embedding data of a space-like surface, namely they satisfy the Gauss--Codazzi equations provided $(\I,B)$ satisfy the Gauss--Codazzi equations. It is helpful to use the following formula for the curvature of $\III(v,w)=\I(B\cdot,B\cdot)$:
$$K_{\III}=\frac{K_\I}{\det B}\,,$$
which holds under the assumption that $B$ satisfies the Codazzi equation (see
\cite{labourieCP} or \cite{Schlenker-Krasnov}).
\begin{fact}
The pair $(\III,B^{-1})$ are the embedding data of the dual surface $S^*$.
\end{fact}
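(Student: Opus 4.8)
The plan is to realize the dual surface explicitly as the image of a Gauss-type map and to read off its embedding data from those of $S$. Lift $S$ to an immersion $\sigma\colon S\to\mathcal{M}$ into the pseudo-sphere, with unit normal field $\textsc{N}$ lying in $T_\sigma\mathcal{M}$, so that $b(\textsc{N},\sigma)=0$ and $b(\textsc{N},d\sigma(v))=0$ for every $v\in TS$. By the point/hyperplane duality of Subsection~\ref{duality}, the tangent plane of $S$ at $\sigma(x)$ lifts to the linear hyperplane $\mathrm{span}\big(\sigma(x),T_{\sigma(x)}\sigma(S)\big)$, whose $b$-orthogonal line is directed by $\textsc{N}_x$; hence the point dual to this tangent plane is $[\textsc{N}_x]$. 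In other words, $S^*$ is parametrized by $\sigma^*:=\textsc{N}$, with values in the dual pseudo-sphere.

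First I would compute $d\sigma^*(v)=D_v\textsc{N}$, where $D$ is the ambient flat connection. Differentiating $b(\textsc{N},\sigma)=0$ yields $b(D_v\textsc{N},\sigma)=-b(\textsc{N},v)=0$, so $D_v\textsc{N}$ has no component along the ambient normal $\sigma$ and therefore equals $\nabla^{\mathcal{M}}_v\textsc{N}$. By the shape-operator formula \eqref{eq:shape op}, $\nabla^{\mathcal{M}}_v\textsc{N}=\pm B(v)$, a vector tangent to $S$. Consequently the first fundamental form of $S^*$ is
$$\I_{S^*}(v,w)=b\big(D_v\textsc{N},D_w\textsc{N}\big)=b\big(B(v),B(w)\big)=\I(B(v),B(w))=\III(v,w)~,$$
which is the first half of the claim. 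For the shape operator, note that $\sigma$ is a unit normal of $S^*$: it is $b$-orthogonal to every $d\sigma^*(v)=\pm B(v)\in T_\sigma\mathcal{M}$, and $b(\sigma,\sigma)=\pm1$. Since $D_v\sigma=v$, the shape operator $B_{S^*}$ of $S^*$ satisfies $B_{S^*}(d\sigma^*(v))=\pm v=\pm d\sigma^*(B^{-1}v)$, so under the identification of $TS^*$ with $TS$ furnished by $d\sigma^*$ one obtains $B_{S^*}=B^{-1}$, the overall sign being fixed by the orientation and normalization conventions relating the two pseudo-spheres.

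As an independent check --- and the route suggested in the text --- one can instead verify directly that $(\III,B^{-1})$ satisfies the Gauss--Codazzi equations of the dual ambient space whenever $(\I,B)$ satisfies those of the original space, and then invoke the Fundamental theorem of immersed surfaces to produce an immersion with these data, necessarily congruent to $S^*$. The Gauss equation follows from the curvature identity $K_{\III}=K_\I/\det B$ (valid once $B$ is Codazzi) together with the sign table for $\det B$ across the dual pair; the Codazzi equation for $B^{-1}$ with respect to $\nabla^{\III}$ follows from the Codazzi equation \eqref{eq:codazzi eq} for $B$ with respect to $\nabla^\I$ by the corresponding change of connection.

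The main obstacle is the sign bookkeeping. The factor $\pm$ in \eqref{eq:shape op}, the sign of $b(\sigma,\sigma)$, and the Riemannian-versus-Lorentzian character of the two ambient spaces all depend on which duality (for instance $\H^3\leftrightarrow\dS^3$, or $\AdS^3\leftrightarrow\AdS^3$) is at play; these must conspire so that the resulting shape operator is exactly $B^{-1}$ and not $-B^{-1}$, and so that $\III$ is genuinely the (positive-definite) induced metric of $S^*$. The cleanest presentation therefore treats one representative case in full --- say the boundary of a convex body in $\H^3$ with dual surface in $\dS^3$ --- and remarks that the remaining cases are entirely analogous.
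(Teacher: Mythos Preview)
Your proof is correct and follows essentially the same approach as the paper: parametrize $S^*$ by the unit normal field $\textsc{N}$ (the Gauss-type map), and read off that the induced metric on $S^*$ is $\III$. The only notable difference concerns the shape operator. You compute it directly by observing that $\sigma$ serves as the unit normal to $S^*$ and differentiating, whereas the paper instead invokes the involution property $(S^*)^*=S$: once one knows $\I_{S^*}=\III_S$, applying the same statement to $S^*$ gives $\III_{S^*}=\I_S$, and unwinding $\III_{S^*}=\I_{S^*}(B_{S^*}\cdot,B_{S^*}\cdot)=\I_S(BB_{S^*}\cdot,BB_{S^*}\cdot)$ forces $BB_{S^*}=\mathrm{Id}$. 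Your route is more explicit and self-contained; the paper's is slicker but relies on applying the partial result twice. Both the paper and you settle the signs by restricting to a representative duality (the paper does $\H^3\leftrightarrow\dS^3$).
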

Let us check the fact in the hyperbolic-de Sitter case. If $S$ is a convex surface in $\H^3$, for $x\in S$, $\textsc{N}(x)$ is a point inside the unit tangent sphere of $T_x\H^3$, and $\III$ is by definition the pull-back by $\textsc{N}$ on $S$ of the spherical metric. But on the other hand,
the hyperplane tangent to $S$ at $x$ is naturally identified, via the double cover in $\M^{4}$, to a time-like hyperplane, and $\textsc{N}(x)$ is a unit vector orthogonal to this hyperplane, hence a point in $d\mathcal{S}^n$, the double cover of de Sitter space. So $\III$ is exactly the induced metric on the dual surface, and by the involution property of the duality, its shape operator is the inverse of the one of $B$.

\subsection{Geometry of surfaces in co-Euclidean space}

We are now ready to define the second fundamental form of any space-like surface in $^*\E^3$. For simplicity (since this is a local notion) we will use the double cover $\cE^3\cong \mathcal S^2\times \R$. A space-like surface in $\cE^3$ is locally the graph of a function $ u:\Omega\to\R$, for $\Omega\subseteq\mathcal S^2$, and the first fundamental form is just the spherical metric on the base $\mathcal S^2$. 

Given a space-like immersion $\sigma:S\to\cE^3$, again there is no notion of normal vector field, since the metric of $\cE^3$ is degenerate. However, we have already remarked several times that the vector field $\textsc{T}$ (of zero length for the degenerate metric, see Section~\ref{sec deg case}) is well-defined, i.e. it is invariant by the action of the isometry group. Given two vector fields $\hat v,\hat w$ on $S$, using symmetry and compatibility with the metric, it is easy to prove that the tangential component of $\nabla^{\cE}_{\sigma_*v}(\sigma_*\hat w)$ in the splitting
$$T_{(x,t)}\mathcal \cE^3=T_{(x,t)}\sigma(S)\oplus\langle \textsc{T}\rangle\,,$$ coincides with the Levi-Civita connection of the first fundamental form. This enables to give the following definition:

\begin{df}
Given a space-like immersion $\sigma:S\to\cE^3$, the second fundamental form of $S$ is defined by
$$\nabla^{\cE^3}_{\sigma_*v}(\sigma_*\hat w)=\nabla^{\I}_{v}(\hat w)+\II(v,w) \textsc{T}\,,$$
for every pair of vectors $v,w\in T_x S$, where $\hat w$ is any extension on $S$ of the vector $w\in T_x S$.
\end{df}

It is easy to check, as in the classical Riemannian case, that $\II$ is linear in both arguments, and thus defines a $(0,2)$-tensor.

\begin{df}
The shape operator of $\sigma:S\to\cE^3$ is the  symmetric $(1,1)$-tensor $B$ such that $\II(v,w)=\I(B(v),w)$ for every $v,w\in T_x S$, where $\I$ is the first fundamental form of $S$. The extrinsic curvature of $S$ is the determinant of the shape operator.
\end{df}

As usual the definition does not depend on the extension of any of the vectors $v$ and $w$ and the second fundamental form is symmetric.

\begin{lemma} \label{lemma shape operator sph hessu-uid}
Given a space-like embedded graph $S$ in $\cE^3$, consider the embedding $\sigma:\Omega\to\cE^3\cong \mathcal S^2\times\R$ defining $S$ as a graph:
$$\sigma(x)=(x, u(x))$$
for $u:\Omega\to\R$ and $\Omega\subseteq\mathcal S^2$. Then the shape operator of $S$ for the embedding $\sigma$ is 
\begin{equation*} 
B={\Hess^{\mathcal{S}^2}}u+u\, \mathrm{Id}\,,
\end{equation*} 
where ${\Hess^{\mathcal{S}^2}}u=\nabla^{\mathcal S^2}\!\mathrm{grad} u$ denotes the {spherical} Hessian of $u$.
\end{lemma}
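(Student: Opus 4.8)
The plan is to verify the formula by a direct computation in the double cover $\cE^3\cong\mathcal{S}^2\times\R$, exploiting the description of the co-Euclidean connection in terms of the ambient derivative $D$ of $\R^4$, the transverse field $\textsc{N}_y=y$, and the distinguished field $\textsc{T}=\partial_{x_4}$. Writing a point of $\Omega\subseteq\mathcal{S}^2\subset\R^3\subset\R^4$ as $x$, so that $\sigma(x)=x+u(x)\,\textsc{T}$, the first step is to record the pushforward $\sigma_* v=v+(v.u)\,\textsc{T}$ for $v\in T_x\mathcal{S}^2$, and to observe that the first fundamental form $\I$ is the round metric $b^*|_{T\mathcal{S}^2}$, since $\textsc{T}$ is $b^*$-null; consequently $\nabla^{\I}$ will turn out to be $\nabla^{\mathcal{S}^2}$.

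Next I would compute the ambient derivative $D_{\sigma_* v}(\sigma_*\hat w)$ by differentiating the $\R^4$-valued field $\sigma_*\hat w=\hat w+(\hat w.u)\,\textsc{T}$ componentwise along a curve in $\Omega$ with velocity $v$. The first three components yield the Euclidean derivative of $\hat w$ as a field on $\mathcal{S}^2$, and here I apply the Gauss formula for the unit sphere $\mathcal{S}^2\subset\R^3$, whose outward normal is $x$ and whose second fundamental form is $-\I$; this gives $D^{\R^3}_v\hat w=\nabla^{\mathcal{S}^2}_v\hat w-\I(v,\hat w)\,x$. The $\textsc{T}$-component is the scalar $v.(\hat w.u)$, which I rewrite through the definition of the Hessian as $\Hess^{\mathcal{S}^2}u(v,\hat w)+(\nabla^{\mathcal{S}^2}_v\hat w).u$. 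Regrouping the terms proportional to $\nabla^{\mathcal{S}^2}_v\hat w$ into the tangent vector $\sigma_*(\nabla^{\mathcal{S}^2}_v\hat w)$, I obtain
\[
D_{\sigma_* v}(\sigma_*\hat w)=\sigma_*\big(\nabla^{\mathcal{S}^2}_v\hat w\big)-\I(v,\hat w)\,x+\Hess^{\mathcal{S}^2}u(v,\hat w)\,\textsc{T}\,.
\]

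Then I apply the defining relation of the co-Euclidean connection together with $\textsc{N}_{\sigma(x)}=\sigma(x)=x+u\,\textsc{T}$ and $b^*(\sigma_* v,\sigma_*\hat w)=\I(v,\hat w)$. The one subtle point — which I expect to be the only real content of the argument — is that the $x$-component contributed by the $b^*(\cdot,\cdot)\,\textsc{N}$ correction cancels precisely the term $-\I(v,\hat w)\,x$ produced by the sphere's Gauss formula; this is exactly what is required for $\nabla^{\cE^3}_{\sigma_* v}(\sigma_*\hat w)$ to lie in $T_{\sigma(x)}\cE^3=T_{\sigma(x)}\sigma(S)\oplus\langle\textsc{T}\rangle$. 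What remains is
\[
\nabla^{\cE^3}_{\sigma_* v}(\sigma_*\hat w)=\sigma_*\big(\nabla^{\mathcal{S}^2}_v\hat w\big)+\big(\Hess^{\mathcal{S}^2}u(v,\hat w)+u\,\I(v,\hat w)\big)\,\textsc{T}\,.
\]

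Finally, comparing with $\nabla^{\cE^3}_{\sigma_* v}(\sigma_*\hat w)=\nabla^{\I}_v\hat w+\II(v,w)\,\textsc{T}$ identifies $\nabla^{\I}$ with $\nabla^{\mathcal{S}^2}$ and reads off $\II(v,w)=\Hess^{\mathcal{S}^2}u(v,\hat w)+u\,\I(v,\hat w)$. Since $\Hess^{\mathcal{S}^2}u=\nabla^{\mathcal{S}^2}\mathrm{grad}\,u$ satisfies $\I(\Hess^{\mathcal{S}^2}u(v),w)=\Hess^{\mathcal{S}^2}u(v,w)$ and $u\,\I(v,w)=\I(u\,\mathrm{Id}(v),w)$, the defining relation $\II(v,w)=\I(B(v),w)$ gives $B=\Hess^{\mathcal{S}^2}u+u\,\mathrm{Id}$, as claimed. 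The routine verifications that $\II$ is symmetric and independent of the chosen extension $\hat w$ proceed exactly as in the non-degenerate case, and need not be repeated.
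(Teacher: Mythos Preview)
Your proof is correct and takes a more direct route than the paper's. The paper first composes with an isometry of $\cE^3$ (dual to a Euclidean translation) so that at the point $x_0$ one has $u(x_0)=0$ and $\mathrm{grad}\,u(x_0)=0$; in that gauge $\textsc{N}_{\sigma(x_0)}=x_0$, the cross term $(\nabla^{\mathcal S^2}_v\hat w).u$ drops, and the tangential projection of $D_V W$ reads off $\II=\Hess^{\mathcal S^2}u$ at $x_0$, which equals $\Hess^{\mathcal S^2}u+u\,\mathrm{Id}$ since $u(x_0)=0$. One then checks that subtracting an affine function $f(x)=\langle x,p_0\rangle$ does not change $\Hess^{\mathcal S^2}u+u\,\mathrm{Id}$, via the identity $\Hess U=\Hess^{\mathcal S^2}u+u\,\mathrm{Id}$ for the $1$-homogeneous extension. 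You instead keep all terms in the general computation and observe two regroupings: the term $(\nabla^{\mathcal S^2}_v\hat w).u\,\textsc{T}$ recombines with $\nabla^{\mathcal S^2}_v\hat w$ into $\sigma_*(\nabla^{\mathcal S^2}_v\hat w)$, and since $\textsc{N}_{\sigma(x)}=x+u\,\textsc{T}$, the $b^*(\cdot,\cdot)\,\textsc{N}$ correction both cancels the $-\I(v,\hat w)\,x$ from the sphere's Gauss formula and produces the missing $u\,\I(v,\hat w)\,\textsc{T}$. Your approach avoids the preliminary normalization and its invariance check at the cost of a little more bookkeeping; the paper's approach trades the extra terms for a cleaner display at the point of computation.
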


{Before proceeding to the proof of this proposition, let us relate functions on $\Omega\subset \mathcal{S}^2\subset \E^3$ to functions on an open set of $\E^3$. As the unit outward vector field of $\mathcal{S}^2$ is the identity, it follows from
 \eqref{eq:nabla sub} and \eqref{eq:shape op} that, at a point $x\in \mathcal{S}^2$, for $X,Y$ tangent to the sphere,
\begin{equation}\label{eq: sphere connection}D_{X} Y=\nabla^{\mathcal S^2}_{X}Y-\langle X,Y\rangle x\,\end{equation}
where $\langle\cdot,\cdot\rangle:=b_{3,0}$. From this it follows immediately that, for $U$ the 1-homogenous extension of $u$,
 $$\langle \Hess U (X),Y\rangle= X.Y.U - (D_XY).U $$$$= \Hess^{\mathcal{S}^2} U|_{\mathcal{S}^2}(X,Y)+\langle X,Y\rangle x.U \,,$$
and as $U$ is 1-homogenous, by the Euler theorem,
 $x.U=U$, so on $\S^2$:
 \begin{equation}\label{restr hessien2}\Hess U = \Hess^{\mathcal{S}^2}\! u + u \operatorname{Id}~. \end{equation}
 } 


\begin{proof}
Fix a point $x_0\in\Omega$, and denote by $b^*=x_1y_1+x_2y_2+x_3y_3$ the ambient degenerate quadratic form. By composing with an element in $\mathrm{Isom}(\mathcal \cE^3)$ of the form $(x,t)\mapsto(x,t+f(x))$, where $f(x)=\langle x,p_0\rangle$, we can assume $S$ is tangent to the horizontal plane $\mathcal S^2\times\{0\}$ at $x_0$. Indeed, observe that by the usual duality, this statement is equivalent to saying that the point in $\E^3$ dual to $T_{\sigma(x_0)}S$ is the origin and the plane in $\E^3$ dual to $x_0$ is $x_0^\perp$. Such s condition can be easily obtained by applying a translation of $\E^3$. {From \eqref{restr hessien2}, } ${\Hess^{\mathcal{S}^2}} f+f \mathrm{Id}=0$, and thus it suffices to prove the statement in this case. 

We consider $\mathcal S^2\times\{0\}$ inside the copy of $\E^3$ obtained as $\{x_4=0\}$ in $\R^4$. Let $\hat v$, $\hat w$ be two vector fields on $\Omega\subseteq \mathcal S^2$. Then at $x_0\in\mathcal S^2$ one has
as in \eqref{eq: sphere connection} 
 $$D_{\hat v}\hat w=\nabla^{\mathcal S^2}_{\hat v}\hat w-\langle \hat v,\hat w\rangle x_0\,.$$
Consider now the vector fields $\sigma_*(\hat v)=\hat v+\d u(\hat v)\textsc{T}$ and $\sigma_*(\hat w)=\hat w+\d u(\hat w)\textsc{T}$ on $S$. We choose extensions $V$ and $W$ in a neighbourhood of $S$ which are invariant by translations $t\mapsto t+t_0$ in the degenerate direction of $\mathcal S^2\times\R$. We can now compute
\begin{align*}
D_V W&=D_{\hat v}\hat w+D_{\hat v}(\d u(\hat w)\textsc{T})+\d u(\hat v)D_{\textsc{T}}W \\
&=\nabla^{\mathcal S^2}_{\hat v}\hat w-\langle \hat v,\hat w\rangle x_0+{\hat v}.\langle  \mathrm{grad} u,\hat w\rangle \textsc{T} \\
&=\nabla^{\mathcal S^2}_{\hat v}\hat w-\langle \hat v,\hat w\rangle x_0+\left(\langle{\Hess^{\mathcal{S}^2}} u(\hat v),\hat w\rangle +\langle \mathrm{grad} u,\nabla^{\mathcal S^2}_{\hat v}\hat w\rangle \right)\textsc{T}\,,
\end{align*}
where in the first equality we have substituted the expressions for $V$ and $W$, and in the second equality we have used that $D\textsc{T}=0$ and that the chosen extension $W$ is invariant along the direction of $\textsc{T}$. Recalling that the connection $\nabla^{\cE^3}_V W$ at $x_0$ is defined as the tangential component of $D_V W$ with respect to the transverse vector $\textsc{N}_{x_0}=x_0$ and using that $\mathrm{grad} u$ vanishes at $x_0$ by construction, we get at $x_0$: 
$$\nabla^{\cE^3}_V W=\nabla^{\mathcal S^2}_{\hat v}\hat w+\I({\Hess^{\mathcal{S}^2}}u(v),w)\,.$$
By Proposition \ref{prop properties coeucl connection}, $\nabla^{\mathcal S^2}_{\hat v}\hat w$ is tangent to the slice ${\mathcal S^2}\times\{0\}$ itself, and thus the second fundamental form is:
$$\II(v,w)=\I({\Hess^{\mathcal{S}^2}} u(v),w)\,.$$
Since $u(x_0)=0$, this concludes the proof.
\end{proof}

\begin{cor}
A space-like surface in $^*\E^3$ is a plane if and only if $B\equiv 0$.
\end{cor}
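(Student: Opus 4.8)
The plan is to reduce everything to Lemma~\ref{lemma shape operator sph hessu-uid}, which in the double cover $\cE^3\cong\mathcal S^2\times\R$ expresses the shape operator of a space-like graph $\sigma(x)=(x,u(x))$, with $u:\Omega\to\R$ and $\Omega\subseteq\mathcal S^2$, as $B=\Hess^{\mathcal S^2}u+u\,\mathrm{Id}$. Hence $B\equiv 0$ is equivalent to the linear second-order equation $\Hess^{\mathcal S^2}u+u\,\mathrm{Id}=0$ on $\Omega$, and the whole statement becomes a question about the solutions of this equation.

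First I would read this equation through the $1$-homogeneous extension $U$ of $u$ to $\R^3\setminus\{0\}$. By the identity \eqref{restr hessien2}, namely $\Hess U=\Hess^{\mathcal S^2}u+u\,\mathrm{Id}$ along $\mathcal S^2$, the condition $B\equiv 0$ is equivalent to the vanishing of the ambient Euclidean Hessian $\Hess U$ on $\mathcal S^2$. Then I would exploit homogeneity: since $U$ is homogeneous of degree $1$, its second derivatives $\Hess U$ are homogeneous of degree $-1$, so $\Hess U=0$ on $\mathcal S^2$ forces $\Hess U\equiv 0$ on all of the connected set $\R^3\setminus\{0\}$. Consequently $U$ is affine, and being also $1$-homogeneous it must be linear: $U(x)=\langle x,p_0\rangle$ for a fixed $p_0\in\R^3$, where $\langle\cdot,\cdot\rangle=b_{3,0}$. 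Thus $u=U|_{\mathcal S^2}$ is exactly the restriction of a linear form, and conversely every such restriction solves the equation.

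The remaining step, which I expect to be the only genuinely geometric point rather than a computation, is to match \emph{$u$ is linear} with \emph{the surface is a plane}. Writing the graph as $\{(x,\langle x,p_0\rangle):x\in\mathcal S^2\}\subset\R^4$, one recognizes it as the intersection of $\cE^3$ with the linear hyperplane $\{x_4=\langle(x_1,x_2,x_3),p_0\rangle\}$ of $\R^4$, which is transverse to the degenerate direction $\textsc{T}$; by the definition of a plane in a model space (the projective quotient of the intersection of $\cE^3$ with a $3$-dimensional linear subspace), this is precisely a space-like plane of $^*\E^3$. Conversely, a space-like plane is $\cE^3\cap P'$ for a $3$-dimensional linear subspace $P'$ not containing the degenerate direction, hence the graph of a linear function $u(x)=\langle x,p_0\rangle$; its $1$-homogeneous extension $U$ is linear, so $\Hess U=0$ and \eqref{restr hessien2} gives $B\equiv 0$. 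Finally, since $B\equiv 0$ is a local condition whereas ``being a plane'' is global, I would invoke connectedness of the surface: on a connected domain the solution $u$ of $\Hess^{\mathcal S^2}u+u\,\mathrm{Id}=0$ is the restriction of a single linear form, so the whole surface lies in one plane. This last promotion from the local to the global statement is the point to state carefully, though it follows immediately from the uniqueness of the linear extension.
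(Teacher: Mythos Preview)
Your proposal is correct and follows the same approach as the paper: both reduce to Lemma~\ref{lemma shape operator sph hessu-uid} and identify planes with graphs of functions $u(x)=\langle x,p\rangle$, which are exactly the solutions of $\Hess^{\mathcal S^2}u+u\,\mathrm{Id}=0$. The paper's proof is extremely terse and simply asserts this correspondence, whereas you spell out the converse direction explicitly via the $1$-homogeneous extension and \eqref{restr hessien2}; this extra care is not a different route, just a more complete version of the same argument.
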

\begin{proof}
Totally geodesic planes in $\cE^3$ are graphs of functions of the form $u(x)=\langle x,p\rangle$, which are functions such that ${\Hess^{\mathcal{S}^2}}\, u+ u\,\mathrm{Id}$ vanishes. Since the statement is local, this is true also in $^*\E^3$.
\end{proof}

Recall that the induced metric on a space-like surface $S$ in $\cE^3$, written as a graph over $\Omega\subset\mathcal S^2$, is just the spherical metric of $\mathcal S^2$. Hence the first fundamental form $\I$ of $S$ satisfies:
\begin{equation} \label{baby gauss}
K_\I=1\,.
\end{equation}
Of course this statement is local, and thus holds in $^*\E^3$ as well. Moreover, it turns out that the shape operator satisfies the Codazzi equation with respect to $\I$:
\begin{equation} \label{baby codazzi}
\d^{\nabla^\I}\!B=0\,.
\end{equation}

Indeed, let $U$ be the $1$ homogenous extention of $u$. Let $\tilde B=\Hess U$. Each component 
$\tilde B^i$, $i=1,2,3$, is a one-form on $\R^3$, $$\tilde B^i=\tilde B^i_1 \d x^1+\tilde B^i_2\d x^2 + \tilde B^i_3 \d x^3~,$$
with $U_{ij}=\tilde B^i_j$. By the Schwarz lemma, $\tilde B^i$ is closed, and by \eqref{eq:codazzi eq} and  \eqref{eq: sphere connection}, $B$ satisfies the Codazzi equation on  $\mathcal{S}^2$.
See also \cite[2.61]{GHL} for the relations between closdeness of one forms and the Codazzi equation.

The above two equations \eqref{baby gauss} and \eqref{baby codazzi} can be interpreted as a \emph{baby-version} of the Gauss--Codazzi equations for co-Euclidean geometry. Of course this is a very simple version, since the two equations are not really coupled: the first equation is independent of $B$. In fact, the proof of the fundamental theorem of immersions will be very simple. But before that, let us show that any  tensor satisfying the Codazzi equation on $\mathcal{S}^2$ can be locally written as $\Hess^{\mathcal S^2}\! u+u\,\mathrm{Id}$.

Let $\Omega$ be a domain in $\mathcal S^2$ and let $\tilde \Omega$ be the cone over $\Omega$ from the origin of $\E^3$. Note that $\tilde\Omega$ is star-shaped from the origin.
Let $B$ be a $(1,1)$-tensor on $\Omega$.
 Let $\tilde B$ be 	a $(1,1)$-tensor of $\tilde \Omega$ which satisfies $\tilde B_x(x)=0$, and for $\langle X,x\rangle=0$, $\tilde B_x(X)=B_{x/\|x\|}(X)$, where  $\|\cdot\|$ is the usual norm on $\E^3$.
  
Suppose  that $B$ satisfies the Codazzi equation on $\Omega$, and consider $\tilde B$ as an $\R^3$-valued one form.
From \eqref{eq:codazzi eq} and \eqref{eq: sphere connection}, $\tilde B$ satisfies the Codazzi equation on $\tilde \Omega$, hence is a  closed form, and  by the Poincar\'e Lemma, 
there exists $F^i : \R^3\to \R$ such that 
$\tilde B^i=\d F^i$.  
Suppose moreover that $B$ is symmetric. This readily implies that $\tilde B$ is symmetric, and hence that the one form 
$\omega=F^1\d x^1+F^2\d x_2^2+
F^3\d x_3^2$ is closed, so by the Poincar\'e Lemma, there exists a function $U:\tilde \Omega \to \R$ with 
$\omega = \d U$, i.e.  $\tilde B=\Hess U$. 
From  \eqref{restr hessien2}, we obtain the following fact, previously noted by D. Ferus in \cite{ferus}:
\begin{fact}\label{fact ferus}
Let $B$ be a  tensor satisfying the Codazzi equation on a surface of constant curvature equal to one. Then locally there exists a function $u$ on $\mathcal{S}^2$ such that $B=\Hess^{\mathcal S^2} u + u\, \mathrm{Id}$.
\end{fact}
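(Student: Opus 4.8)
The plan is to transport the problem from the sphere to the ambient flat space $\E^3$, where Codazzi equations become closedness conditions and one can integrate by the Poincar\'e Lemma. Since the statement is local and every surface of constant curvature one is locally isometric to a domain $\Omega$ of the unit sphere $\mathcal{S}^2\subset\E^3$, I would work directly on such a domain. The guiding principle is the identity \eqref{restr hessien2}: if $U$ is the $1$-homogeneous extension to the cone $\tilde\Omega$ over $\Omega$ of a function $u$ on $\mathcal{S}^2$, then the ambient Hessian $\Hess U$, restricted to tangent directions of the sphere, equals exactly $\Hess^{\mathcal{S}^2} u+u\,\mathrm{Id}$, while $\Hess U(x)=0$ in the radial direction by Euler's relation. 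Thus the tensor we are trying to produce is precisely the sphere-restriction of an ambient Hessian, and the problem reduces to writing $B$ as such a restriction.

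Concretely, first I would extend $B$ from $\Omega$ to a $(1,1)$-tensor $\tilde B$ on the star-shaped cone $\tilde\Omega$, imposing $\tilde B_x(x)=0$ on the radial direction and $\tilde B_x(X)=B_{x/\|x\|}(X)$ on directions $X$ orthogonal to $x$, so that $\tilde B$ matches the radial pattern of an ambient Hessian. Viewing $\tilde B$ as an $\R^3$-valued one-form (its components $\tilde B^i$ being ordinary one-forms on $\tilde\Omega$), I would use the connection formula \eqref{eq: sphere connection}, which relates the flat ambient derivative $D$ to the spherical connection $\nabla^{\mathcal{S}^2}$ through the correction term $-\langle X,Y\rangle x$, to check that the Codazzi equation \eqref{eq:codazzi eq} for $B$ on $\Omega$ is equivalent to the Codazzi equation for $\tilde B$ with respect to $D$ on $\tilde\Omega$. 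Because $D$ is the flat connection of $\R^3$, this last condition says exactly that each $\tilde B^i$ is a closed one-form.

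The Poincar\'e Lemma on $\tilde\Omega$ then yields functions $F^i$ with $\tilde B^i=\d F^i$. Feeding in the symmetry of $B$, which transfers to $\tilde B$ and hence gives $\partial_j F^i=\partial_i F^j$, shows that the one-form $\omega=F^1\d x^1+F^2\d x^2+F^3\d x^3$ is closed; a second application of the Poincar\'e Lemma produces $U$ with $\d U=\omega$, that is $\tilde B=\Hess U$. Restricting to $\mathcal{S}^2$ and invoking \eqref{restr hessien2} gives $B=\Hess^{\mathcal{S}^2} u+u\,\mathrm{Id}$ with $u=U|_{\mathcal{S}^2}$, as desired. The one step requiring genuine care --- the main obstacle --- is the second one: verifying that the spherical Codazzi equation for $B$ matches precisely the Euclidean closedness of $\tilde B$. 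This hinges on the radial extension convention cooperating with the correction term $-\langle X,Y\rangle x$ of \eqref{eq: sphere connection}, so that the tangential and radial components of the covariant derivatives cancel correctly; once this translation is in place, the two integrations by the Poincar\'e Lemma are routine.
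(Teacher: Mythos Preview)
Your proposal is correct and follows essentially the same route as the paper: extend $B$ to $\tilde B$ on the cone $\tilde\Omega$ with the stated radial convention, use \eqref{eq: sphere connection} and \eqref{eq:codazzi eq} to convert the spherical Codazzi equation into closedness of the components $\tilde B^i$, apply the Poincar\'e Lemma to get $F^i$, use symmetry to make $\omega=F^i\d x^i$ closed, integrate again to obtain $U$ with $\tilde B=\Hess U$, and conclude via \eqref{restr hessien2}. Your identification of the Codazzi-to-closedness translation as the one nontrivial step is also in line with what the paper emphasizes.
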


\begin{proposition}[Fundamental theorem of immersed surfaces in co-Euclidean geometry] \label{fund thm coE}

Given a simply connected surface $S$ and a pair $(\I,B)$ of a Riemannian metric and a symmetric  $(1,1)$-tensor on $S$, if $(\I,B)$ are such that $\I$ has constant curvature $+1$,
$$K_\I=1\,,$$
and $B$ is satisfies the Codazzi conditions for $\I$,
$$\d^{\nabla^\I}\!B=0\,.$$
Then there exists a smooth immersion of $S$ in $^*\E^3$ having $(\I,B)$ as embedding data. Any two such immersions differ by post-composition by a global isometry of $^*\E^3$.
\end{proposition}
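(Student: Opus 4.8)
The plan is to pass to the double cover $\cE^3\cong\mathcal S^2\times\R$ (legitimate since $S$ is simply connected, so any space-like immersion into $^*\E^3$ lifts) and to exploit the two structural facts already established: the first fundamental form of a space-like graph is automatically the spherical metric on the $\mathcal S^2$-factor, and by Lemma~\ref{lemma shape operator sph hessu-uid} the shape operator of a graph $\sigma(x)=(x,u(x))$ is $\Hess^{\mathcal S^2}u+u\,\mathrm{Id}$. Thus the whole problem reduces to (i) realizing the abstract metric $\I$ through a developing map onto $\mathcal S^2$, and (ii) solving the linear equation $\Hess^{\I}u+u\,\mathrm{Id}=B$ for a height function $u$.

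For existence, since $(S,\I)$ is simply connected of constant curvature $+1$, it carries a developing map $D\colon S\to\mathcal S^2$, i.e.\ a local isometry, unique up to post-composition by an element of $\mathrm{Isom}(\mathcal S^2)=\mathrm{O}(3)$. Next I would produce a global $u\colon S\to\R$ with $\Hess^{\I}u+u\,\mathrm{Id}=B$. Locally this is exactly Fact~\ref{fact ferus}, pulled back through $D$ (which intertwines $\Hess^{\I}$ and $\Hess^{\mathcal S^2}$, being a local isometry). The local primitives differ on overlaps by solutions of the homogeneous equation $\Hess^{\I}f+f\,\mathrm{Id}=0$; such solutions are determined by their $1$-jet at one point, hence form a flat rank-$3$ bundle (a local system) over $S$. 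As $S$ is simply connected this local system is trivial and its $H^1$ vanishes, so the local primitives glue to a global $u$. Setting $\sigma(p)=(D(p),u(p))\in\mathcal S^2\times\R$ gives a space-like immersion whose first fundamental form is $D^*g_{\mathcal S^2}=\I$ and whose shape operator, by Lemma~\ref{lemma shape operator sph hessu-uid}, is $\Hess^{\I}u+u\,\mathrm{Id}=B$; composing with the covering $\cE^3\to{}^*\E^3$ completes the existence part.

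For uniqueness, let $\sigma_1,\sigma_2\colon S\to\cE^3$ have the same data $(\I,B)$. Composing with the projection $\pi\colon\mathcal S^2\times\R\to\mathcal S^2$, each $\pi\circ\sigma_i$ satisfies $(\pi\circ\sigma_i)^*g_{\mathcal S^2}=\I$, hence is a developing map for $(S,\I)$; by the rigidity of developing maps, after applying the ambient isometry $(x,t)\mapsto(Ax,t)$ for a suitable $A\in\mathrm{O}(3)$ we may assume $\pi\circ\sigma_1=\pi\circ\sigma_2=:D$. Then $\sigma_i=(D,u_i)$, and equality of the shape operators forces $\Hess^{\I}(u_1-u_2)+(u_1-u_2)\,\mathrm{Id}=0$. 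On the simply connected $S$ the global solutions of this homogeneous equation are exactly the functions $p\mapsto\langle D(p),v\rangle$, $v\in\R^3$, so $u_1-u_2=\langle D(\cdot),v\rangle$; the map $(x,t)\mapsto(x,t+\langle x,v\rangle)$ is precisely the ``translation'' isometry of $^*\E^3$ from Definition~\ref{defi isom coeucl}, and it carries $\sigma_2$ to $\sigma_1$. Composing the two isometries yields the desired global isometry.

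The main obstacle is the passage from the local statement of Fact~\ref{fact ferus} to a single globally defined height function $u$: one must verify that the sheaf of solutions of $\Hess^{\I}f+f\,\mathrm{Id}=0$ is a genuine local system and then invoke simple connectivity to kill the gluing obstruction — equivalently, to identify the $3$-dimensional global solution space with $\{\langle D(\cdot),v\rangle:v\in\R^3\}$. Everything else (the existence and rigidity of developing maps, and the identification of the residual ambiguity with the translation subgroup of $\mathrm{Isom}(^*\E^3)$) is routine given the lemmas already proved.
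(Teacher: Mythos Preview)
Your argument is correct and follows essentially the same route as the paper: develop $(S,\I)$ onto $\mathcal S^2$, solve $\Hess^{\I}u+u\,\mathrm{Id}=B$ via Fact~\ref{fact ferus}, and identify the remaining ambiguity with the $\mathrm{O}(3)$- and translation-parts of $\mathrm{Isom}(^*\E^3)$. If anything, your treatment is more careful than the paper's, which simply invokes Fact~\ref{fact ferus} as if it furnished a global $u$ on $S$; your local-system/sheaf argument for gluing the local primitives (and the identification of the kernel with $\{\langle D(\cdot),v\rangle\}$) is exactly the missing step and is handled correctly.
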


\begin{proof}
Again, it suffices to prove first the immersion statement in $\cE^3$. By Fact~\ref{fact ferus} there exists a function $u:S\to\R$ such that $B={\Hess^{\mathcal{S}^2}}\, u+ u\, \mathrm{Id}$. Let $\mathrm{dev}:\tilde S\to\mathcal S^2$ be a developing map (that is, a local isometry) for the spherical metric $\I$ on $S$. Then we define
$$\sigma:S\to\cE^3\cong \mathcal S^2\times\R$$
by means of
$$\sigma(x)=(\mathrm{dev}(x), u(x))\,.$$
Since $\mathrm{dev}$ is a local isometry, and the degenerate metric of $\cE^3$ reduces to the spherical metric on the $\mathcal S^2$ component, the first fundamental form of $\sigma$ is $\I$. By Lemma \ref{lemma shape operator sph hessu-uid}, the shape operator is $B$.

Given any other immersion $\sigma'$ with embedding data $(\I,B)$, the projection to the first component is a local isometry, hence it differs from $\mathrm{dev}$ by postcomposition by a global isometry $A$ of $\mathcal S^2$. By composing with an isometry of $\cE^3$ which acts on $\mathcal S^2$ by means of $A$ and leaves the coordinate $t$ invariant, we can assume the first component of $\sigma$ and $\sigma'$ in $\mathcal S^2\times\R$ coincide.

Hence we have $\sigma'(x)=(\mathrm{dev}(x), u'(x))$ where $u'$ is such that $B={\Hess^{\mathcal{S}^2}}\,u'+u'\, \mathrm{Id}$. Therefore ${\Hess^{\mathcal{S}^2}}(u-u')+(u-u')\mathrm{Id}=0$, which implies  $$u(x)-u'(x)=\langle \mathrm{dev}(x),p_0\rangle$$ for some $p_0\in\E^3$. This shows that $\sigma$ and $\sigma'$ differ by the isometry
$$(x,t)\mapsto (x,t+\langle x,p_0\rangle)$$
which is an element of $\mathrm{Isom}(\cE^3)$. This concludes the proof.
\end{proof}

Let us remark that (recalling the discussion of Subsection \ref{coeucl}) if $S$ is a space-like surface which is homotopic to $\mathcal S^2\times\{\star\}$ in $\cE^3\cong\mathcal S^2\times \R$ and its shape operator $B$ is positive definite, then $S$ is the graph of $u$ where $u$ is a support function of a convex body whose boundary is $S$. 
Let us now compute the embedding data in Euclidean space of the dual surface of $S$. The formulae we will obtain are exactly the same as in the $\AdS^3$-$\AdS^3$ duality and in the $\H^3$-$\dS^3$ duality.

\begin{cor}\label{cor dual plat}
Given a space-like smooth surface $S$ in $\E^3$ (resp. $^*\E^3$) which bounds a convex set and with positive definite shape operator, if the embedding data of $S$ are $(\I,B)$, then the boundary of the dual convex set in $^*\E^3$ (resp. $\E^3$) has embedding data $(\III,B^{-1})$.
\end{cor}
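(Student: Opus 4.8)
The plan is to reduce the claim to the explicit description of the dual surface as the graph of a support function, and then to combine Lemma~\ref{lemma shape operator sph hessu-uid} with the classical support-function parametrization of a convex surface. I treat the case $S\subset\E^3$; the case $S\subset{}^*\E^3$ follows by the involutivity of the duality, since applying $\III$ twice returns $\I$ and $(B^{-1})^{-1}=B$.

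First I would recall from Subsection~\ref{coeucl} that, if $K$ is the convex body bounded by $S$ and $h\colon\mathcal S^2\to\R$ is its support function, then the dual $K^*\subset\cE^3\cong\mathcal S^2\times\R$ is the epigraph of $h$, so the dual surface $S^*$ is the graph $\nu\mapsto(\nu,h(\nu))$. Since the first fundamental form of a space-like graph over $\Omega\subseteq\mathcal S^2$ in $\cE^3$ is, by construction, the spherical metric of the base (compare \eqref{baby gauss}), the first fundamental form of $S^*$ in the $\nu$-parametrization is the round metric $g_{\mathcal S^2}$, and by Lemma~\ref{lemma shape operator sph hessu-uid} its shape operator is
$$B^*=\Hess^{\mathcal S^2}h+h\,\mathrm{Id}\,.$$

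The heart of the argument is to identify this $\nu$-parametrization with the Gauss map of $S$. For a convex surface with positive definite shape operator the outward Gauss map $G\colon S\to\mathcal S^2$ is a diffeomorphism, its inverse is the support parametrization $x(\nu)=h(\nu)\,\nu+\mathrm{grad}^{\mathcal S^2}h(\nu)$, and differentiating yields $\d x_\nu=\Hess^{\mathcal S^2}h+h\,\mathrm{Id}$ as an endomorphism of $T_\nu\mathcal S^2=\nu^\perp$. Since $\d G=B$ is the Weingarten map of $S$ and $T_{x(\nu)}S=\nu^\perp=T_\nu\mathcal S^2$, this exhibits $\Hess^{\mathcal S^2}h+h\,\mathrm{Id}=B^{-1}$, hence $B^*=B^{-1}$. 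Pulling the round metric back through $G$ then gives, for $v,w\in T_xS$,
$$G^*g_{\mathcal S^2}(v,w)=\langle \d G(v),\d G(w)\rangle=\I(Bv,Bw)=\III(v,w)\,,$$
while pulling $B^*$ back gives $(\d G)^{-1}B^*\,\d G=B^{-1}B^{-1}B=B^{-1}$. Thus, under the duality identification $G\colon S\to S^*$, the embedding data of $S^*$ are exactly $(\III,B^{-1})$, as claimed.

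I expect the only real difficulty to lie in the bookkeeping of identifications and signs: matching the affine chart $\{y_{n+1}=-1\}$ used to define $K^*$ with the graph over $\mathcal S^2$, fixing the correct sign in $B=\pm\nabla^{\E^3}_v\textsc{N}$, and verifying the classical identity $\d x_\nu=\Hess^{\mathcal S^2}h+h\,\mathrm{Id}$, i.e. that $\Hess^{\mathcal S^2}h+h\,\mathrm{Id}$ is the inverse Weingarten (radii-of-curvature) operator. Once these are in place the computation is formally identical to the one for the $\H^3$--$\dS^3$ and $\AdS^3$--$\AdS^3$ dualities. An alternative route would be to obtain the formula as a limit of the $\H^3$--$\dS^3$ duality under the blow-up transition of Proposition~\ref{prop limit connection volume coeucl}, but the direct computation above seems cleaner.
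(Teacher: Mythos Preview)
Your proof is correct and follows essentially the same approach as the paper: both reduce to one direction by involutivity, identify the dual surface as the graph of the support function over $\mathcal S^2$, invoke Lemma~\ref{lemma shape operator sph hessu-uid} to get $B^*=\Hess^{\mathcal S^2}h+h\,\mathrm{Id}$, and then use the Gauss map to identify this with $B^{-1}$ and to recognize the induced metric as $\III$. Your version is in fact slightly more explicit than the paper's, since you write out the support parametrization $x(\nu)=h(\nu)\nu+\mathrm{grad}^{\mathcal S^2}h(\nu)$ and differentiate it, whereas the paper simply asserts that one can directly show $B^{-1}=\Hess^{\mathcal S^2}u+u\,\mathrm{Id}$; you also spell out the conjugation $(\d G)^{-1}B^*\,\d G=B^{-1}$, which the paper leaves implicit.
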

\begin{proof}
 If the result is proved for one case, then the other case is obvious, since it suffices to exchange the roles of $(\I,B)$ and $(\III,B^{-1})$. Let us consider a surface $S$ in $
 \E^3$, and let $S^*$ be the dual surface in $^*\E^3$. 
 
Let us assume that in the double cover $\cE^3$, $S^*$ is locally the graph of $u$ over an open subset $\Omega$ of $\mathcal S^2$. To check that the first fundamental form of the dual surface is the third fundamental form, it suffices to compute the pull-back of the spherical metric of $\mathcal S^2$ by means of the first component of the dual embedding $\sigma:S\to {^*\E^3}$ for $S^*$ --- that is, the map which associates to a tangent plane $P=T_x S$ to $S$ the dual point $P^*\in$ $^*\E^3$. Again in the double cover, the first component of the dual embedding is precisely the Gauss map $G$ of $S$, and the derivative of the Gauss map is the shape operator, hence one gets
$$\langle \d\sigma(v),\d\sigma(w)\rangle=\langle \d G(v),\d G(w)\rangle=\I(B(v),B(w))\,.$$
Moreover, one can directly show that the inverse of the shape operator of $S$, by means of the inverse of the Gauss map $G^{-1}:\mathcal S^2\to S\subset\E^3$, is $B^{-1}={\Hess^{\mathcal{S}^2}}u+u\, \mathrm{Id}$, where $u:\mathcal S^2\to\R$ is the support function of the convex set bounded by $S$. Using Lemma \ref{lemma shape operator sph hessu-uid}, this concludes the proof.
\end{proof}

\subsection{Geometry of surfaces in co-Minkowski space}

By means of a very analogous construction, one can study the geometry of surfaces in co-Minkowski space. We will not repeat all the details, but just state the parallel results. Here $\langle\cdot,\cdot\rangle:=b_{2,1}$ will denote the Minkowski product of $\M^3$, and we will implicitly identify $^*\M^3$, which is the quotient of $\cM^3\cong\mathcal H^2\times \R$ by the action of $\{\pm\mathrm{Id}\}$, to one of its connected components, thus considering the model of $\H^2\times\R$. The definition of second fundamental form of an immersion $\sigma:S\to$~$^*\M^3$ makes again use of the splitting:
$$T_{x}\mathcal \cM^3=T_{x}\sigma(S)\oplus\langle \textsc{T}\rangle\,.$$

\begin{df}
Given a space-like immersion $\sigma:S\to{^*\M^3}$, the second fundamental form of $S$ is defined by
$$\nabla^{^*\M^3}_{\sigma_*v}(\sigma_*\hat w)=\nabla^{\I}_{v}(\hat w)+\II(v,w) \textsc{T}\,,$$
for every pair of vectors $v,w\in T_x S$, where $\hat w$ is any extension on $S$ of the vector $w\in T_x S$.
\end{df}

\begin{df}
The shape operator of $\sigma:S\to{^*\M^3}$ is the  symmetric  $(1,1)$-tensor such that $\II(v,w)=\I(B(v),w)$ for every $v,w\in T_x S$, where $\I$ is the first fundamental form of $S$. The extrinsic curvature of $S$ is the determinant of the shape operator.
\end{df}

\begin{lemma} \label{lemma shape operator hp hessu-uid}
Given a space-like embedded graph $S$ in $^*\M^3$, consider the embedding $\sigma:\Omega\to{^*\M^3}\cong \H^2\times\R$ defining $S$ as a graph:
$$\sigma(x)=(x,u(x))$$
for $ u:\Omega\to\R$ and $\Omega\subseteq\H^2$. Then the shape operator of $S$ for the embedding $\sigma$ is 
\begin{equation*} 
B=\mathrm{Hess^{\H^2}} u- u\, \mathrm{Id}\,,
\end{equation*} 
where $\Hess^{\H^2} u=\nabla^{\H^2}\!\mathrm{grad} \, u$ denotes the hyperbolic Hessian of $ u$.
\end{lemma}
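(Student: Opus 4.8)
The plan is to follow verbatim the proof of Lemma~\ref{lemma shape operator sph hessu-uid}, replacing the round sphere $\mathcal S^2\subset\E^3$ by the hyperboloid $\H^2\subset\M^3$, and to keep careful track of the signs coming from the fact that here $\H^2=\{\langle x,x\rangle=-1\}$ with $\langle\cdot,\cdot\rangle:=b_{2,1}$. The first step is the hyperbolic analogue of \eqref{restr hessien2}. Writing $D$ for the flat connection on $\M^3$ and $x$ for the position vector, the pseudo-sphere Gauss formula (the $\H^2$-version of \eqref{eq: sphere connection}) reads
$$D_X Y=\nabla^{\H^2}_X Y+\langle X,Y\rangle\, x,$$
where the sign in front of the $\langle X,Y\rangle\,x$ term is opposite to the spherical case precisely because $\langle x,x\rangle=-1$ rather than $+1$. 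If $U$ denotes the $1$-homogeneous extension of $u$ to the cone over $\H^2$, then for $X,Y$ tangent to $\H^2$ one computes $\langle\Hess U(X),Y\rangle=X.Y.U-(D_XY).U=\langle\Hess^{\H^2}u(X),Y\rangle-\langle X,Y\rangle\,(x.U)$, and since $x.U=U=u$ by Euler's theorem this yields on $\H^2$ the identity
$$\Hess U=\Hess^{\H^2}u-u\,\mathrm{Id}.$$
This is the source of the minus sign in the statement.

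Next I would reduce to a convenient base point. The linear functions $f(x)=\langle x,p_0\rangle$ are exactly those with $\Hess^{\H^2}f-f\,\mathrm{Id}=0$ (apply the identity above to $U=f$, which is its own homogeneous extension and has $\Hess f=0$). Hence composing $\sigma$ with the isometry $(x,t)\mapsto(x,t+\langle x,p_0\rangle)$, which belongs to $\mathrm{Isom}(^*\M^3)$ by Definition~\ref{defi isom comink}, changes neither the shape operator $B$ nor the candidate tensor $\Hess^{\H^2}u-u\,\mathrm{Id}$. Choosing $p_0$ suitably I may therefore assume $u(x_0)=0$ and $\mathrm{grad}\,u(x_0)=0$, i.e. $S$ is tangent to $\H^2\times\{0\}$ at $x_0$. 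Then, extending $\sigma_*\hat v=\hat v+\d u(\hat v)\textsc{T}$ and $\sigma_*\hat w=\hat w+\d u(\hat w)\textsc{T}$ invariantly in the degenerate direction and using $D\textsc{T}=0$ together with the Gauss formula above, at $x_0$ (where $\mathrm{grad}\,u$ vanishes) one obtains
$$D_V W=\nabla^{\H^2}_{\hat v}\hat w+\langle\hat v,\hat w\rangle\,x_0+\langle\Hess^{\H^2}u(\hat v),\hat w\rangle\,\textsc{T}.$$
By Definition~\ref{defi comink connection} the connection $\nabla^{^*\M^3}_V W$ is the component of $D_VW$ transverse to $\textsc{N}_{x_0}=x_0$; since after the reduction $x_0$ lies in the $\H^2$-factor and carries no $\textsc{T}$-component, discarding the $x_0$-term leaves the $\textsc{T}$-coefficient untouched, while by Proposition~\ref{prop properties half pipe connection} the term $\nabla^{\H^2}_{\hat v}\hat w$ is the Levi-Civita connection of $\I$. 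Reading off the coefficient of $\textsc{T}$ gives $\II(v,w)=\I(\Hess^{\H^2}u(v),w)$, so $B=\Hess^{\H^2}u$ at $x_0$; because $u(x_0)=0$ this equals $(\Hess^{\H^2}u-u\,\mathrm{Id})(x_0)$, and the isometry-invariance noted above promotes this to the global identity.

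The only genuine obstacle is the sign bookkeeping: the single place where the computation departs from the co-Euclidean case of Lemma~\ref{lemma shape operator sph hessu-uid} is the Gauss formula for $\H^2$, where $\langle x,x\rangle=-1$ flips the sign of the $\langle X,Y\rangle\,x$ term, and this flip propagates through the Euler-theorem step to produce $-u\,\mathrm{Id}$ in place of $+u\,\mathrm{Id}$. Everything else — the invariance of $B$ under the isometry, the vanishing of $D\textsc{T}$, and the extraction of the $\textsc{T}$-component — is formally identical to the spherical argument, the extra care being only that the ambient metric of $\M^3$ is Lorentzian.
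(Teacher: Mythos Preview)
Your proof is correct and is precisely the argument the paper intends: the paper does not spell out a proof of this lemma but explicitly says the co-Minkowski case is obtained by the same construction as Lemma~\ref{lemma shape operator sph hessu-uid}, and you have carried out that analogy faithfully, correctly identifying that the single substantive change is the sign in the Gauss formula $D_X Y=\nabla^{\H^2}_X Y+\langle X,Y\rangle x$ (because $\langle x,x\rangle=-1$), which propagates through the Euler-theorem step to yield $-u\,\mathrm{Id}$. One tiny wording point: where you say ``the component of $D_VW$ transverse to $\textsc{N}_{x_0}$'' you mean the component tangent to $\cM^3$, i.e.\ $D_VW-b^*_-(V,W)\,x_0$; your computation uses this correctly even if the phrasing is slightly off.
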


\begin{cor}
A space-like surface in $^*\M^3$ is   a plane if and only if $B\equiv 0$.
\end{cor}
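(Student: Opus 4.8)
The plan is to follow verbatim the strategy of the corresponding corollary in co-Euclidean geometry, replacing the sphere by the hyperboloid and tracking the resulting sign changes. Since the statement is local, I would first pass to the double cover $\cM^3\cong\mathcal H^2\times\R$ and realize $S$ locally as a space-like graph $\sigma(x)=(x,u(x))$ over $\Omega\subseteq\H^2$. By Lemma~\ref{lemma shape operator hp hessu-uid}, the shape operator of such a graph is $B=\Hess^{\H^2}u-u\,\mathrm{Id}$, so the condition $B\equiv 0$ becomes the linear equation $\Hess^{\H^2}u-u\,\mathrm{Id}=0$, and the whole problem reduces to identifying its solutions with the totally geodesic planes.

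Next I would establish the hyperbolic analogue of the identity \eqref{restr hessien2}: writing $U$ for the $1$-homogeneous extension of $u$ to the cone over $\H^2$ in $\M^3$, one has on $\mathcal H^2$ the relation $\Hess U=\Hess^{\H^2}u-u\,\mathrm{Id}$. This rests on the Gauss-type connection formula for the hyperboloid, namely $D_XY=\nabla^{\H^2}_XY+\langle X,Y\rangle x$, which is the analogue of \eqref{eq: sphere connection} but with a $+$ sign, reflecting that $\langle x,x\rangle=-1$ on $\mathcal H^2$ rather than $+1$ on $\mathcal S^2$; combined with Euler's relation $x.U=U$ this yields the claimed formula. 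In particular $B=\Hess U$, exactly as in the co-Euclidean case.

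Finally I would identify the planes. The restriction to $\H^2$ of a linear form $u(x)=\langle x,p\rangle$, with $p\in\M^3$, has a linear (hence $1$-homogeneous) extension $U$, so $\Hess U=0$ and therefore $B\equiv 0$; moreover the graph of such a $u$ is the intersection of $\cM^3$ with a linear hyperplane of $\R^4$, i.e.\ a totally geodesic plane, as one checks in coordinates. Conversely, $B\equiv 0$ forces $\Hess U=0$, so $U$ is linear, $u=\langle\,\cdot\,,p\rangle$, and $S$ is a plane. Descending to ${}^*\M^3$ and invoking locality concludes the argument.

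I expect no genuine obstacle here: the only delicate point is the sign bookkeeping in the connection and Hessian identities (the $-u\,\mathrm{Id}$ appearing here, as opposed to $+u\,\mathrm{Id}$ in co-Euclidean space), which is a purely formal verification. The remaining step, matching linear extensions $U$ with totally geodesic planes, is routine once performed in explicit coordinates adapted to the splitting $\cM^3\cong\mathcal H^2\times\R$.
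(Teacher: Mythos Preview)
Your proposal is correct and follows exactly the approach the paper intends: the paper states this corollary without proof, relying on the reader to transport verbatim the short argument given for the co-Euclidean analogue (planes are graphs of $u(x)=\langle x,p\rangle$, and these are precisely the solutions of $\Hess^{\H^2}u-u\,\mathrm{Id}=0$), which is what you do. Your write-up is in fact more complete than the paper's co-Euclidean version, since you make explicit the converse direction via the vanishing of $\Hess U$ in the radial direction (Euler) and hence globally.
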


Again, one can observe that the embedding data of a smooth space-like surface in $^*\M^3$ satisfy the very simple condition that the first fundamental form $\I$ is hyperbolic, and the shape operator satisfies the  Codazzi equation for $\I$.

\begin{proposition}[Fundamental theorem of immersed surfaces in co-Minkowski geometry] \label{fund thm halfpipe}

Given a simply connected surface $S$ and a pair $(\I,B)$ of a Riemannian metric and a  symmetric $(1,1)$-tensor on $S$, if $(\I,B)$ are such that $\I$ has constant curvature $-1$,
$$K_\I=-1\,,$$
and $B$ satisfies the Codazzi conditions for $\I$,
$$\d^{\nabla^\I}\!B=0\,.$$
then there exists a smooth immersion of $S$ in $^*\M^3$ having $(\I,B)$ as embedding data. Any two such immersions differ by post-composition by a global isometry of $^*\M^3$.
\end{proposition}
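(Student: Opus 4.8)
The plan is to follow \emph{verbatim} the strategy of Proposition~\ref{fund thm coE}, replacing the round sphere $\mathcal S^2$ by the hyperboloid $\mathcal H^2=\{x\in\M^3\mid \langle x,x\rangle=-1\}$ and the identity \eqref{restr hessien2} by its Lorentzian counterpart. As usual it suffices to produce the immersion in the double cover $\cM^3\cong \mathcal H^2\times\R$, since $-\mathrm{Id}$ acts as an isometry preserving the connection, and we may work on the sheet identified with $^*\M^3$. First I would record the analogue of \eqref{eq: sphere connection}: since the transverse field to $\mathcal H^2$ is the position vector $x$ with $\langle x,x\rangle=-1$, differentiating $\langle Y,x\rangle=0$ gives, for $X,Y$ tangent to $\mathcal H^2$,
$$D_X Y=\nabla^{\H^2}_X Y+\langle X,Y\rangle x\,,$$
the change of sign with respect to the spherical case reflecting the normalization $\langle x,x\rangle=-1$. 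For the $1$-homogeneous extension $U$ of a function $u$ on $\mathcal H^2$, Euler's relation $x.U=U$ then yields, on $\mathcal H^2$,
$$\Hess U=\Hess^{\H^2} u - u\,\mathrm{Id}\,,$$
which is exactly the shape-operator expression of Lemma~\ref{lemma shape operator hp hessu-uid}.

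The key preliminary step is the hyperbolic version of Fact~\ref{fact ferus}: \emph{a symmetric tensor $B$ satisfying the Codazzi equation \eqref{eq:codazzi eq} on a surface of constant curvature $-1$ can locally be written as $B=\Hess^{\H^2} u - u\,\mathrm{Id}$}. I would prove this exactly as in the co-Euclidean argument preceding Fact~\ref{fact ferus}: over a domain $\Omega\subset\mathcal H^2$ form the cone $\tilde\Omega\subset\M^3$, extend $B$ to the $(1,1)$-tensor $\tilde B$ on $\tilde\Omega$ determined by $\tilde B_x(x)=0$ and $\tilde B_x(X)=B_{x/\|x\|_-}(X)$ for $\langle X,x\rangle=0$, and view $\tilde B$ as an $\R^3$-valued one-form. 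Using the displayed connection formula above in place of \eqref{eq: sphere connection}, the Codazzi equation for $B$ on $\mathcal H^2$ is equivalent to $\tilde B$ being a closed form; two applications of the Poincar\'e lemma (first to each component $\tilde B^i$, then, using the symmetry of $\tilde B$, to the resulting one-form) produce $U:\tilde\Omega\to\R$ with $\tilde B=\Hess U$, and restriction via the displayed Lorentzian Hessian identity gives $B=\Hess^{\H^2} u - u\,\mathrm{Id}$.

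With this in hand the construction is immediate. Given $(\I,B)$ as in the statement, the above yields $u:S\to\R$ with $B=\Hess^{\H^2} u - u\,\mathrm{Id}$; choosing a developing map $\mathrm{dev}:\tilde S\to\mathcal H^2$ for the hyperbolic metric $\I$, I set $\sigma(x)=(\mathrm{dev}(x),u(x))\in\mathcal H^2\times\R$. Since the degenerate metric of $\cM^3$ restricts to the hyperbolic metric on the $\mathcal H^2$-factor, the first fundamental form of $\sigma$ is $\I$, and Lemma~\ref{lemma shape operator hp hessu-uid} gives shape operator $B$. For uniqueness, a second immersion $\sigma'$ with the same data has first component a local isometry of $\I$, hence differing from $\mathrm{dev}$ by some $A\in\mathrm{Isom}(\mathcal H^2)$; composing $\sigma'$ with the isometry of $\cM^3$ acting by $A$ on $\mathcal H^2$ and fixing the $\R$-coordinate, we may assume the two immersions share the first component. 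Then $u-u'$ satisfies $\Hess^{\H^2}(u-u')-(u-u')\mathrm{Id}=0$, so its homogeneous extension is linear, i.e.\ $u(x)-u'(x)=\langle\mathrm{dev}(x),p_0\rangle$ for some $p_0\in\M^3$, and the two immersions differ by the isometry $(x,t)\mapsto(x,t+\langle x,p_0\rangle)$ of $^*\M^3$.

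The only point requiring genuine care---the main (though mild) obstacle---is tracking the Lorentzian sign conventions: one must verify that the transverse-field computation indeed produces $+\langle X,Y\rangle x$, and hence the $-u\,\mathrm{Id}$ term, \emph{consistently} across the connection formula, the Codazzi-to-closedness equivalence on the cone, and the kernel computation, so that every sign matches Lemma~\ref{lemma shape operator hp hessu-uid}. Once these are pinned down, each step is formally identical to the co-Euclidean case.
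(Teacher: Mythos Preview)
Your proposal is correct and follows precisely the approach the paper intends: the paper does not give a separate proof of Proposition~\ref{fund thm halfpipe}, explicitly stating that the co-Minkowski case is obtained ``by means of a very analogous construction'' to the co-Euclidean one and declining to ``repeat all the details''. Your adaptation---the connection formula $D_XY=\nabla^{\H^2}_XY+\langle X,Y\rangle x$, the Lorentzian Hessian identity $\Hess U=\Hess^{\H^2}u-u\,\mathrm{Id}$, the hyperbolic analogue of Fact~\ref{fact ferus}, and the uniqueness argument via the kernel $\Hess^{\H^2}v-v\,\mathrm{Id}=0$---are exactly the expected translations, with the sign changes handled correctly.
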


Finally, the formulae for the embedding data of dual surfaces work well in the $\M^3$-$^*\M^3$ duality:

\begin{cor}
Given a space-like smooth surface $S$ in $\M^3$ (resp. $^*\M^3$) which bounds a convex set and with positive definite shape operator, if the embedding data of $S$ are $(\I,B)$, then the boundary of the dual convex set in $^*\M^3$ (resp. $\M^3$) has embedding data $(\III,B^{-1})$.
\end{cor}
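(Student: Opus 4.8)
The plan is to mirror the proof of Corollary~\ref{cor dual plat}, replacing the sphere $\mathcal S^2$ by the hyperboloid throughout and the co-Euclidean Gauss map by the Lorentzian one. As in that proof, it suffices to treat a single direction, say $S\subset\M^3$ with dual surface $S^*\subset{^*\M^3}$: once this case is settled, the reverse statement follows by exchanging the roles of $(\I,B)$ and $(\III,B^{-1})$ and invoking the involutivity of duality, $(K^*)^*=K$ (see \eqref{autoduality}).

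First I would work in the double cover $\cM^3\cong\mathcal H^2\times\R$. By Fact~\ref{fact:comink space plane}, the dual embedding $\sigma\colon S\to{^*\M^3}$ sends a point $x\in S$ to the point of $^*\M^3$ dual to the space-like tangent plane $T_xS$. With the coordinate conventions of Subsection~\ref{cominkowski plane}, the $\mathcal H^2$-component of $\sigma$ is exactly the \emph{Gauss map} $G\colon S\to\mathcal H^2_+$, assigning to $x$ its future unit normal $\textsc{N}(x)$ regarded as a point of the upper hyperboloid, identified with $\H^2$; the $\R$-valued component is the support function $u$ of the convex body bounded by $S$, so that $\sigma(x)=(G(x),u(x))$ exhibits $S^*$ as a graph over $\Omega\subseteq\H^2$.

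The computation of the first fundamental form of $S^*$ is then formally identical to the co-Euclidean case. Since the restriction of the degenerate metric $g^*$ to a slice $\H^2\times\{\star\}$ is the hyperbolic metric, the induced metric on $S^*$ is the pull-back of the hyperbolic metric under $G$; and by \eqref{eq:shape op} the differential of the Gauss map is, up to the Lorentzian sign, the shape operator $B$, so that
$$\langle\d\sigma(v),\d\sigma(w)\rangle=\langle\d G(v),\d G(w)\rangle=\I(B(v),B(w))=\III(v,w)\,,$$
where $\langle\cdot,\cdot\rangle=b_{2,1}$. Thus the first fundamental form of $S^*$ is $\III$. For the shape operator, I would use that $S^*$ is the graph of $u$ over $\H^2$ and apply Lemma~\ref{lemma shape operator hp hessu-uid}, which gives shape operator $\Hess^{\H^2}u-u\,\mathrm{Id}$; parametrizing $S$ by the inverse Gauss map $G^{-1}\colon\H^2\to S$ and reading $u$ as the support function, one identifies this tensor with $B^{-1}$, using the curvature relation $K_{\III}=K_\I/\det B$ to control the determinants.

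The step I expect to be most delicate is the careful bookkeeping of signs in the Lorentzian setting: the normal $\textsc{N}$ to a space-like surface in $\M^3$ is time-like, so $G$ takes values in $\mathcal H^2_+$ rather than on a sphere, and the sign in \eqref{eq:shape op} must be chosen accordingly. One must also verify that the minus sign appearing in Lemma~\ref{lemma shape operator hp hessu-uid} (as opposed to the plus sign in the spherical Lemma~\ref{lemma shape operator sph hessu-uid}) is precisely what makes $\Hess^{\H^2}u-u\,\mathrm{Id}$ come out as $B^{-1}$ rather than as some other combination. Once these conventions are pinned down, the remainder is a verbatim transcription of the proof of Corollary~\ref{cor dual plat}.
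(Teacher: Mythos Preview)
Your proposal is correct and follows exactly the approach the paper intends: the paper states this corollary without proof, leaving it as the direct co-Minkowski analogue of Corollary~\ref{cor dual plat}, and your sketch is precisely that analogue---Gauss map to $\mathcal H^2_+$ in place of $\mathcal S^2$, Lemma~\ref{lemma shape operator hp hessu-uid} in place of Lemma~\ref{lemma shape operator sph hessu-uid}, with the Lorentzian sign conventions you correctly flag as the only point requiring care. The aside about the curvature relation $K_{\III}=K_\I/\det B$ is superfluous here (the identification of the shape operator with $B^{-1}$ is a direct computation via the support function, as in the proof of Corollary~\ref{cor dual plat}), but it does no harm.
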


\subsection{Geometric transition of surfaces}

We now conclude by showing that the notion of curvature in co-Euclidean and co-Minkowski geometry, under the procedures of geometric transition we have encountered, is the rescaled limit of the usual notions in the model spaces which degenerate to $^*\E^3$ and $^*\M^3$. In particular, we will focus on the transitions defined in Subsection \ref{subsec Limits of 3-dimensional model spaces}, having limit in the co-Euclidean and co-Minkowski space.

\begin{proposition} \label{prop rescaling curvature}
Suppose $\sigma_t$ is a $C^2$ family of space-like smooth immersions of a simply connected surface $S$ into $\Ell^3$ or $\dS^3$ (resp. $\AdS^3$ or $\H^3$), such that $\sigma_0$ is contained in a totally geodesic plane $P$. Let $$\sigma=\lim_{t\to 0}(g_t^*\circ \sigma_t)$$
be the rescaled immersion in $^*\E^3$ (resp. $^*\M^3$) obtained by blowing-up the plane $P$.
Then:
\begin{itemize}
\item The first fundamental form of $\sigma$ coincides with the first fundamental form of $\sigma_0$:
$$\I(v,w)=\lim_{t\to 0}\I_t(v,w)\,;$$
\item The second fundamental form of $\sigma$ is the first derivative of the second fundamental form of $\sigma_t$:
$$\II(v,w)=\lim_{t\to 0} \frac{\II_t(v,w)}{t}\,;$$
\item The shape operator $B$ of $\sigma$ is the first derivative of the shape operator $B_t$ of $\sigma_t$:
$$B(v)=\lim_{t\to 0}\frac{B_t(v)}{t}\,;$$
\item The extrinsic curvature $K^{ext}$ of $\sigma$ is the second derivative of the extrinsic curvature $K^{ext}_t=\det B_t$ of $\sigma_t$:
$$K^{ext}(x)=\lim_{t\to 0}\frac{K^{ext}_t(x)}{t^2}\,.$$
\end{itemize}
\end{proposition}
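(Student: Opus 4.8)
The plan is to compute everything in the double cover, where $g_t^*$ is the explicit rescaling matrix $\operatorname{diag}(1,1,1,1/t)$ in $\mathrm{PGL}(4,\R)$, and to exploit the fact that the defining quadratic form is rescaled to $b^t(x,x)=x_1^2+x_2^2+x_3^2+t^2 x_4^2$ (in the $\dS^3$ case, with appropriate signs; the $\AdS^3/\H^3$ case is completely parallel). I would write the immersion $\sigma_t$ in a chart adapted to the totally geodesic plane $P=\{x_4=0\}$. Since $\sigma_0(S)\subset P$, the fourth coordinate of $\sigma_t$ vanishes at $t=0$; after applying $g_t^*$, this fourth coordinate gets divided by $t$, so the limit immersion $\sigma$ records the first-order deviation of $\sigma_t$ away from $P$. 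The key point, already established in Proposition~\ref{prop limit connection volume coeucl} and its co-Minkowski analogue, is that $g_t^*\nabla^{\dS^3}_X Y \to \nabla^{^*\E^3}_{\dot X}\dot Y$ and that the transverse vector field $\textsc{N}_t$ converges to the transverse vector field $\textsc{N}$ used to define the degenerate connection.

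The first bullet is the easiest: the first fundamental form involves only the tangential first three coordinates of $\sigma_t$, which are unaffected (to leading order) by $g_t^*$, and by continuity $\I_t\to\I_0=\I$. For the second and third bullets I would decompose $\nabla^{\dS^3}_{\sigma_{t*}v}(\sigma_{t*}\hat w)=\nabla^{\I_t}_v\hat w+\II_t(v,w)\textsc{N}_t$ and apply $g_t^*$ to both sides. On the left we get convergence to $\nabla^{^*\E^3}_{\dot\sigma_* v}(\dot\sigma_*\hat w)$ by the cited Proposition. On the right, $g_t^*$ fixes the tangential part but stretches the normal direction: because $\textsc{N}_t$ is (up to normalization) becoming the degenerate transverse field $\textsc{N}=x_\infty$, the normal component $\II_t(v,w)\textsc{N}_t$ survives in the limit only after being multiplied by $1/t$. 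Matching the normal components in the splitting $T\mathcal{\cE}^3=T\sigma(S)\oplus\langle\textsc{T}\rangle$ (using that $g_t^*$ sends the old normal direction into a multiple of $\textsc{T}$, scaled by $1/t$) yields $\II(v,w)=\lim_{t\to0}\II_t(v,w)/t$. The shape operator statement then follows immediately from $\II(v,w)=\I(B(v),w)$ together with the first two bullets, since $\I_t(B_t(v),w)=\II_t(v,w)$ gives $\I(B(v),w)=\lim \II_t(v,w)/t=\lim \I_t(B_t(v)/t,w)$, and nondegeneracy of $\I$ forces $B(v)=\lim_{t\to0}B_t(v)/t$.

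The extrinsic curvature statement is then a formal consequence of the shape operator convergence. Since $B=\lim_{t\to0}B_t/t$ as a $(1,1)$-tensor, and $K^{ext}_t=\det B_t$ in dimension two equals $t^2\det(B_t/t)$, we get $K^{ext}_t/t^2=\det(B_t/t)\to\det B=K^{ext}$ by continuity of the determinant. Here I would use that the convergence $B_t/t\to B$ is in the $C^0$ sense at each point (coming from the $C^2$ hypothesis on the family $\sigma_t$, which guarantees that the second fundamental forms vary in a $C^1$ manner and that the first-order term is genuinely the derivative).

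I expect the main obstacle to be the careful bookkeeping of normalizations in the normal direction: one must verify precisely that under $g_t^*$ the unit normal $\textsc{N}_t$ of $\sigma_t(S)$ in $\dS^3$ (with respect to $b^t$) is sent to a vector converging to the transverse field $\textsc{N}=x_\infty$, and simultaneously that the old ``normal coordinate'' is the one being divided by $t$, so that the factor of $1/t$ lands on $\II_t$ rather than elsewhere. In other words, the subtle step is confirming that the splitting used to define $\II_t$ (orthogonal complement of the tangent plane, with respect to a nondegenerate metric that is itself degenerating) converges to the splitting $T\sigma(S)\oplus\langle\textsc{T}\rangle$ used to define $\II$ in the degenerate geometry, and that the rescaling factor is exactly $t$ and not some other power. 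Once this identification is pinned down, the rest is continuity and the elementary determinant computation.
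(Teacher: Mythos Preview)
Your approach is essentially the paper's: the same rescaling $g_t^*=\operatorname{diag}(1,1,1,1/t)$, the same decomposition $\nabla^{\mathcal{S}^3}_{\sigma_{t*}v}(\sigma_{t*}\hat w)=\nabla^{\I_t}_v\hat w+\II_t(v,w)\textsc{N}_t$, the same appeal to Proposition~\ref{prop limit connection volume coeucl} for the left-hand side, and the same observation that $t\,g_t^*(\textsc{N}_t)\to\textsc{T}=(0,0,0,1)$ forces the factor $1/t$ onto $\II_t$; the shape-operator and determinant steps are identical. One slip to clean up: you momentarily write that the surface normal $\textsc{N}_t$ ``is becoming the degenerate transverse field $\textsc{N}=x_\infty$'', but $\textsc{N}=x_\infty$ is the position vector transverse to the pseudo-sphere in $\R^4$, whereas the unit normal to $\sigma_t(S)$ inside the model space at $t=0$ is $(0,0,0,1)$ and its rescaled limit is $\textsc{T}$ --- you use $\textsc{T}$ correctly in the very next sentence, so this is only a notational conflation of two distinct transverse fields, not a gap in the argument.
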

\begin{proof}
As in Subsection \ref{subsec Limits of 3-dimensional model spaces} (recall also the proof of Proposition \ref{prop limit connection volume coeucl}), we can assume that the totally geodesic plane $P$ is defined by $x_4=0$ in $\R\mathrm{P}^3$, and use the projective transformations
$$g_t^*=\begin{bmatrix} 1 & 0 & 0 & 0 \\ 0 & 1 & 0 & 0 \\ 0 & 0 &  1 & 0 \\ 0 & 0 & 0 & 1/t  \end{bmatrix}\in \mathrm{PGL}(4,\R)\,.$$
Hence, the limit of $g_t^*\sigma_t$ is:
$$\lim_{t\to 0}g_t^* \sigma_t(x)=\lim_{t\to 0}\begin{bmatrix} (\sigma_t(x))_1 \\ (\sigma_t(x))_2 \\ (\sigma_t(x))_3 \\ (\sigma_t(x))_4/t  \end{bmatrix}=\begin{bmatrix} (\sigma_0(x))_1 \\ (\sigma_0(x))_2 \\ (\sigma_0(x))_3 \\ (\dot\sigma(x))_4/t  \end{bmatrix}\,,$$
namely, $\sigma(x)=(\sigma_0(x), u(x))$ for some function $u$ which encodes the derivative of the vertical component of $\sigma_t$. It is then clear, from the degenerate form of the metric of both $^*\E^3$ and $^*\M^3$, that in both cases the first fundamental form of the rescaled limit $\sigma$ is the limit of the first fundamental form of $\sigma_t$.

For the second point, we will focus on the case of $\Ell^3$ (with limit in $\E^3$) for definiteness. Since the statement is local, we can consider the computation in the dual covers $\mathcal S^3$ and $\cE^3$. 
Consider the unit normal vector fields $\textsc{N}_t$ to $\sigma_t( S)$, chosen so that at time $t=0$ the vector field is $(0,0,0,1)$ and $\textsc{N}_t$ varies continuously with $t$. Recall that the second fundamental form of $\sigma_t(S)$ satisfies:
$$\nabla^{\mathcal S^3}_{\sigma_{t}v}(\sigma_{t}\hat w)-\nabla^{\I_t}_{v}(\hat w)=\II_t(v,w)\textsc{N}_t=\frac{\II_t(v,w)}{t}t\textsc{N}_t\,.$$
Now applying the transformation $g_t^*$, we have:
\begin{itemize}
\item $g_t^*\nabla^{\mathcal S^3}_{\sigma_{t}v}(\sigma_{t}\hat w)$ converges to $\nabla^{\cE^3}_{\sigma v}\sigma w$ by Proposition \ref{prop limit connection volume coeucl};
\item $g_t^* \nabla^{\I_t}_{v}(\hat w)$ converges to a tangential component to $\sigma(S)$, since $ \nabla^{\I_t}_{v}(\hat w)$ is tangent to $\sigma_t(S)$;
\item $t g_t^*(N_t)$ converges to $\textsc{T}=(0,0,0,1)$ since $g_t^*(\textsc{N}_t)=(\textsc{N}_t^1,\textsc{N}_t^2,\textsc{N}_t^3,{\textsc{N}_t^4}/{t})$ and $\textsc{N}_0=(0,0,0,1)$.
\end{itemize}
This shows at once that $\II_t(v,w)/t$ converges to the second fundamental form of $\sigma$ in $\cE^3$, and $g_t^* \nabla^{\I_t}_{v}(\hat w)$ converges to $\nabla^{\I}_{v}(\hat w)$. 

Since $\II_t(v,w)=\I_t(B_t(v),w)$ and $\II(v,w)=\I(B(v),w)$, the third point follows from the first two statements. The last point is a consequence of the third point and the fact that $K^{ext}_t=\det B_t$ and $K^{ext}=\det B$.

Clearly the proof is completely analogous for the convergence from $\dS^3$ to $^*\E^3$, or from the convergence from $\H^3$ or $\AdS^3$ to $^*\M^3$.
\end{proof}

%

\subsection{Projective nature of infinitesimal rigidity}\label{sec:inf rig}

Let $S$ be a smooth space-like surface in  $\E^3$, $\M^ 3$, $\Ell^3$, $\H^3$, $\dS^3$ or $\AdS^3$.
 A vector field $Z$ on $S$ is called an \emph{infinitesimal isometric deformation}\index{infinitesimal isometric deformation} if it preserves the induced metric on $S$ at the first order: $\textsc{L}_Zg(X,Y)=0\quad \forall X,Y\in TS $,  which is easily shown to be equivalent to
$g(\nabla_X Z,X)=0\quad \forall X\in TS$, if $g$ is the metric of the ambient space and $\nabla$ its Levi--Civita connection.
 An infinitesimal isometric deformation  is said to be \emph{trivial} if it is the restriction to $S$ of a Killing field of the ambient space.

Now let us suppose that $S$ embeds in an affine chart, 
and that this affine representation is also an affine chart for another model space, in which the image of $S$ is space-like. 
By a straightforward adaptation of the  proof of Lemma~\ref{lem:killing}, it is easy to see that the infinitesimal Pogorelov map sends 
infinitesimal isometric deformations of $S$ for the first ambient metric to infinitesimal isometric deformations of $S$ for the second ambient metric. Moreover,  Lemma~\ref{lem:killing} also says that trivial deformations are sent to trivial deformations. 
A surface $S$ is \emph{infinitesimally rigid}\index{infinitesimal rigidity} if all its infinitesimal isometric deformations are trivial.

{It follows from the above discussion that the infinitesimal rigidity of a surface in a three dimensional affine space is independent of the choice of the projective distance on the ambient space.}


\paragraph{Comments and references} \small
\begin{itemize}
\item A classical problem in Riemannian geometry is the existence of zero mean curvature surfaces with prescribed boundary at infinity. For instance, in hyperbolic space the so-called asymptotic Plateau problem was proved by Anderson in \cite{anderson}. In Anti-de Sitter space, the analogous problem was tackled by Bonsante and Schlenker in \cite{bon_schl}. With the tools introduced in this paper, the analogous problem in $^*\M^3$ turns out to be very simple. Indeed, 
taking the trace in the expression
$B=\Hess^{\H^3} u-u\, \mathrm{Id}$
for the shape operator of a space-like surface, one obtains that a surface has zero mean curvature if and only if 
$
\Delta u-2u=0$
where $\Delta$ denotes the hyperbolic Laplacian. Hence a proof follows  from the existence and uniqueness of solutions to this linear PDE.
\item Volkov \cite{volkov} showed the following: a vector field $Z$ on $S$ is an infinitesimal isometric deformation if and only if for each  $x\in S$ there exists a  Killing field $K^x$ such that
$Z_x=(K^x)_x$ and $(\nabla_X Z)_x = (\nabla_X K^x)_x$ $\forall X\in T_xS~.$
\item {To simplify the presentation, we have considered different metrics on the same (affine) space. But we could have considered maps between different spaces, sending geodesics onto geodesics. In particular, it would follow from the result presented here that infinitesimal rigidity in the Euclidean space is invariant under projective transformations.} This fact 
is known since the end of the 19th century and called the Darboux--Sauer theorem, see e.g. 
 \cite[Chapitre IV]{dar96}. 
 This is also true for more general surfaces than smooth ones. For example consider polyhedral surfaces made of pieces of space-like planes. An infinitesimal isometric deformation is then defined as the data of a Killing field on each face, such that they coincide on common edges. 
Then Lemma~\ref{lem:killing}  implies that the infinitesimal Pogorelov map sends deformations to deformations and trivial deformations to trivial deformations. This invariance of infinitesimal rigidity is always true if one considers the more general case of frameworks, that includes polyhedral surfaces \cite{izm09,ivan}.
\item The \emph{Pogorelov map}\index{Pogorelov map} is a map taking two convex surfaces $S_1,S_2$ in $\Ell^3$ and giving two convex surfaces $\bar S_1,\bar S_2$ of $\E^3$. {If $S_1$ is isometric to $S_2$ (for the distance induced by the ambient metric) then $\bar S_1$ and $\bar S_2$ are also isometric. Moreover, if the isometry between $S_1$ and $S_2$ is the restriction to $S_1$ of an isometry of the ambient space, then the isometry between $\bar S_1$ and $ \bar S_2$ is also the restriction of an isometry of the ambient space.} The difficult part is to prove that $\bar S_1$ and $\bar S_2$ are convex \cite{Pogo}. The differential of this map at $S=S_1=S_2$ gives the infinitesimal Pogorelov map.
The map was extended to $\H^3$ \cite{Pogo} and to $\dS^3$  \cite{milka}  instead of $\Ell^3$, see also
\cite{sch98,sch00}. 
\end{itemize}

\normalsize

\printindex

\bibliographystyle{plain}
\bibliography{review-hyp}

\begin{thebibliography}{10}

\bibitem{AP1}
N.~A'Campo and A.~Papadopoulos.
\newblock Notes on non-{E}uclidean geometry.
\newblock In {\em Strasbourg master class on geometry}, volume~18 of {\em IRMA
  Lect. Math. Theor. Phys.}, pages 1--182. Eur. Math. Soc., Z\"urich, 2012.

\bibitem{AP2}
N.~A'Campo and A.~Papadopoulos.
\newblock On {K}lein's {\it so-called non-{e}uclidean geometry}.
\newblock In {\em Sophus {L}ie and {F}elix {K}lein: the {E}rlangen program and
  its impact in mathematics and physics}, volume~23 of {\em IRMA Lect. Math.
  Theor. Phys.}, pages 91--136. Eur. Math. Soc., Z\"urich, 2015.

\bibitem{AP3}
N.~A'Campo and A.~Papadopoulos.
\newblock Transitional geometry.
\newblock In {\em Sophus {L}ie and {F}elix {K}lein: the {E}rlangen program and
  its impact in mathematics and physics}, volume~23 of {\em IRMA Lect. Math.
  Theor. Phys.}, pages 217--235. Eur. Math. Soc., Z\"urich, 2015.

\bibitem{agol}
I.~Agol.
\newblock The virtual {H}aken conjecture.
\newblock {\em Doc. Math.}, 18:1045--1087, 2013.
\newblock With an appendix by Agol, Daniel Groves, and Jason Manning.

\bibitem{anderson}
M.~T. Anderson.
\newblock Complete minimal hypersurfaces in hyperbolic {$n$}-manifolds.
\newblock {\em Comment. Math. Helv.}, 58(2):264--290, 1983.

\bibitem{mes+}
L.~Andersson, T.~Barbot, R.~Benedetti, F.~Bonsante, W.~M. Goldman, F.~Labourie,
  K.~P. Scannell, and J.-M. Schlenker.
\newblock Notes on: ``{L}orentz spacetimes of constant curvature'' [{G}eom.
  {D}edicata {\bf 126} (2007), 3--45; mr2328921] by {G}. {M}ess.
\newblock {\em Geom. Dedicata}, 126:47--70, 2007.

\bibitem{notes}
L.~Andersson, T.~Barbot, R.~Benedetti, F.~Bonsante, W.~M. Goldman, F.~Labourie,
  K.~P. Scannell, and J.-M. Schlenker.
\newblock Notes on: ``{L}orentz spacetimes of constant curvature'' [{G}eom.
  {D}edicata {\bf 126} (2007), 3--45; mr2328921] by {G}. {M}ess.
\newblock {\em Geom. Dedicata}, 126:47--70, 2007.

\bibitem{BGM}
C.~B{\"a}r, P.~Gauduchon, and A.~Moroianu.
\newblock Generalized cylinders in semi-{R}iemannian and {S}pin geometry.
\newblock {\em Math. Z.}, 249(3):545--580, 2005.

\bibitem{bbzads}
T.~Barbot, F.~B{\'e}guin, and A.~Zeghib.
\newblock Constant mean curvature foliations of globally hyperbolic spacetimes
  locally modelled on {${\rm AdS}_3$}.
\newblock {\em Geom. Dedicata}, 126:71--129, 2007.

\bibitem{barbotzeghib}
T.~Barbot, F.~B{\'e}guin, and A.~Zeghib.
\newblock Prescribing {G}auss curvature of surfaces in 3-dimensional
  spacetimes: application to the {M}inkowski problem in the {M}inkowski space.
\newblock {\em Ann. Inst. Fourier (Grenoble)}, 61(2):511--591, 2011.

\bibitem{beem}
J.~K. Beem, P.~E. Ehrlich, and K.~L. Easley.
\newblock {\em Global {L}orentzian geometry}, volume 202 of {\em Monographs and
  Textbooks in Pure and Applied Mathematics}.
\newblock Marcel Dekker, Inc., New York, second edition, 1996.

\bibitem{benpet}
R.~Benedetti and C.~Petronio.
\newblock {\em Lectures on hyperbolic geometry}.
\newblock Universitext. Springer-Verlag, Berlin, 1992.

\bibitem{ferus}
I.~Bivens, J.-P. Bourguignon, A.~Derdzi{\'n}ski, D.~Ferus, O.~Kowalski,
  T.~Klotz Milnor, V.~Oliker, U.~Simon, W.~Str{\"u}bing, and K.~Voss.
\newblock Discussion on {C}odazzi-tensors.
\newblock In {\em Global differential geometry and global analysis ({B}erlin,
  1979)}, volume 838 of {\em Lecture Notes in Math.}, pages 243--299. Springer,
  Berlin-New York, 1981.

\bibitem{bonotal}
F.~Bonahon and J.-P. Otal.
\newblock Laminations measur\'ees de plissage des vari\'et\'es hyperboliques de
  dimension 3.
\newblock {\em Ann. of Math. (2)}, 160(3):1013--1055, 2004.

\bibitem{bms}
F.~Bonsante, G.~Mondello, and J.-M. Schlenker.
\newblock A cyclic extension of the earthquake flow {I}.
\newblock {\em Geom. Topol.}, 17(1):157--234, 2013.

\bibitem{bon_schl}
F.~Bonsante and J.-M. Schlenker.
\newblock Maximal surfaces and the universal {T}eichm\"uller space.
\newblock {\em Invent. Math.}, 182(2):279--333, 2010.

\bibitem{bonschlfixed}
F.~Bonsante and J.-M. Schlenker.
\newblock Fixed points of compositions of earthquakes.
\newblock {\em Duke Math. J.}, 161(6):1011--1054, 2012.

\bibitem{bscod}
F.~Bonsante and A.~Seppi.
\newblock On {C}odazzi tensors on a hyperbolic surface and flat {L}orentzian
  geometry.
\newblock {\em Int. Math. Res. Not. IMRN}, (2):343--417, 2016.

\bibitem{bsmink}
F.~Bonsante and A.~Seppi.
\newblock Spacelike convex surfaces with prescribed curvature in (2 +
  1)-{M}inkowski space.
\newblock {\em Adv. Math.}, 304:434--493, 2017.

\bibitem{bf}
Francesco Bonsante and Fran\c{c}ois Fillastre.
\newblock The equivariant {M}inkowski problem in {M}inkowski space.
\newblock {\em Ann. Inst. Fourier (Grenoble)}, 67(3):1035--1113, 2017.

\bibitem{bsads}
Francesco Bonsante and Andrea Seppi.
\newblock Area-preserving diffeomorphisms of the hyperbolic plane and
  {$K$}-surfaces in anti-de {S}itter space.
\newblock {\em J. Topol.}, 11(2):420--468, 2018.

\bibitem{bsequivariant}
Francesco Bonsante and Andrea Seppi.
\newblock Equivariant maps into {A}nti-de {S}itter space and the symplectic
  geometry of {$\Bbb H^2\times\Bbb H^2$}.
\newblock {\em Trans. Amer. Math. Soc.}, 371(8):5433--5459, 2019.

\bibitem{bstvolume}
Francesco Bonsante, Andrea Seppi, and Andrea Tamburelli.
\newblock On the volume of anti--de {S}itter maximal globally hyperbolic
  three-manifolds.
\newblock {\em Geom. Funct. Anal.}, 27(5):1106--1160, 2017.

\bibitem{Cho09}
Y.~Cho.
\newblock Trigonometry in extended hyperbolic space and extended de {S}itter
  space.
\newblock {\em Bull. Korean Math. Soc.}, 46(6):1099--1133, 2009.

\bibitem{cdkbook}
D.~Cooper, C.~D. Hodgson, and S.~P. Kerckhoff.
\newblock {\em Three-dimensional orbifolds and cone-manifolds}, volume~5 of
  {\em MSJ Memoirs}.
\newblock Mathematical Society of Japan, Tokyo, 2000.
\newblock With a postface by Sadayoshi Kojima.

\bibitem{CDW}
Daryl Cooper, Jeffrey Danciger, and Anna Wienhard.
\newblock Limits of geometries.
\newblock {\em Trans. Amer. Math. Soc.}, 370(9):6585--6627, 2018.

\bibitem{dan-thesis}
J.~Danciger.
\newblock {\em Geometric transitions: From hyperbolic to {A}d{S} geometry}.
\newblock PhD thesis, Stanford University, 2011.

\bibitem{dan}
J.~Danciger.
\newblock A geometric transition from hyperbolic to anti-de {S}itter geometry.
\newblock {\em Geom. Topol.}, 17(5):3077--3134, 2013.

\bibitem{dgk1}
J.~Danciger, F.~Gu{\'e}ritaud, and F.~Kassel.
\newblock Geometry and topology of complete {L}orentz spacetimes of constant
  curvature.
\newblock {\em Ann. Sci. \'Ec. Norm. Sup\'er. (4)}, 49(1):1--56, 2016.

\bibitem{dgk2}
J.~Danciger, F.~Gu{\'e}ritaud, and F.~Kassel.
\newblock Margulis spacetimes via the arc complex.
\newblock {\em Invent. Math.}, 204(1):133--193, 2016.

\bibitem{DMS}
J.~{Danciger}, S.~{Maloni}, and J.-M. {Schlenker}.
\newblock {Polyhedra inscribed in a quadric}.
\newblock {\em ArXiv e-prints}, October 2014.

\bibitem{dar96}
G.~Darboux.
\newblock {\em Le\c cons sur la th\'eorie g\'en\'erale des surfaces et les
  applications g\'eom\'etriques du calcul infinit\'esimal. {Q}uatri\`eme
  partie}.
\newblock Chelsea Publishing Co., Bronx, N. Y., 1972.
\newblock D{\'e}formation infiniment petite et representation sph{\'e}rique,
  R{\'e}impression de la premi{\`e}re {\'e}dition de 1896.

\bibitem{derthol}
B.~Deroin and N.~Tholozan.
\newblock Dominating surface group representations by {F}uchsian ones.
\newblock {\em Int. Math. Res. Not. IMRN}, (13):4145--4166, 2016.

\bibitem{GKR}
D.~Gale, V.~Klee, and R.~T. Rockafellar.
\newblock Convex functions on convex polytopes.
\newblock {\em Proc. Amer. Math. Soc.}, 19:867--873, 1968.

\bibitem{GPS82}
R.~F. Galeeva, V.~V. Pelipenko, and D.~D. Sokolov.
\newblock Infinitesimal bendings of surfaces in pseudo-{E}uclidean space.
\newblock {\em Uspekhi Mat. Nauk}, 37(5(227)):175--176, 1982.

\bibitem{GHL}
S.~Gallot, D.~Hulin, and J.~Lafontaine.
\newblock {\em Riemannian geometry}.
\newblock Universitext. Springer-Verlag, Berlin, third edition, 2004.

\bibitem{hawking}
S.~W. Hawking and G.~F.~R. Ellis.
\newblock {\em The large scale structure of space-time}.
\newblock Cambridge University Press, London-New York, 1973.
\newblock Cambridge Monographs on Mathematical Physics, No. 1.

\bibitem{hodker}
C.~D. Hodgson and S.~P. Kerckhoff.
\newblock Rigidity of hyperbolic cone-manifolds and hyperbolic {D}ehn surgery.
\newblock {\em J. Differential Geom.}, 48(1):1--59, 1998.

\bibitem{ivan}
I.~Izmestiev.
\newblock Infinitesimal isometric deformations of frameworks in euclidean and
  non-euclidean geometries.
\newblock In {\em This volume}.

\bibitem{izm09}
I.~Izmestiev.
\newblock Projective background of the infinitesimal rigidity of frameworks.
\newblock {\em Geom. Dedicata}, 140:183--203, 2009.

\bibitem{kne30}
M.~S. Knebelman.
\newblock On {G}roups of {M}otion in {R}elated {S}paces.
\newblock {\em Amer. J. Math.}, 52(2):280--282, 1930.

\bibitem{kobayashi}
S.~Kobayashi.
\newblock {\em Transformation groups in differential geometry}.
\newblock Classics in Mathematics. Springer-Verlag, Berlin, 1995.
\newblock Reprint of the 1972 edition.

\bibitem{kozai}
K.~Kozai.
\newblock Hyperbolic structures from {S}ol on pseudo-{A}nosov mapping tori.
\newblock {\em Geom. Topol.}, 20(1):437--468, 2016.

\bibitem{Schlenker-Krasnov}
K.~Krasnov and J.-M. Schlenker.
\newblock Minimal surfaces and particles in 3-manifolds.
\newblock {\em Geom. Dedicata}, 126:187--254, 2007.

\bibitem{labourieCP}
F.~Labourie.
\newblock Surfaces convexes dans l'espace hyperbolique et {${\bf C}{\rm
  P}^1$}-structures.
\newblock {\em J. London Math. Soc. (2)}, 45(3):549--565, 1992.

\bibitem{LS00}
F.~Labourie and J.-M. Schlenker.
\newblock Surfaces convexes fuchsiennes dans les espaces lorentziens \`a
  courbure constante.
\newblock {\em Math. Ann.}, 316(3):465--483, 2000.

\bibitem{Mess}
G.~Mess.
\newblock Lorentz spacetimes of constant curvature.
\newblock {\em Geom. Dedicata}, 126:3--45, 2007.

\bibitem{milka}
A.~D. Milka.
\newblock Unique determination of convex surfaces in a pseudo-{R}iemannian
  spherical space. {I}.
\newblock {\em Ukrain. Geom. Sb.}, (29):113--118, iii, 1986.

\bibitem{nomizu}
K.~Nomizu and T.~Sasaki.
\newblock {\em Affine differential geometry}, volume 111 of {\em Cambridge
  Tracts in Mathematics}.
\newblock Cambridge University Press, Cambridge, 1994.
\newblock Geometry of affine immersions.

\bibitem{oneil}
B.~O'Neill.
\newblock {\em Semi-{R}iemannian geometry}, volume 103 of {\em Pure and Applied
  Mathematics}.
\newblock Academic Press, Inc. [Harcourt Brace Jovanovich, Publishers], New
  York, 1983.
\newblock With applications to relativity.

\bibitem{papadopoulos}
A.~Papadopoulos.
\newblock {\em Metric spaces, convexity and non-positive curvature}, volume~6
  of {\em IRMA Lectures in Mathematics and Theoretical Physics}.
\newblock European Mathematical Society (EMS), Z\"urich, second edition, 2014.

\bibitem{Pogo}
A.~V. Pogorelov.
\newblock {\em Extrinsic geometry of convex surfaces}.
\newblock American Mathematical Society, Providence, R.I., 1973.
\newblock Translated from the Russian by Israel Program for Scientific
  Translations, Translations of Mathematical Monographs, Vol. 35.

\bibitem{poincare}
H.~Poincar{\'e}.
\newblock Sur les hypoth\`eses fondamentales de la g\'eom\'etrie.
\newblock {\em Bull. Soc. Math. France}, 15:203--216, 1887.

\bibitem{richter}
J.~Richter-Gebert.
\newblock {\em Perspectives on projective geometry}.
\newblock Springer, Heidelberg, 2011.
\newblock A guided tour through real and complex geometry.

\bibitem{roc}
R.~T. Rockafellar.
\newblock {\em Convex analysis}.
\newblock Princeton Landmarks in Mathematics. Princeton University Press,
  Princeton, NJ, 1997.
\newblock Reprint of the 1970 original, Princeton Paperbacks.

\bibitem{rosenfeld}
B.~A. Rosenfeld.
\newblock {\em A history of non-{E}uclidean geometry}, volume~12 of {\em
  Studies in the History of Mathematics and Physical Sciences}.
\newblock Springer-Verlag, New York, 1988.
\newblock Evolution of the concept of a geometric space, Translated from the
  Russian by Abe Shenitzer.

\bibitem{sch98}
J.-M. Schlenker.
\newblock M\'etriques sur les poly\`edres hyperboliques convexes.
\newblock {\em J. Differential Geom.}, 48(2):323--405, 1998.

\bibitem{sch00}
J.-M. Schlenker.
\newblock Dihedral angles of convex polyhedra.
\newblock {\em Discrete Comput. Geom.}, 23(3):409--417, 2000.

\bibitem{sch06}
J.-M. Schlenker.
\newblock Hyperbolic manifolds with convex boundary.
\newblock {\em Invent. Math.}, 163(1):109--169, 2006.

\bibitem{schneider}
R.~Schneider.
\newblock {\em Convex bodies: the {B}runn-{M}inkowski theory}, volume 151 of
  {\em Encyclopedia of Mathematics and its Applications}.
\newblock Cambridge University Press, Cambridge, expanded edition, 2014.

\bibitem{andreathesis}
A.~Seppi.
\newblock {\em Surfaces in constant curvature three-manifolds and the
  infinitesimal Teichmüller theory}.
\newblock PhD thesis, University of Pavia, 2015.

\bibitem{sephyp}
A.~Seppi.
\newblock Minimal discs in hyperbolic space bounded by a quasicircle at
  infinity.
\newblock {\em Comment. Math. Helv.}, 91(4):807--839, 2016.

\bibitem{seppimaximal}
Andrea Seppi.
\newblock Maximal surfaces in {A}nti-de {S}itter space, width of convex hulls
  and quasiconformal extensions of quasisymmetric homeomorphisms.
\newblock {\em J. Eur. Math. Soc. (JEMS)}, 21(6):1855--1913, 2019.

\bibitem{spi3}
M.~Spivak.
\newblock {\em A comprehensive introduction to differential geometry. {V}ol.
  {III}}.
\newblock Publish or Perish, Inc., Wilmington, Del., second edition, 1979.

\bibitem{struve}
H.~Struve and R.~Struve.
\newblock Projective spaces with {C}ayley-{K}lein metrics.
\newblock {\em J. Geom.}, 81(1-2):155--167, 2004.

\bibitem{Thurcour1}
W.~P. Thurston.
\newblock {\em The geometry and topology of three-manifolds. Recent version of
  the 1980 notes.}
\newblock 1997.
\newblock {\tt http://www.msri.org/communications/books/gt3m}.

\bibitem{thb}
W.~P. Thurston.
\newblock {\em Three-dimensional geometry and topology. {V}ol. 1}, volume~35 of
  {\em Princeton Mathematical Series}.
\newblock Princeton University Press, Princeton, NJ, 1997.
\newblock Edited by Silvio Levy.

\bibitem{thurston2dim}
W.~P. Thurston.
\newblock Earthquakes in 2-dimensional hyperbolic geometry.
\newblock In {\em Fundamentals of hyperbolic geometry: selected expositions},
  volume 328 of {\em London Math. Soc. Lecture Note Ser.}, pages 267--289.
  Cambridge Univ. Press, Cambridge, 2006.

\bibitem{volkov}
Yu.~A. {Volkov}.
\newblock {Generalization of theorems of Darboux-Sauer and Pogorelov.}
\newblock {\em {J. Sov. Math.}}, 8:444--448, 1978.

\bibitem{yaglom}
I.~M. Yaglom.
\newblock {\em A simple non-{E}uclidean geometry and its physical basis}.
\newblock Springer-Verlag, New York-Heidelberg, 1979.
\newblock An elementary account of Galilean geometry and the Galilean principle
  of relativity, Heidelberg Science Library, Translated from the Russian by Abe
  Shenitzer, With the editorial assistance of Basil Gordon.

\end{thebibliography}

\end{document}